\definecolor{rouge}{rgb}{0.7,0.00,0.00}
\definecolor{vert}{rgb}{0.00,0.5,0.00}
\definecolor{bleu}{rgb}{0.00,0.00,0.8}
\newtheorem{theorem}{Theorem}[section]
\newtheorem{lemma}[theorem]{Lemma}
\newtheorem{corollary}[theorem]{Corollary}
\newtheorem{proposition}[theorem]{Proposition}
\newtheorem{hypothesis}{Hypothesis M\kern-0.1mm}
\newtheorem{hypothesisH}[theorem]{Hypothesis}
\theoremstyle{definition}
\newtheorem{example}[theorem]{Example}
\newtheorem{remark}[theorem]{Remark}
\numberwithin{equation}{section}
\newcommand*{\abs}[1]{\left\lvert#1\right\rvert}
\newcommand*{\norm}[1]{\left\lVert#1\right\rVert}
\newcommand*{\pent}[1]{\left\lfloor#1\right\rfloor}
\newcommand*{\sachant}[2]{\left.#1 \,\middle|\,#2\right.}
\newcommand*{\scal}[2]{\left\langle {#1}, {#2} \right\rangle}
\def\bb#1{\mathbb{#1}}
\def\bs#1{\boldsymbol{#1}}
\def\geq{\geqslant}
\def\leq{\leqslant}
\newcommand\ee{\varepsilon}
\DeclareMathOperator{\GL}{GL}
\DeclareMathOperator{\dd}{d\!}
\DeclareMathOperator{\e}{e}
\DeclareMathOperator{\sh}{sh}
\DeclareMathOperator{\ch}{ch}
\DeclareMathOperator{\Var}{Var}
\DeclareMathOperator{\supp}{supp}
\begin{document}
\title[Conditioned limit theorems for Markov walks]{Limit theorems for Markov walks conditioned to stay positive under a spectral gap assumption}

\author{Ion Grama}
\curraddr[Grama, I.]{Universit\'{e} Bretagne Sud, LMBA UMR CNRS 6205,
Vannes, France}
\email{ion.grama@univ-ubs.fr}
\author{Ronan Lauvergnat}
\curraddr[Lauvergnat, R.]{Universit\'{e} Bretagne Sud, LMBA UMR CNRS 6205,
Van\-nes, France}
\email{ronan.lauvergnat@univ-ubs.fr}
\author{\'Emile Le Page}
\curraddr[Le Page, \'E.]{Universit\'{e} Bretagne Sud, LMBA UMR CNRS 6205,
Vannes, France}
\email{emilepierre.lepage@gmail.com}

\begin{abstract}
Consider a Markov chain $(X_n)_{n\geq 0}$ with values in the state space $\bb X$. 
Let $f$ be a real function on $\bb X$ and set $S_0=0,$ $S_n = f(X_1)+\cdots + f(X_n),$ $n\geq 1$. 
Let $\bb P_x$ be the probability measure generated by the Markov chain starting at $X_0=x$.
For a starting point $y \in \bb R$ denote by $\tau_y$ the first moment  when the Markov walk $(y+S_n)_{n\geq 1}$ becomes non-positive. 
Under the condition that $S_n$ has zero drift, we find the asymptotics of the probability $\bb P_x \left( \tau_y >n  \right)$ 
and of the conditional law $\bb P_x \left(  \sachant{y+S_n\leq \cdot\sqrt{n}}{ \tau_y >n }  \right)$ as $n\to +\infty.$ 
\end{abstract}

\date{\today }
\subjclass[2010]{Primary 60G50, 60F05, 60J50. Secondary 60J05, 60J70, 60G40}
\keywords{Markov chain, random walk, exit time, harmonic function, limit theorem}

\maketitle

\section{Introduction}

Assume that on the probability space $\left(\Omega, \mathscr F, \bb P\right)$ we are given a sequence of random variables 
$\left(X_n\right)_{n\geq 1}$ with values in a measurable space $\bb X.$ Let $f$ be a real function on $\bb X.$ 
Suppose that the random walk $S_n = f(X_1)+\cdots + f(X_n),$ $n\geq 1$ has zero drift. 
For a starting point $y\in \bb R$ denote by $\tau_y$ the time at which $\left(y+S_n\right)_{n\geq 1}$ first passes into the interval $(-\infty,0].$   
We are interested in determining the asymptotic behaviour of the probability $\bb P (\tau_y>n)$ and of the 
conditional law of $ \frac{y+S_n}{\sqrt{n}}$ given the event $\left\{ \tau_y >n \right\}=\left\{ S_1>0,\dots,S_n>0 \right\}$
as $n\to +\infty.$

The case when $f$ is the identity function and $\left(X_n\right)_{n\geq 1}$ 
are i.i.d.\  in $\bb X = \bb R$ has been extensively studied in the literature.  
We refer to 
Spitzer \cite{spitzer_principles_2013}, 
Iglehart \cite{iglehart_functional_1974,iglehart_random_1974},   
Bolthausen \cite{bolthausen_functional_1976},
Doney \cite{doney_asymptotic_1989}, 
Bertoin and Doney \cite{bertoin_conditioning_1994},
Borovkov \cite{borovkov_asymptotic_2004,borovkov_asymptotic_2004-1},
Caravenna \cite{caravenna_local_2005},
Vatutin and Wachtel \cite{vatutin_local_2008} to cite only a few. 
Recent progress has been made for random walks with independent increments in $\bb X = \bb R^d$, 
see 
Eichelbacher and K\"onig \cite{eichelsbacher_ordered_2008},  
Denisov and Wachtel \cite{denisov_random_2015, denisov_conditional_2010} and
Duraj \cite{duraj_random_2014}. 
However, to the best of our knowledge, the case of the Markov chains has been treated only in some special cases.
Upper and lower bounds for $\bb P (\tau_y>n)$ have been obtained in Varapoulos \cite{varopoulos_potential_1999}, \cite{varopoulos_potential_2000} for Markov chains with bounded jumps and in Dembo, Ding and Gao \cite{dembo_persistence_2013} for integrated random walks based on independent increments. 
An approximation of $\bb P\left( \tau_y >n \right)$ by the survival probability of the Brownian motion for Markov walk under moment conditions is given in Varopoulos \cite{varopoulos_potential_2001}.
Exact asymptotics are obtained in Presman \cite{presman_1967,presman_1969} in the case of sums of random variables defined on a finite Markov chain under the additional assumption that the distributions have an absolute continuous component and in Denisov and Wachtel  \cite{denisov_exit_2015} for integrated random walks. 
The case of products of i.i.d.\  random matrices 
which reduces to the study of a particular Markov chain defined on a merely compact state space was 
considered in  \cite{grama_conditioned_2016}
and the case of affine walks in $\bb R$ has been treated in  \cite{GLLP_affine_2016}.
 
In this paper we give the asymptotics of the probability of the exit time $\tau_y$ and of the law of $y+S_n$ conditioned to stay positive
for a Markov chain under the assumption that its transition operator has a spectral gap. In particular our results cover the case of Markov chains with compact state spaces, the affine random walks in $\bb R$ (see \cite{GLLP_affine_2016}) and $\bb R^d$ (see Gao, Guivarc'h and Le Page \cite{gao_stable_2015}). Our results apply also to the case of sums of i.i.d.\  random variables.

To present briefly the main results of the paper denote by $\bb P_x$ and $\bb E_x$ the probability 
and the corresponding expectation generated by the trajectories of a Markov chain 
$\left(X_n\right)_{n\geq 1}$ with the initial state $X_0=x \in \bb X.$ 
Let $\mathbf Q$ be the transition operator of the Markov chain $\left(X_n, y+S_n\right)_{n\geq 1}$ and let $\mathbf Q_{+}$ be the restriction of $\mathbf Q$ on $\bb X \times \bb R^{+}_{*}$. We show that under appropriate assumptions, there exists a $\mathbf Q_+$-harmonic function, say $V$, which is positive on a domain $\mathscr D_+(V) \subseteq \bb X \times \bb R$ and $0$ on its complement such that, for any $(x,y) \in \mathscr D_+(V)$,
\begin{equation}
	\label{intro-001}
	\bb P_x \left( \tau_y > n \right) \underset{n\to +\infty}{\sim} \frac{2V(x,y)}{\sqrt{2\pi n}\sigma}
\end{equation}
and  
\[
\bb P_x \left( \sachant{\frac{y+S_n}{ \sigma \sqrt{n}} \leq t }{\tau_y >n} \right) \underset{n\to+\infty}{\longrightarrow} \mathbf \Phi^+(t),
\]
where $\mathbf \Phi^+(t) = 1-\e^{-\frac{t^2}{2}}$ is the Rayleigh distribution function and $\sigma$ is a positive real.
On the complement of $\mathscr D_+(V)$ we find that
\begin{equation}
	\bb P_x \left( \tau_y > n \right) \leq c_x e^{-c n},
\label{intro-002}
\end{equation}
where $c_x$ depends on $x$ and $c$ is a constant. Moreover, we obtain uniform versions of \eqref{intro-001} and \eqref{intro-002}. 
We give an example of a Markov chain for which the bound  \eqref{intro-002} is attained. 
This is different from the case of sums of i.i.d.\ random variables where on the complement of $\mathscr D_+(V)$ it holds
$\bb P_x \left( \tau_y > n \right)=0.$ 
For details we refer to Section \ref{sec-not-res}.

The asymptotics of the probability of the exit time  $\bb P (\tau_y>n)$ for walks in $\bb R$  are usually obtained by the Wiener-Hopf factorization 
(see Feller \cite{feller_introduction_1971}). 
Eichelbacher and K\"onig \cite{eichelsbacher_ordered_2008} and
Denisov and Wachtel \cite{denisov_random_2015}
have developed an alternative approach for obtaining the asymptotics of $\bb P (\tau_y>n)$ 
for random walks with independent increments in $\bb R^d.$ 
To study the case of Markov chains we mainly rely upon the developments made in \cite{eichelsbacher_ordered_2008}, \cite{denisov_random_2015} 
for the independent case 
and in the work of authors \cite{grama_conditioned_2016} and \cite{GLLP_affine_2016} 
for two particular cases of Markov chains.
We also make use  of the strong approximation result for Markov chains 
obtained in \cite{ion_grama_rate_2014} with explicit constants
depending on the properties of the transition operator of the Markov chain and  on its initial state.

To carry out the approach developed in \cite{denisov_random_2015} from the independent case 
to the case of a Markov chain it is necessary to refine it substantially by taking into account the 
dependence on its initial state $x \in \bb X$, which is one of the major difficulties
of this paper.
We assume that the transition operator of the Markov chain satisfies a spectral gap condition on some associated Banach space, 
which implies that  for any function $g$ in this space we have, for any $x \in \bb X,$
\begin{equation}
	\label{ideedecexp}
	\bb E_x \left( \abs{ g \left( X_n \right)} \right) = c + e^{-cn} N(x),
\end{equation}
where $N(x)$ is a function carrying the dependence on the initial state $x$
(see Section \ref{sec-not-res} for details).
 The relation \eqref{ideedecexp} ensures that the dependence on the initial state decreases exponentially fast. In the present paper it is supposed essentially that 
the property \eqref{ideedecexp} is extended to other functions than those in the Banach space
(see Hypothesis \ref{Momdec} in the text section).
In Section \ref{Applications} we show that the conditions we impose are verified 
for a stochastic recursion in $\bb R^d$ and
for Markov chains with compact state space.

The paper is organized as follows.
In Section \ref{Mart Approx} we approximate the walk by an appropriate martingale and state some properties on this martingale and on associated exit times. In Section \ref{CMWI} we prove that the martingale killed at a special exit time has a uniformly bounded expectation. This result implies in particular that the sequence $( \bb E_x ((y+S_n) \mathbbm 1_{\{ \tau_y >n \}} ) )_{n\geq 0}$ is bounded. Using the results of Sections \ref{Mart Approx} and \ref{CMWI}, we establish in Section \ref{Sec Harm Func} the existence of a $\mathbf Q_+$-harmonic function and prove in Section \ref{PosHaFun} that this function is non identically zero. In Section \ref{AsExTi}, we determine the asymptotic of the probability $\bb P_x ( \tau_y > n )$ and in Section \ref{AsCondMarkWalk} we prove that the conditional law of $(y+S_n)/(\sigma\sqrt{n})$ given the event $\{\tau_y > n\}$ converges to the Rayleigh distribution.
 
We end this section by agreeing on some basic notations. For the rest of the paper the symbol $c$ denotes a positive constant depending on the all previously introduced constants. Sometimes, to stress the dependence of the constants on some parameters 
 $\alpha,\beta,\dots$ we shall use the notations $ c_{\alpha}, c_{\alpha,\beta},\dots$. All these constants are likely to change their values every occurrence. For any real numbers $u$ and $v$, denote by $u \wedge v=\min(u,v)$ the minimum between $u$ and $v$. The indicator of an event $A$ is denoted by $\mathbbm 1_A$. For any bounded measurable function $f$ on $\bb X$, random variable $X$ in $\bb X$ and event $A$, the integral $\int_{\bb X}  f(x) \bb P (X \in \dd x, A)$ means the expectation $\bb E\left( f(X); A\right)=\bb E \left(f(X) \mathbbm 1_A\right)$.

\section{Notations and results}
\label{sec-not-res}

On the probability space $\left(\Omega, \mathscr F, \bb P\right)$ consider a Markov chain $(X_n)_{n\geq 0}$ taking values in the measurable state space $(\bb X, \mathscr{X})$. For any given $x \in \bb X,$ denote by $\mathbf P (x,\cdot)$ its transition probability, to which we associate the transition operator
\[
\mathbf Pg (x) = \int_{\bb X} g(x') \mathbf P (x,\dd x'),
\]
for any of complex bounded measurable function  $g$  on $\bb X$. 
Denote by $\bb P_x$ and $\bb E_x$ the probability and the corresponding expectation generated by the finite dimensional distributions of the Markov chain $(X_n)_{n\geq 0}$ starting at $X_0=x.$ 
We remark that 
$\mathbf{P} g \left( x\right) = \bb E_x \left(g\left( X_1 \right)\right)$ and $\mathbf{P}^n g \left( x\right) = \bb E_x \left(g\left( X_n \right)\right)$  
for any $g$ complex bounded measurable, $x \in \bb X$ and $n\geq 1.$

Let $f$ be a real valued function defined on the state space $\bb X$ and let $\mathscr{B}$ be a Banach space of complex valued functions on $\bb X$ endowed with the norm $\norm{\cdot}_{\mathscr{B}}$. Let $\norm{\cdot}_{\mathscr{B} \to \mathscr{B}}$ be the operator norm on $\mathscr{B}$ and let $\mathscr{B}'=\mathscr{L}\left( \mathscr{B},\bb C \right)$ be the topological dual of $\mathscr{B}$ endowed with the norm $\norm{\varphi}_{\mathscr{B}'}=\sup_{h \in \mathscr{B}} \frac{\abs{\varphi(h)}}{\norm{h}_{\mathscr{B}}}$, for any $\varphi \in\mathscr{B}'$.  Denote by $e$ the unit function of $\bb X$: $e(x) = 1$, for any $x\in \bb X$ and by $\bs \delta_x$ the Dirac measure at $x\in \bb X$: $\bs \delta_x(g) = g(x)$, for any $g \in \mathscr{B}$.

Following \cite{ion_grama_rate_2014}, we assume the following hypotheses.

\begin{hypothesis}[Banach space]\ 
\label{BASP}
\begin{enumerate}[ref=\arabic*, leftmargin=*, label=\arabic*.]
	\item \label{BASP001} The unit function $e$ belongs to $\mathscr{B}$.
	\item \label{BASP002} For any $x\in \bb X$, the Dirac measure $\bs \delta_x$ belongs to $\mathscr{B}'$.
	\item \label{BASP003} The Banach space $\mathscr{B}$ is included in $L^1\left( \mathbf{P}(x,\cdot) \right)$, for any $x\in \bb X$.
	\item \label{BASP004} There exists a constant $\ee_0 \in (0,1)$ such that for any $g\in \mathscr{B}$, the function $\e^{itf}g$ is in $\mathscr{B}$ for any $t$ satisfying $\abs{t} \leq \ee_0$.
\end{enumerate}
\end{hypothesis}

Under the point \ref{BASP003} of \ref{BASP}, $\mathbf{P}g(x)$ exists for any $g\in \mathscr{B}$ and $x\in \bb X$.

\begin{hypothesis}[Spectral gap]\ 
\label{SPGA}
\begin{enumerate}[ref=\arabic*, leftmargin=*, label=\arabic*.]
	\item \label{SPGA001} The map $g \mapsto \mathbf{P}g$ is a bounded operator on $\mathscr{B}$.
	\item \label{SPGA002} There exist constants $C_Q >0$ and $\kappa \in (0,1)$ such that
	\[
	\mathbf{P} = \Pi+Q,
	\]
	where $\Pi$ is a one-dimensional projector and $Q$ is an operator on $\mathscr{B}$ satisfying $\Pi Q=Q\Pi=0$ and for any $n\geq 1$,
	\[
	\norm{Q^n}_{\mathscr{B}\to\mathscr{B}} \leq C_Q \kappa^n.
	\]
\end{enumerate}
\end{hypothesis}

Since $\Pi$ is a one-dimensional projector and $e$ is an eigenvector of $\mathbf{P}$, there exists a linear form $\bs \nu \in \mathscr{B}'$, such that for any $g \in \mathscr{B}$,
\begin{equation}
\Pi g = \bs \nu(g) e.
\label{linear form}
\end{equation}
When Hypotheses \ref{BASP} and \ref{SPGA} hold, we set $\mathbf{P}_tg := \mathbf{P}\left( \e^{itf}g \right)$ for any $g \in \mathscr{B}$ and $t \in [-\ee_0,\ee_0]$. In particular $\mathbf{P}_0=\mathbf{P}$.

\begin{hypothesis}[Perturbed transition operator]\ 
\label{PETO}
\begin{enumerate}[ref=\arabic*, leftmargin=*, label= \arabic*.]
	\item \label{PETO001} For any $\abs{t} \leq \ee_0$ the map $g \mapsto \mathbf{P}_tg$ is a bounded operator on $\mathscr{B}$.
	\item \label{PETO002} There exists a constant $C_{\mathbf{P}} >0$ such that, for any $n\geq 1$ and $\abs{t} \leq \ee_0$,
	\[
	\norm{\mathbf{P}_t^n}_{\mathscr{B} \to \mathscr{B}} \leq C_{\mathbf{P}}.
	\]
\end{enumerate}
\end{hypothesis}

To control the dependence on $x$ of the Markov chain $\left( X_n \right)_{n\geq0}$, the following hypothesis is a little more demanding than the one of \cite{ion_grama_rate_2014}.

\begin{hypothesis}[Local integrability]\ 
\label{Momdec}
The Banach space $\mathscr{B}$ contains a sequence of real non-negative functions $N, N_1, N_2, \dots $  such that:
\begin{enumerate}[ref=\arabic*, leftmargin=*, label=\arabic*.]
\item \label{Momdec001} There exist $\alpha > 2$ and $\gamma > 0$ such that, for any $x\in \bb X$,
	\[
	\max \left\{ \abs{f(x)}^{1+\gamma}, \norm{\bs \delta_x}_{\mathscr{B}'}, \bb E_x^{1/\alpha} \left( N\left( X_n \right)^{\alpha} \right) \right\} \leq c \left( 1+N(x) \right)
	\]
and
	\[
	N(x) \mathbbm 1_{\{ N(x) > l\}} \leq N_l(x), \quad \text{for any} \quad l\geq 1.
	\]
\item \label{Momdec002} There exists $c > 0$ such that, for any $l\geq 1$,
	\[
	\norm{N_l}_{\mathscr{B}} \leq  c.
	\]
\item \label{Momdec003} There exists $\beta>0$ and $c > 0$ such that, for any $l\geq 1$,
	\[
	 \abs{\bs \nu \left( N_l \right)} \leq \frac{c}{l^{1+\beta}}.
	\]
\end{enumerate}
\end{hypothesis}

Under Hypotheses \ref{BASP}, \ref{SPGA} and \ref{Momdec}, we have, for any $x\in \bb X$ and $n\geq 0$,
\begin{align}
	\bb E_x \left( N(X_n) \right) &= \bs \nu(N) + Q^n N(x) \nonumber\\
	&\leq \abs{\bs \nu(N)} + \norm{Q^n}_{\mathscr{B}\to\mathscr{B}} \norm{N}_{\mathscr{B}} \norm{\bs \delta_x}_{\mathscr{B}'} \nonumber\\
	 &\leq c + \e^{-cn} N(x)
	\label{decexpN}
\end{align}
and, in the same way, for any $x\in \bb X$, $l\geq 1$ and $n \geq 0$,
\begin{equation}
	\label{decexpNl}
	\bb E_x \left( N_l \left( X_n \right) \right) \leq \frac{c}{l^{1+\beta}} + \e^{-c n} \left( 1+N(x) \right).
\end{equation}

Remark that in Hypothesis \ref{Momdec} the function $f$ need not belong to the Banach space $\mathscr B$.

A consequence of Hypotheses \ref{BASP}-\ref{Momdec} is the following proposition (cf.\ \cite{ion_grama_rate_2014}). For any $x \in \bb X,$ set $\mu_\alpha(x)=\sup_{n\geq 1}\bb E_x^{1/\alpha} \left( \abs{f\left( X_n \right)}^{\alpha} \right)$.

\begin{proposition}
\label{MomAs}
Assume that the Markov chain $\left(X_n\right)_{n\geq 0}$ and the function $f$ satisfy Hypotheses \ref{BASP}-\ref{Momdec}.
\begin{enumerate}[ref=\arabic*, leftmargin=*, label=\arabic*.]
	\item \label{MomAs001} There exists a constant $\mu$ such that, for any $x\in \bb X$ and $n \geq 0$,
	\[
	\abs{ \bb E_x \left( f(X_n) \right) - \mu } \leq \e^{-c  n} \left( 1+\mu_{\alpha}(x)^{1+\gamma}  + \norm{\bs \delta_x}_{\mathscr{B}'} \right).
	\]
	\item \label{MomAs002} There exists a constant $\sigma \geq 0$ such that, for any $x\in \bb X$ and $n \geq 1$,
	\[
	\underset{m\geq 0}{\sup} \abs{ \Var_{x} \left( \sum_{k={m+1}}^{m+n} f(X_k) \right) - n \sigma^2 } \leq c \left( 1+1+\mu_{\alpha}(x)^{2+2\gamma}  + \norm{\bs \delta_x}_{\mathscr{B}'} \right),
	\]
	where $\Var_x$ is the variance under $\bb P_x$.
\end{enumerate}
\end{proposition}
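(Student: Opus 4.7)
The plan is to apply the Nagaev--Guivarc'h perturbation method to the family of Fourier operators $\mathbf{P}_t g(x) = \mathbf{P}(e^{itf} g)(x)$, defined on $\mathscr{B}$ for $\abs{t} \leq \ee_0$ by Hypotheses \ref{BASP} and \ref{PETO}. Since $\mathbf{P}_0 = \mathbf{P}$ has the spectral gap $\mathbf{P} = \Pi + Q$ of Hypothesis \ref{SPGA} with $1$ a simple isolated eigenvalue, analytic perturbation theory produces, for $\abs{t}$ small enough, a spectral decomposition
\[
\mathbf{P}_t^n = \lambda(t)^n \Pi_t + Q_t^n,
\]
with $\lambda(0) = 1$, $\Pi_0 = \Pi$, $\Pi_t Q_t = Q_t \Pi_t = 0$, and $\norm{Q_t^n}_{\mathscr{B} \to \mathscr{B}} \leq c \rho^n$ for some $\rho < 1$ uniform in $t$. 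The central step is then the Taylor expansion of $\lambda$ at $0$: exploiting $e^{itf} = 1 + itf - \tfrac{1}{2} t^2 f^2 + O(\abs{tf}^{2+\gamma'})$ together with the growth bound $\abs{f}^{1+\gamma} \leq c(1+N)$ from Hypothesis \ref{Momdec}, one obtains
\[
\lambda(t) = 1 + i \mu t - \tfrac{1}{2}(\sigma^2 + \mu^2) t^2 + o(t^2),
\]
for real $\mu$ and $\sigma \geq 0$; this relies on a truncation argument via $N = N \mathbbm{1}_{N > l} + N \mathbbm{1}_{N \leq l}$ with $N \mathbbm{1}_{N > l} \leq N_l$, the bounds $\norm{N_l}_{\mathscr{B}} \leq c$ and $\abs{\bs \nu(N_l)} \leq c/l^{1+\beta}$ compensating for the fact that $f$ itself is not in $\mathscr{B}$.

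For Part (1), I would start from $\mathbf{P}^n(e^{itf})(x) = \bb E_x(e^{itf(X_n)})$, apply $\mathbf{P}^n = \Pi + Q^n$ on $\mathscr{B}$, and differentiate at $t = 0$. The derivative of $\Pi(e^{itf})$ yields the constant $\mu$ identified above, and the exponential decay of $\norm{Q^n}_{\mathscr{B} \to \mathscr{B}}$ combined with the first-order Taylor remainder gives
\[
\abs{\bb E_x(f(X_n)) - \mu} \leq e^{-cn} \bigl(1 + \mu_\alpha(x)^{1+\gamma} + \norm{\bs \delta_x}_{\mathscr{B}'}\bigr),
\]
where $\mu_\alpha(x)^{1+\gamma}$ comes from controlling the Taylor remainder in $L^\alpha(\bb P_x)$ and $\norm{\bs \delta_x}_{\mathscr{B}'}$ from evaluating the $\mathscr{B}$-valued operator at $x$. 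For Part (2), one differentiates the full iterate $\mathbf{P}_t^n e(x) = \bb E_x(e^{itS_n})$ twice at $t = 0$: the term $\lambda(t)^n \Pi_t e(x)$ contributes $n \sigma^2$ to $\Var_x(S_n)$ through $(\lambda^n)''(0)$, while $Q_t^n e(x)$ contributes only $O(1)$. Shift-invariance in $m$ then follows by conditioning at time $m$ and using Part (1) to absorb the transient dependence on $X_m$ into the constant.

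The main obstacle is coupling the spectral perturbation, which naturally produces $\mathscr{B}$-norm bounds, with the pointwise in $x$ dependence demanded by the statement. Concretely, the Taylor remainders for $\lambda(t)$ and $\Pi_t$ have to be tracked with explicit dependence on $\norm{\bs \delta_x}_{\mathscr{B}'}$ and on moments of $f$, and the truncation level $l$ must be optimized as a function of $n$ so that the polynomial tail $\abs{\bs \nu(N_l)} \leq c l^{-(1+\beta)}$ combines with the geometric contraction $\kappa^n$ of $Q^n$ to yield the single exponential factor $e^{-cn}$ in the final bound.
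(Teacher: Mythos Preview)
The paper does not actually prove Proposition~\ref{MomAs}: it is stated as a consequence of Hypotheses \ref{BASP}--\ref{Momdec} with a reference to \cite{ion_grama_rate_2014}, and no argument is given in the body of the paper. Your proposal via the Nagaev--Guivarc'h perturbation scheme is precisely the method used in that reference, so in spirit you are reproducing the intended proof rather than offering an alternative.

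That said, your sketch is somewhat loose on the one point that genuinely matters here, namely that $f$ is \emph{not} assumed to lie in $\mathscr{B}$. Writing ``differentiate $\mathbf{P}^n(e^{itf})(x)$ at $t=0$'' and ``the derivative of $\Pi(e^{itf})$ yields the constant $\mu$'' glosses over the fact that the formal derivative $i\mathbf{P}^n(f e^{itf})$ need not make sense as an element of $\mathscr{B}$. You do flag the truncation $N = N\mathbbm 1_{\{N\leq l\}} + N\mathbbm 1_{\{N>l\}}$ with $N\mathbbm 1_{\{N>l\}}\leq N_l$ as the remedy, and that is exactly right: one replaces $f$ by a bounded truncation $f_l$ (controlled through $N$ and $N_l$), applies the spectral decomposition to $e^{itf_l}$ or to suitable difference quotients in $\mathscr{B}$, and then lets $l\to\infty$ using $\norm{N_l}_{\mathscr{B}}\leq c$ and $\abs{\bs\nu(N_l)}\leq c l^{-(1+\beta)}$. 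But this step is where all the work lies, and your proposal treats it as a technicality to be optimized at the end rather than as the heart of the argument. In a full write-up you would need to make this truncation explicit from the outset, not as an afterthought.
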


We do not assume the existence of the stationary probability measure. If a stationary probability measure $\bs \nu'$ satisfying $\bs \nu' \left( N^2 \right) < +\infty$ exists then, under Hypotheses \ref{BASP}-\ref{Momdec}, we have that $\bs \nu' = \bs \nu$ is necessarily unique and it holds (see \cite{ion_grama_rate_2014})
\begin{equation}
	\label{mu-sigma001}
	\bs \nu(f) = \mu \quad\text{and}\quad \sigma^2 = \int_{\bb R^d} f^2(x) \bs \nu(\dd x) + 2\sum_{n=1}^{+\infty} \int_{\bb R^d} f(x) \mathbf{P}^n f(x) \bs \nu(\dd x).
\end{equation}

\begin{hypothesis}[Centring and non-degeneracy]\ 
\label{CECO}
We suppose that the constants $\mu$ and $\sigma$ defined in Proposition \ref{MomAs} satisfy $\mu=0$ and $\sigma > 0$.
\end{hypothesis}

Using this assumption and the point \ref{Momdec001} of \ref{Momdec} we have $\mu_{\alpha}(x) \leq c\left(1+N(x)^{\frac{1}{1+\gamma}} \right)$ and therefore, for any $x\in \bb X$ and $n \geq 0$,
\begin{equation}
	\label{bound_EfXn}
	\abs{ \bb E_x \left( f(X_n) \right) } \leq \e^{-c  n} \left( 1+ N(x) \right).
\end{equation}
	
Let $y \in \bb R$ be a starting point and $(y+S_n)_{n\geq 0}$ be the Markov walk defined by $S_n := \sum_{k=1}^n f\left( X_k \right)$, $n\geq 1$ with $S_0=0$. Denote by $\tau_y$ the first moment when $y+S_n$ becomes non-positive:
\[
\tau_y := \inf \left\{ k\geq 1, \, y+S_k \leq 0 \right\}.
\]
It is shown in Corollary \ref{Exitfinit} that for any $y \in \bb R$ and $x\in \bb X$, the stopping time $\tau_y$ is $\bb P_x$-a.s.\ finite. The asymptotic behaviour of the probability $\bb P_x \left( \tau_y >n  \right)$ is determined by the harmonic function which we proceed to introduce. For any $(x,y) \in \bb X \times \bb R$, denote by $\mathbf Q(x,y,\cdot)$ the transition probability of the Markov chain $(X_n,y+S_n)_{n\geq 0}$. The restriction of the measure $\mathbf Q(x,y,\cdot)$  on $\bb X \times \mathbb R^*_+$ is defined by
\[
\mathbf{Q}_+(x,y,B) = \mathbf{Q}(x,y,B)
\]
for any measurable set  $B$ on $\bb X \times \bb R_+^*$ and  for any $(x,y) \in \bb X \times \bb R$. For any bounded measurable function $\varphi: \bb X \times \bb R \to \bb R$ set $\mathbf{Q}_+\varphi (x,y)=\int_{\bb X \times \bb R_+^*} \varphi(x',y') \mathbf{Q}_+(x,y,\dd x' \times \dd y')$. A function $V: \bb X \times \bb R \to \bb R$ is said to be $\mathbf{Q}_+$-harmonic if
\[
\mathbf{Q}_+V (x,y)  = V(x,y), \qquad \text{for any } (x,y) \in \bb X \times \bb R.
\]
In the sequel, we deal only with non-negative harmonic functions. 
For any non-negative function $V$, denote by $\mathscr{D}_+(V)$ the set where $V$ is positive,
\[
\mathscr{D}_+(V) := \{ (x,y) \in \bb X \times \bb R, V(x,y)> 0 \}
\]
and by $\mathscr{D}_+(V)^c$ its complement, \textit{i.e.}\ the set where $V$ is $0$. For any $\gamma >0$, introduce the following set
\[
\mathscr{D}_{\gamma} := \left\{ (x,y) \in \bb X \times \bb R, \; \exists n_0 \geq 1,\; \bb P_x \left( y+S_{n_0} > \gamma \left( 1+N \left( X_{n_0} \right) \right) \,,\, \tau_y > n_0 \right) > 0 \right\}.
\]

The following assertion proves the existence of a non-identically zero harmonic function.
\begin{theorem}
\label{thonV}
Assume Hypotheses \ref{BASP}-\ref{CECO}.
\begin{enumerate}[ref=\arabic*, leftmargin=*, label=\arabic*.]
	\item \label{thonV001} For any $x\in \bb X$, $y\in \bb R$, the sequence $\left( \bb E_x \left( y+S_n \,;\, \tau_y > n \right) \right)_{n\geq 0}$ converges to a real number $V(x,y)$:
	\[
	\bb E_x \left( y+S_n \,;\, \tau_y > n \right) \underset{n\to +\infty}{\longrightarrow} V(x,y). 
	\]
	\item \label{thonV002} The function $V$: $\bb X \times \bb R \to \bb R$, defined in the previous point is $\mathbf{Q}_+$-harmonic, \textit{i.e.}\  for any $x\in \bb X$, $y\in \bb R,$
	\[
	\mathbf{Q}_+ V(x,y) = \bb E_x \left( V\left( X_1, y+S_1 \right) \,;\, \tau_y > 1 \right) = V(x,y).
	\]
	\item \label{thonV003} For any $x\in \bb X$, the function $V(x,\cdot)$ is  non-negative and non-decreasing on $\bb R$ and
	\[
	\underset{y\to+\infty}{\lim} \frac{V(x,y)}{y} = 1.
	\]
	Moreover, for any $\delta > 0$, $x \in \bb X$ and $y \in \bb R$,
	\[
	\left( 1-\delta \right) \max(y,0) - c_{\delta} \left( 1+N(x) \right) \leq V(x,y) \leq \left( 1+\delta \right) \max(y,0) + c_{\delta} \left( 1+N(x) \right).
	\]
	\item \label{thonV004} There exists $\gamma_0>0$ such that, for any $\gamma \geq \gamma_0$, 
	\[
	\mathscr{D}_+(V) = \mathscr{D}_{\gamma}.
	\]
\end{enumerate}
\end{theorem}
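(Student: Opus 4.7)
The plan is to follow the Denisov--Wachtel strategy, adapted to the Markov setting using the preliminary material from Sections \ref{Mart Approx}--\ref{PosHaFun}. The key auxiliary object is a martingale approximation $M_n = y + S_n + \theta(X_n)$, where $\theta$ solves the Poisson equation $(\mathbf I - \mathbf P)\theta = f$ (well defined since $\mu = 0$ and since the spectral gap decomposition $\mathbf P = \Pi + Q$ provides $\theta = \sum_{k \geq 0} Q^k f$, with $|\theta(x)| \leq c(1+N(x))$ thanks to \eqref{bound_EfXn} and Hypothesis \ref{Momdec}). On $\{\tau_y > n\}$ the walk is positive, and $|M_n - (y+S_n)|$ is uniformly dominated by $c(1+N(X_n))$, so the exponential contraction $\mathbb E_x(N(X_n)) \leq c + e^{-cn}N(x)$ from \eqref{decexpN} controls the error.

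For part \ref{thonV001}, I would prove that $u_n(x,y) := \mathbb E_x(y+S_n;\tau_y > n)$ is Cauchy. Writing $u_n(x,y) = \mathbb E_x(M_n;\tau_y > n) - \mathbb E_x(\theta(X_n);\tau_y > n) + \theta(x)\mathbb P_x(\tau_y > n)$, I would use the uniform integrability of the stopped martingale $M_{n\wedge \tau_y}$ (the content of Section \ref{CMWI}) together with the fact that $\mathbb E_x(N(X_n);\tau_y > n)$ tends to a finite limit; the Cauchy property then follows by comparing two indices $n < m$ via a decomposition at an intermediate stopping time $T_n$ of the type used in \cite{denisov_random_2015, grama_conditioned_2016}, all error terms being controlled by \eqref{decexpN} and \eqref{decexpNl}.

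For part \ref{thonV002}, the Markov property gives
\[
u_{n+1}(x,y) = \mathbb E_x\bigl(u_n(X_1, y+S_1);\, y+S_1 > 0\bigr),
\]
and it suffices to pass to the limit inside the expectation. Dominated convergence applies because part \ref{thonV003} supplies the bound $|u_n(x',y')| \leq c(1+(y')^+ + N(x'))$ uniformly in $n$, and $\mathbb E_x(y+S_1)^+ + \mathbb E_x(N(X_1))$ is finite by Hypothesis \ref{Momdec}. For part \ref{thonV003}, non-negativity is immediate since $y+S_n > 0$ on $\{\tau_y > n\}$; monotonicity follows from the inclusion $\{\tau_{y_1} > n\} \subseteq \{\tau_{y_2} > n\}$ for $y_1 \leq y_2$, which yields $u_n(x,y_2) \geq u_n(x,y_1)$. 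The two-sided sandwich
\[
(1-\delta)\max(y,0) - c_\delta(1+N(x)) \leq V(x,y) \leq (1+\delta)\max(y,0) + c_\delta(1+N(x))
\]
is the main technical estimate: the upper bound comes from the trivial $u_n(x,y) \leq y + \mathbb E_x(S_n;\tau_y > n)$ combined with the martingale approximation, while the lower bound requires the strong Gaussian approximation of \cite{ion_grama_rate_2014} applied to $S_n$ in order to show that, for $y$ large relative to $N(x)$, the exit time $\tau_y$ is rare on the typical trajectories so that $u_n(x,y)$ is close to $y$. The asymptotic $V(x,y)/y \to 1$ then follows by letting $y \to +\infty$ with $x$ fixed.

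Part \ref{thonV004} splits into two inclusions. For $\mathscr D_\gamma \subseteq \mathscr D_+(V)$, if $(x,y) \in \mathscr D_\gamma$ witnessed by $n_0$, iterate the harmonicity of part \ref{thonV002} to get
\[
V(x,y) \geq \mathbb E_x\bigl(V(X_{n_0}, y+S_{n_0});\, y+S_{n_0} > \gamma(1+N(X_{n_0})),\, \tau_y > n_0\bigr),
\]
and on the event in question the lower bound from part \ref{thonV003} with $\delta = 1/2$ gives $V(X_{n_0}, y+S_{n_0}) \geq \tfrac{1}{2}\gamma(1+N(X_{n_0})) - c_{1/2}(1+N(X_{n_0}))$, which is strictly positive once $\gamma \geq \gamma_0 := 2c_{1/2} + 1$. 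The reverse inclusion $\mathscr D_+(V) \subseteq \mathscr D_\gamma$ is more delicate: assuming that $(x,y) \notin \mathscr D_\gamma$ for some large $\gamma$, so that $\{y+S_n \leq \gamma(1+N(X_n))\}$ holds $\bb P_x$-a.s.\ on $\{\tau_y > n\}$ for every $n$, one uses harmonicity together with the upper bound from part \ref{thonV003} to sandwich $V(x,y)$ by a quantity going to $0$ as $n \to +\infty$, thanks to the bound on $\mathbb E_x(N(X_n)\mathbbm 1_{\tau_y > n})$ and the fact that $\mathbb P_x(\tau_y > n) \to 0$ (Corollary \ref{Exitfinit}), forcing $V(x,y) = 0$.

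The main obstacle will be the lower bound in part \ref{thonV003}, which is the one place where the $x$-dependence cannot be absorbed trivially and where the strong approximation from \cite{ion_grama_rate_2014}, together with the full strength of Hypothesis \ref{Momdec} (in particular the tail control via $N_l$ and the $\bs\nu(N_l)$ decay), is genuinely needed to replace the independence arguments from \cite{denisov_random_2015}.
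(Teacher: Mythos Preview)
Your overall plan follows the Denisov--Wachtel template, and parts \ref{thonV002} and \ref{thonV004} are essentially correct (your argument for $\mathscr D_+(V)\subseteq\mathscr D_\gamma$ via harmonicity, the upper bound on $V$, and Cauchy--Schwarz with $\bb P_x(\tau_y>n)\to 0$ is in fact a bit simpler than the paper's route through the exponential decay of $\zeta_\gamma$; the paper needs that exponential decay anyway for Theorem~\ref{thontau}, point~\ref{thontau002}).

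The genuine gap is in part \ref{thonV003}, and it propagates backward into part \ref{thonV001}. You call the upper bound ``trivial'' and write $u_n(x,y)\leq y+\bb E_x(S_n;\tau_y>n)$ --- but that is just the definition of $u_n$, not a bound. The difficulty is precisely that $\{\tau_y>n\}$ is adapted to $(S_k)$ while the martingale is $(M_k)$, so there is no optional-stopping identity linking $\bb E_x(z+M_n;\tau_y>n)$ to $z$. The paper's solution is to introduce the auxiliary stopping time $\hat T_z=\inf\{k\geq\tau_y:\,z+M_k\leq 0\}$: the process $(z+M_n)\mathbbm 1_{\{\hat T_z>n\}}$ \emph{is} a submartingale (Lemma~\ref{Mnsubmartingale}), one has $\bb E_x(z+M_n;\tau_y>n)\leq\bb E_x(z+M_n;\hat T_z>n)$, and a recursive scheme based on the overshoot time $\nu_n$ (Lemmas \ref{firstupperbound}--\ref{intofthemartcond}) gives the uniform bound $\bb E_x(z+M_n;\hat T_z>n)\leq(1+c_\ee n_f^{-\ee})(\max(z,0)+cN(x))+c_\ee\sqrt{n_f}$. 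This machinery is the technical heart of the paper; without it neither the upper bound in part~\ref{thonV003} nor the integrability of $M_{\tau_y}$ needed for part~\ref{thonV001} goes through.

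For the lower bound you rightly flag the difficulty, but the mechanism is not ``$\tau_y$ is rare for large $y$'' (which would only control $u_n$ for fixed $n$). The paper shows instead that $V(x,y)=\lim_n\bb E_x(\hat W(X_n,z+M_n);\tau_y>n)$ (Lemma~\ref{minpos001}), where $\hat W$ is the limit of the $\hat T_z$-stopped expectations, and then runs a second recursive descent (Lemma~\ref{mWtau2}) on this supermartingale to extract the linear growth $(1-\delta)y-c_\delta(1+N(x))$. The strong approximation enters only indirectly, through the Berry--Esseen-type Corollary~\ref{BerEss} used in the concentration Lemma~\ref{concentnu}. In short, both halves of the sandwich in part~\ref{thonV003} hinge on the $\hat T_z$ construction, which your proposal does not mention.
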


The following result gives the asymptotic of the exit probability for fixed $(x,y) \in \bb X \times \bb R$.

\begin{theorem}
\label{thontau}
Assume Hypotheses \ref{BASP}-\ref{CECO}.
\begin{enumerate}[ref=\arabic*, leftmargin=*, label=\arabic*.]
	\item \label{thontau001} For any $(x,y) \in \mathscr{D}_+(V)$,
	\[
	\bb P_x \left( \tau_y > n \right) \underset{n\to +\infty}{\sim} \frac{2V(x,y)}{\sqrt{2\pi n} \sigma}.
	\]
	\item \label{thontau002} For any $(x,y) \in \mathscr{D}_+(V)^c$ and $n\geq 1$,
	\[
	\bb P_x \left( \tau_y > n \right) \leq e^{-c n} \left( 1+N(x) \right).
	\]
\end{enumerate}
\end{theorem}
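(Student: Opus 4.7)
The plan is to prove the two parts separately. Part~2 follows quickly from the characterization of $\mathscr{D}_+(V)^c$ given in Theorem \ref{thonV} item~\ref{thonV004}, while part~1 adapts the Denisov--Wachtel strategy to the Markov chain setting, using the martingale approximation of Section \ref{Mart Approx} and the harmonic function constructed in Section \ref{Sec Harm Func}.

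For part~2, by Theorem \ref{thonV} item~\ref{thonV004}, $\mathscr{D}_+(V)^c = \mathscr{D}_\gamma^c$ for some $\gamma \geq \gamma_0$. Thus for every $k \geq 1$,
\[
\bb P_x\bigl(y+S_k > \gamma(1+N(X_k)),\, \tau_y > k\bigr) = 0,
\]
so on the event $\{\tau_y > k\}$ the walk is almost surely trapped in the tube $0 < y+S_k \leq \gamma(1+N(X_k))$. Fixing a large constant $L$ and truncating on $\{N(X_k) \leq L\}$ (whose complement has probability controlled by \eqref{decexpN}), one uses the variance estimate of Proposition \ref{MomAs} item~\ref{MomAs002} together with $\sigma > 0$ and the Paley--Zygmund/Chebyshev inequality to obtain a lower bound $p = p(L,T,\gamma) > 0$ on the probability that the walk drops below zero in the next $T$ steps, uniformly over its current state in this truncated tube. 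Iterating the Markov property over $\lfloor n/T \rfloor$ such blocks yields exponential decay, and the factor $1+N(x)$ absorbs the first-block dependence on the initial state via \eqref{decexpN}.

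For part~1, the idea is to transfer the classical Brownian exit asymptotic $\bb P(\tau^B_y > n) \sim \sqrt{2/\pi}\, y / (\sigma \sqrt{n})$ to the Markov walk, with $y$ replaced by $V(x,y)$. First, approximate $S_n$ by the martingale $M_n$ of Section \ref{Mart Approx} and couple $M_n$ with $\sigma B_n$ at polynomial rate via the strong approximation of \cite{ion_grama_rate_2014}. Next, use a three-block decomposition with intermediate time $n_0 = n_0(n) \to \infty$ and $n_0 = o(n)$: the initial block dissipates the dependence on the starting state $x$ and pushes $y+S_{n_0}$ to a positive level of order $\sqrt{n_0}$ on $\{\tau_y > n_0\}$, the long middle block is handled by the Brownian approximation, and a short final block corrects for the discrete exit. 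By the Markov property,
\[
\bb P_x(\tau_y > n) = \bb E_x\bigl(\bb P_{X_{n_0}}(\tau_{y+S_{n_0}} > n-n_0);\, \tau_y > n_0\bigr),
\]
and Brownian coupling replaces the inner probability by $\sqrt{2/\pi}\,(y+S_{n_0})/(\sigma \sqrt{n-n_0})$ up to a vanishing error. Identifying the limit of $\bb E_x(y+S_{n_0};\, \tau_y > n_0)$ as $V(x,y)$ via Theorem \ref{thonV} item~\ref{thonV001} produces the claimed asymptotic $2V(x,y)/(\sigma\sqrt{2\pi n})$.

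The main obstacle is to control the Brownian-replacement error uniformly in the intermediate random state $(X_{n_0}, y+S_{n_0})$, because the strong approximation rate from \cite{ion_grama_rate_2014} depends on the starting state through $N(\cdot)$. This is precisely where the strengthened Hypothesis \ref{Momdec} enters: the estimates \eqref{decexpN}--\eqref{decexpNl} ensure that moments of $N(X_{n_0})$ are bounded essentially independently of $x$ once $n_0$ is large, so the state-dependent errors can be integrated out against the law of $X_{n_0}$. A secondary subtle point is that $\mathscr{D}_+(V)$ may contain pairs with $y \leq 0$, so the positivity of the walk level at time $n_0$ cannot be deduced directly from $y > 0$; instead one relies on $V(x,y) > 0$ together with the already-established convergence $\bb E_x(y+S_n;\, \tau_y > n) \to V(x,y)$ of Theorem \ref{thonV} item~\ref{thonV001} to guarantee that the expected level at the start of the middle block is of order $V(x,y)$.
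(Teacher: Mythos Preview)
For Part~2 your sketch is close in spirit to the paper, which observes that on $\mathscr D_\gamma^c$ one has $\{\tau_y>n\}\subseteq\{\zeta_\gamma>n\}$ $\bb P_x$-a.s.\ (where $\zeta_\gamma$ is the first exit from the tube $\abs{y+S_k}\le\gamma(1+N(X_k))$) and then invokes Lemma~\ref{concentmu}. However, a fixed truncation $\{N(X_k)\le L\}$ does not by itself yield exponential decay in $n$: the unboundedness of $N$ forces one to track $\bb E_x\bigl(N(X_{jT});\,\text{survival}\bigr)$ alongside the survival probability, and it is the resulting $2\times2$ matrix recursion with spectral radius $<1$ that gives the rate. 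This is the content of Lemma~\ref{concentmu}, whose per-block estimate uses the CLT bound of Corollary~\ref{BerEss} rather than a Paley--Zygmund argument.

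For Part~1 there is a genuine gap in the step ``Brownian coupling replaces the inner probability by $\sqrt{2/\pi}\,(y+S_{n_0})/(\sigma\sqrt{n-n_0})$ up to a vanishing error''. The coupling of Proposition~\ref{majdeA_k} shifts the effective level by $\pm(n-n_0)^{1/2-2\ee}$, and the linear Brownian asymptotic of Proposition~\ref{exittimeforB} point~\ref{exittimeforB002} holds only for levels that are $o(\sqrt{n})$. Hence the replacement is valid only when $y+S_{n_0}$ lies in a window of the form $[c\,n^{1/2-2\ee},\,\theta_n\sqrt{n}]$, and one must show that the contribution of the complement to $\bb E_x\bigl(y+S_{n_0};\,\tau_y>n_0\bigr)$ vanishes. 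With a deterministic $n_0$ neither tail is under control: the lower tail requires the rate $\bb P_x(\tau_y>n_0)=O(n_0^{-1/2+\ee})$ of Lemma~\ref{taupptrn}, which is not available a priori, and the upper tail does not follow from the moment bounds since $\bb E_x\bigl(\abs{S_{n_0}}^p\bigr)$ grows like $n_0^{p/2}$. The paper's key device---already present in Denisov--Wachtel---is to replace $n_0$ by the random time $\nu_n^{\ee^2}=\nu_n+\pent{n^{\ee^2}}$, where $\nu_n=\inf\{k\ge1:z+M_k>n^{1/2-\ee}\}$. Then the level at $\nu_n$ exceeds $n^{1/2-\ee}$ by construction, and since $z+M_{\nu_n-1}\le n^{1/2-\ee}$, the event $\{z+M_{\nu_n}>n^{1/2-\ee/2}\}$ forces a single increment $\xi_{\nu_n}>c_\ee\,n^{1/2-\ee/2}$, whose probability is controlled through the functions $N_l$ of Hypothesis~\ref{Momdec} (Lemmas~\ref{MajE3}--\ref{SurF2etF4}). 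The convergence playing the role of your $\bb E_x\bigl(y+S_{n_0};\,\tau_y>n_0\bigr)\to V(x,y)$ is then Lemma~\ref{SurE1etE2}. You correctly flag the $N$-dependence of the coupling rate, but the principal obstacle is the level of the walk at the splitting time, not the state.
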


Now we complete the point \ref{thontau001} of the previous theorem by some estimations.
\begin{theorem}
\label{thontau2}
Assume Hypotheses \ref{BASP}-\ref{CECO}.
\begin{enumerate}[ref=\arabic*, leftmargin=*, label=\arabic*.]
	\item \label{thontau001bis} There exists $\ee_0 >0$ such that, for any $\ee \in (0,\ee_0)$, $n\geq 1$ and $(x,y)\in \bb X\times  \bb R$,
	\[
	\abs{\bb P_x \left( \tau_y > n \right) - \frac{2V(x,y)}{\sqrt{2\pi n} \sigma}} \leq c_{\ee}\frac{\max(y,0) +  \left( 1+ y\mathbbm 1_{\{y> n^{1/2-\ee}\}} +N(x) \right)^2}{n^{1/2+\ee/16}}.
	\]
	\item \label{thontau003} Moreover, for any $(x,y) \in \bb X \times \bb R$ and $n\geq 1$,
	\[
	\bb P_x \left( \tau_y > n \right) \leq c\frac{ 1 + \max(y,0) + N(x) }{\sqrt{n}}.
	\] 
\end{enumerate}
\end{theorem}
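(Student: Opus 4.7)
The plan is to follow the Denisov--Wachtel scheme as adapted to Markov chains in \cite{grama_conditioned_2016, GLLP_affine_2016}, combining the harmonic function $V$ of Theorem \ref{thonV} with the strong approximation of $(S_n)$ by a Brownian motion of variance $\sigma^2$ obtained in \cite{ion_grama_rate_2014}, which comes with explicit rates depending on $N(x)$. The bulk of the work lies in assertion \ref{thontau001bis}; \ref{thontau003} will follow as a direct consequence.

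The first step is to fix an intermediate time $k = \pent{n^{1-\theta}}$ with a parameter $\theta \in (0,1)$ to be tuned against $\ee$, and to apply the Markov property at time $k$:
\[
\bb P_x(\tau_y>n) = \bb E_x\!\left(\bb P_{X_k}(\tau_{y+S_k}>n-k);\,\tau_y>k\right).
\]
For the inner probability I would couple the post-$k$ increments of $S$ with a Brownian motion, so that, uniformly in $(x',y')$ with $0 < y' \lesssim \sqrt{n-k}$,
\[
\bb P_{x'}(\tau_{y'}>n-k) = \bb P\!\left(\min_{m\leq n-k}\sigma B_m>-y'\right) + \text{error},
\]
and the classical reflection computation gives $\bb P(\min_{m\leq n-k}\sigma B_m>-y')\sim \sqrt{2/\pi}\,y'/(\sigma\sqrt{n-k})$. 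The error term carries a factor of the form $(1+N(x'))^2$ times a small negative power of $n-k$, the exponent depending on $\theta$. After taking the outer expectation, the leading contribution is
\[
\frac{\sqrt{2/\pi}}{\sigma\sqrt{n-k}}\;\bb E_x(y+S_k;\tau_y>k),
\]
and the first conclusion of Theorem \ref{thonV}, together with a quantitative version of that convergence obtainable from the spectral gap decomposition \eqref{decexpN}--\eqref{decexpNl}, identifies this with $2V(x,y)/(\sqrt{2\pi n}\,\sigma)$ up to a remainder that is a small power of $1/n$ times $1+N(x)$. The discrepancy between $\sqrt{n-k}$ and $\sqrt n$ produces an additional $O(n^{-\theta})$ relative error, and optimising $\theta$ against $\ee$ yields the displayed exponent $1/2+\ee/16$.

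The main obstacle is the regime $y>n^{1/2-\ee}$, where the Brownian coupling error scales linearly with $y$ and the above argument breaks down. Here I would separately establish that both $\bb P_x(\tau_y>n)$ and $2V(x,y)/(\sqrt{2\pi n}\sigma)$ are close to a common quantity describing a descent to altitudes of order $\sqrt n$, using a truncation of the increments, the almost-sure finiteness of $\tau_y$ from Corollary \ref{Exitfinit}, the harmonicity of $V$, and the linear asymptotic $V(x,y)/y\to 1$ both given in Theorem \ref{thonV}. The extra factor $y^2$ appearing on $\{y>n^{1/2-\ee}\}$ in the error bound reflects precisely the cost of controlling the second-moment contribution of $(y+S_k)$ in this regime. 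Throughout, the dependence on the initial state $x$ is carried by the function $N$ and absorbed via the exponential bounds \eqref{decexpN}--\eqref{decexpNl}.

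Finally, \ref{thontau003} is obtained by combining \ref{thontau001bis} with the upper bound $V(x,y)\leq (1+\delta)\max(y,0)+c_\delta(1+N(x))$ from Theorem \ref{thonV}: for small $n$ the inequality is trivially satisfied since $\bb P_x(\tau_y>n)\leq 1$, while for large $n$ the $V$-term and the error term combine to yield exactly $c(1+\max(y,0)+N(x))/\sqrt n$.
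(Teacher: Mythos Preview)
Your derivation of part~\ref{thontau003} from part~\ref{thontau001bis} does not work. The error term in part~\ref{thontau001bis} depends quadratically on $N(x)$ (and on $y$ when $y>n^{1/2-\ee}$), whereas part~\ref{thontau003} claims a linear dependence. Concretely, for $y\leq n^{1/2-\ee}$ the bound of part~\ref{thontau001bis} together with the linear upper bound on $V$ yields at best
\[
\bb P_x(\tau_y>n)\leq \frac{c(1+\max(y,0)+N(x))}{\sqrt n}+\frac{c_\ee\,(1+N(x))^2}{n^{1/2+\ee/16}},
\]
and the second term is not dominated by $c(1+N(x))/\sqrt n$ unless $1+N(x)\leq c\,n^{\ee/16}$; the trivial bound $\bb P\leq 1$ does not rescue the complementary range either. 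The paper proves part~\ref{thontau003} by a separate argument: it first establishes the linear bound \eqref{point2th22} for starting points with $z\geq n^{1/2-\ee}$ via a short Markov step and the Brownian coupling, and then uses the Markov property at the \emph{random} time $\nu_n=\inf\{k:z+M_k>n^{1/2-\ee}\}$ to reduce to that case, controlling the relevant expectation through $E_1$ of Lemma~\ref{SurE1etE2}.

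For part~\ref{thontau001bis} your fixed-time scheme also has a gap. The paper does not apply the Markov property at a deterministic time $k=\pent{n^{1-\theta}}$ but at the random time $\nu_n^{\ee^2}$; the point is that at this time the process sits above level $n^{1/2-\ee}$, which is precisely what makes the Brownian comparison accurate and, via $V(x,y)\sim y$ for large $y$, converts the expectation into $V(x,y)$ with an explicit rate (Lemmas~\ref{SurE1etE2} and~\ref{SurF2etF4}). Your claim that a quantitative rate for $\bb E_x(y+S_k;\tau_y>k)\to V(x,y)$ at a fixed time $k$ is ``obtainable from the spectral gap decomposition'' is unsubstantiated: no such rate is proved in the paper, and the whole random-time machinery (the submartingale property of Lemma~\ref{Mnsubmartingale}, Lemma~\ref{concentnu}, and the overshoot control of Lemmas~\ref{MajE3}--\ref{SurE4}) exists because this rate is not directly accessible. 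Without the random time you also lack a mechanism to control the event that $y+S_k$ is large, which the paper handles through the $J_3/J_4$ split and the bound on $F_4$ in Lemma~\ref{SurF2etF4}.
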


Finally, we give the asymptotic of the conditional law of $y+S_n.$
\begin{theorem}
\label{loideRayleigh}
Assume Hypotheses \ref{BASP}-\ref{CECO}. 
\begin{enumerate}[ref=\arabic*, leftmargin=*, label=\arabic*.]
	\item \label{AAA001} 
For any $(x,y) \in \mathscr{D}_+(V)$ and $t\geq 0$,
\[
\bb P_x \left( \sachant{\frac{y+S_n}{\sigma \sqrt{n}} \leq t }{\tau_y >n} \right) \underset{n\to+\infty}{\longrightarrow} \mathbf \Phi^+(t),
\]
where $\mathbf \Phi^+(t) = 1-\e^{-\frac{t^2}{2}}$ is the Rayleigh distribution function.
\item \label{AAA002} 
Moreover there exists $\ee_0 >0$ such that, for any $\ee \in (0,\ee_0)$, $n\geq 1$, $t_0 > 0$, $t\in [ 0, t_0 ]$ and $(x,y)\in \bb X \times \bb R$,
\begin{align*}
&\abs{\bb P_x \left( y+S_n \leq t \sqrt{n} \,,\, \tau_y > n \right) -\frac{2V(x,y)}{\sqrt{2\pi n}\sigma} \mathbf \Phi^+\left(\frac{t}{\sigma}\right)} \\
&\hspace{5cm} \leq c_{\ee,t_0}\frac{\max(y,0) +  \left( 1+ y\mathbbm 1_{\{y> n^{1/2-\ee}\}} +N(x) \right)^2}{n^{1/2+\ee/16}}.
\end{align*}
\end{enumerate}
\end{theorem}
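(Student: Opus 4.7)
The approach combines the strategy already used for Theorem \ref{thontau} with extra tracking of the endpoint $y+S_n$. First, part \ref{AAA001} follows from part \ref{AAA002}: dividing the bound in \ref{AAA002} by the asymptotic $\bb P_x(\tau_y > n) \sim \frac{2V(x,y)}{\sigma\sqrt{2\pi n}}$ from Theorem \ref{thontau}, the error tends to $0$ for any fixed $(x,y) \in \mathscr{D}_+(V)$, and the leading term yields $\mathbf \Phi^+(t/\sigma)$. So the real task is to prove the quantitative bound \ref{AAA002}.

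The plan for \ref{AAA002} is to introduce an intermediate time $m_n = \pent{n^{1-\ee'}}$ with $\ee'$ depending on $\ee$, apply the Markov property at $m_n$, and couple the remaining $n-m_n$ steps to a Brownian motion. Writing
\[
\bb P_x(y+S_n \leq t\sqrt{n},\, \tau_y > n) = \bb E_x\!\left(\psi_{n-m_n}(X_{m_n}, y+S_{m_n})\,;\, \tau_y > m_n\right),
\]
where $\psi_k(x',y') = \bb P_{x'}(y' + S_k \leq t\sqrt{n},\, \tau_{y'} > k)$, I would then invoke the strong approximation of Markov chains of \cite{ion_grama_rate_2014}, which couples $(S_k)$ with a centred Brownian motion $(\sigma B_k)$ with a quantitative error depending on $N(x)$, to replace $\psi_{n-m_n}(X_{m_n}, y+S_{m_n})$ by the Brownian quantity
\[
\bb P\!\left(y' + \sigma B_{n-m_n} \leq t\sqrt{n},\; \min_{0 \leq k \leq n-m_n}(y' + \sigma B_k) > 0\right)
\]
evaluated at $y' = y+S_{m_n}$. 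On the event $\{\tau_y > m_n\}$ and outside a small-probability event, $y+S_{m_n}$ is of order $\sqrt{m_n} = o(\sqrt{n})$, which is the regime where the approximation is sharp.

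The Brownian quantity is computable from the killed-at-zero transition density and equals
\[
[2\Phi(y'/(\sigma\sqrt{n-m_n})) - 1] - [\Phi((t\sqrt{n}+y')/(\sigma\sqrt{n-m_n})) - \Phi((t\sqrt{n}-y')/(\sigma\sqrt{n-m_n}))],
\]
where $\Phi$ is the standard Gaussian distribution function. A Taylor expansion in the small parameter $y'/\sqrt{n-m_n}$, together with $n-m_n \sim n$ and the identity $1 - \e^{-t^2/(2\sigma^2)} = \mathbf \Phi^+(t/\sigma)$, gives the leading term $\frac{2y'}{\sigma\sqrt{2\pi n}}\mathbf \Phi^+(t/\sigma)$ with an explicit second-order remainder. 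Substituting back yields
\[
\bb P_x(y+S_n \leq t\sqrt{n},\, \tau_y > n) \approx \frac{2\mathbf \Phi^+(t/\sigma)}{\sigma\sqrt{2\pi n}}\; \bb E_x\!\left(y+S_{m_n}\,;\, \tau_y > m_n\right),
\]
and the expectation converges to $V(x,y)$ by Theorem \ref{thonV}; a quantitative rate for this convergence (extractable from the proof of that theorem) delivers the required error.

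The main obstacle will be propagating all these approximation errors uniformly in the initial state $(x,y)$, keeping a tight grip on $N(x)$ via the spectral gap estimates \eqref{decexpN}--\eqref{decexpNl}, and handling the large-start regime $y > n^{1/2-\ee}$ in which the intermediate time $m_n$ is no longer large compared with the distance to the barrier. In that regime the plan is to short-circuit the argument: the walk starts far above $0$, so a direct uniform local CLT for $y+S_n$ combined with a crude Kolmogorov-type bound on $\bb P_x(\tau_y \leq n)$ yields the Gaussian term; this is precisely what the indicator $\mathbbm 1_{\{y > n^{1/2-\ee}\}}$ in the error absorbs. Patching the two regimes, together with a truncation of $y+S_{m_n}$ at level $n^{1/2-\ee''}$ on a high-probability event (which justifies the second-order expansion of $\Phi$), should produce the stated bound with its quadratic dependence on $1 + y\mathbbm 1_{\{y > n^{1/2-\ee}\}} + N(x)$.
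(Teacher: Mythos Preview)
Your overall architecture --- Markov property at an intermediate time, strong approximation on the remaining block, explicit Brownian-bridge formula, and a first-order Taylor expansion giving the factor $\frac{2y'}{\sigma\sqrt{2\pi n}}\mathbf\Phi^+(t/\sigma)$ --- is exactly the paper's. The substantive difference is that you condition at a \emph{deterministic} time $m_n=\pent{n^{1-\ee'}}$, whereas the paper conditions at the \emph{random} time $\nu_n^{\ee^2}=\nu_n+\pent{n^{\ee^2}}$, where $\nu_n$ is the first time the walk (in the martingale scale) exceeds $n^{1/2-\ee}$.

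This is not a cosmetic difference; it is precisely the device that closes the two gaps you flag but do not fill. You need (i) a quantitative rate for $\bb E_x(y+S_{m_n};\tau_y>m_n)\to V(x,y)$ and (ii) control of the tail $\bb E_x\bigl((y+S_{m_n})\mathbbm 1_{\{y+S_{m_n}>n^{1/2-\ee''}\}};\tau_y>m_n\bigr)$. Neither is available in the paper for a fixed time: the convergence in Theorem~\ref{thonV} is obtained by dominated convergence and carries no rate, and the claim ``$y+S_{m_n}$ is of order $\sqrt{m_n}$ outside a small-probability event'' is essentially a conditional limit theorem of the very type you are proving, so invoking it is circular. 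The paper manufactures both ingredients through $\nu_n$: by construction $z+M_{\nu_n}\in(n^{1/2-\ee},\infty)$, the overshoot above $n^{1/2-\ee/2}$ is controlled by moment/spectral-gap estimates (Lemmas~\ref{MajE3}, \ref{SurE4}, \ref{SurF2etF4}), and the harmonic-function approximation with rate comes from Lemma~\ref{SurF2etF4}, which compares $\bb E_x(y+S_{\nu_n^{\ee^2}};\tau_y>\nu_n^{\ee^2},\nu_n^{\ee^2}\le\pent{n^{1-\ee}})$ directly to $V(x,y)$ with error $c_\ee n^{-\ee/8}(\dots)$. Lemma~\ref{concentnu} guarantees $\nu_n\le\pent{n^{1-\ee}}$ up to an exponentially small event, so the random time still leaves $n-\nu_n^{\ee^2}\asymp n$ steps for the Brownian coupling. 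Your separate treatment of the regime $y>n^{1/2-\ee}$ via a ``direct uniform local CLT'' is also replaced in the paper by the same $\nu_n$ machinery (cf.\ \eqref{nupetit}): when $y$ is that large, $\nu_n$ is simply small, and the dependence on $y\mathbbm 1_{\{y>n^{1/2-2\ee}\}}$ enters through the bound on $\bb P_x(\nu_n\le 2\pent{n^\ee})$.

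In short: your plan is correct at the level of structure, but the fixed-time conditioning leaves exactly the two estimates you label ``extractable'' genuinely open; the paper's stopping time $\nu_n$ is the missing idea that supplies them.
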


We now comment on Theorems \ref{thonV} and \ref{thontau}.  

\begin{remark}
	\label{Demboité}
	The sets $(\mathscr{D}_{\gamma})_{\gamma>0}$ are nested: for any $\gamma_1 \leq \gamma_2$, we have $\mathscr{D}_{\gamma_1} \supseteq \mathscr{D}_{\gamma_2}.$ Moreover, by the point \ref{thonV004} of Theorem \ref{thonV}, 
the sets $\mathscr{D}_{\gamma}$ are equal to $\mathscr{D}_+(V)$ for all $\gamma$ large enough.
\end{remark}

\begin{remark}
	\label{dominclus}
	The set $\mathscr{D}_+(V)$ it is not empty. More precisely there exists $\gamma_1 > 0$ such that
	\[
	\{ (x,y) \in \bb X \times \bb R,\; y > \gamma_1 \left( 1+N(x) \right) \} \subseteq \mathscr{D}_+(V).
	\]
	Example \ref{recstoacst} and Figure \ref{Domdepos} illustrate this property.
\end{remark}

\begin{remark}
	When $( X_n )_{n\geq 1}$ are i.i.d., it is well known that $\bb P_x \left( \tau_y > n \right) = 0$ for any $(x,y) \in \mathscr{D}_+(V)^c$. 
	When the sequence $( X_n )_{n\geq 1}$ from a Markov hain, instead of this property, we have the bound of the point \ref{thontau002} of Theorem \ref{thontau}. Moreover there exist some Markov walks for which this exponential bound is attained. This remark is developed in Example \ref{seulexemple}.
\end{remark}

\begin{example}[Random walks in $\bb R$]
	When $( X_n )_{n\geq 1}$ are i.i.d.\  real random variables of mean $0$ and positive variance with finite absolute moments of order $p>2$, one can take $N=0$ and therefore
	\[
	\mathscr{D}_{\gamma} := \left\{ y \in \bb R, \; \exists n_0 \geq 1, \; \bb P \left( y+S_{n_0} > \gamma  \,,\, \tau_y > n_0 \right) > 0 \right\}.
	\]
	Since the walk is allowed to increase at each step with positive probability, it follows that $\bb P \left( y+S_{n_0} > \gamma  \,,\, \tau_y > n_0 \right) > 0$ if and only if $\bb P \left( \tau_y > 1 \right)= \bb P \left( y+X_1 >0 \right) > 0$. Thus, $[0,+\infty] \subseteq ( -\max \supp( \bs \mu ) , +\infty ) = \mathscr{D}_{\gamma} = \mathscr{D}_+(V)$ for every $\gamma >0$, where $\bs \mu$ is the common law of $X_n$ and $\supp ( \bs \mu )$ is its support.
\end{example}

The following example is intended to illustrate Remark \ref{dominclus}.

\begin{example}
\label{recstoacst}
Consider the following special case of the one dimensional stochastic recursion: $X_{n+1} = a_{n+1} X_n + b_{n+1}$ where $(a_i)_{i\geq 1}$ and $(b_i)_{i\geq 1}$ are two independent sequences of i.i.d.\ random variables.
In this example we consider that the law of $a_i$ is $\frac{1}{2} \bs \delta_{\{-1/2\}}+\frac{1}{2} \bs \delta_{\{1/2\}}$ and that of $b_i$ is uniform on $[-1,1]$.
The state space $\bb X$ is $\bb R$ and the function $N$ is given by $N(x) = \abs{x}^{1+\ee}$ for some $\ee >0$ (see \cite{GLLP_affine_2016} or Section \ref{MarcheaffineRd} for a construction of an appropriate Banach space and for the proof that \ref{BASP}-\ref{CECO} are verified for the stochastic recursion).
One can verify that the domain of positivity of the function $V$ is 
$\mathscr{D}_+ (V) = \{ (x,y) \in \bb R^2,\; y > -\frac{\abs{x}}{2} -1 \} = \mathscr{D}_{\gamma}$, for all $\gamma > 0$. 
Obviously, $\{ (x,y) \in \bb X \times \bb R,\; y > \frac{1}{2} \left( 1+\abs{x}^{1+\ee} \right) \} \subseteq \mathscr{D}_+(V),$ see Figure \ref{Domdepos}.
\end{example}

\begin{figure}[ht]
	\begin{center}
	\begin{tikzpicture}[scale=0.8]
	\fill[black!10] (-5, 1.5) -- (0,-1) -- (5, 1.5) -- (5, 5) -- (-5, 5) -- cycle;
	\fill[black!40] plot [domain=-5:5] (\x,{exp(1.2*ln(abs(\x)))/2+1/2}) -- (5,5) -- (-5,5) -- cycle;
	\draw[thick][domain=-5:5] plot(\x,{exp(1.2*ln(abs(\x)))/2+1/2});
	\draw(-2.5,4) node{$y>\frac{1}{2}(\abs{x}^{1+\ee}+1)$};
	\draw(-3,1.3) node{$\mathscr{D}_+(V)$};
	\draw(-3,-1) node{$\mathscr{D}_+(V)^c$};
	\draw[very thick,->](-5,0)--(5,0)node[below left]{$x$};
	\draw[very thick,->](0,-2.5)--(0,5)node[below left]{$y$};
	\draw(0,0) node[below left]{$0$};
	\draw[thick](-5,1.5)--(0,-1);
	\draw[thick](0,-1)--(5,1.5);
	\draw[thick](-5,-0.1)--(-5,0.1);
	\draw[thick](-4,-0.1)--(-4,0.1);
	\draw[thick](-3,-0.1)--(-3,0.1);
	\draw[thick](-2,-0.1)--(-2,0.1);
	\draw[thick](-1,-0.1)--(-1,0.1);
	\draw[thick](1,-0.1)--(1,0.1);
	\draw[thick](2,-0.1)--(2,0.1);
	\draw[thick](3,-0.1)--(3,0.1);
	\draw[thick](4,-0.1)--(4,0.1);
	\draw[thick](-0.1,-2)--(0.1,-2);
	\draw[thick](-0.1,-1)--(0.1,-1);
	\draw[thick](-0.1,1)--(0.1,1);
	\draw[thick](-0.1,2)--(0.1,2);
	\draw[thick](-0.1,3)--(0.1,3);
	\draw[thick](-0.1,4)--(0.1,4);
	\end{tikzpicture}
	\caption{\label{Domdepos}}
	\end{center}
\end{figure}

The next example is intended to show that the inequality of the point \ref{thontau002} of Theorem \ref{thontau} is attained.

\begin{example}
\label{seulexemple}
Consider the Markov walk $(X_n)_{n\geq 0}$ living on the finite state space $\bb X :=\{-1\,;\,1\,;\,-3\,;\,7/6\}$ 
with the transition probabilities given in Figure \ref{graphedemarkov}.
\begin{figure}[ht]
\begin{center}
\begin{tikzpicture}[scale=0.8]
\tikzset{ville/.style={draw,minimum width=1cm,circle,very thick,fill=black!25},
chemin/.style={very thick,->,>=latex}}
\node[ville] (P-1) at (0,0) {$-1$};
\node[ville] (P1) at (4,0) {$1$};
\node[ville] (P-3) at (2,-3) {$-3$};
\node[ville] (Pa) at (8,-3) {$\frac{7}{6}$};
\coordinate (n) at (10,-3);
\node at (10.5,-3) {$1/2$};
\draw[chemin] (P-1) to [bend left] node[midway,above]{$1/2$} (P1);
\draw[chemin] (P-1) to [bend right] node[midway,left]{$1/2$} (P-3);
\draw[chemin] (P1) to [bend left] node[midway,below]{$1/2$} (P-1);
\draw[chemin] (P1) to [bend left] node[midway,right]{$1/2$} (P-3);
\draw[chemin] (P-3) to [bend right] node[midway,below]{$1$} (Pa);
\draw[chemin] (Pa.north east) to [out=45,in=90] (n) to [out=-90,in=-45] (Pa.south east);
\draw[chemin] (Pa) to [bend right] node[midway,above]{$1/2$} (P1);
\end{tikzpicture}
\end{center}
\caption{\label{graphedemarkov}}
\end{figure}
Suppose that $f$ is the identity function on $\bb X$. 
It is easy to see that the assumptions stated in Remark \ref{casfini} of Section \ref{Compact2}  
are satisfied and thereby so are Hypotheses \ref{BASP}-\ref{CECO}. 
Now, when $x=1$ and $y\in (1,3]$ or when $x =-1$ and $y \in (-1,2]$, 
one can check that the Markov walk $y+S_n$ stays positive if and only if the values of the the variables $X_i$ 
alternate between
$1$ and $-1$ and therefore, for such starting points $(x,y)$, we have $\bb P_x \left( \tau_y > n \right) = \left( \frac{1}{2} \right)^n$.
This shows that, when the random variables $\left( X_n \right)_{n\geq 1}$ form a Markov chain, 
the survival probability $\bb P_x \left( \tau_y > n \right)$ has an asymptotic behaviour
different from that in the independent case where it can be either equivalent to 
$\frac{c_{x,y}}{\sqrt{n}}$ or $0$.

In this example we can make explicit the set $\mathscr{D}_+(V)$. Since $N=0$, we notice that the function $V$ is positive if and only if there exists an integer $n\geq 1$ such that $\bb P_x \left( y+S_n > \gamma \,,\, \tau_y > n \right)>0$ for a $\gamma$ large enough. This is possible only if the chain can reach the state $X_n = 7/6$ within a trajectory of $\left( y+S_k \right)_{n \geq k\geq 1}$ which stays positive, \textit{i.e.}\ $\bb P_x \left( X_n = 7/6 \,,\, \tau_y > n \right) > 0$. Consequently
\begin{align*}
	\mathscr{D}_+(V) &= \{ -1 \} \times (2,+\infty) \cup \{ 1 \} \times (3,+\infty) \cup \{ -3,7/6 \} \times (-7/6,+\infty) \\
	&= \mathscr{D}_3 = \left\{ (x,y) \in \bb X \times \bb R, \; \exists n \geq 1, \; \bb P_x \left( y+S_n > 3 \,,\, \tau_y > n \right) > 0 \right\}.
\end{align*}

To sum up, this model presents the three possible asymptotic behaviours of $\bb P_x \left( \tau_y >n \right)$: for any $(x,y) \in \mathscr{D}_+(V)= \{ -1 \} \times (2,+\infty) \cup \{ 1 \} \times (3,+\infty) \cup \{ -3,7/6 \} \times (-7/6,+\infty)$,
\[
\bb P_x \left( \tau_y >n \right) \underset{n\to+\infty}{\sim} \frac{2V(x,y)}{\sqrt{2\pi n}\sigma},
\]
for any $(x,y) \in \{ -1 \} \times (-1,2] \cup \{ 1 \} \times (1,3]$ and $n\geq 1$,
\[
\bb P_x \left( \tau_y >n \right) = \left(\frac{1}{2}\right)^n,
\]
for any $(x,y) \in \{ -1 \} \times (-\infty,-1] \cup \{ 1 \} \times (-\infty,1] \cup \{ -3,7/6 \} \times (-\infty,-7/6]$ and $n\geq 1$,
\[
\bb P_x \left( \tau_y >n \right) = 0.
\]
\end{example}

\section{Applications}
\label{Applications}

We illustrate the results of Section \ref{sec-not-res} by considering three particular models.  

\subsection{Affine random walk in \texorpdfstring{$\bb R^d$}{} conditioned to stay in a half-space}
\label{MarcheaffineRd}

Let $d\geq 1$ be an integer and $( g_n )_{n\geq 1} = ( A_n, B_n )_{n\geq 1}$ be a sequence of i.i.d.\ random elements in $\GL\left( d, \bb R \right) \times \bb R^d$ following the same distribution $\bs \mu$. Let $( X_n )_{n \geq 0}$ be the Markov chain on $\bb R^d$ defined by
\[
X_0 = x \in \bb R^d, \qquad \qquad X_{n+1} = A_{n+1} X_n + B_{n+1}, \quad n \geq 1.
\]
Set $S_n  = \sum_{k=1}^n f\left( X_k \right)$, $n\geq 1,$ where the function $f(x) = \scal{u}{x}$ is the projection of the vector $x \in \bb R^d$ on the direction defined by the vector $u\in \bb R^d \smallsetminus \{0\}$. For any $y\in \bb R$, consider the first time when the random walk $\left(y+S_n\right)_{n\geq 1}$ becomes non-positive: 
\[
\tau_y = \inf \{ k \geq 1, \; y+S_k \leq 0 \}.
\]
This stopping time coincides with the entry time of the affine walk $\left( \sum_{k=1}^n X_k  \right) _{n \geq 0}$ in the closed half-subspace $\{ s \in \bb R^d, \scal{u}{s} \leq -y \}$.

Introduce the following hypothesis.
\begin{hypothesisH}\ 
\label{hypoH}
\begin{enumerate}[ref=\arabic*, leftmargin=*, label=\arabic*.]
	\item \label{H1} There exists a constant $\delta>0$, such that
	\[
	\bb E \left( \abs{A_1}^{2+2\delta} \right) < +\infty, \quad \bb E \left( \abs{B_1}^{2+2\delta} \right) < +\infty
	\]
	and
	\[
	k(\delta) = \underset{n\to+\infty}{\lim} \bb E^{1/n} \left( \abs{A_nA_{n-1} \dots A_1}^{2+2\delta} \right) < 1.
	\]
	\item \label{H2} There is no proper affine subspace of $\bb R^d$ which is invariant with respect to all the elements of the support of $\bs \mu$.
	\item \label{H3} For any vector $v_0 \in \bb R^d \smallsetminus \{0\}$,
	\[
	\bb P \left( {}^t\!A_1^{-1} v_0 = {}^t\!A_2^{-1} v_0 \right) < 1.
	\] 
	\item \label{H4} The vector $B_1$ is centred: $\bb E \left( B_1 \right) = 0$.
\end{enumerate}
\end{hypothesisH}

\begin{proposition}
\label{PP001}
Under Hypothesis \ref{hypoH}, Theorems \ref{thonV}, \ref{thontau}, \ref{thontau2} and \ref{loideRayleigh} hold true for the affine random walk conditioned to stay in the half-subspace $\{ s \in \bb R^d, \scal{u}{s} \leq 0 \}$. 
\end{proposition}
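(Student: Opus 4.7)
The plan is to verify, one by one, that Hypotheses \ref{BASP}, \ref{SPGA}, \ref{PETO}, \ref{Momdec} and \ref{CECO} are satisfied for the affine Markov chain $(X_n)_{n\ge 0}$ with observable $f(x)=\scal{u}{x}$. Once this is done, Theorems \ref{thonV}, \ref{thontau}, \ref{thontau2} and \ref{loideRayleigh} apply directly.

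First I would choose the Banach space. Following the construction already used in \cite{GLLP_affine_2016}, I fix a small $\eta>0$ and $\ee>0$ with $\eta+\ee<\delta$ from \ref{H1}, and set $\mathscr{B}$ to be the space of continuous functions $g\colon\bb R^d\to\bb C$ such that
\[
\norm{g}_{\mathscr{B}}=\sup_{x\in\bb R^d}\frac{\abs{g(x)}}{1+\abs{x}^{1+\ee}}+\sup_{x\ne x'}\frac{\abs{g(x)-g(x')}}{\abs{x-x'}^{\eta}\bigl(1+\abs{x}^{1+\ee}+\abs{x'}^{1+\ee}\bigr)}
\]
is finite. With this norm, $e\in\mathscr{B}$, each Dirac mass $\bs\delta_x$ is in $\mathscr{B}'$ with $\norm{\bs\delta_x}_{\mathscr{B}'}\le 1+\abs{x}^{1+\ee}$, and multiplication by $\e^{itf}$ for $\abs{t}\le\ee_0$ keeps $\mathscr{B}$ stable; the inclusion $\mathscr{B}\subseteq L^1(\mathbf P(x,\cdot))$ is handled through the moment bound in \ref{H1}. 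This takes care of Hypothesis~\ref{BASP}. The function $N$ required in \ref{Momdec} is defined by $N(x)=\abs{x}^{1+\ee}$; the truncated versions $N_l(x)=\abs{x}^{1+\ee}\mathbbm 1_{\{\abs{x}^{1+\ee}>l\}}$ are smoothed slightly (e.g.\ multiplied by a Lipschitz cutoff) so they live in $\mathscr{B}$ with uniformly bounded norm, which gives condition~\ref{Momdec}(2). Condition~\ref{Momdec}(1) comes from iterating $X_{n+1}=A_{n+1}X_n+B_{n+1}$ and using $\bb E(\abs{A_n\cdots A_1}^{2+2\delta})\le K(1-\rho)^n$ for some $\rho>0$, which follows from $k(\delta)<1$, together with the moment assumption on $B_1$; condition~\ref{Momdec}(3) follows from the tail decay of the stationary measure $\bs\nu$, which has finite moments of order $2+2\delta$ (by Kesten/Goldie type estimates or simply from \ref{H1}).

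Next I turn to the spectral gap Hypothesis~\ref{SPGA} and to \ref{PETO}. The boundedness of $\mathbf P$ on $\mathscr{B}$ is checked by a direct computation using the affine structure, the weight $1+\abs{x}^{1+\ee}$ and the H\"older exponent $\eta$. The decomposition $\mathbf P=\Pi+Q$ with geometric decay $\norm{Q^n}_{\mathscr{B}\to\mathscr{B}}\le C_Q\kappa^n$ is the content of the classical Guivarc'h--Le~Page--Hennion spectral-gap theorem for affine recursions, available under the contraction condition \ref{H1} and the irreducibility condition \ref{H2}; I cite \cite{GLLP_affine_2016} (or \cite{gao_stable_2015} in the $\bb R^d$ setting). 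The unique invariant probability $\bs\nu$ satisfies $\bs\nu(e)=1$, and $\Pi g=\bs\nu(g)e$. Hypothesis~\ref{PETO} is obtained by standard analytic perturbation theory: for $\abs{t}$ small enough the operator $\mathbf P_t g=\mathbf P(\e^{itf}g)$ admits a decomposition $\mathbf P_t^n=\lambda(t)^n\Pi_t+Q_t^n$ with $\abs{\lambda(t)}\le 1$, and the non-arithmeticity assumption \ref{H3} (adapted through the Le~Page inequality, which shows that the dual cocycle ${}^t\! A^{-1}$ acts non-trivially on directions $v_0$) forbids peripheral eigenvalues on $[-\ee_0,\ee_0]\setminus\{0\}$, yielding the uniform bound $\norm{\mathbf P_t^n}_{\mathscr{B}\to\mathscr{B}}\le C_{\mathbf P}$.

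Finally I verify Hypothesis~\ref{CECO}. For the mean, the stationary distribution $\bs\nu$ satisfies the fixed-point identity $\bb E_{\bs\nu}(X)=\bb E(A_1)\bb E_{\bs\nu}(X)+\bb E(B_1)$; assumption \ref{H4} gives $\bb E(B_1)=0$, and $k(\delta)<1$ in \ref{H1} forces the spectral radius of $\bb E(A_1)$ to be strictly less than $1$, so $I-\bb E(A_1)$ is invertible and $\bb E_{\bs\nu}(X)=0$, hence $\mu=\bs\nu(f)=\scal{u}{\bb E_{\bs\nu}(X)}=0$. The positivity of the asymptotic variance $\sigma>0$ is the most delicate point: I would derive it from the non-arithmeticity condition \ref{H3}, arguing by contradiction that $\sigma=0$ would force the projected cocycle $\scal{u}{S_n}$ to be a coboundary, which combined with \ref{H2}--\ref{H3} is impossible; a clean reference for this argument in the affine setting is again \cite{GLLP_affine_2016}. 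The main obstacle is this last non-degeneracy step, because translating \ref{H3} into a genuine lower bound on $\sigma$ requires either reproducing a Le~Page-type argument or invoking the machinery of \cite{GLLP_affine_2016}; the rest of the verification is essentially bookkeeping once the correct Banach space $\mathscr{B}$ and weight function $N$ have been fixed.
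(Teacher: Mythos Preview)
Your strategy is the same as the paper's: verify Hypotheses \ref{BASP}--\ref{CECO} on a weighted H\"older space with $N(x)=\abs{x}^{1+\ee}$. A few technical points differ from what the paper actually does and deserve care. First, the paper's Banach space uses the weight $(1+\abs{x})^{\theta}$ with $1+\ee<\theta<2$ in the sup part and $(1+\abs{x})(1+\abs{y})$ in the H\"older part; this strict gap $\theta>1+\ee$ is what makes the Ionescu--Tulcea--Marinescu compactness argument go through (Lemma~\ref{opPt}, point~\ref{opPt003}). Your weight $1+\abs{x}^{1+\ee}$ matches $N$ exactly and leaves no such gap, so if you cite a spectral-gap theorem you must check it applies to \emph{your} space, not the paper's. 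Second, for \ref{PETO} the paper does not use analytic perturbation: it reads off the uniform bound $\norm{\mathbf P_t^n}_{\mathscr B\to\mathscr B}\le C$ directly from the Doeblin--Fortet type inequalities in Lemma~\ref{opPt}. Your perturbation route needs $\abs{\lambda(t)}\le 1$, which is immediate on a compact state space but requires an argument on $\bb R^d$.

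The point where your sketch is genuinely thin is $\sigma>0$. The paper's argument (end of Appendix~\ref{proof-rec-sto-Mat}) is not just ``$\sigma=0$ forces a coboundary'': one shows that $\sigma=0$ produces an eigenfunction equation $\mathbf P_t h=h$ on $\supp(\bs\nu)$ with $h=\e^{it\mathbf P\Theta}$, and then Lemma~\ref{uneseulevp} (point~\ref{uneseulevp002}) rules this out by showing that such an equation forces the law of $Z'u=\sum_k {}^t\!A_1\cdots{}^t\!A_k u$ to have discrete support, leading via a barycentre argument to ${}^t\!A_1^{-1}v_0={}^t\!A_2^{-1}v_0$ a.s.\ for some $v_0\ne0$, contradicting \ref{H3}. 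Both \ref{H2} (to get a basis of $\bb R^d$ from $\supp(\bs\nu)$) and \ref{H3} are used in a precise way here; your outline should at least indicate how \ref{H3} enters.
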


Proposition \ref{PP001} is proved in Appendix \ref{proof-rec-sto-Mat}  where we construct an appropriate Banach space $\mathscr{B}$
and show that Hypotheses \ref{BASP}-\ref{CECO} are satisfied with $N(x) = \abs{x}^{1+\ee}$, for some $\ee > 0$.

\begin{remark}
	The set $\mathscr{D}_+(V)$ depends on the law of $(A_i,B_i)$. In the case when $A_i$ are independent of $B_i$ and the support of the law of $\scal{u}{B_i}$ contains a sequence converging to $+\infty$, one can verify that $\mathscr{D}_+(V) = \bb R^d \times \bb R$.
\end{remark}

\subsection{Two components Markov chains in compact sets under the Doeblin-Fortet condition}
\label{Compact1}

Let $(X,d_{X})$ be a compact metric space, $\mathscr{C}\left( X \right)$ and $\mathscr{L}\left( X \right)$ be the spaces of continuous and Lipschitz 
complex functions on $X$,  respectively. Define
\[
\abs{h}_{\infty} = \sup_{x \in X} \abs{h(x)}, \quad \forall h \in \mathscr{C}\left( X \right)
\]
and
\[
\left[ h \right]_{X} = \sup_{\substack{(x,y) \in X \\ x\neq y}} \frac{\abs{h(x)-h(y)}}{d_{X}(x,y)}, \quad \forall h \in \mathscr{L}\left( X \right).
\]
We endow $\mathscr{C}\left( X \right)$ with the uniform norm $\abs{\cdot}_{\infty}$ and $\mathscr{L}\left( X \right)$ with the norm $\abs{\cdot}_{\mathscr{L}} = \abs{\cdot}_{\infty} + \left[ \cdot \right]_{X}$, respectively. 
Consider the space $\bb X := X \times X$ with the metric $d_{\bb X}$ on $\bb X$ defined by $d_{\bb X} ( (x_1, x_2) , (y_1,y_2) ) = d_{X} ( x_1,y_1 ) + d_{X} ( x_2,y_2 ),$ for any $(x_1, x_2)$ and $(y_1,y_2)$ in $\bb X$. Denote by $\mathscr{L}\left( \bb X \right)$ the space of the Lipschitz complex function on $\bb X$ endowed with the norm $\norm{\cdot}_{\mathscr{L}} = \norm{\cdot}_{\infty} + \left[ \cdot \right]_{\bb X}$, where
\[
\norm{h}_{\infty} = \sup_{x \in \bb X} \abs{h(x)}, \quad \forall h \in \mathscr{C}\left( \bb X \right)
\]
and
\[
 \left[ h \right]_{ \bb X} = \sup_{\substack{(x,y) \in \bb X \\ x\neq y}} \frac{\abs{h(x)-h(y)}}{d_{\bb X}(x,y)}, \quad \forall h \in \mathscr{L}\left( \bb X \right).
\]
Following Guivarc'h and Hardy \cite{guivarch_theoremes_1988}, consider a Markov chain $\left( \chi_n \right)_{n\geq 0}$ on $X$ with transition probability $P$. Let $\left( X_n \right)_{n\geq 0}$ be the Markov chain on $\bb X$ defined by $X_n = \left( \chi_{n-1}, \chi_{n} \right)$, $n\geq 1$ and $X_0=\left( 0, \chi_0 \right)$:  its transition probability is given by
\[
\mathbf P((x_1,x_2), \dd y_1 \times \dd y_2) = \bs \delta_{x_2} \left( \dd y_1 \right) P\left( x_2, \dd y_2 \right).
\]
For a fixed real function $f$ on $\bb X$, let $S_n := \sum_{k=1}^n f\left( X_n \right)$ be the associated Markov walk and, for any $y\in \bb R$, let $\tau_y := \inf \left\{ n \geq 1, \, y+S_n \leq 0 \right\}$ be the associated exit time.

In order to apply the results stated in the previous section, we need some hypotheses on the function $f$ and the operator $P$ on $\mathscr{C}(X)$ defined by $Ph(x) = \int_{X} h(y) P(x,\dd y)$ for any $x\in X$ and any $h \in \mathscr{C}(X)$.

\begin{hypothesisH}\ 
\label{hypoHcompactI}
\begin{enumerate}[ref=\arabic*, leftmargin=*, label=\arabic*.]
	\item \label{H1compactI} For any $h$ in $\mathscr{C} \left( X \right),$ respectively in $\mathscr{L} \left( X \right)$, the function $Ph$ is an element of $\mathscr{C} (X),$ respectively of $\mathscr{L} (X)$.
	\item \label{H2compactI} There exist constants $n_0 \geq 1$, $0 < \rho < 1$ and $C>0$ such that, for any function $h \in \mathscr{L}(X)$, we have
	\[
	\abs{P^{n_0}h}_{\mathscr{L}} \leq \rho \abs{h}_{\mathscr{L}} + C \abs{h}_{\infty}
	\]
	\item \label{H3compactI} The unique eigenvalue of $P$ of modulus $1$ is $1$ and the associated eigenspace is generated by the function $e$: $x\mapsto 1$, \textit{i.e.}\ if there exist $\theta \in \bb R$ and $h \in \mathscr{L}(X)$ such that	$Ph = \e^{i\theta} h$, then $h$ is constant and $\e^{i\theta} = 1$.
\end{enumerate}
\end{hypothesisH}

Under Hypothesis \ref{hypoHcompactI}, one can check that conditions (a), (b), (c) and (d) of Chapter 3 in Norman \cite{norman1972markov} hold true and we can apply the theorem of Ionescu Tulcea and Marinescu \cite{tulcea_theorie_1950} (see also \cite{guivarch_theoremes_1988}). Coupling this theorem with the point \ref{H3compactI} of Hypothesis \ref{hypoHcompactI} we obtain the following proposition.

\begin{proposition}\
\label{TroupourPpasgras} 
\begin{enumerate}[ref=\arabic*, leftmargin=*, label=\arabic*.]
	\item There exists a unique $P$-invariant probability $\nu$ on $X$. 
	\item For any $n\geq 1$ and $h \in \mathscr{L}(X)$,
	\[
	P^nh = \nu(h) + R^nh,
	\]
	where $R$ is an operator on $\mathscr{L}(X)$ with a spectral radius $r(R)<1$.
\end{enumerate}
\end{proposition}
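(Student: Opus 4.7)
The plan is to apply the Ionescu-Tulcea-Marinescu quasi-compactness theorem to $P$ viewed as an operator on $\mathscr{L}(X)$, and then to exploit Hypothesis \ref{H3compactI} to upgrade the quasi-compactness into the desired one-dimensional spectral decomposition on the peripheral spectrum.

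First, I would verify that the setting matches Ionescu-Tulcea-Marinescu. By Hypothesis \ref{H1compactI}, $P$ leaves $\mathscr{L}(X)$ stable and is bounded on $(\mathscr{L}(X),\abs{\cdot}_{\mathscr{L}})$ (the Doeblin-Fortet inequality of \ref{H2compactI} applied to $n_0=1$ combined with $\abs{Ph}_\infty \le \abs{h}_\infty$ shows this, or one simply iterates and uses \ref{H2compactI} directly). The Doeblin-Fortet inequality
\[
\abs{P^{n_0}h}_{\mathscr{L}} \leq \rho\, \abs{h}_{\mathscr{L}} + C \abs{h}_{\infty}
\]
with $\rho<1$ is precisely the contraction-modulo-compact-perturbation hypothesis. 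Compactness of $X$ and Arzelà-Ascoli yield that the canonical embedding $(\mathscr{L}(X),\abs{\cdot}_{\mathscr{L}}) \hookrightarrow (\mathscr{C}(X),\abs{\cdot}_\infty)$ is compact. These are the conditions (a)–(d) of Norman \cite{norman1972markov}, Chapter 3, and allow the application of the Ionescu-Tulcea-Marinescu theorem \cite{tulcea_theorie_1950}; see also \cite{guivarch_theoremes_1988}.

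From that theorem one obtains that $P$ is quasi-compact on $\mathscr{L}(X)$: its essential spectral radius is strictly less than $1$, and its peripheral spectrum consists of finitely many eigenvalues $\lambda_1,\dots,\lambda_k$ of modulus $1$, each of finite multiplicity, with a spectral decomposition
\[
P = \sum_{j=1}^k \lambda_j \Pi_j + R,
\]
where the $\Pi_j$ are finite-rank projectors, $\Pi_i \Pi_j = 0$ for $i\ne j$, $\Pi_j R = R \Pi_j = 0$, and the operator $R$ has spectral radius $r(R)<1$. Invoking Hypothesis \ref{H3compactI}, the only peripheral eigenvalue is $\lambda_1 = 1$ and its eigenspace is one-dimensional, spanned by the constant function $e$. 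Hence the decomposition reduces to $P = \Pi + R$ with $\Pi$ a rank-one projector onto $\mathbb{C}\, e$, $\Pi R = R\Pi = 0$, and $r(R)<1$. Iterating gives $P^n h = \Pi h + R^n h$ for every $h\in\mathscr{L}(X)$ and every $n\ge 1$.

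It remains to identify $\Pi$ as integration against an invariant probability. Since $\Pi$ has rank one and fixes $e$, there is a continuous linear form $\nu$ on $\mathscr{L}(X)$ with $\Pi h = \nu(h) e$ and $\nu(e)=1$. The relation $P\Pi = \Pi$ rewrites as $\nu(Ph) = \nu(h)$, i.e. $\nu$ is $P$-invariant as a linear functional on $\mathscr{L}(X)$. Because $P$ is positivity-preserving and $\mathscr{L}(X)$ is dense in $\mathscr{C}(X)$ in the uniform norm, a standard approximation argument together with $\Pi h \to \nu(h) e$ in $\abs{\cdot}_\infty$ on test sequences shows that $\nu$ extends to a positive continuous linear form on $\mathscr{C}(X)$ with $\nu(e)=1$; by Riesz's representation theorem, $\nu$ is a Borel probability measure on the compact set $X$, and it is $P$-invariant. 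Uniqueness of $\nu$ follows from the one-dimensionality of the fixed space: any $P$-invariant probability $\nu'$ with the appropriate regularity gives an eigenfunction $\mathbf 1$ for $P^\ast$ associated with the eigenvalue $1$; the dual of the peripheral spectrum inherits the same multiplicity as the primal, so $\nu'=\nu$. The main (minor) obstacle is this last identification step, which is routine once the spectral decomposition is in place.
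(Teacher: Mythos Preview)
Your proposal is correct and follows exactly the approach the paper indicates: verify Norman's conditions (a)--(d) for the Ionescu-Tulcea--Marinescu theorem using the Doeblin--Fortet inequality and the compact embedding $\mathscr{L}(X)\hookrightarrow\mathscr{C}(X)$, then invoke Hypothesis~\ref{hypoHcompactI}.\ref{H3compactI} to collapse the peripheral spectrum to the single simple eigenvalue $1$. The paper in fact gives no more than this sketch, so your write-up is already more detailed than the original.
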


Suppose that $f$ and $\nu$ satisfy the following hypothesis.

\begin{hypothesisH}\ 
\label{hypoHcompactII}
\begin{enumerate}[ref=\arabic*, leftmargin=*, label=\arabic*.]
	\item \label{H1compactII} The function $f$ belongs to $\mathscr{L}(\bb X)$.
	\item \label{H2compactII} The function $f$ is centred, in the sense that
	\[
	\int_{\bb X} f(x,y) P(x,\dd y) \nu(\dd x) = 0.
	\]
	\item \label{H3compactII} The function $f$ is non-degenerated, that means that there is no function $h\in \mathscr{L}(X)$ such that 
	\[
	f(x,y) = h(x) - h(y),
	\]
	for $P_{\nu}$-almost all $(x,y),$ where $P_{\nu}(\dd x \times \dd y) = P(x,\dd y)\nu(\dd x)$.
\end{enumerate}
\end{hypothesisH}

Assuming Hypotheses \ref{hypoHcompactI} and \ref{hypoHcompactII}, Guivarc'h and Hardy \cite{guivarch_theoremes_1988} have established that 
the sequence $\left( S_n / \sqrt{n} \right)_{n\geq 1}$ converges weakly to a centred Gaussian random variable of variance $\sigma^2 >0$,
under the probability $\bb P_x$  generated by the finite dimensional distributions of the Markov chain $(X_n)_{n\geq 0}$ starting at $X_0=x$, 
for any $x\in X$. Moreover, under the same hypotheses, we show in Appendix \ref{proof-cas-compact} that \ref{BASP}-\ref{CECO} are satisfied, thereby proving the following assertion.
\begin{proposition}
\label{PP002}
Under Hypotheses \ref{hypoHcompactI} and \ref{hypoHcompactII}, Theorems \ref{thonV}, \ref{thontau}, \ref{thontau2} and \ref{loideRayleigh} hold true for the Markov chain $(X_n)_{n\geq 1}$, the function $f$ and the Banach space $\mathscr{L}(\bb X)$. 
\end{proposition}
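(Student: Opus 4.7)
The plan is to reduce Proposition \ref{PP002} to the verification of Hypotheses \ref{BASP}-\ref{CECO} for the bivariate chain $(X_n)_{n\geq 0}$ on $\bb X=X\times X$, the function $f$, and the Banach space $\mathscr{B}=\mathscr{L}(\bb X)$ endowed with the norm $\norm{\cdot}_\mathscr{L}$. Once these are checked, Theorems \ref{thonV}, \ref{thontau}, \ref{thontau2} and \ref{loideRayleigh} apply immediately. Throughout, compactness of $X$ (hence of $\bb X$) is the main simplifier: every Lipschitz function is bounded, and in particular $f$ is bounded, which will allow us to take $N\equiv 0$ (or a small constant) in Hypothesis \ref{Momdec}.

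First I would dispatch Hypothesis \ref{BASP}. The constant function $e$ lies in $\mathscr{L}(\bb X)$ with $\norm{e}_\mathscr{L}=1$; each Dirac measure $\bs\delta_x$ satisfies $\abs{\bs\delta_x(h)}\leq \norm{h}_\infty\leq \norm{h}_\mathscr{L}$, so $\bs\delta_x\in\mathscr{B}'$ with $\norm{\bs\delta_x}_{\mathscr{B}'}\leq 1$; and the inclusion $\mathscr{L}(\bb X)\subseteq L^1(\mathbf P(x,\cdot))$ is immediate since elements of $\mathscr{L}(\bb X)$ are bounded. For point \ref{BASP004}, since $f\in\mathscr{L}(\bb X)$ by Hypothesis \ref{hypoHcompactII}\ref{H1compactII}, the estimate $\abs{\e^{itf(x)}-\e^{itf(y)}}\leq \abs{t}[f]_{\bb X}d_{\bb X}(x,y)$ gives $[\e^{itf}g]_{\bb X}\leq \abs{t}[f]_{\bb X}\norm{g}_\infty+[g]_{\bb X}$, so $\e^{itf}g\in\mathscr{L}(\bb X)$ for any $t$ with a norm bounded uniformly on $\abs{t}\leq \ee_0$.

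Next I would establish Hypothesis \ref{SPGA}. The transition operator acts by $\mathbf Ph(x_1,x_2)=\int h(x_2,y)P(x_2,\dd y)$, so using point \ref{H1compactI} of \ref{hypoHcompactI} and the Lipschitz structure of the second coordinate, $\mathbf P$ sends $\mathscr{L}(\bb X)$ into itself. The central point is to upgrade the Doeblin-Fortet inequality of point \ref{H2compactI} from $P$ acting on $\mathscr{L}(X)$ to $\mathbf P$ acting on $\mathscr{L}(\bb X)$. This is done by the classical shift-and-iterate argument: for $h\in\mathscr{L}(\bb X)$, write $\mathbf P^{n_0}h(x_1,x_2)=\int h(x_2,\cdot)$ composed with an $(n_0{-}1)$-step iteration of $P$, and use $[\cdot]_{X}$ control of both the first argument (trivial after one step) and the second (controlled by the Doeblin-Fortet estimate on $P$). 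Combined with compactness (the injection $\mathscr{L}(\bb X)\hookrightarrow \mathscr{C}(\bb X)$ is compact by Arzelà-Ascoli), this gives quasi-compactness of $\mathbf P$ on $\mathscr{L}(\bb X)$ via the Ionescu Tulcea-Marinescu theorem. To identify the peripheral spectrum, I would use point \ref{H3compactI} of \ref{hypoHcompactI}: any unimodular eigenfunction of $\mathbf P$ projects to a unimodular eigenfunction of $P$, hence is constant, which gives the decomposition $\mathbf P=\Pi+Q$ with $\Pi$ one-dimensional and $\norm{Q^n}\leq C_Q\kappa^n$. This is the main obstacle, and it is essentially the content of \cite{guivarch_theoremes_1988}.

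Hypothesis \ref{PETO} follows from the standard Nagaev-Guivarch'h perturbation argument. The map $t\mapsto \mathbf P_t$ is Lipschitz at $0$ in operator norm on $\mathscr{L}(\bb X)$: indeed $\norm{(\mathbf P_t-\mathbf P)h}_\mathscr{L}\leq c\abs{t}\norm{h}_\mathscr{L}$ by the bound $\abs{\e^{itf}-1}\leq \abs{tf}$ and the Lipschitz estimate above. Consequently, for $\abs{t}\leq \ee_0$ small enough, a perturbation of the decomposition $\mathbf P=\Pi+Q$ yields $\mathbf P_t=\lambda(t)\Pi_t+Q_t$ with $\abs{\lambda(t)}\leq 1$ (as proved in \cite{guivarch_theoremes_1988}) and $\norm{Q_t^n}\leq c\kappa_1^n$ for some $\kappa_1\in(0,1)$, giving the uniform bound $\norm{\mathbf P_t^n}_{\mathscr{B}\to\mathscr{B}}\leq C_\mathbf P$. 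For Hypothesis \ref{Momdec}, take $N\equiv 0$ and $N_l\equiv 0$ for all $l\geq 1$: boundedness of $f$ makes $\abs{f(x)}^{1+\gamma}\leq c$, $\norm{\bs\delta_x}_{\mathscr{B}'}\leq 1$, and all other requirements trivial for any $\alpha>2,\gamma>0,\beta>0$. Finally, for Hypothesis \ref{CECO}, the stationary measure of $\mathbf P$ is $P_\nu(\dd x_1\times\dd x_2)=\nu(\dd x_1)P(x_1,\dd x_2)$; hence by \eqref{mu-sigma001}, $\mu=\bs\nu(f)=\int f\,\dd P_\nu=0$ by point \ref{H2compactII}, and $\sigma>0$ is precisely the non-degeneracy in the Guivarch-Hardy CLT under point \ref{H3compactII}. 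This completes the verification and yields Proposition \ref{PP002}.
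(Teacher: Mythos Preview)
Your proposal is correct and follows the paper's overall strategy: verify Hypotheses \ref{BASP}--\ref{CECO} for $(X_n)$, $f$, and $\mathscr{B}=\mathscr{L}(\bb X)$, then invoke the general theorems. The treatments of \ref{BASP}, \ref{Momdec} (with $N\equiv 0$) and \ref{CECO} coincide with the paper's.

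The one genuine methodological difference is in the verification of \ref{SPGA}. You propose to establish a Doeblin--Fortet inequality directly for $\mathbf P$ on $\mathscr{L}(\bb X)$ and re-apply the Ionescu Tulcea--Marinescu theorem to $\mathbf P$. The paper takes a shorter route: it observes that $\mathbf P h(x_1,x_2)$ depends only on $x_2$, so that writing $F_h(x_2):=\int_X h(x_2,x')P(x_2,\dd x')$ one has $\mathbf P^n h(x_1,x_2)=P^{n-1}F_h(x_2)$ for $n\geq 1$. Since $F_h\in\mathscr{L}(X)$ with $\abs{F_h}_\mathscr{L}\leq\norm{\mathbf P h}_\mathscr{L}$, the spectral decomposition $P^{n-1}=\nu(\cdot)e+R^{n-1}$ of Proposition~\ref{TroupourPpasgras} immediately yields $\mathbf P^n=\Pi+Q^n$ with $\Pi h=\bs\nu(h)e$, $\bs\nu(h)=\nu(F_h)$, and $\norm{Q^n}_{\mathscr{L}\to\mathscr{L}}\leq \abs{R^{n-1}}_{\mathscr{L}\to\mathscr{L}}\norm{\mathbf P}_{\mathscr{L}\to\mathscr{L}}\leq C_Q\kappa^n$. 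This bypasses a second invocation of Ionescu Tulcea--Marinescu entirely. Your argument is valid but heavier; the paper's reduction exploits the specific two-component structure. For \ref{PETO} the paper also goes slightly further, showing $t\mapsto\mathbf P_t$ is analytic (via the power series $\mathbf P_t=\sum_n (it)^n \mathbf P(f^n\cdot)/n!$ in the Banach algebra $\mathscr{L}(\bb X)$) rather than merely Lipschitz, and then applies analytic perturbation theory; your Lipschitz-perturbation argument suffices for the stated conclusion.
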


\subsection{Markov chains in compact sets under spectral gap assumptions}
\label{Compact2}

In the previous Section \ref{Compact1}, we considered a Markov chain with two components satisfying the Doeblin Fortet condition and proved, inter alia, that this chain has a spectral gap (Hypothesis \ref{SPGA}). In this section, for Markov chains with values in a compact set, we give  more general conditions which ensure the applicability of the results of the previous section. 

Let $\left( \bb X, d \right)$ be a compact metric space and $\left( X_n \right)_{n\geq 0}$ be a Markov chain living in $\bb X$. Denote by $\mathbf P$ the transition probability of $\left( X_n \right)_{n\geq 0}$ and by $\mathscr{C}(\bb X)$ the Banach algebra of the continuous complex functions on $\bb X$ endowed with the uniform norm
\[
\abs{h}_{\infty} = \underset{x \in \bb X}{\sup} \abs{h(x)}, \qquad h \in \mathscr{C}(\bb X).
\]
Consider a real function $f$ defined on $\bb X$, the transition operator $\mathbf P$ on $\mathscr{C}(\bb X)$ associated to the transition probability of $\left( X_n \right)_{n\geq 0}$ and the unit function $e$ defined on $\bb X$ by $e(x) =1$, for any $x \in \bb X$.
\begin{hypothesisH}\ 
\label{hypoHcompactIII}
\begin{enumerate}[ref=\arabic*, leftmargin=*, label=\arabic*.]
	\item \label{H1compactIII} For any $h\in \mathscr{C}(\bb X)$, the function $\mathbf Ph$ is an element of $\mathscr{C}(\bb X)$.
	\item \label{H2compactIII} The operator $\mathbf P$ has a unique invariant probability $\bs \nu$.
	\item \label{H3compactIII} For any $n\geq 1$,
	\[
	\mathbf P^n = \Pi + Q^n,
	\]
	where $\Pi$ is the one-dimensional projector on $\mathscr{C}(\bb X)$ defined by $\Pi(h) = \bs \nu(h) e$, for any $h \in \mathscr{C}(\bb X)$, $Q$ is an operator on $\mathscr{C}(\bb X)$ of spectral radius $r(Q) < 1$ satisfying $\Pi Q= Q \Pi = 0$.
	\item \label{H4compactIII} The function $f$  belongs to $\mathscr{C}(\bb X)$ and is $\bs \nu$-centred, \textit{i.e.}\  $\bs \nu(f) = 0$.
	\item \label{H5compactIII} The function $f$ is non-degenerated, that is there is no function $h\in \mathscr{C}(\bb X)$ such that
	\[
	f(X_1) = h(X_0) - h(X_1), \qquad \bb P_{\bs \nu}\text{-a.s.,}
	\]
	where $\bb P_{\bs \nu}$ is the probability generated by the finite dimensional distributions of the Markov chain $(X_n)_{n\geq 0}$ when the initial law of $X_0$ is $\bs \nu$.
\end{enumerate}
\end{hypothesisH}

Consider the Markov walk $S_n = \sum_{k=1}^n f(X_k)$. 
It is well known, that under Hypothesis \ref{hypoHcompactIII} the normalized sum 
$S_n/\sqrt{n}$ converges in law to a centred normal distribution of variance $\sigma^2 > 0$
with respect to the probability $\bb P_x$ generated by the finite dimensional distributions of the Markov chain $(X_n)_{n\geq 0}$ starting at $X_0=x$, 
for any $x\in \bb X$.

\begin{proposition}
\label{PP003}
Under Hypothesis \ref{hypoHcompactIII}, Theorems \ref{thonV}, \ref{thontau}, \ref{thontau2} and \ref{loideRayleigh} 
hold true for the Markov chain $(X_n)_{n\geq 1}$, the function $f$ and the Banach space $\mathscr{C}(\bb X)$. 
\end{proposition}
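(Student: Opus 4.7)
The plan is to verify that Hypotheses \ref{BASP}--\ref{CECO} of Section \ref{sec-not-res} hold for the Markov chain $(X_n)_{n\geq 0}$, the function $f$, and the Banach space $\mathscr{B} = \mathscr{C}(\bb X)$ equipped with the uniform norm $\abs{\cdot}_\infty$, with the choice $N \equiv 0$ and $N_l \equiv 0$ for every $l \geq 1$. Once this is achieved, Theorems \ref{thonV}--\ref{loideRayleigh} apply verbatim.

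I would first dispose of the three easy hypotheses, which become nearly trivial thanks to the compactness of $\bb X$ and to working with the uniform norm. Hypothesis \ref{BASP} is immediate: the unit function is continuous; $\abs{\bs\delta_x(g)} \leq \abs{g}_\infty$ yields $\norm{\bs\delta_x}_{\mathscr{B}'} = 1$; continuous functions on a compact space are bounded, hence integrable against any probability measure; and $\e^{itf}g \in \mathscr{C}(\bb X)$ for every real $t$ because $f \in \mathscr{C}(\bb X)$ by \ref{H4compactIII}, so the constant $\ee_0$ of \ref{BASP004} may be taken arbitrarily large. Hypothesis \ref{PETO} follows in the same spirit: the bound $\abs{\mathbf P_t g}_\infty \leq \abs{g}_\infty$ forces $\norm{\mathbf P_t^n}_{\mathscr{B}\to\mathscr{B}} \leq 1$ uniformly in $n$ and $t$, and $\mathbf P_t g = \mathbf P(\e^{itf}g)$ is continuous by \ref{H1compactIII}. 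With $N \equiv N_l \equiv 0$, Hypothesis \ref{Momdec} reduces to the inequality $\max\{\abs{f}_\infty^{1+\gamma}, 1\} \leq c$, which holds for any $\gamma > 0$ and any $\alpha > 2$ since $f$ is bounded, while points \ref{Momdec002}--\ref{Momdec003} are vacuous.

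For Hypothesis \ref{SPGA}, the contraction $\abs{\mathbf P g}_\infty \leq \abs{g}_\infty$ gives point \ref{SPGA001}, and the decomposition $\mathbf P^n = \Pi + Q^n$ with $\Pi Q = Q\Pi = 0$ and spectral radius $r(Q) < 1$ furnished by \ref{H3compactIII} is essentially the content of \ref{SPGA002}; the geometric bound $\norm{Q^n}_{\mathscr{B}\to\mathscr{B}} \leq C_Q \kappa^n$ then comes from Gelfand's spectral radius formula applied with any $\kappa \in (r(Q), 1)$. For Hypothesis \ref{CECO}, the invariant probability $\bs\nu$ from \ref{H2compactIII} satisfies $\bs\nu(N^2) = 0 < +\infty$, so formula \eqref{mu-sigma001} applies and yields $\mu = \bs\nu(f) = 0$ by \ref{H4compactIII}; the positivity $\sigma > 0$ is the classical consequence of the non-degeneracy assumption \ref{H5compactIII}, already recorded in the central limit theorem statement recalled right after Hypothesis \ref{hypoHcompactIII}.

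I do not foresee any serious obstacle in carrying out this program: it is essentially a translation between the concrete spectral-gap assumption of Section \ref{Compact2} and the abstract framework of Section \ref{sec-not-res}, made particularly clean by the choice $N \equiv 0$, which collapses all of the local integrability conditions to trivialities. The only detail deserving a moment's care is the passage from the spectral-radius hypothesis $r(Q) < 1$ to the geometric operator-norm bound required in \ref{SPGA002}, handled routinely via Gelfand's formula and an appropriate choice of $\kappa$.
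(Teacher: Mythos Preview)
Your proposal is correct and follows exactly the approach the paper intends: verifying Hypotheses \ref{BASP}--\ref{CECO} with $\mathscr{B}=\mathscr{C}(\bb X)$ and $N\equiv N_l\equiv 0$, after which Theorems \ref{thonV}--\ref{loideRayleigh} apply directly. Your direct contraction argument $\abs{\mathbf P_t^n g}_\infty\leq\abs{g}_\infty$ for \ref{PETO} is in fact simpler than the perturbation-theoretic route used in Appendix \ref{proof-cas-compact} for the Lipschitz norm of Proposition \ref{PP002}, precisely because the uniform norm makes $\mathbf P_t$ a contraction outright.
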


All the elements of the proof are contained in the proof of Proposition \ref{PP002} (see Appendix \ref{proof-cas-compact}), 
which therefore is left to the reader.

\begin{remark}
\label{casfini}
As a special example of the compact case, consider the
Markov chain $(X_n)_{n\geq 1}$ taking values in a finite space $\bb X.$
Assume that $(X_n)_{n\geq 1}$ is aperiodic and irreducible with transition matrix $\mathbf P$.
Let $f$ be a finite function on $\bb X$.
We shall verify Hypotesis \ref{hypoHcompactIII}. 
The Banach space $\mathscr B$ consists of all finite real functions on $\bb X,$ 
therefore condition \ref{H1compactIII} is obvious.
Moreover, there is a unique invariant measure $\bs \nu,$ which proves condition \ref{H2compactIII}. 
According to Perron-Frobenius theorem, the transition matrix $\mathbf P$ admits $1$ as the only simple eigenvalue of modulus $1$, which implies 
condition \ref{H3compactIII}.
Assume in addition that $\bs \nu(f)=0$ and   
that there exists a path $x_0,\dots,x_n$ in $\bb X$ such that 
$\mathbf P(x_0,x_1)>0,\dots,\mathbf P(x_{n-1},x_n)>0,\mathbf P(x_n,x_0)>0$ 
and $f(x_0)+\dots +f(x_n)\not= 0$ (conditions \ref{H4compactIII} and \ref{H5compactIII} respectively).
As a consequence of Proposition \ref{PP003} 
it follows that the asymptotics as $n\to+\infty$ of the probability $\bb P_x \left( \tau_y >n \right)$ and  of the conditional law 
$\bb P_x \left( \sachant{y+S_n \leq \cdot \sqrt{n}}{\tau_y >n} \right)$ are given by 
Theorems \ref{thontau} and \ref{loideRayleigh}, respectively.
\end{remark}

\section{Martingale approximation}
\label{Mart Approx}
All over Sections \ref{Mart Approx}-\ref{AsCondMarkWalk} we assume Hypotheses \ref{BASP}-\ref{CECO}. 
The aim of Sections \ref{Mart Approx}-\ref{PosHaFun} is to prove the existence 
and the positivity of the harmonic function claimed in Theorem \ref{thonV}. 
To summarize the approach, 
we approximate the walk $(y+S_n)_{n\geq 1}$ by a martingale $(z+M_n)_{n\geq 1}$ in Section \ref{Mart Approx} 
and prove a difficult result on the uniform boundedness in $n$ of the expectation $\bb E_x \left( y+S_n \,;\, \tau_y > n \right)$
in Section \ref{CMWI}.
The key for doing this is the introduction of two stopping times $T_{z}$ and $\hat{T}_{z}$ defined by \eqref{twostopingtimes001}. The existence and the positivity of the harmonic function on a non-empty set are proved 
in Sections \ref{Sec Harm Func} and \ref{PosHaFun}, respectively.

It is well known that the Poisson equation
\[
\Theta - \mathbf P \Theta = f
\]
admits as a solution the real valued function $\Theta$ defined for any $x\in \bb X$ by
\[
\Theta(x)  = f(x) + \sum_{k=1}^{+\infty} \mathbf P^k f(x).
\]
For any $x\in \bb X$, let
\[
r(x) = \mathbf P \Theta(x) = \Theta(x) - f(x) = \sum_{k=1}^{+\infty} \mathbf P^k f(x).
\]
From \eqref{bound_EfXn} we deduce the following assertion.
\begin{lemma}
\label{MTR}
The functions $\Theta$ and $r$ exist on $\bb X$ and for any $x \in \bb X$,
\[
\abs{\Theta(x)} \leq c \left( 1+N(x) \right) \qquad \text{and} \qquad \abs{r(x)} \leq c \left( 1+N(x) \right).
\]
\end{lemma}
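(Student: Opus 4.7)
The plan is to read everything off the exponential decay bound \eqref{bound_EfXn} already established for $\mathbf P^n f(x) = \bb E_x(f(X_n))$ under Hypothesis \ref{CECO}. Specifically, \eqref{bound_EfXn} gives $|\mathbf P^k f(x)| \leq e^{-ck}(1+N(x))$ for all $k\geq 0$ and $x\in\bb X$, so the series defining $r(x) = \sum_{k\geq 1}\mathbf P^k f(x)$ is dominated by a convergent geometric series, hence absolutely convergent for every $x\in\bb X$. This simultaneously proves existence of $r$ (and hence of $\Theta = f + r$) and yields
\[
|r(x)| \;\leq\; \sum_{k=1}^{+\infty} e^{-ck}(1+N(x)) \;=\; \frac{e^{-c}}{1-e^{-c}}\,(1+N(x)) \;\leq\; c\,(1+N(x)).
\]

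For the bound on $\Theta$, I would only need to control $|f(x)|$ separately, since $\Theta(x) = f(x) + r(x)$. The required estimate on $f$ comes directly from point \ref{Momdec001} of Hypothesis \ref{Momdec}: it asserts $|f(x)|^{1+\gamma} \leq c(1+N(x))$, and since $N\geq 0$ and $\gamma>0$, this gives $|f(x)| \leq c(1+N(x))^{1/(1+\gamma)} \leq c(1+N(x))$ (absorbing constants). Combining with the bound on $r$ yields $|\Theta(x)| \leq c(1+N(x))$, as claimed.

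There is essentially no obstacle here — the lemma is a direct corollary of \eqref{bound_EfXn} and of the growth control on $|f|$ built into Hypothesis \ref{Momdec}. The only point worth double-checking is that the constant $c$ appearing in the exponent of \eqref{bound_EfXn} is strictly positive (it is, since it comes from the spectral gap constant $\kappa<1$ of Hypothesis \ref{SPGA} combined with $\mu=0$), so that the geometric series actually converges. Once that is noted, the proof reduces to two lines, and the identity $r = \mathbf P\Theta = \Theta - f$ follows tautologically from the definitions because the series for $r(x)$ is termwise $\mathbf P$ applied to the series for $\Theta(x)$, the interchange of $\mathbf P$ and the infinite sum being justified by dominated convergence against the integrable majorant provided by \eqref{decexpN}.
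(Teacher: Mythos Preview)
Your proposal is correct and follows exactly the approach the paper takes: the paper's proof is simply the one-line remark ``From \eqref{bound_EfXn} we deduce the following assertion,'' and your argument spells out precisely that deduction. If anything, you have given more detail than the paper, including the separate bound on $|f|$ via Hypothesis~\ref{Momdec} and the justification of $r=\mathbf P\Theta$, both of which the paper leaves implicit.
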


Following Gordin \cite{gordin_central_1969},  define the process $\left( M_n \right)_{n\geq 0}$ by setting $M_0=0$ and, for any $n\geq 1$,
\[
M_n = \sum_{k=1}^{n} \left[ \Theta \left(X_k\right) - \mathbf P \Theta \left(X_{k-1}\right) \right] = \sum_{k=1}^{n} \left[ \Theta \left(X_k\right) - r \left(X_{k-1}\right) \right].
\]
It is easy to see that, for any $x \in \bb X,$ the process $(M_n)_{n\geq 0}$ is a zero mean $\bb P_x$-martingale for the natural filtration $\left(\mathscr{F}_n\right)_{n \geq 0}$, where $\mathscr{F}_n$ is the $\sigma$-algebra generated by $X_1,\,X_2,\dots,\,X_n$ and $\mathscr{F}_0$ the trivial $\sigma$-algebra. 
Denote by $\xi_n$ the increments of the martingale $(M_n)_{n\geq 0}$: for any $n\geq 1$, 
\[
\xi_n := \Theta\left(X_n\right) - r\left(X_{n-1}\right).
\]
In the sequel it will be convenient to consider the martingale $(z+M_n)_{n\geq 1}$ starting at 
\[
z=y+r(x).
\]
The reason for this is the following approximation which is an easy consequence of the definition of the martingale $(z+M_n)_{n\geq 1}$: 
for any $x \in \bb X$ and $y\in \bb R,$ on the event $\Omega,$ 
\begin{equation}
	\label{decMSX}
	z+M_n = y+r(x)+\sum_{k=1}^n \left[ r \left( X_k \right) + f \left( X_k \right) - r \left( X_{k-1} \right) \right] = y + S_n + r \left( X_n \right).
\end{equation}

\begin{lemma}\ 
\label{majmart}
\begin{enumerate}[ref=\arabic*, leftmargin=*, label=\arabic*.]
	\item \label{majmart001} For any $p \in [1,\alpha]$, $x\in \bb X$ and $n \geq 1$,
	\[
	\bb E_x^{1/p} \left( \abs{M_n}^p \right) \leq c_p \sqrt{n} \left( 1 +  N(x) \right).
	\]
	\item \label{majmart002} For any $x\in \bb X$ and $n \geq 1$,
	\[
	\bb E_x \left( \abs{M_n} \right) \leq c \left( \sqrt{n} +  N(x) \right).
	\]
\end{enumerate}
\end{lemma}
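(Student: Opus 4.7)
The statement splits naturally into two parts: the $L^p$ bound (Part 1) follows from the Burkholder-Davis-Gundy inequality applied to the martingale $(M_n)$, while the sharper $L^1$ bound (Part 2) exploits the pointwise telescoping identity $M_n = S_n + r(X_n) - r(x)$ (which follows from $\Theta = r+f$ and the definition of $\xi_k$) combined with the variance estimate from Proposition \ref{MomAs}. The key point is that the general martingale machinery is too crude for Part 2 because it would produce a product $\sqrt{n}\,N(x)$ rather than the sum $\sqrt{n} + N(x)$.

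For Part 1, I would first observe that the increments satisfy $|\xi_k| \leq |\Theta(X_k)| + |r(X_{k-1})| \leq c\left(1 + N(X_k) + N(X_{k-1})\right)$ by Lemma \ref{MTR}. Using Hypothesis \ref{Momdec}.\ref{Momdec001}, which supplies $\bb E_x^{1/\alpha}(N(X_k)^\alpha) \leq c(1+N(x))$, together with Hölder's inequality for $p \leq \alpha$, one deduces uniformly in $k \geq 1$ the bound $\bb E_x(|\xi_k|^p) \leq c_p(1+N(x))^p$. Then Burkholder-Davis-Gundy gives $\bb E_x(|M_n|^p) \leq c_p\, \bb E_x\bigl((\sum_{k=1}^n \xi_k^2)^{p/2}\bigr)$, and the right-hand side is handled case by case: for $p \in [1,2]$ one applies Jensen's inequality (concavity of $u \mapsto u^{p/2}$) together with the orthogonal-increment identity $\bb E_x(M_n^2) = \sum_k \bb E_x(\xi_k^2)$, while for $p \in (2,\alpha]$ one uses Minkowski's inequality in $L^{p/2}$. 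In either case the bound is $c_p n^{p/2}(1+N(x))^p$, which is exactly Part 1.

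For Part 2, I would start from the telescoping identity $M_n = r(X_n) - r(x) + S_n$. By Lemma \ref{MTR} and \eqref{decexpN}, $\bb E_x(|r(X_n) - r(x)|) \leq c(1+N(x))$. The remaining term is bounded by $\bb E_x(|S_n|) \leq (\Var_x(S_n) + \bb E_x(S_n)^2)^{1/2}$. Estimate \eqref{bound_EfXn} summed as a geometric series yields $|\bb E_x(S_n)| \leq c(1+N(x))$, and Proposition \ref{MomAs}.\ref{MomAs002} combined with the bounds $\mu_\alpha(x)^{2+2\gamma} \leq c(1+N(x))^2$ and $\norm{\bs\delta_x}_{\mathscr B'} \leq c(1+N(x))$ from Hypothesis \ref{Momdec}.\ref{Momdec001} provides $\Var_x(S_n) \leq c\,n + c(1+N(x))^2$. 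Thus $\bb E_x(|S_n|) \leq c(\sqrt n + 1 + N(x)) \leq c(\sqrt n + N(x))$ since $\sqrt n \geq 1$ for $n\geq 1$, and Part 2 follows.

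The main subtlety lies in Part 2: even applying Part 1 with $p=1$ gives only $\bb E_x(|M_n|) \leq c\sqrt n(1+N(x))$, in which $\sqrt n$ and $N(x)$ appear multiplicatively. The decoupling into a sum requires replacing the martingale moment inequality by the explicit identity $M_n - S_n = r(X_n) - r(x)$ and invoking the refined moment and variance estimates of Proposition \ref{MomAs}, which are specifically designed to separate the dependence on $n$ from the dependence on the initial state $x$. This sharper form will be crucial in later sections when controlling conditional expectations involving the exit time $\tau_y$.
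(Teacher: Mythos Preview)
Your proof is correct. Part~1 follows essentially the same path as the paper (Burkholder plus moment bounds on the increments $\xi_k$); the only cosmetic difference is that for $p>2$ the paper bounds $\bigl(\sum_k\xi_k^2\bigr)^{p/2}$ via H\"older with exponents $p/2$ and $p/(p-2)$, whereas you invoke Minkowski in $L^{p/2}$, and for $p\in[1,2]$ the paper simply dominates the $L^p$ norm by an $L^q$ norm with $q>2$ already handled.

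Part~2, however, is handled quite differently. The paper does \emph{not} use the identity $M_n=S_n+r(X_n)-r(x)$ here; instead it writes $M_n = M_{\pent{n^{\ee}}} + (M_n-M_{\pent{n^{\ee}}})$ for a small $\ee\in(0,1/2)$, bounds the short initial piece $\sum_{k\leq n^{\ee}}\bb E_x(\abs{\xi_k})$ by $cn^{\ee}+cN(x)$ using the refined increment estimate $\bb E_x(\abs{\xi_k})\leq c+e^{-ck}N(x)$ (which comes from \eqref{decexpN}), and then applies the Markov property together with Part~1 to the remaining piece, so that the factor $1+N(X_{\pent{n^{\ee}}})$ appearing there has expectation bounded by $c+e^{-cn^{\ee}}N(x)$; the product $\sqrt{n}\,e^{-cn^{\ee}}$ is then absorbed into a constant. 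Your route via Proposition~\ref{MomAs}.\ref{MomAs002} is shorter and more transparent: it directly imports the decoupling of $n$ and $x$ already encoded in the variance estimate for $S_n$, at the price of relying on that external input. The paper's burn-in argument is more self-contained within the martingale framework and, more importantly, showcases a device (restart the chain after a short time to wash out the dependence on the initial state) that is reused repeatedly in Sections~\ref{CMWI}--\ref{PosHaFun}.
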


\begin{proof}
First we control the increments $\xi_n$. By Lemma \ref{MTR}, for any $n\geq 1$,
\begin{equation}
	\label{majdesxi000}
	\abs{\xi_n} \leq c\left( 1 + N\left( X_n \right) + N\left( X_{n-1} \right) \right).
\end{equation}
So, using the point \ref{Momdec001} of Hypothesis \ref{Momdec} and \eqref{decexpN}, for any $n\geq 1$,
\begin{align}
	\label{Majdesxi001} \bb E_x^{1/p} \left( \abs{\xi_n}^p \right) &\leq c_p \left( 1+N(x) \right) \qquad \forall p \in [1,\alpha],\\
	\label{Majdesxi002} \bb E_x \left( \abs{\xi_n} \right) &\leq c  + \e^{-c n} N(x).
\end{align}

\textit{Proof of the claim \ref{majmart001}}. By Burkholder's inequality, for $2< p \leq \alpha$,
\[
\bb E_x^{1/p} \left( \abs{M_n}^p \right) \leq c_p \norm{ \left( \sum_{k=1}^n \xi_k^2 \right)^{1/2} }_p = c_p \bb E_x^{1/p} \left( \left( \sum_{k=1}^n \xi_k^2 \right)^{p/2} \right).
\]
Using H\"older's inequality with the exponents $u=p/2>1$ and $v=\frac{p}{p-2}$, we obtain
\[
\bb E_x^{1/p} \left( \abs{M_n}^p \right) \leq c_p \bb E_x^{1/p} \left[ \left( \sum_{k=1}^n \xi_k^{2u} \right)^{\frac{p}{2u}} n^{\frac{p}{2v}} \right] = c_p n^{\frac{p-2}{2p}} \left(\sum_{k=1}^n \bb E_x \left[ \abs{\xi_k}^{p} \right] \right)^{1/p}.	
\]
From \eqref{Majdesxi001}, for any $p\in (2,\alpha]$,
\begin{equation}
	\label{PMM}
	\bb E_x^{1/p} \left( \abs{M_n}^p \right) \leq c_p n^{\frac{p-2}{2p}} \left( \sum_{k=1}^n c_p \left( 1+N(x) \right)^p \right)^{1/p} \leq c_p \sqrt{n} \left( 1+N(x) \right).
\end{equation}
Using the Jensen inequality for $p\in [1,2]$, we obtain the claim \ref{majmart001}.

\textit{Proof of the claim \ref{majmart002}}. Consider $\ee \in (0,1/2)$. By \eqref{Majdesxi002},
\begin{align*}
	\bb E_x \left( \abs{M_n} \right) &\leq  \sum_{k=1}^{\pent{n^\ee}} \bb E_x \left( \abs{\xi_k} \right) + \bb E_x \left( \abs{M_n-M_{\pent{n^\ee}}} \right)\\
	&\leq c n^\ee + c N(x) + \bb E_x \left( \abs{M_n-M_{\pent{n^\ee}}} \right).
\end{align*}
Since $(X_n, M_n)_{n\geq 0}$ is a Markov chain, by the Markov property, the claim \ref{majmart001} and \eqref{decexpN},
\begin{align*}
	\bb E_x \left( \abs{M_n} \right) &\leq c n^\ee + c N(x) + \bb E_x \left( \bb E\left( \sachant{\abs{M_n-M_{\pent{n^\ee}}}}{\mathscr{F}_{\pent{n^\ee}}} \right) \right) \\
	&\leq c n^\ee + c N(x) + \bb E_x \left[ c \left(n-\pent{n^\ee}\right)^{1/2} \left( 1+N \left( X_{\pent{n^\ee}} \right) \right) \right] \\
	&\leq c \sqrt{n} + c_{\ee} N(x).
\end{align*}
\end{proof}
A key point in the proof of the existence and of the positivity of the harmonic function
is the introduction of two stopping times. The first one is the first time when the martingale $(z+M_n)_{n\geq 1}$ becomes non-positive, say $T_z$, and the second one is the first time, after the time $\tau_y,$ when the martingale $(z+M_n)_{n\geq 1}$ becomes non-positive, say $\hat{T}_z$. Precisely, for any $x\in \bb X$, $z\in \bb R$ and $y=z-r(x)$, set
\begin{equation}
T_z := \inf \left\{ k \geq 1,\, z+M_k \leq 0 \right\} \quad \text{and} \quad \hat{T}_z := \inf \left\{ k \geq \tau_y,\, z+M_k \leq 0 \right\}.
\label{twostopingtimes001}
\end{equation}

The following lemmas will be useful in the next sections.

\begin{lemma}
\label{Mnsubmartingale}
For any $x\in \bb X$, $z \in \bb R$, the sequence $\left( (z+M_n) \mathbbm 1_{\left\{ \hat{T}_z > n \right\}} \right)_{n\geq 0}$ is a $\bb P_x$-submartingale.
\end{lemma}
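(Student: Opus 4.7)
The plan is to verify the submartingale inequality case-by-case after noting that each $Y_n := (z+M_n)\mathbbm 1_{\{\hat{T}_z > n\}}$ is $\bb P_x$-integrable, which is immediate from $|Y_n| \leq |z|+|M_n|$ together with the bound $\bb E_x(|M_n|) \leq c(\sqrt{n}+N(x))$ from point \ref{majmart002} of Lemma \ref{majmart}.

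The key structural observation I would use is that $\{\hat{T}_z > n\}$ is $\mathscr{F}_n$-measurable (as $\hat{T}_z$ is a stopping time for $(\mathscr{F}_n)$) and may be partitioned further using the $\mathscr{F}_n$-measurable event $\{\tau_y \leq n\}$. Accordingly, I would split the sample space into the three disjoint $\mathscr{F}_n$-measurable pieces
\[
E_0 = \{\hat{T}_z \leq n\}, \qquad E_1 = \{\hat{T}_z > n,\ \tau_y \leq n\}, \qquad E_2 = \{\tau_y > n\},
\]
and verify $\bb E_x(Y_{n+1}\mathbbm 1_{E_i} \,|\, \mathscr{F}_n) \geq Y_n \mathbbm 1_{E_i}$ separately for $i=0,1,2$.

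On $E_0$, the inclusion $\{\hat{T}_z > n+1\} \subset \{\hat{T}_z > n\}$ forces $Y_n = Y_{n+1} = 0$, so the inequality is trivial. On $E_1$, the definition of $\hat{T}_z$ forces $z+M_n > 0$, and moreover the event $\{\hat{T}_z > n+1\}$ coincides on $E_1$ with $\{z+M_{n+1} > 0\}$; hence $Y_{n+1}\mathbbm 1_{E_1} = (z+M_{n+1})^+ \mathbbm 1_{E_1}$, and the elementary bound $(\cdot)^+ \geq (\cdot)$ together with the martingale identity $\bb E_x(M_{n+1}\,|\,\mathscr{F}_n) = M_n$ delivers the inequality. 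On $E_2$, one must further distinguish whether $\tau_y$ equals $n+1$ or strictly exceeds it: the event $\{\hat{T}_z > n+1\} \cap E_2$ decomposes as $E_2 \cap (\{\tau_y > n+1\} \cup \{\tau_y = n+1,\ z+M_{n+1}>0\})$. Replacing the second piece by a $(\cdot)^+$ and using $(z+M_{n+1})^+ \geq z+M_{n+1}$ yields $Y_{n+1}\mathbbm 1_{E_2} \geq (z+M_{n+1})\mathbbm 1_{E_2}$, and one more application of the martingale identity finishes this case.

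The only mildly delicate point is the decomposition on $E_2$, which requires one to notice that the positivity constraint coming from $\hat{T}_z$ only activates at the step $\tau_y$ itself. Everything else reduces to routine bookkeeping with indicator functions combined with the martingale property of $(M_n)$ established just before the statement.
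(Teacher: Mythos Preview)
Your proof is correct, but it takes a more elaborate route than the paper. The paper's argument does not mention $\tau_y$ at all: it simply writes
\[
\bb E_x\big((z+M_{n+1})\mathbbm 1_{\{\hat{T}_z>n+1\}}\,\big|\,\mathscr{F}_n\big)
= (z+M_n)\mathbbm 1_{\{\hat{T}_z>n\}} - \bb E_x\big((z+M_{n+1})\mathbbm 1_{\{\hat{T}_z=n+1\}}\,\big|\,\mathscr{F}_n\big),
\]
using only that $\{\hat{T}_z>n\}\in\mathscr{F}_n$ and the martingale identity, and then observes that by definition $z+M_{\hat{T}_z}\leq 0$, so the subtracted term is non-negative. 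In other words, the only relevant feature of $\hat{T}_z$ is that the process is non-positive at that stopping time; the two-stage structure through $\tau_y$ never enters. Your three-case split $E_0,E_1,E_2$ and the further subdivision of $E_2$ according to whether $\tau_y=n+1$ are valid but unnecessary bookkeeping---they rederive, piece by piece, the single inequality that the paper obtains in one line. The advantage of the paper's approach is that it applies verbatim to any stopping time $T$ with $z+M_T\leq 0$, whereas your argument is tailored to the specific definition of $\hat{T}_z$.
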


\begin{proof}
For any $n\geq 0$,
\begin{align*}
	\bb E_x &\left( \sachant{z+M_{n+1} \,;\, \hat{T}_z > n+1}{\mathscr{F}_n} \right) \\
	&\quad= \bb E_x \left( \sachant{z+M_{n+1} \,;\, \hat{T}_z > n}{\mathscr{F}_n} \right) - \bb E_x \left( \sachant{z+M_{n+1} \,;\, \hat{T}_z = n+1}{\mathscr{F}_n} \right)\\
	&\quad= \left( z+M_n \right) \mathbbm 1_{\left\{ \hat{T}_z > n \right\}} - \bb E_x \left( \sachant{z+M_{\hat{T}_z} \,;\, \hat{T}_z = n+1}{\mathscr{F}_n} \right).
\end{align*}
By the definition of $\hat{T}_z$ we have $z+M_{\hat{T}_z} < 0$ a.s.\ and the result follows.
\end{proof}

The proof of the following Lemma shows how to apply the Markov property with the event $\left\{\hat{T}_z >n\right\}.$  
The same approach will be used repeatedly in the case of more complicated functionals, as for exemple
$\bb E_x \left( z+M_n \,;\, \hat{T}_z > n \right)$, without giving the details.  
\begin{lemma}
\label{MarkovpropforTz}
For any $x\in \bb X$, $z \in \bb R$, $n\geq 0$, $k\leq n$ and $y=z-r(x)$,
\begin{align*}
	\bb P_x &\left( \hat{T}_z > n \right) = \int_{\bb X \times \bb R} \bb P_{x'} \left( \hat{T}_{z'} > n-k \right) \bb P_x \left( X_k \in \dd x' \,,\, z+M_k \in \dd z' \,,\, \tau_y > k \right) \\
	&+ \int_{\bb X \times \bb R} \bb P_{x'} \left( T_{z'} > n-k \right) \bb P_x \left( X_k \in \dd x' \,,\, z+M_k \in \dd z' \,,\, \tau_y \leq k \,,\, \hat{T}_{z} > k \right).
\end{align*}
\end{lemma}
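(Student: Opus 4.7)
The plan is to partition $\{\hat T_z > n\}$ according to whether $\tau_y > k$ or $\tau_y \leq k$, apply the Markov property at time $k$, and then recognize the two resulting conditional probabilities as $\bb P_{x'}(\hat T_{z'} > n-k)$ and $\bb P_{x'}(T_{z'} > n-k)$ respectively. Since $\hat T_z \geq \tau_y$ by the definition of $\hat T_z$, on $\{\tau_y > k\}$ one automatically has $\hat T_z > k$, so (using $k \leq n$) the decomposition
\[
\left\{\hat T_z > n\right\} = \left\{\hat T_z > n,\, \tau_y > k\right\} \cup \left\{\hat T_z > n,\, \tau_y \leq k,\, \hat T_z > k\right\}
\]
is a disjoint union of $\mathscr F_k$-compatible events (both constraints beyond time $k$ involve only the future of the chain).

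Next I would introduce the shifted process by setting $X'_j = X_{k+j}$, $M'_j = M_{k+j}-M_k$, and $S'_j = S_{k+j}-S_k$, and write $z' := z+M_k$. Using \eqref{decMSX}, $z' - r(X_k) = y + S_k$, so $z'$ plays the role of $z$ for the shifted chain started at $X_k$ (with new walk level $y+S_k$). Denoting by $\tau'$, $T'$, $\hat T'$ the analogues of $\tau$, $T$, $\hat T$ for the shifted process, on $\{\tau_y > k\}$ the definitions immediately give $\tau_y = k + \tau'_{y+S_k}$ and $\hat T_z = k + \hat T'_{z'}$, hence
\[
\{\hat T_z > n\} \cap \{\tau_y > k\} = \{\hat T'_{z'} > n-k\} \cap \{\tau_y > k\}.
\]
On the other piece $\{\tau_y \leq k,\, \hat T_z > k\}$, the constraint $z+M_j > 0$ already holds for $\tau_y \leq j \leq k$, and $\hat T_z > n$ reduces to demanding $z+M_j > 0$ for every $k < j \leq n$, i.e.\ $T'_{z'} > n-k$.

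Finally, both $\{\tau_y > k\}$ and $\{\tau_y \leq k,\, \hat T_z > k\}$ are $\mathscr F_k$-measurable, so conditioning on $\mathscr F_k$ and invoking the Markov property (the shifted chain given $\mathscr F_k$ has the law of the original chain started at $X_k$) replaces $\bb P_x(\hat T'_{z'} > n-k \mid \mathscr F_k)$ by $\bb P_{X_k}(\hat T_{z'} > n-k)$, and likewise for the $T'$ event. Disintegrating against the joint law of $(X_k, z+M_k)$ on each piece then yields exactly the two integrals in the statement. There is no real obstacle; the only point requiring care is the pathwise identity $\hat T_z = k + \hat T'_{z'}$ on $\{\tau_y > k\}$, which follows because the infimum defining $\hat T_z$ is constrained to indices $\geq \tau_y > k$, allowing one to reindex by $j \mapsto j-k$ without losing any candidate.
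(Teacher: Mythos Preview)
Your proof is correct and follows essentially the same approach as the paper: partition $\{\hat T_z>n\}$ according to whether $\tau_y>k$ or $\tau_y\leq k$, and apply the Markov property at time $k$. The paper's version is marginally less direct---it first splits the piece $\{\tau_y>k\}$ further into $\{\tau_y>n\}$ and $\{\tau_y=k+i\}$ for $1\leq i\leq n-k$, applies the Markov property to each, and then recombines these to recover the $\bb P_{x'}(\hat T_{z'}>n-k)$ integral---whereas your pathwise identity $\hat T_z=k+\hat T'_{z'}$ on $\{\tau_y>k\}$ handles that case in one stroke.
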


\begin{proof}
For any $k\leq n$, we have
\[
\bb P_x \left( \hat{T}_z > n \right) = \bb P_x \left( \tau_y > n \right) + \sum_{i=1}^{n-k} \bb P_x \left( \tau_y =i+k \,,\, \hat{T}_z > n \right) + \bb P_x \left( \tau_y \leq k \,,\, \hat{T}_z > n \right).
\]
By the Markov property and \eqref{decMSX}, with $y'=z'-r(x')$,
\begin{align*}
	\bb P_x \left( \hat{T}_z > n \right) =\;& \int_{\bb X \times \bb R} \bb P_{x'} \left( \tau_{y'} > n-k \right) \bb P_x \left( X_k \in \dd x' \,,\, z+M_k \in \dd z' \,,\, \tau_y > k \right) \\
	&+ \sum_{i=1}^{n-k} \int_{\bb X \times \bb R} \bb P_{x'} \left( \tau_{y'} =i \,,\, z'+M_{i} > 0 \,,\, \dots \,,\, z'+M_{n-k} > 0 \right) \\
	&\hspace{2cm} \times\bb P_x \left( X_k \in \dd x' \,,\, z+M_k \in \dd z' \,,\, \tau_y > k \right) \\
	&+ \int_{\bb X \times \bb R} \bb P_{x'} \left( T_{z'} > n-k \right) \bb P_x \left( X_k \in \dd x' \,,\, z+M_k \in \dd z' \,,\, \tau_{y} \leq k \,,\, \right. \\
	&\hspace{6cm} \left. z+M_{\tau_y} > 0 \,,\, \dots \,,\, z+M_k > 0 \right).
\end{align*}
Putting together the first two terms we get the result.
\end{proof}

\section{Integrability of the killed martingale and of the killed Markov walk}
\label{CMWI}

First we give  a bound of order $n^{1/2-2\ee}$ of the expectation of the martingale $(z+M_n)_{n\geq 0}$ killed at $T_z$ and a similar bound when the martingale is killed at $\hat{T}_z$. 
From these two results we will deduce a uniform in $n$ bound for the second expectation, \textit{i.e.}\ for the expectation of the martingale $(z+M_n)_{n\geq 0}$ killed at $\hat{T}_z$.
We will conclude the section by showing that the expectation of the Markov walk $(y+S_n)_{n\geq 0}$ killed at $\tau_y$ is also bounded uniformly in $n$.

\begin{lemma}
\label{firstupperboundforMnTz}
There exists $\ee_0>0$ such that, for any $\ee \in (0,\ee_0)$, $x\in \bb X$, $z\in \bb R$ and $n \in \bb N$, it holds
\[ 
\bb E_x \left( z+M_n \,;\, T_z > n \right) \leq \max(z,0) + c_{\ee} \left( n^{1/2-2\ee} + N(x) \right).
\]
\end{lemma}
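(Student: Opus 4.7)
The plan is to apply optional stopping to the bounded stopping time $T_z \wedge n$, which reduces the problem to bounding an overshoot sum, and then to control this sum by truncating $N$ at a well-chosen level.

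First I would apply the optional stopping theorem to the martingale $(z+M_n)_{n\geq 0}$ at the bounded stopping time $T_z \wedge n$, obtaining $\bb E_x(z+M_{T_z \wedge n}) = z$, equivalently
\[
\bb E_x \left( z+M_n \,;\, T_z > n \right) = z + \bb E_x \left( -(z+M_{T_z}) \,;\, T_z \leq n \right).
\]
To bound the overshoot $-(z+M_{T_z}) \geq 0$ on $\{T_z \leq n\}$, I would split according to whether $T_z = 1$ or $T_z \geq 2$. On $\{T_z \geq 2\}$, the definition of $T_z$ forces $z+M_{T_z-1} > 0$, so $-(z+M_{T_z}) \leq -\xi_{T_z} \leq \abs{\xi_{T_z}}$; on $\{T_z = 1\}$, we have $-(z+M_1) = -z - \xi_1 \leq \abs{z}\mathbbm 1_{z<0} + \abs{\xi_1}$. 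A key observation is that, for $z<0$, $\{T_z = 1\} = \{\xi_1 \leq \abs{z}\}$, hence $z + \abs{z}\bb P_x(T_z = 1) = z\bb P_x(T_z > 1) \leq 0$. Combining these bounds,
\[
\bb E_x \left( z+M_n \,;\, T_z > n \right) \leq \max(z, 0) + \bb E_x \left( \abs{\xi_{T_z}} \,;\, T_z \leq n \right).
\]

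It then remains to bound $\bb E_x(\abs{\xi_{T_z}} \,;\, T_z \leq n)$. By Lemma \ref{MTR} we have $\abs{\xi_k} \leq c(1+N(X_k)+N(X_{k-1}))$, and by point \ref{Momdec001} of \ref{Momdec}, $N \leq l + N_l$ pointwise for any $l \geq 1$. Hence $\abs{\xi_k} \leq c(1+l) + c(N_l(X_k)+N_l(X_{k-1}))$. Summing over $k \leq n$ on $\{T_z=k\}$, using $\sum_k \bb P_x(T_z=k) \leq 1$ for the deterministic part while dropping the indicator and invoking \eqref{decexpNl} for the $N_l$ contributions, yields
\[
\bb E_x \left( \abs{\xi_{T_z}} \,;\, T_z \leq n \right) \leq c(1+l) + \frac{cn}{l^{1+\beta}} + c(1+N(x)).
\]
Balancing the first two terms with the choice $l = n^{1/(2+\beta)}$ reduces this to $cn^{1/(2+\beta)} + c(1+N(x))$. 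Since $\beta > 0$ one has $1/(2+\beta) < 1/2$, and any $\ee \in (0, \beta/(8+4\beta))$ satisfies $1/(2+\beta) \leq 1/2 - 2\ee$, which gives the claim.

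The main obstacle is the case $z<0$: a naive approach would yield a bound of order $\sqrt{n}$ and fail to eliminate the linear contribution in $\abs{z}$. The exact cancellation $z + \abs{z}\bb P_x(T_z = 1) = z\bb P_x(T_z > 1) \leq 0$ is what makes the estimate uniform in $z<0$, while the sub-$\sqrt{n}$ gain in $n^{1/2-2\ee}$ relies essentially on the polynomial decay $\abs{\bs\nu(N_l)} \leq c/l^{1+\beta}$ from point \ref{Momdec003} of \ref{Momdec}.
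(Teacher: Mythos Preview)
Your proof is correct and follows essentially the same route as the paper: optional stopping at $T_z\wedge n$, bounding the overshoot $-(z+M_{T_z})$ by $|\xi_{T_z}|$, and controlling $\bb E_x(|\xi_{T_z}|;T_z\leq n)$ via the truncation functions $N_l$ from Hypothesis~\ref{Momdec}. The only cosmetic difference is that the paper fixes $l=\lfloor n^{1/2-2\ee}\rfloor$ and splits the time sum at $\lfloor n^{\ee}\rfloor$ (handling early times with $N$ and late times with $N_l$), whereas your use of the pointwise inequality $N\leq l+N_l$ avoids that split and leads directly to the balanced choice $l\asymp n^{1/(2+\beta)}$; both yield the same constraint $\ee<\beta/(4(2+\beta))$.
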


\begin{proof}
Using the fact that $( M_n )_{n\geq 0}$ is a zero mean martingale and the optional stopping theorem,
\[
\bb E_x \left( z+M_n \,;\, T_z > n \right) = z - \bb E_x \left( z + M_n \,;\, T_z \leq n \right) = z - \bb E_x \left( z + M_{T_z} \,;\, T_z \leq n \right).
\]
By the definition of $T_z$, on the event $\left\{T_z > 1\right\}$, we have
\[
\xi_{T_z} = z + M_{T_z} - \left( z+M_{T_z-1} \right) < z + M_{T_z}  \leq 0.
\]
Using this inequality and \eqref{majdesxi000}, we obtain
\begin{align}
	\bb E_x \left( z+M_n \,;\, T_z > n \right) &\leq z \bb P_x \left( T_z > 1 \right) + \bb E_x \left( \abs{\xi_1} \,;\, T_z=1 \right) + \bb E_x \left( \abs{\xi_{T_z}} \,;\, 1 < T_z \leq n \right) \nonumber\\
	&\leq \max(z,0) + c\bb E_x \left( 1 + N\left( X_{T_z} \right) + N\left( X_{T_z-1} \right) \,;\, T_z \leq n \right).
	\label{xitoN}
\end{align}
We bound $\bb E_x \left( N\left( X_{T_z} \right) \,;\, T_z \leq n \right)$ as follows. Let $\ee$ be a real number in $(0,1/6)$ and set $l=\pent{n^{1/2-2\ee}}$. Using the point \ref{Momdec001} of Hypothesis \ref{Momdec} we write
\begin{align*}
	\bb E_x \left( N\left( X_{T_z} \right) \,;\, T_z \leq n \right)	\leq n^{1/2-2\ee} &+ \bb E_x \left( N\left( X_{T_z} \right) \,;\, N\left( X_{T_z} \right) > n^{1/2-2\ee} \,,\, T_z \leq n \right) \\
	\leq n^{1/2-2\ee} &+ \sum_{k=1}^{\pent{n^{\ee}}} \bb E_x \left( N\left( X_k \right) \right) + \sum_{k=\pent{n^{\ee}}+1}^n \bb E_x \left( N_l \left( X_k \right) \right).
\end{align*}
By \eqref{decexpN} and \eqref{decexpNl},
\[
\bb E_x \left( N\left( X_{T_z} \right) \,;\, T_z \leq n \right)	\leq c n^{1/2-2\ee} + c N(x) + \frac{cn}{l^{1+\beta}} + \e^{-cn^{\ee}} \left( 1+N(x) \right).
\]
Choosing $\ee < \min(\frac{\beta}{4(2+\beta)},\frac{1}{6})$, we find that
\begin{equation}
	\label{NXT}
	\bb E_x \left( N\left( X_{T_z} \right) \,;\, T_z \leq n \right) \leq c_{\ee} n^{1/2-2\ee}+ c_{\ee} N(x).
\end{equation}
In the same manner, we obtain that $\bb E_x \left( N\left( X_{T_z-1} \right) \,;\, T_z \leq n \right) \leq c_{\ee} n^{1/2-2\ee}+ c_{\ee} N(x)$. Consequently, from \eqref{NXT} and \eqref{xitoN}, we conclude the assertion of the lemma.
\end{proof}

\begin{lemma}
\label{firstupperbound}
There exists $\ee_0 >0$ such that, for any $\ee \in (0,\ee_0)$, $x\in \bb X$, $z\in \bb R$ and $n \in \bb N$, we have
\[
\bb E_x\left( z+M_n \,;\, \hat{T}_z>n \right) \leq \max(z,0) + c_{\ee} \left( n^{1/2-2\ee} + n^{2\ee} N(x) \right).
\]
\end{lemma}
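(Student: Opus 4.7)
The strategy I would use is to apply the optional stopping theorem to the martingale $(z+M_k)_{k\geq 0}$ at the bounded stopping time $n \wedge \hat{T}_z$, which gives $\bb E_x(z+M_{n\wedge \hat{T}_z}) = z$ and, using that $z + M_{\hat{T}_z} \leq 0$ on $\{\hat{T}_z < +\infty\}$, the identity
\[
\bb E_x\bigl((z+M_n)\mathbbm 1_{\{\hat{T}_z > n\}}\bigr) = z + \bb E_x\bigl(\abs{z + M_{\hat{T}_z}}\mathbbm 1_{\{\hat{T}_z \leq n\}}\bigr).
\]
Thus the problem reduces to bounding the overshoot $\bb E_x(\abs{z + M_{\hat{T}_z}}\mathbbm 1_{\{\hat{T}_z \leq n\}})$.

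I would decompose the overshoot according to whether $\hat{T}_z > \tau_y$ (Case B) or $\hat{T}_z = \tau_y$ (Case A). In Case B, the minimality of $\hat{T}_z$ forces $z + M_{\hat{T}_z - 1} > 0$, so the overshoot is controlled by a single martingale increment: $\abs{z + M_{\hat{T}_z}} \leq \abs{\xi_{\hat{T}_z}} \leq c(1 + N(X_{\hat{T}_z}) + N(X_{\hat{T}_z - 1}))$ by \eqref{majdesxi000}. In Case A I would split further: for $\tau_y \geq 2$ the minimality of $\tau_y$ gives $y + S_{\tau_y - 1} > 0$, whence $\abs{y + S_{\tau_y}} \leq \abs{f(X_{\tau_y})}$ and $\abs{z + M_{\tau_y}} \leq c(1 + N(X_{\tau_y}))$ via Lemma \ref{MTR}; for $\tau_y = 1$ one has $\abs{z + M_1} \leq \abs{z} + \abs{\xi_1}$, and the extra $\abs{z}$ term combines with the $z$ appearing in the optional stopping identity to produce $\max(z,0)$ (indeed $z + \abs{z}\mathbbm 1_{\{z\leq 0\}} = \max(z,0)$).

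The remaining task is to estimate the expectations $\bb E_x(N(X_{\hat{T}_z}); \hat{T}_z \leq n)$, $\bb E_x(N(X_{\hat{T}_z-1}); \hat{T}_z \leq n)$ and $\bb E_x(N(X_{\tau_y}); \tau_y \leq n)$. For each of these I would use the truncation $N(X_k) \leq l + N_l(X_k)$ with $l = \pent{n^{1/2-2\ee}}$ as in the proof of Lemma \ref{firstupperboundforMnTz}, and split the summation over $k$ at $k_0 = \pent{n^{2\ee}}$. The early terms ($1 \leq k \leq k_0$) are handled directly via $\bb E_x(N(X_k)) \leq c + \e^{-ck} N(x)$ from \eqref{decexpN} and contribute at most $c n^{2\ee}(1 + N(x))$. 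The late terms ($k_0 < k \leq n$) are handled via $\bb E_x(N_l(X_k)) \leq c l^{-(1+\beta)} + \e^{-ck}(1+N(x))$ from \eqref{decexpNl}, yielding a contribution of order $l + cn/l^{1+\beta} + \e^{-c k_0}(1+N(x)) \leq c_\ee n^{1/2-2\ee}$ provided $\ee$ is small enough that $(2+\beta)\ee < \beta/2$. Summing all pieces gives exactly the claimed bound $\max(z,0) + c_\ee(n^{1/2-2\ee} + n^{2\ee}N(x))$.

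The main obstacle is the Case A analysis, because the overshoot of the walk $y + S_n$ at $\tau_y$ cannot be controlled by a martingale increment as it was for $T_z$ in Lemma \ref{firstupperboundforMnTz}. The crucial observation that unblocks this step is the minimality of $\tau_y$, which replaces the uncontrolled quantity $\abs{y + S_{\tau_y}}$ by the single increment $\abs{f(X_{\tau_y})}$, thereby reducing the problem to the moment and truncation bounds supplied by Hypothesis \ref{Momdec}.
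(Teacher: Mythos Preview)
Your approach is genuinely different from the paper's and, once a small imprecision is fixed, it is correct --- in fact slightly sharper.

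\textbf{Comparison with the paper.} The paper does \emph{not} bound the overshoot of $\hat T_z$ directly. Instead it introduces the shifted level $z_+ = z+n^{1/2-2\ee}$ and splits according to $T_{z_+}$: on $\{T_{z_+}>n,\hat T_z>n\}$ it simply invokes Lemma~\ref{firstupperboundforMnTz} for $T_{z_+}$; on $\{T_{z_+}\leq n,\hat T_z>n\}$ it uses the Markov property together with the observation that on $\{\tau_y>T_{z_+}\}$ one has $r(X_{T_{z_+}})<-n^{1/2-2\ee}$, which trades the crude $\sqrt{n}$ from Lemma~\ref{majmart} for an $n^{2\ee}$ times a moment of $N$. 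This is the origin of the extra factor $n^{2\ee}$ in front of $N(x)$ in the stated bound. Your route --- direct optional stopping at $\hat T_z$ and control of the overshoot by a single increment via minimality of $\hat T_z$ (Case~B) or of $\tau_y$ (Case~A) --- bypasses this detour and, when carried out correctly, yields the stronger bound $\max(z,0)+c_\ee\bigl(n^{1/2-2\ee}+N(x)\bigr)$, which of course implies the lemma.

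\textbf{The imprecision.} Your treatment of the sub-case $\hat T_z=\tau_y=1$ is not quite right as written. You bound $\abs{z+M_1}\leq\abs{z}+\abs{\xi_1}$ and then invoke $z+\abs{z}\mathbbm 1_{\{z\leq 0\}}=\max(z,0)$. But the event $\{\hat T_z=\tau_y=1\}$ is not $\{z\leq 0\}$: one can perfectly well have $z>0$ with $z+M_1\leq 0$ and $y+S_1\leq 0$, and then the contribution $\abs{z}\,\bb P_x(\hat T_z=\tau_y=1)$ combined with $z$ gives up to $2z$, not $z$. The fix is immediate: on $\{\hat T_z=\tau_y=1\}$ use the \emph{exact} value $-(z+M_1)=-z-\xi_1$ rather than a triangle inequality. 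Then the total becomes
\[
z + \bb E_x\bigl(-z-\xi_1;\,\hat T_z=\tau_y=1\bigr) + (\text{Cases A with }\tau_y\geq 2\text{ and B})
= z\bigl(1-\bb P_x(\hat T_z=\tau_y=1)\bigr) + \bb E_x(\abs{\xi_1}) + \cdots,
\]
and $z(1-p)\leq\max(z,0)$ for every $p\in[0,1]$. Alternatively, note that whenever $y>0$ the case $\tau_y=1$ already falls under your $\tau_y\geq 2$ argument (since $y+S_0=y>0$), while if $y\leq 0$ and $z>0$ then $0<z\leq r(x)\leq c(1+N(x))$ and the extra $\abs{z}$ is absorbed into the $N(x)$ term. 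Either patch restores the coefficient $1$ in front of $\max(z,0)$.
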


\begin{proof}
Let $\ee$ be a real number in $(0,1/4)$. Denoting $z_+ := z + n^{1/2-2\ee}$ we have,
\begin{align}
	\bb E_x\left( z+M_n \,;\, \hat{T}_z>n \right) &= \underbrace{\bb E_x\left( z+M_n \,;\, T_{z_+} \leq n \,,\, \hat{T}_z>n \right)}_{=:J_1} \nonumber\\
	&\qquad + \underbrace{\bb E_x\left( z+M_n \,;\, T_{z_+} > n \,,\, \hat{T}_z>n \right)}_{=:J_2}.
	\label{decmartkillpremmaj}
\end{align}

\textit{Bound of $J_1$.} Let $y=z-r(x)$. Using the fact that $\bb P_x \left( \tau_y \leq k \,,\, \hat{T}_z > k \,,\, T_{z_+} =k \right) = 0$ and the Markov property, in the same way as in the proof of Lemma \ref{MarkovpropforTz},
\begin{align*}
	J_1 &= \sum_{k=1}^n \int_{\bb X \times \bb R} \bb E_{x'} \left( z'+ M_{n-k} \,;\, \hat{T}_{z'} > n-k \right) \\
	&\hspace{5cm} \times \bb P_x \left( X_k \in \dd x' \,,\, z+M_k \in \dd z' \,,\, \tau_y > k \,,\, T_{z_+} =k \right).
\end{align*}
Since $z+M_{T_{z_+}} < 0$, using the point \ref{majmart002} of Lemma \ref{majmart}, we have
\[
J_1 \leq c \bb E_x \left( \sqrt{n} +  N\left(X_{T_{z_+}}\right) \,;\, \tau_y > T_{z_+} \,,\, T_{z_+} \leq n \right).
\]
By the approximation \eqref{decMSX}, on the event $\{ \tau_y > T_{z_+} \},$ it holds
\[
r \left( X_{T_{z_+}} \right) = z + M_{T_{z_+}} - \left( y + S_{T_{z_+}} \right) < -n^{1/2-2\ee}.
\]
Therefore, by Lemma \ref{MTR},
\begin{align*}
	J_1 &\leq c n^{2\ee} \bb E_x \left( \abs{r \left( X_{T_{z_+}} \right)} + N\left(X_{T_{z_+}}\right) \,;\, \abs{r \left( X_{T_{z_+}} \right)} > n^{1/2-2\ee} \,,\, T_{z_+} \leq n \right)\\
	&\leq c n^{2\ee} + c n^{2\ee} \bb E_x \left( N\left(X_{T_{z_+}}\right) \,;\, T_{z_+} \leq n \right).
\end{align*}
Choosing $\ee$ small enough, by \eqref{NXT},
\begin{equation}
	\label{MajorJ1}
	J_1 \leq c n^{2\ee} + c_{\ee} n^{2\ee} \left( n^{1/2-4\ee} + N(x) \right) \leq c_{\ee} n^{1/2-2\ee} + c_{\ee} n^{2\ee} N(x).
\end{equation}

\textit{Bound of $J_2$.}
By Lemma \ref{firstupperboundforMnTz}, there exists $\ee_0>0$ such that, for any $\ee\in (0,\ee_0)$,
\[
J_2 \leq \bb E_x\left( z_+ + M_n \,;\, T_{z_+} > n \right) \leq \max(z,0) + c_{\ee} n^{1/2-2\ee} + c_{\ee} N(x).
\]

Inserting this bound and \eqref{MajorJ1} into \eqref{decmartkillpremmaj}, for any $\ee\in (0,\ee_0)$, we deduce the assertion of the lemma.
\end{proof}

Let $\nu_n$ be the first time when the martingale $z+M_n$ exceeds $n^{1/2-\ee}$: for any $n \geq 1$, $\ee \in (0,1/2)$ and $z\in \bb R$,
\[
\nu_n = \nu_{n,\ee,z} := \min \left\{ k \geq 1,\, z+M_k > n^{1/2-\ee} \right\}.
\]

The control on the joint law of $\nu_n$ and $\hat{T}_z$ is given by the following lemma.

\begin{lemma}
\label{concentnu}
There exists $\ee_0>0$ such that, for any $\ee \in (0,\ee_0)$, $\delta>0$, $x \in \bb X$, $z \in \bb R$ and $n \in \bb N$,
\[
\bb P_x \left( \nu_n > \delta n^{1-\ee} \,,\, \hat{T}_z > \delta n^{1-\ee} \right) \leq \e^{-c_{\ee,\delta} n^{\ee}} \left( 1 + N \left( x \right) \right).
\]
\end{lemma}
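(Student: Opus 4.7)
My plan is to exploit the joint constraint $\{\nu_n>\delta n^{1-\ee},\,\hat T_z>\delta n^{1-\ee}\}$ to trap the martingale $(z+M_k)$ (or its companion walk $(y+S_k)$) inside a strip of width $\mathrm O(n^{1/2-\ee})$ over a time interval of length $\Omega(n^{1-\ee})$, and then to derive the exponential bound $\e^{-c n^{\ee}}$ via a block argument. The Brownian heuristic on an interval gives a decay of order $\e^{-c T/w^2}=\e^{-c n^{\ee}}$ for $T\asymp n^{1-\ee}$ and $w\asymp n^{1/2-\ee}$, which is exactly the target.

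To identify the trapping I would split according to $\tau_y$. Set $N_0:=\lfloor\delta n^{1-\ee}\rfloor$. On $\{\tau_y\le N_0/2\}$, the constraint $\hat T_z>N_0$ yields $z+M_k\in(0,n^{1/2-\ee}]$ for every $k\in[\tau_y,N_0]$, a two-sided trapping over an interval of length $\ge N_0/2$. On $\{\tau_y>N_0/2\}$, the walk $y+S_k$ stays positive throughout $[1,N_0/2]$, and combining with $z+M_k\le n^{1/2-\ee}$, the identity \eqref{decMSX}, and Lemma~\ref{MTR}, one gets $y+S_k\in(0,\,n^{1/2-\ee}+c(1+N(X_k))]$ there. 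After excluding the auxiliary event $\{\max_{k\le N_0}N(X_k)>L\}$ with $L=n^{\eta}$ for a small $\eta>0$, this produces in both cases a strip of controlled width. I would then partition the trapping interval into $K\asymp n^{\ee}$ disjoint blocks of length $m\asymp n^{1-2\ee}$, chosen so that $\sigma\sqrt m$ is a few times the strip width. Using the Markov property at block endpoints, Proposition~\ref{MomAs}(\ref{MomAs002}) for the conditional variance of the block increment of $M$, and a Paley-Zygmund estimate with moment bounds from Lemma~\ref{majmart}(\ref{majmart001}), I would extract a uniform lower bound $p>0$ on $\bb P(|M_{(j+1)m}-M_{jm}|\ge 2n^{1/2-\ee})$ provided $N(X_{jm})\le L$; block-independence then gives $(1-p)^K\le\e^{-c_{\delta,\ee}n^{\ee}}$. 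The complementary large-$N$ event is controlled via Markov's inequality applied with $N_L$ together with \eqref{decexpN}, \eqref{decexpNl}, and Hypothesis~\ref{Momdec}(\ref{Momdec003}); the prefactor $(1+N(x))$ then emerges from the exponentially decaying initial-state term in \eqref{decexpN}.

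The main obstacle is reconciling the only-polynomial control on the tails of $N(X_k)$ from Hypothesis~\ref{Momdec} with the exponential-in-$n^{\ee}$ target. A clean fallback, if the direct block argument under large $N(X_{jm})$ degrades the Paley-Zygmund lower bound too severely, is to invoke the strong-approximation theorem of \cite{ion_grama_rate_2014}: coupling $M_k$ to $\sigma W_k$ for a Brownian motion $W$ with an error of order $k^{1/2-\delta_0}$, off an exceptional event of probability $\mathrm O(\e^{-cn^\ee}(1+N(x)))$, reduces the strip-trapping estimate for $M$ to the explicit Brownian bound $\e^{-c\pi^2 T/(2w^2)}$ coming from the reflection principle or the spectral decomposition on an interval. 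Most of the technical effort lies in balancing the block size $m$, the threshold $L$, and the split at $N_0/2$ so that every error term fits inside the desired bound.
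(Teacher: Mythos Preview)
Your block strategy with $K\asymp n^{\ee}$ blocks of length $m\asymp n^{1-2\ee}$ is the right skeleton and matches the paper's, but your treatment of the state dependence through $N(X_{jm})$ does not close. Excluding $\{\max_{k\le N_0} N(X_k)>L\}$ by a union bound and \eqref{decexpNl} yields only a polynomial error: summing over $k\le N_0$ gives at best $cN_0/L^{1+\beta}+c(1+N(x))$, which for $L=n^\eta$ is $O(n^{1-\ee-\eta(1+\beta)})$, never $\e^{-c n^\ee}$. Your strong-approximation fallback has the same defect: Proposition~\ref{majdeA_k} bounds the coupling exceptional set by $c_\ee n^{-\ee}(1+N(x))$, not by the exponential rate you assert, so the Brownian strip estimate cannot be transferred at the precision you need.

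The paper never controls $N(X_k)$ pathwise. Its single-block estimate via Corollary~\ref{BerEss} has the form $q_{\ee,\delta}+c_{\ee,\delta}n^{-\ee/2}\bigl(1+N(x'')\bigr)$ with $q_{\ee,\delta}<1$, and the $N(x'')$ term is then \emph{integrated out} over one further block using the Markov property and \eqref{decexpN}: $\bb E_{x'}\bigl(N(X_l)\bigr)\le c+\e^{-cl}N(x')$. After this second block the bound reads $q_{\ee,\delta}+c_{\ee,\delta}n^{-\ee/2}+\e^{-c_{\ee,\delta}n^{1-2\ee}}N(x')$; the residual $N(x')$ contribution is super-exponentially small and survives the $K$-fold iteration harmlessly, giving $(q_{\ee,\delta}+c_{\ee,\delta}n^{-\ee/2})^K$ plus a negligible geometric remainder. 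This two-blocks-at-a-time device is the missing ingredient. As a side remark, the paper also avoids your split on $\tau_y$ by applying the decomposition of Lemma~\ref{MarkovpropforTz} at each block boundary, which mechanically replaces $\hat T_z$ by $\hat T_{z'}$ or $T_{z'}$; the positivity of $y''$ (resp.\ $z''$) inherited from the outer event together with the one-sided constraint from $\nu_n$ already yields $\bb P_{x''}(S_l/\sqrt l\le c_{\ee,\delta})\le q_{\ee,\delta}<1$ via Corollary~\ref{BerEss}, so neither a two-sided strip nor Paley--Zygmund is needed.
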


\begin{proof}
Let $\ee \in (0,1/4)$ and $K:= \pent{n^\ee/2}$. We split the interval $[1,\delta n^{1-\ee}]$ by subintervals of length $l:= \pent{\delta n^{1-2\ee}}$. Introduce the event $A_{k,z} := \{ \max_{1 \leq k' \leq k} \left( z+M_{k'l} \right) \leq n^{1/2-\ee}  \}$. Then 
\begin{equation}
	\bb P_x \left( \nu_n > \delta n^{1-\ee} \,,\, \hat{T}_z > \delta n^{1-\ee} \right) \leq \bb P_x \left( A_{2K,z} \,,\, \hat{T}_z > 2Kl \right).
	\label{Probnun}
\end{equation}
By the Markov property, as in the proof of Lemma \ref{MarkovpropforTz}, with $y=z-r(x)$, we have
\begin{align}
	\bb P_x &\left( A_{2K,z} \,,\, \hat{T}_z > 2Kl \right) \nonumber\\
	&= \int_{\bb X \times \bb R} \bb P_{x'} \left( A_{2,z'} \,,\, \hat{T}_{z'} > 2l  \right) \bb P_x \left( X_{2(K-1)l} \in \dd x' \,,\, z+M_{2(K-1)l} \in \dd z' \,,\, \right. \nonumber\\
	&\hspace{8cm} \left. A_{2(K-1),z} \,,\, \tau_y > 2(K-1)l \right) \nonumber\\
	&\quad + \int_{\bb X \times \bb R} \bb P_{x'} \left( A_{2,z'} \,,\, T_{z'} > 2l  \right) \bb P_x \left( X_{2(K-1)l} \in \dd x' \,,\, z+M_{2(K-1)l} \in \dd z'\,,\, \right. \nonumber\\
	&\qquad \hspace{4cm}\left.  A_{2(K-1),z} \,,\, \tau_y \leq 2(K-1)l \,,\, \hat{T}_z > 2(K-1)l \right).
	\label{PA00}
\end{align}

\textit{Bound of $\bb P_{x'} \left( A_{2,z'} \,,\, \hat{T}_{z'} > 2l  \right)$.} With $y'=z'-r(x')$, we write
\begin{align}
	\bb P_{x'} &\left( A_{2,z'} \,,\, \hat{T}_{z'} > 2l  \right) \nonumber\\
	&= \int_{\bb X \times \bb R} \bb P_{x''} \left( A_{1,z''} \,,\, \hat{T}_{z''} > l \right) \bb P_{x'} \left( X_l \in \dd x'' \,,\, z'+M_l \in \dd z'' \,,\, A_{1,z'} \,,\, \tau_{y'} > l \right) \nonumber\\
	\label{PA1}
	&\hspace{1cm} + \int_{\bb X \times \bb R} \bb P_{x''} \left( A_{1,z''} \,,\, T_{z''} > l \right) \\
	&\hspace{3cm} \times \bb P_{x'} \left( X_l \in \dd x'' \,,\, z'+M_l \in \dd z'' \,,\, A_{1,z'} \,,\, \tau_{y'} \leq l \,,\, \hat{T}_{z'} > l \right). \nonumber
\end{align}

\textit{Bound of $\bb P_{x''} \left( A_{1,z''} \,,\, \hat{T}_{z''} > l \right)$.} Note that on the event $\{ \tau_{y'} > l \}$ we have $y' + S_{l} > 0$. So it is enough to consider that $y''=z''-r(x'')>0$. Using \eqref{decMSX} we have,
\begin{align*}
	\bb P_{x''} \left( A_{1,z''} \,,\, \hat{T}_{z''} > l \right) &\leq \bb P_{x''} \left( y''+S_l \leq 2 n^{1/2-\ee} \,,\, \abs{r \left( X_l \right)} \leq n^{1/2-\ee} \right) \\
	&\hspace{6cm} + \bb P_{x''} \left( \abs{r \left( X_l \right)} > n^{1/2-\ee} \right).
\end{align*}
Therefore, there exists a constant $c_{\ee,\delta}$ such that
\[
\bb P_{x''} \left( A_{1,z''} \,,\, \hat{T}_{z''} > l \right) \leq \bb P_{x''} \left( \frac{S_l}{\sqrt{l}} \leq c_{\ee,\delta} \right) + \bb E_{x''} \left( \frac{\abs{r \left( X_l \right)}}{n^{1/2-\ee}} \right).
\]
Using Corollary \ref{BerEss} and Lemma \ref{MTR}, there exists $\ee_0 \in (0,1/4)$, such that, for any $\ee \in (0,\ee_0)$,
\[
\bb P_{x''} \left( A_{1,z''} \,,\, \hat{T}_{z''} > l \right) \leq \int_{-\infty}^{c_{\ee,\delta}} \e^{-\frac{u^2}{2\sigma^2}} \frac{\dd u}{\sqrt{2\pi} \sigma} + \frac{c_{\ee}}{l^\ee} \left(1+N(x'')\right) + \frac{c}{n^{1/2-\ee}} \bb E_{x''} \left( 1+N \left( X_l \right) \right).
\]
Using the point \ref{Momdec001} of Hypothesis \ref{Momdec} and the fact that $l^\ee \geq n^{\ee/2}/c_{\ee,\delta}$ for $\ee < 1/4$, we have,
\begin{equation}
	\bb P_{x''} \left( A_{1,z''} \,,\, \hat{T}_{z''} > l \right) \leq q_{\ee,\delta} + \frac{c_{\ee,\delta}}{n^{\ee/2}} \left(1+N(x'')\right),
	\label{majq001}
\end{equation}
with $q_{\ee,\delta} := \int_{-\infty}^{c_{\ee,\delta}} \e^{-\frac{u^2}{2\sigma^2}} \frac{\dd u}{\sqrt{2\pi} \sigma} < 1$.

\textit{Bound of $\bb P_{x''} \left( A_{1,z''} \,,\, T_{z''} > l \right)$.} 
On the event $\{ T_{z''} > l \}$ we have $z''+M_l > 0$. Using \eqref{decMSX} and Corollary \ref{BerEss}, in the same way as in the proof of the bound \eqref{majq001}, we obtain
\begin{align}
	\bb P_{x''} \left( A_{1,z''} \,,\, T_{z''} > l \right) &\leq \bb P_{x''} \left( 0 < z''+M_l \leq n^{1/2-\ee} \right) \nonumber \\
	&\leq \int_{\frac{-y''}{\sqrt{l}}-c_{\ee,\delta}}^{\frac{-y''}{\sqrt{l}}+c_{\ee,\delta}} \e^{-\frac{u^2}{2\sigma^2}} \frac{\dd u}{\sqrt{2\pi} \sigma} + \frac{c_{\ee,\delta}}{n^{\ee/2}} 
	\left( 1+N(x'') \right) \nonumber\\
        &\leq q_{\ee,\delta} + \frac{c_{\ee,\delta}}{n^{\ee/2}} \left(1+N(x'')\right).
	\label{majq002}
\end{align}

Inserting \eqref{majq001} and \eqref{majq002} into \eqref{PA1} and using \eqref{decexpN}, we have
\begin{align}
	\bb P_{x'} \left( A_{2,z'} \,,\, \hat{T}_{z'} > 2l  \right) &\leq q_{\ee,\delta} + \frac{c_{\ee,\delta}}{n^{\ee/2}} + \frac{c_{\ee,\delta}}{n^{\ee/2}} \bb E_{x'} \left( N \left( X_l \right) \right) \nonumber\\
	&\leq q_{\ee,\delta} + \frac{c_{\ee,\delta}}{n^{\ee/2}} + \e^{-c_{\ee,\delta} n^{1-2\ee}} N(x').
	\label{majq003}
\end{align}

\textit{Bound of $\bb P_{x'} \left( A_{2,z'} \,,\, T_{z'} > 2l  \right)$.} By the Markov property, 
\begin{align*}
	\bb P_{x'} \left( A_{2,z'} \,,\, T_{z'} > 2l  \right) &= \int_{\bb X \times \bb R} \bb P_{x''} \left( A_{1,z''} \,,\, T_{z''} > l \right) \\
	&\qquad \times \bb P_{x'} \left( X_l \in \dd x'' \,,\, z'+M_{l} \in \dd z'' \,,\, A_{1,z'} \,,\, T_{z'} > l \right).
\end{align*}
Using \eqref{majq002} to bound the probability inside the integral, we get
\begin{equation}
	\label{majq004}
	\bb P_{x'} \left( A_{2,z'} \,,\, T_{z'} > 2l  \right) \leq q_{\ee,\delta} + \frac{c_{\ee,\delta}}{n^{\ee/2}} + \e^{-c_{\ee,\delta} n^{1-2\ee}} N(x').
\end{equation}

Inserting the bounds \eqref{majq003} and \eqref{majq004} into \eqref{PA00}, we find that
\begin{align*}
	\bb P_x &\left( A_{2K,z} \,,\, \hat{T}_z > 2Kl \right) \\
	&\leq \left( q_{\ee,\delta} + \frac{c_{\ee,\delta}}{n^{\ee/2}} \right) \bb P_x \left( A_{2(K-1),z} \,,\, \hat{T}_z > 2(K-1)l \right) + \e^{-c_{\ee,\delta} n^{1-2\ee}} \left( 1+N(x) \right).
\end{align*}
Iterating this inequality, we get
\[
\bb P_x \left( A_{2K,z} \,,\, \hat{T}_z > 2Kl \right) \leq \left( q_{\ee,\delta} + \frac{c_{\ee,\delta}}{n^{\ee/2}} \right)^K + \e^{-c_{\ee,\delta} n^{1-2\ee}} \left( 1+N(x) \right) \sum_{k=0}^{K-1} \left( q_{\ee,\delta} + \frac{c_{\ee,\delta}}{n^{\ee/2}} \right)^k.
\]
As $K=\pent{n^\ee/2}$ and $q_{\ee,\delta} < 1$ it follows that, for $n$ large enough, $\left( q_{\ee,\delta} + \frac{c_{\ee,\delta}}{n^{\ee/2}} \right)^K \leq \e^{-c_{\ee,\delta} n^\ee}$,
which, in turn, implies
\[
\bb P_x \left( A_{2K,z} \,,\, \hat{T}_z > 2Kl \right) \leq \e^{-c_{\ee,\delta} n^{\ee}} \left( 1+N(x) \right).
\]
\end{proof}

\begin{lemma}
\label{intofthemartcond}
There exists $\ee_0 > 0$ such that, for any $\ee \in (0,\ee_0)$, $x\in \bb X$, $z \in \bb R$, $n\geq 2$ and any integer $n_f \in \{2, \dots, n \}$,
\[
\bb E_x \left( z+M_n \,;\, \hat{T}_z > n \right) \leq \left( 1 + \frac{c_{\ee}}{n_f^\ee} \right) \left( \max(z,0) + cN(x) \right) + c_{\ee} n_f^{1/2} + \e^{-c_{\ee} n_f^{\ee}} N(x).
\]
\end{lemma}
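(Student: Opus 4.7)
The plan is to refine Lemma \ref{firstupperbound} by splitting the expectation at the first passage time $\nu := \nu_{n_f}$ of the martingale $z+M_k$ above $n_f^{1/2-\ee}$, together with the concentration estimate of Lemma \ref{concentnu}. Setting $m_f := \pent{\delta n_f^{1-\ee}}$ for a suitably small $\delta > 0$, I write
\[
\bb E_x(z+M_n \,;\, \hat T_z > n) = J_1 + J_2,
\]
where $J_1$ is the contribution of $\{\nu \leq m_f\}$ and $J_2$ that of $\{\nu > m_f\}$. The free parameter $n_f$ plays the role of interpolating between a tight multiplicative bound $(1+c/n_f^\ee)(\max(z,0)+cN(x))$ and an additive error of order $n_f^{1/2}$.

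\textbf{Controlling $J_2$.} On $\{\hat T_z > n,\ \nu > m_f\}$ we have $\hat T_z > m_f$, so by Lemma \ref{concentnu}, $\bb P_x(\nu > m_f, \hat T_z > m_f) \leq \e^{-c_\ee n_f^\ee}(1+N(x))$. Decomposing $z+M_n = (z+M_{m_f}) + (M_n-M_{m_f})$, applying the strong Markov property at $m_f$, and bounding the continuation expectation by Lemma \ref{firstupperbound}, I then use H\"older's inequality with exponent $\alpha > 2$ from Hypothesis \ref{Momdec}\ref{Momdec001} together with the moment bound of Lemma \ref{majmart}\ref{majmart001}. The polynomial-in-$n$ factors from the moments are absorbed against the exponentially small probability, yielding $J_2 \leq c_\ee n_f^{1/2} + \e^{-c_\ee n_f^\ee}(1+N(x))$.

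\textbf{Controlling $J_1$.} I apply the strong Markov property at $\nu$, separating the event $\{\hat T_z > \nu\}$ further according to whether $\tau_y > \nu$ or $\tau_y \leq \nu$. In either subcase the new starting point $z' := z+M_\nu$ satisfies $n_f^{1/2-\ee} < z' \leq n_f^{1/2-\ee} + \abs{\xi_\nu}$, where $\abs{\xi_\nu}$ is integrable by \eqref{Majdesxi001}. Using Lemma \ref{firstupperboundforMnTz} (when $\tau_y \leq \nu$, continuation governed by $T_{z'}$) or Lemma \ref{firstupperbound} (when $\tau_y > \nu$, continuation governed by $\hat T_{z'}$), the inner expectation is bounded by $\max(z',0) + c_\ee((n-\nu)^{1/2-2\ee} + (n-\nu)^{2\ee}N(X_\nu))$. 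The leading term integrates to $\bb E_x(z+M_\nu \,;\, \nu \leq m_f,\hat T_z > \nu)$; by the submartingale property (Lemma \ref{Mnsubmartingale}) applied between $\nu \wedge m_f$ and $m_f$, this is dominated by $\bb E_x(z+M_{m_f} \,;\, \hat T_z > m_f)$ plus a residual already handled in $J_2$. Invoking Lemma \ref{firstupperbound} at time $m_f$, I obtain $\max(z,0) + c_\ee(m_f^{1/2-2\ee} + m_f^{2\ee}N(x))$; with $m_f \sim n_f^{1-\ee}$, the exponents satisfy $(1/2-2\ee)(1-\ee) < 1/2$ and $2\ee(1-\ee) < \ee$ for $\ee$ small enough, so these terms fit into $(1+c/n_f^\ee)(\max(z,0)+cN(x)) + c_\ee n_f^{1/2}$.

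\textbf{Main obstacle.} The critical difficulty is maintaining uniformity in $n$ while invoking Lemmas \ref{firstupperboundforMnTz}--\ref{firstupperbound} for the continuation after $\nu$ or $m_f$, since both carry factors $(n-\nu)^{1/2-2\ee}$ and $(n-\nu)^{2\ee}N(X_\nu)$ that \emph{a priori} grow with $n$. These are neutralized by two complementary mechanisms: on the rare event $\{\nu > m_f\}$ the exponentially small probability from Lemma \ref{concentnu} combined with H\"older (using Hypothesis \ref{Momdec}\ref{Momdec001}) dominates the polynomial; on the typical event $\{\nu \leq m_f\}$, the integrability of $N(X_\nu)$ through \eqref{decexpN} together with the fact that $z'$ already sits at scale $n_f^{1/2-\ee}$ lets the $n^{2\ee}$-factor be absorbed into the relative correction $c/n_f^\ee$. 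Balancing these two mechanisms forces the threshold $\ee_0 > 0$ in the statement.
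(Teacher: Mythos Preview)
Your one-shot scheme with $\nu:=\nu_{n_f}$ has a genuine gap: the continuation bounds from Lemmas~\ref{firstupperboundforMnTz} and~\ref{firstupperbound} produce error terms of order $(n-\nu)^{1/2-2\ee}$ and $(n-\nu)^{2\ee}N(X_\nu)$, i.e.\ polynomials in $n$, whereas every quantity you have available to kill them lives at scale $n_f$. In your $J_2$ the ``exponentially small probability'' from Lemma~\ref{concentnu} is $\e^{-c_\ee n_f^{\ee}}$, not $\e^{-c_\ee n^{\ee}}$; multiplying it by $c\sqrt n(1+N(x))$ from Lemma~\ref{majmart}, or by $n^{1/2-2\ee}$ or $n^{2\ee}$ from Lemma~\ref{firstupperbound}, gives a bound that blows up as $n\to\infty$ for fixed $n_f$. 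In your $J_1$ the claim that ``$z'$ sits at scale $n_f^{1/2-\ee}$'' lets you absorb the $n$-powers does not go through: from $z'=z+M_\nu>n_f^{1/2-\ee}$ you only get $n^{1/2-2\ee}\leq (n/n_f)^{1/2-\ee}\,z'\,n_f^{-\ee}$, and the ratio $(n/n_f)^{1/2-\ee}$ is unbounded. Incidentally, the inequality ``$2\ee(1-\ee)<\ee$ for $\ee$ small'' that you invoke is false: $2\ee(1-\ee)-\ee=\ee(1-2\ee)>0$ for $\ee\in(0,1/2)$.

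The paper's proof is structurally different and this difference is essential. It works with $\nu_n$ (threshold $n^{1/2-\ee}$, matched to the \emph{current} horizon) and the shifted time $\nu_n^{\ee}:=\nu_n+\pent{n^{\ee}}$; then on the relevant event $n^{1/2-2\ee}<(z+M_{\nu_n})/n^{\ee}$, so the error term feeds back, via the submartingale property (Lemma~\ref{Mnsubmartingale}), into $(c_\ee/n^{\ee})\,\bb E_x(z+M_{\pent{n^{1-\ee}}};\hat T_z>\pent{n^{1-\ee}})$. The outcome is a \emph{recursive} inequality
\[
\bb E_x\bigl(z+M_n;\hat T_z>n\bigr)\leq\Bigl(1+\frac{c_\ee}{n^{\ee}}\Bigr)\bb E_x\bigl(z+M_{\pent{n^{1-\ee}}};\hat T_z>\pent{n^{1-\ee}}\bigr)+\e^{-c_\ee n^{\ee}}(1+N(x)),
\]
which is then iterated down to $n_f$ via Lemma~\ref{lemanalyse} (using that the sequence is non-decreasing by Lemma~\ref{Mnsubmartingale}). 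The recursion is precisely what converts $n$-scale errors into $n_f$-scale errors; your argument has no substitute for it.
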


\begin{proof}
Let $\ee \in (0,1/2)$ and $y=z-r(x)$. Considering the stopping time 
\[
\nu_n^\ee:= \nu_n + \pent{n^\ee},
\]
we have,
\begin{align}
	\bb E_x \left( z+M_n \,;\, \hat{T}_z > n \right) \leq\;& \underbrace{\bb E_x \left( z+M_n \,;\, \hat{T}_z > n \,,\, \nu_n^\ee > \pent{n^{1-\ee}} \right)}_{=:J_1} \nonumber\\
	&+ \underbrace{\bb E_x \left( z+M_n \,;\, \hat{T}_z > n \,,\, \nu_n^\ee \leq \pent{n^{1-\ee}} \right)}_{=:J_2}.
	\label{UBJ1ET2}
\end{align}

\textit{Bound of $J_1$.} 
Set $m_{\ee} = \pent{n^{1-\ee}}-\pent{n^\ee}$. Using the fact that $\{ \nu_n^\ee > \pent{n^{1-\ee}} \} = \{ \nu_n > m_\ee \}$ and the Markov property as in the proof of Lemma \ref{MarkovpropforTz},
\begin{align*}
	J_1 =\; &\int_{\bb X \times \bb R} \bb E_{x'} \left( z'+M_{n-m_\ee} \,;\, \hat{T}_{z'} > n-m_\ee \right) \\
	&\qquad \times \bb P_x \left( X_{m_\ee} \in \dd x' \,,\, z+M_{m_\ee} \in \dd z' \,,\, \tau_y > m_\ee \,,\, \nu_n > m_\ee \right)\\
	&+\int_{\bb X \times \bb R} \bb E_{x'} \left( z'+M_{n-m_\ee} \,;\, T_{z'} > n-m_\ee \right) \\
	&\qquad \times \bb P_x \left( X_{m_\ee} \in \dd x' \,,\, z+M_{m_\ee} \in \dd z' \,,\, \tau_y \leq m_\ee \,,\, \hat{T}_z > m_\ee \,,\, \nu_n > m_\ee \right).
\end{align*}
On the event $\{ \nu_n > m_\ee \}$, we have $z'=z+M_{m_\ee} \leq n^{1/2-\ee}$. Moreover by the point \ref{majmart002} of Lemma \ref{majmart}, $\bb E_{x'} \left( \abs{M_{n-m_\ee}} \right) \leq c n^{1/2} + c N(x')$. Therefore,
\[
J_1 \leq c \bb E_x \left( n^{1/2} + N\left(X_{m_\ee}\right) \,;\, \hat{T}_z > m_\ee \,,\, \nu_n > m_\ee \right).
\]
Set $m_{\ee}'=m_{\ee}-\pent{n^{\ee}} = \pent{n^{1-\ee}} - 2\pent{n^{\ee}}$. Using the Markov property and \eqref{decexpN},
\begin{align*}
	J_1 &\leq c \int_{\bb X} n^{1/2} +\bb E_{x'} \left( N\left( X_{\pent{n^{\ee}}} \right) \right) \bb P_x\left( X_{m_{\ee}'} \in \dd x' \,,\, \hat{T}_z > m_\ee' \,,\, \nu_n > m_\ee' \right) \\
	&\leq c n^{1/2} \bb P_x\left( \hat{T}_z > m_\ee' \,,\, \nu_n > m_\ee' \right) + \e^{-c n^\ee} \bb E_x \left( N\left( X_{m_\ee'} \right) \right).
\end{align*}
By Lemma \ref{concentnu} and the point \ref{Momdec001} of Hypothesis \ref{Momdec},
\begin{equation}
	\label{MajJ1}
	J_1 \leq c n^{1/2} \e^{-c_{\ee} n^\ee} \left( 1 + N(x) \right) + \e^{-c n^\ee} \left( 1+N(x) \right) \leq \e^{-c_\ee n^\ee} \left( 1+N(x) \right).
\end{equation}

\textit{Bound of $J_2$.}  By the Markov property, as in the proof of Lemma \ref{MarkovpropforTz}, we have
\begin{align*}
	J_2 = \sum_{k=1}^{\pent{n^{1-\ee}}} &\int_{\bb X \times \bb R} \bb E_{x'} \left( z'+M_{n-k} \,;\, \hat{T}_{z'} > n-k \right) \\
	&\qquad \times \bb P_x \left( X_k \in \dd x' \,,\, z+M_k \in \dd z' \,,\, \tau_y > k \,,\, \nu_n^\ee = k \right) \\
	&+ \int_{\bb X \times \bb R} \bb E_{x'} \left( z'+M_{n-k} \,;\, T_{z'} > n-k \right) \\
	&\qquad \times \bb P_x \left( X_k \in \dd x' \,,\, z+M_k \in \dd z' \,,\, \tau_y \leq k \,,\, \hat{T}_{z} > k \,,\, \nu_n^\ee = k \right).
\end{align*}
By Lemmas \ref{firstupperbound} and \ref{firstupperboundforMnTz},
\begin{align}
	J_2 &\leq \underbrace{c_\ee \bb E_x \left( n^{1/2-2\ee} + n^{2\ee} N\left( X_{\nu_n^\ee} \right) \,;\, \hat{T}_{z} > \nu_n^\ee \,,\, \nu_n^\ee \leq \pent{n^{1-\ee}} \right)}_{=:J_{21}} \nonumber\\
	&\qquad+ \underbrace{\bb E_x \left( \max \left( z+M_{\nu_n^\ee} , 0 \right) \,;\, \hat{T}_{z} > \nu_n^\ee \,,\, \nu_n^\ee \leq \pent{n^{1-\ee}} \right)}_{=:J_{22}}.
	\label{decJ2}
\end{align}

\textit{Bound of $J_{21}$.} Using the Markov property and \eqref{decexpN},
\begin{align}
	J_{21} &\leq c_\ee \int_{\bb X} \bb E_{x'} \left( n^{1/2-2\ee} + n^{2\ee} N\left( X_{\pent{n^\ee}} \right) \right) \bb P_x \left( X_{\nu_n} \in \dd x' \,,\, \hat{T}_{z} > \nu_n \,,\, \nu_n \leq \pent{n^{1-\ee}} \right) \nonumber\\
	&\leq c_{\ee} \bb E_x \left( n^{1/2-2\ee} + \e^{-c_{\ee} n^{\ee}} N\left( X_{\nu_n} \right) \,;\, \hat{T}_{z} > \nu_n \,,\, \nu_n \leq \pent{n^{1-\ee}} \right) \nonumber\\
	&\leq \underbrace{c_{\ee} \bb E_x \left( n^{1/2-2\ee} \,;\, \hat{T}_{z} > \nu_n \,,\, \nu_n \leq \pent{n^{1-\ee}} \right)}_{=:J_{21}'} + \e^{-c_{\ee} n^{\ee}} n^{1-\ee} \left( 1 + N(x) \right).
	\label{decJ21}
\end{align}
By the definition of $\nu_n$, we have $n^{1/2-2\ee} < \frac{z+M_{\nu_n}}{n^\ee}$. 
So
\[
J_{21}' \leq \frac{c_{\ee}}{n^\ee} \bb E_x \left( z+M_{\nu_n} \,;\, \hat{T}_{z} > \nu_n \,,\, \nu_n \leq \pent{n^{1-\ee}} \right).
\]
Using Lemma \ref{Mnsubmartingale},
\begin{align}
	J_{21}' \leq\;& \frac{c_{\ee}}{n^\ee} \bb E_x \left( z+M_{\pent{n^{1-\ee}}} \,;\, \hat{T}_{z} > \pent{n^{1-\ee}} \right) \nonumber\\
	&- \frac{c_{\ee}}{n^\ee}\underbrace{ \bb E_x \left( z+M_{\pent{n^{1-\ee}}} \,;\, \hat{T}_{z} > \pent{n^{1-\ee}} \,,\, \nu_n > \pent{n^{1-\ee}} \right)}_{=:J_{21}''}.
	\label{decJ21'}
\end{align}
On the event $\{ \tau_y > \pent{n^{1-\ee}} \}$, by \eqref{decMSX}, it holds $z+M_{\pent{n^{1-\ee}}} > r \left( X_{\pent{n^{1-\ee}}} \right)$ 
and on the event $\{\tau_{y} \leq \pent{n^{1-\ee}} \,,\, \hat{T}_{z} > \pent{n^{1-\ee}} \}$ 
it holds $z+M_{\pent{n^{1-\ee}}} >0$. 
So, by the definition of $\hat{T}_z$,
\begin{align*}
	-J_{21}''  \leq\;& -\bb E_x \left( r \left( X_{\pent{n^{1-\ee}}} \right)  \,;\, \tau_{y} > \pent{n^{1-\ee}} \,,\, \nu_n > \pent{n^{1-\ee}} \right)\\
	\leq\;& c\bb E_x \left( 1+N \left( X_{\pent{n^{1-\ee}}} \right)  \,;\, \hat{T}_{z} > \pent{n^{1-\ee}} \,,\, \nu_n > \pent{n^{1-\ee}} \right).
\end{align*}
Denoting $m_{\ee} = \pent{n^{1-\ee}} - \pent{n^\ee}$ and using the Markov property and \eqref{decexpN},
\begin{align*}
	-J_{21}'' &\leq c \bb E_x \left( 1+e^{-c_{\ee} n^{\ee}} N\left( X_{m_{\ee}} \right) \,;\, \hat{T}_{z} > m_{\ee} \,,\, \nu_n > m_{\ee} \right) \\
	&\leq c \bb P_x \left( \nu_n > m_{\ee} \,,\, \hat{T}_{z} > m_{\ee} \right) + e^{-c_{\ee} n^{\ee}} \left( 1+N(x) \right).
\end{align*}
By Lemma \ref{concentnu},
\begin{equation}
	\label{MajJ21''}
	-J_{21}'' \leq e^{-c_{\ee} n^{\ee}} \left( 1+N(x) \right).
\end{equation}
Putting together \eqref{MajJ21''} and \eqref{decJ21'},
\begin{equation}
	\label{MajJ21'}
	J_{21}' \leq \frac{c_{\ee}}{n^\ee} \bb E_x \left( z+M_{\pent{n^{1-\ee}}} \,;\, \hat{T}_{z} > \pent{n^{1-\ee}} \right) + e^{-c_{\ee} n^{\ee}} \left( 1+N(x) \right).
\end{equation}
So, using \eqref{decJ21},
\begin{equation}
	\label{MajJ21}
	J_{21} \leq \frac{c_{\ee}}{n^\ee} \bb E_x \left( z+M_{\pent{n^{1-\ee}}} \,;\, \hat{T}_{z} > \pent{n^{1-\ee}} \right) + \e^{-c_{\ee} n^{\ee}} \left( 1+N(x) \right).
\end{equation}

\textit{Bound of $J_{22}$.} On the event $\{ \hat{T}_{z} > \nu_n^\ee \,,\, \tau_y \leq \nu_n^\ee \}$ we have $z+M_{\nu_n^\ee} > 0$. Consequently
\begin{align*}
	J_{22} =\;& \bb E_x \left( z+M_{\nu_n^\ee} \,;\, \hat{T}_{z} > \nu_n^\ee \,,\, \nu_n^\ee \leq \pent{n^{1-\ee}} \right) \\
	&+ \bb E_x \left( \max \left( z+M_{\nu_n^\ee} , 0 \right) - \left(z+M_{\nu_n^\ee}\right) \,;\, \tau_y > \nu_n^\ee \,,\, \nu_n^\ee \leq \pent{n^{1-\ee}} \right).
\end{align*}
By Lemma \ref{Mnsubmartingale},
\begin{align}
	J_{22} \leq\;& \bb E_x \left( z+M_{\pent{n^{1-\ee}}} \,;\, \hat{T}_{z} > \pent{n^{1-\ee}} \right) \nonumber\\
	\label{decJ22}
	&-\underbrace{\bb E_x \left( z+M_{\pent{n^{1-\ee}}} \,;\, \hat{T}_{z} > \pent{n^{1-\ee}} \,,\, \nu_n^\ee > \pent{n^{1-\ee}} \right)}_{=:J_{22}''}\\
	&\underbrace{- \bb E_x \left( z+M_{\nu_n^\ee} \,;\, z+M_{\nu_n^\ee} < 0 \,,\, \tau_y > \nu_n^\ee \,,\, \nu_n^\ee \leq \pent{n^{1-\ee}} \right)}_{=:J_{22}'}. \nonumber	
\end{align}
In the same way as in the proof of the bound of $J_{21}''$, replacing $\nu_n$ by $\nu_n^{\ee}$, one can prove that
\begin{equation}
	\label{MajJ22''}
	-J_{22}'' \leq e^{-c_{\ee} n^{\ee}} \left( 1+N(x) \right).
\end{equation}
Moreover, using \eqref{decMSX}, on the event $\{ \tau_y > \nu_n^\ee \}$, we have $-(z+M_{\nu_n^\ee}) < -r \left( X_{\nu_n^\ee} \right)$. So, by Lemma \ref{MTR}, the Markov property and \eqref{decexpN},
\begin{align}
	J_{22}' &\leq \bb E_x \left( \abs{ r \left( X_{\nu_n^\ee} \right) } \,;\, \hat{T}_{z} > \nu_n^\ee \,,\, \nu_n^\ee \leq \pent{n^{1-\ee}} \right) \nonumber\\
	\label{MajJ22'}
	&\leq c \bb E_x \left( 1+\e^{-c n^\ee} N \left( X_{\nu_n} \right) \,;\, \hat{T}_{z} > \nu_n \,,\, \nu_n \leq \pent{n^{1-\ee}} \right) \\
	&\leq J_{21}' + \e^{-c_{\ee} n^\ee} \left( 1+N(x) \right). \nonumber
\end{align}
With \eqref{MajJ21'}, \eqref{decJ22} and \eqref{MajJ22''} we obtain,
\begin{align}
	\label{MajJ22}
	J_{22} &\leq \left( 1 + \frac{c_ {\ee}}{n^\ee} \right) \bb E_x \left( z+M_{\pent{n^{1-\ee}}} \,;\, \hat{T}_{z} > \pent{n^{1-\ee}} \right) + \e^{-c_{\ee} n^\ee} \left( 1+N(x) \right).
\end{align}

Inserting \eqref{MajJ22} and \eqref{MajJ21} into \eqref{decJ2},
\begin{equation}
\label{MajJ2}
J_2 \leq \left( 1 + \frac{c_{\ee}}{n^\ee} \right) \bb E_x \left( z+M_{\pent{n^{1-\ee}}} \,;\, \hat{T}_{z} > \pent{n^{1-\ee}} \right) + \e^{-c_{\ee} n^{\ee}} \left( 1 + N(x) \right).
\end{equation}

Now, inserting \eqref{MajJ1} and \eqref{MajJ2} into \eqref{UBJ1ET2}, we find that
\begin{align}
	\bb E_x \left( z+M_n \,;\, \hat{T}_z > n \right) &\leq \left( 1 + \frac{c_{\ee}}{n^\ee} \right) \bb E_x \left( z+M_{\pent{n^{1-\ee}}} \,;\, \hat{T}_{z} > \pent{n^{1-\ee}} \right) \nonumber\\
	&\qquad+ \e^{-c_{\ee} n^{\ee}} \left( 1 + N(x) \right).
	\label{REC001}
\end{align}
By Lemma \ref{Mnsubmartingale}, the sequence $( \bb E_x ( z+M_n \,;\, \hat{T}_z > n ) )_{n\geq 1}$ is non-decreasing. Therefore, using Lemma \ref{lemanalyse}, we obtain that for any $n\geq 2$ and $n_f \in \{ 2, \dots, n \}$,
\[
\bb E_x \left( z+M_n \,;\, \hat{T}_z > n \right) \leq \left( 1 + \frac{c_{\ee}}{n_f^\ee} \right) \bb E_x \left( z+M_{n_f} \,;\, \hat{T}_z > n_f \right) + \e^{-c_{\ee} n_f^{\ee}} \left( 1 + N(x) \right).
\]
Finally, by the point \ref{majmart002} of Lemma \ref{majmart},
\[
\bb E_x \left( z+M_n \,;\, \hat{T}_z > n \right) \leq \left( 1 + \frac{c_{\ee}}{n_f^\ee} \right) \left( \max(z,0) + cN(x) \right) + c_{\ee} n_f^{1/2} + \e^{-c_{\ee} n_f^{\ee}} N(x).
\]
\end{proof}

\begin{corollary}
\label{unifmajdelafoncinv}
There exists $\ee_0 > 0$ such that, for any $\ee \in (0,\ee_0)$, $x\in \bb X$, $y \in \bb R$, $n\geq 0$ and any integer $n_f \in \{2, \dots, n \}$,
	\[
	\bb E_x \left( y+S_n \,;\, \tau_y > n \right) \leq \left( 1 + \frac{c_{\ee}}{n_f^\ee} \right) \left( \max(y,0) + c N(x) \right) + c_{\ee} n_f^{1/2} + \e^{-c_{\ee} n_f^{\ee}} N(x).
\]
\end{corollary}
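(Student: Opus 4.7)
The plan is to reduce the claim to Lemma \ref{intofthemartcond} via the martingale decomposition \eqref{decMSX}, paying attention to the shift $z=y+r(x)$ and the enlargement of the event $\{\tau_y>n\}$ to $\{\hat T_z>n\}$.

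First I would observe that on the event $\{\tau_y>n\}$ one has $y+S_n = z+M_n - r(X_n)$ from \eqref{decMSX}, so
\[
\bb E_x \left( y+S_n \,;\, \tau_y > n \right) = \bb E_x \left( z+M_n \,;\, \tau_y > n \right) - \bb E_x \left( r(X_n) \,;\, \tau_y > n \right),
\]
and therefore
\[
\bb E_x \left( y+S_n \,;\, \tau_y > n \right) \leq \bb E_x \left( z+M_n \,;\, \tau_y > n \right) + \bb E_x \left( \abs{r(X_n)} \right).
\]
The last term is controlled by Lemma \ref{MTR} together with \eqref{decexpN}, giving a bound $c+\e^{-cn}N(x)$, which is harmless (absorbed into the $cN(x)$ summand).

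Next, from the definition of $\hat T_z$ one has $\{\tau_y > n\} \subseteq \{\hat T_z > n\}$, and on the extra event $\{\hat T_z > n\} \setminus \{\tau_y > n\} = \{\tau_y\leq n,\, \hat T_z>n\}$ the integrand $z+M_n$ is strictly positive (by the very definition of $\hat T_z$). Hence
\[
\bb E_x \left( z+M_n \,;\, \tau_y > n \right) \leq \bb E_x \left( z+M_n \,;\, \hat T_z > n \right),
\]
and Lemma \ref{intofthemartcond} applies directly to bound the right-hand side by
\[
\left( 1+\frac{c_\ee}{n_f^\ee} \right)\left( \max(z,0) + c N(x) \right) + c_\ee n_f^{1/2} + \e^{-c_\ee n_f^\ee} N(x).
\]

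Finally, since $z = y+r(x)$ and $|r(x)| \leq c(1+N(x))$ by Lemma \ref{MTR}, one has $\max(z,0) \leq \max(y,0) + c(1+N(x))$. Substituting this and collecting the bounded remainder terms into the coefficient of $N(x)$ (adjusting the constant $c$) yields the stated inequality. The argument is essentially bookkeeping; the only slightly subtle point is justifying step two, namely that passing from $\{\tau_y>n\}$ to $\{\hat T_z>n\}$ on the $z+M_n$ piece only enlarges the expectation — this uses that on the difference event the martingale is positive. No substantial new difficulty is expected; all the analytic work has already been done in Lemmas \ref{firstupperboundforMnTz}--\ref{intofthemartcond}.
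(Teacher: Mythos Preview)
Your proposal is correct and essentially identical to the paper's proof: both use \eqref{decMSX} to pass from $y+S_n$ to $z+M_n$ (controlling the $r(X_n)$ term via Lemma \ref{MTR} and \eqref{decexpN}), enlarge $\{\tau_y>n\}$ to $\{\hat T_z>n\}$ using that $z+M_n>0$ on the difference event, apply Lemma \ref{intofthemartcond}, and finally replace $\max(z,0)$ by $\max(y,0)+c(1+N(x))$. The only difference is the order in which the two reductions are performed, which is immaterial.
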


\begin{proof} First, using the definition of $\hat{T}_z$ and Lemma \ref{intofthemartcond}, with $z=y+r(x)$,
\begin{align}
	\bb E_x \left( z+M_n \,;\, \tau_y > n \right) &= \bb E_x \left( z+M_n \,;\, \hat{T}_z>n \right) - \bb E_x \left( z+M_n \,;\, \tau_y \leq n \,,\, \hat{T}_z>n \right) \nonumber\\
	\label{VnPPWn}
	&\leq \bb E_x \left( z+M_n \,;\, \hat{T}_z>n \right) \\
	\label{VnPPWn2}
	&\leq \left( 1 + \frac{c_{\ee}}{n_f^\ee} \right) \left( \max(z,0) + cN(x) \right) + c_{\ee} n_f^{1/2} + \e^{-c_{\ee} n_f^{\ee}} N(x).
\end{align}
Now, using \eqref{decMSX}, Lemma \ref{MTR} and \eqref{decexpN},
\begin{align*}
	\bb E_x \left( y+S_n \,;\, \tau_y > n \right) &= \bb E_x \left( z+M_n \,;\, \tau_y > n \right) - \bb E_x \left( r \left(X_n\right) \,;\, \tau_y > n \right) \\
	&\leq \bb E_x \left( z+M_n \,;\, \tau_y > n \right) + c \left( 1+\e^{-cn} N(x) \right)\\
	&\leq \left( 1 + \frac{c_{\ee}}{n_f^\ee} \right) \left( \max(z,0) + cN(x) \right) + c_{\ee} n_f^{1/2} + \e^{-c_{\ee} n_f^{\ee}} N(x).
\end{align*}
Using the definition of $z$ concludes the proof.
\end{proof}

\section{Existence of the harmonic function}
\label{Sec Harm Func}

In this section we rely upon the results of the previous Sections \ref{Mart Approx} and \ref{CMWI} to construct a non-trivial harmonic function $V$ (Corollary \ref{ExofVW}) and state some of its properties (Proposition \ref{IFP}). The idea consists in establishing the existence of the limit as $n\to +\infty$ of the expectation $-\bb E_x \left( M_{\tau_y} \,;\, \tau_y \leq n \right)$ using the Lebesgue dominated convergence theorem. 
To this end, we state the following Lemma.

\begin{lemma}
\label{ExitfinitTmanuz}
For any $x \in \bb X$ and $z \in \bb R$,
\[
\hat{T}_z < + \infty \quad \text{$\bb P_x$-a.s.}
\]
\end{lemma}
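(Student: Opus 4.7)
The plan is to combine the submartingale property established in Lemma \ref{Mnsubmartingale} with the uniform $L^1$ bound obtained in Lemma \ref{intofthemartcond}, invoke Doob's a.s.\ convergence theorem, and then rule out the event $\{\hat T_z = +\infty\}$ by exploiting the non-degenerate central-limit behaviour of $(M_n)$ that follows from the spectral-gap hypothesis.

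First, I would set $V_n := (z+M_n)\mathbbm 1_{\{\hat T_z > n\}}$. Lemma \ref{Mnsubmartingale} shows that $(V_n)_{n \geq 0}$ is a $\bb P_x$-submartingale, and Lemma \ref{intofthemartcond} applied with the fixed choice $n_f = 2$ gives $\sup_{n} \bb E_x(V_n) \leq c\left(1 + \max(z,0) + N(x)\right)$. The variable $V_n$ is not a priori non-negative: on $\{n < \tau_y\}$ it may take negative values. However, if $V_n < 0$ and $\hat T_z > n$, then necessarily $n < \tau_y$, so $y+S_n > 0$ and the identity $z+M_n = y+S_n + r(X_n)$ gives $|V_n| \leq |r(X_n)| \leq c(1+N(X_n))$ by Lemma \ref{MTR}; the bound \eqref{decexpN} then yields $\sup_n \bb E_x(V_n^-) < +\infty$. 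Thus $(V_n)$ is a submartingale bounded in $L^1(\bb P_x)$, and Doob's a.s.\ convergence theorem produces an integrable limit $V_\infty$ with $V_n \to V_\infty$ $\bb P_x$-a.s.

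On the event $\{\hat T_z = +\infty\}$ the indicator in $V_n$ equals $1$ for every $n \geq 0$, so $M_n \to V_\infty - z$ $\bb P_x$-a.s.\ on this event. It remains to show that this forces $\bb P_x(\hat T_z = +\infty) = 0$. Arguing by contradiction, suppose $p := \bb P_x(\hat T_z = +\infty) > 0$. Then on a set of probability $p$ one has $M_n/\sqrt n \to 0$ $\bb P_x$-a.s., so for any $\eta > 0$, $\liminf_n \bb P_x(|M_n| \leq \eta \sqrt n) \geq p$. On the other hand, under Hypotheses \ref{BASP}--\ref{CECO}, the martingale $M_n = S_n + r(X_n) - r(x)$ satisfies a central limit theorem: $M_n/\sqrt n$ converges in law to a centred Gaussian with variance $\sigma^2 > 0$ (a standard consequence of the Gordin decomposition, Proposition \ref{MomAs}(\ref{MomAs002}) and Hypothesis \ref{CECO}). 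Consequently $\limsup_n \bb P_x(|M_n| \leq \eta \sqrt n) \to 0$ as $\eta \to 0$, contradicting the lower bound $p > 0$.

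The main obstacle is the final contradiction step, which relies on the CLT for the martingale approximation $(M_n)$. Although standard under the spectral-gap framework, this CLT is not stated as a separate proposition in the excerpt; an alternative would be to combine the uniform $L^p$ bound on the increments $\xi_n$ (following from \eqref{Majdesxi001}) with a stopping-time argument to show directly that a zero-mean martingale with $\Var_x(M_n) \to +\infty$ cannot converge $\bb P_x$-a.s.\ on a positive-probability set. Both routes crucially exploit the non-degeneracy $\sigma > 0$ of Hypothesis \ref{CECO}.
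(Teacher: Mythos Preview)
Your argument is correct and provides a genuine alternative to the paper's proof. The paper proceeds by a direct quantitative estimate: it splits at time $\pent{n/2}$ via the Markov property (in the spirit of Lemma \ref{MarkovpropforTz}), writes
\[
\bb P_x(\hat T_z > n) \leq \bb P_x(\tau_y > \pent{n/2}) + \int \bb P_{x'}(T_{z'} > n - \pent{n/2})\,\bb P_x(X_{\pent{n/2}}\in\dd x',\, z+M_{\pent{n/2}}\in\dd z',\, \tau_y\leq\pent{n/2},\,\hat T_z>\pent{n/2}),
\]
and then feeds in the polynomial bounds \eqref{tauyto0} and \eqref{Tzto0} on $\bb P_x(\tau_y>n)$ and $\bb P_{x'}(T_{z'}>n)$ from the appendix together with Lemma \ref{intofthemartcond}, obtaining $\bb P_x(\hat T_z>n)\leq c_\ee n^{-\ee}(1+\max(y,0)+N(x))$.

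Your route trades this explicit rate for a softer martingale-convergence argument: the $L^1$ bound from Lemmas \ref{Mnsubmartingale} and \ref{intofthemartcond} plus Doob's theorem force $M_n$ to converge a.s.\ on $\{\hat T_z=+\infty\}$, which is incompatible with the CLT for $M_n/\sqrt n$ (equivalently for $S_n/\sqrt n$, via Corollary \ref{BerEss} and $r(X_n)/\sqrt n\to 0$ in probability). The paper's approach yields a quantitative decay rate and stays within the estimate-driven style of the surrounding sections; your approach is cleaner conceptually and isolates precisely where the non-degeneracy $\sigma>0$ enters. Since only the qualitative conclusion $\hat T_z<+\infty$ a.s.\ is invoked later (e.g.\ in Lemma \ref{minpos001}), the loss of the explicit rate is harmless for the lemma as stated.
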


\begin{proof}
In order to apply Lemmas \ref{Exitfinit} and \ref{ExitfinitTz}, we write, with $y=z-r(x)$,
\begin{align*}
	\bb P_x \left( \hat{T}_z > n \right) &\leq \bb P_x \left( \tau_y > \pent{n/2} \right) + \int_{\bb X \times \bb R} \bb P_{x'} \left( T_{z'} > n - \pent{n/2} \right)	\bb P_x \left( X_{\pent{n/2}} \in \dd x' \,,\, \right. \\
	&\hspace{0.5cm} \left. z+M_{\pent{n/2}} \in \dd z' \,,\, \tau_y \leq \pent{n/2} \,,\, \hat{T}_{z} > \pent{n/2} \right).
\end{align*}
Using \eqref{tauyto0}, \eqref{Tzto0} and the definition of $y$, we have
\begin{align*}
	\bb P_x \left( \hat{T}_z > n \right) \leq\;& \frac{c_{\ee}}{n^{\ee}} \left( 1+\max(y,0)+N(x) \right) \\
	&+ \frac{c_{\ee}}{n^{\ee}} \bb E_x \left( 1+z+M_{\pent{n/2}} + N \left( X_{\pent{n/2}} \right) \,;\, \tau_y \leq \pent{n/2} \,,\, \hat{T}_z > \pent{n/2} \right).
\end{align*}
By the point \ref{Momdec001} of Hypothesis \ref{Momdec},
\begin{align*}
	\bb P_x \left( \hat{T}_z > n \right) \leq\;& \frac{c_{\ee}}{n^{\ee}} \left( 1+\max(y,0)+N(x) \right) + \frac{c_{\ee}}{n^{\ee}} \bb E_x \left( z+M_{\pent{n/2}} \,;\,  \hat{T}_z > \pent{n/2} \right) \\
	&- \frac{c_{\ee}}{n^{\ee}} \bb E_x \left( z+M_{\pent{n/2}} \,;\,  \tau_y > \pent{n/2} \right).
\end{align*}
Using \eqref{decMSX}, we see that on the event $\{ \tau_y > \pent{n/2} \}$ we have $z+M_{\pent{n/2}} > r \left( X_{\pent{n/2}} \right)$. 
Then, by Lemma \ref{MTR} and the point \ref{Momdec001} of Hypothesis \ref{Momdec},
\[
\bb P_x \left( \hat{T}_z > n \right) \leq \frac{c_{\ee}}{n^{\ee}} \left( 1+\max(y,0)+N(x) \right) + \frac{c_{\ee}}{n^{\ee}} \bb E_x \left( z+M_{\pent{n/2}} \,;\, \hat{T}_z > \pent{n/2} \right).
\]
Using Lemma \ref{intofthemartcond}, we have
\[
\bb P_x \left( \hat{T}_z > n \right) \leq \frac{c_{\ee}}{n^{\ee}} \left( 1+\max(y,0)+N(x) \right).
\]
Finally, we conclude that
\[
\bb P_x \left( \hat{T}_z = + \infty \right) = \underset{n\to +\infty}{\lim} \bb P_x \left( \hat{T}_z > n \right) = 0.
\]
\end{proof}

\begin{lemma}
\label{intdeMtau}
Let $x \in \bb X,$ $y \in \bb R$ and $z=y+r(x)$. The random variables $M_{\hat{T}_z}$, $M_{T_z}$ and $M_{\tau_y}$ are integrable and
\[
\max \left\{ \bb E_x \left( \abs{M_{\hat{T}_z}} \right), \bb E_x \left( \abs{M_{T_z}} \right), \bb E_x \left( \abs{M_{\tau_y}} \right) \right\} \leq c \left( 1+ \abs{z} + N(x) \right) < + \infty.
\]
\end{lemma}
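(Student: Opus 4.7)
The plan is to apply the optional stopping theorem to the zero-mean $\bb P_x$-martingale $(M_n)_{n\geq 0}$ at the bounded stopping time $T\wedge n$ for each $T\in\{\hat{T}_z,T_z,\tau_y\}$ and let $n\to+\infty$. All three stopping times are $\bb P_x$-a.s.\ finite by Corollary \ref{Exitfinit}, Lemma \ref{ExitfinitTz}, and Lemma \ref{ExitfinitTmanuz}, and optional stopping yields
\[
\bb E_x(z+M_T;T\leq n)+\bb E_x(z+M_n;T>n)=z.
\]
The strategy will then be to extract a uniform-in-$n$ upper bound for $\bb E_x(z+M_n;T>n)$ from the results of Section \ref{CMWI}, and combine it with the sign of $z+M_T$ (or of $y+S_{\tau_y}$ for $T=\tau_y$) to turn the signed identity into an absolute-value bound.

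For $T=\hat{T}_z$ the argument will be essentially immediate: since $z+M_{\hat{T}_z}\leq 0$, monotone convergence applied to the nonnegative random variable $-(z+M_{\hat{T}_z})$ gives
\[
\bb E_x|z+M_{\hat{T}_z}|=-\bb E_x(z+M_{\hat{T}_z})=\lim_{n\to+\infty}\bigl(\bb E_x(z+M_n;\hat{T}_z>n)-z\bigr).
\]
Lemma \ref{intofthemartcond} with $n_f$ fixed once for all (for instance $n_f=2$) bounds $\bb E_x(z+M_n;\hat{T}_z>n)$ by $c(1+\max(z,0)+N(x))$ uniformly in $n\geq 2$, and the triangle inequality $|M_{\hat{T}_z}|\leq|z|+|z+M_{\hat{T}_z}|$ then yields $\bb E_x|M_{\hat{T}_z}|\leq c(1+|z|+N(x))$.

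For $T=T_z$ the scheme is the same, except that Lemma \ref{firstupperboundforMnTz} alone only yields a bound that grows polynomially in $n$, so a uniform bound on $\bb E_x(z+M_n;T_z>n)$ must be obtained by a detour through $\hat{T}_z$. Using $T_z\leq \hat{T}_z$ (hence $\{T_z>n\}\subseteq\{\hat{T}_z>n\}$) together with the positivity $z+M_n>0$ on $\{T_z>n\}$, I would write
\[
\bb E_x(z+M_n;T_z>n)\leq \bb E_x\bigl((z+M_n)^+;\hat{T}_z>n\bigr)=\bb E_x(z+M_n;\hat{T}_z>n)+\bb E_x\bigl((z+M_n)^-;\hat{T}_z>n\bigr).
\]
On $\{\hat{T}_z>n\}\cap\{\tau_y\leq n\}$ the value $z+M_n$ is positive, so the negative part is supported on $\{\tau_y>n\}$ and is dominated there by $|r(X_n)|\leq c(1+N(X_n))$; by \eqref{decexpN} this contribution is at most $c(1+N(x))$, uniformly in $n$. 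Combined with Lemma \ref{intofthemartcond} and the sign argument of the $\hat{T}_z$ case, this produces $\bb E_x|M_{T_z}|\leq c(1+|z|+N(x))$.

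The case $T=\tau_y$ is the most delicate one, because $z+M_{\tau_y}$ need not be nonpositive (only $y+S_{\tau_y}\leq 0$ is known), so the sign trick has to be replaced. The plan combines two ingredients. First, writing $z+M_n=(y+S_n)+r(X_n)$ and invoking Corollary \ref{unifmajdelafoncinv} together with \eqref{decexpN} gives $|\bb E_x(z+M_n;\tau_y>n)|\leq c(1+|z|+N(x))$ uniformly in $n$, so that optional stopping furnishes the signed bound $|\bb E_x(M_{\tau_y};\tau_y\leq n)|\leq c(1+|z|+N(x))$. Second, the strong Markov property at $\tau_y$ identifies the post-$\tau_y$ chain with a fresh copy started at $(X_{\tau_y},z+M_{\tau_y})$: on $\{z+M_{\tau_y}\leq 0\}$ one already has $\hat{T}_z=\tau_y$, while on $\{z+M_{\tau_y}>0\}$ the increment $\hat{T}_z-\tau_y$ coincides with the first-passage time $T'_{z+M_{\tau_y}}$ of the shifted martingale. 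Decomposing $M_{\tau_y}=M_{\hat{T}_z}-(M_{\hat{T}_z}-M_{\tau_y})$ and applying the bound on $M_{T_z}$ just established to the shifted chain yields $\bb E_x(|M_{\hat{T}_z}-M_{\tau_y}|\mid \mathscr{F}_{\tau_y})\leq c(1+(z+M_{\tau_y})^++N(X_{\tau_y}))$. Since $(z+M_{\tau_y})^+\leq|r(X_{\tau_y})|\leq c(1+N(X_{\tau_y}))$, the argument reduces to showing $\bb E_x N(X_{\tau_y})\leq c(1+N(x))$; this last step is what I expect to be the main obstacle, and I plan to handle it by splitting the sum $\sum_k\bb E_x(N(X_k);\tau_y=k)$ via the truncations $N_l$ of Hypothesis \ref{Momdec} combined with the polynomial decay of $\bb P_x(\tau_y>n)$ established inside the proof of Lemma \ref{ExitfinitTmanuz}.
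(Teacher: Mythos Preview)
Your treatment of $\hat{T}_z$ is correct and matches the paper's core computation, and your argument for $T_z$ is also correct (it is essentially the bound \eqref{MetMchap} in the paper, derived independently). The problem is in the $\tau_y$ case, where you reduce the question to showing
\[
\bb E_x\bigl(N(X_{\tau_y})\bigr)\leq c\bigl(1+N(x)\bigr),
\]
and then propose to obtain this by combining the truncations $N_l$ with the tail bound on $\bb P_x(\tau_y>n)$. At this point in the paper the only available decay is $\bb P_x(\tau_y>n)\leq c_\ee n^{-\ee}(1+\max(y,0)+N(x))$ from \eqref{tauyto0}, with $\ee$ small. If you take $l=l_k$ and bound
\[
\bb E_x\bigl(N(X_k);\tau_y=k\bigr)\leq l_k\,\bb P_x(\tau_y=k)+\bb E_x\bigl(N_{l_k}(X_k)\bigr)
\leq l_k\,\bb P_x(\tau_y=k)+\frac{c}{l_k^{1+\beta}}+e^{-ck}(1+N(x)),
\]
then to make $\sum_k l_k^{-(1+\beta)}$ converge you need $l_k\gtrsim k^{\delta}$ with $\delta(1+\beta)>1$, while to make $\sum_k l_k\,\bb P_x(\tau_y=k)$ converge (via Abel summation and the $n^{-\ee}$ tail) you need $\delta<\ee$. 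Since $\ee$ is small, these constraints are incompatible; the sketch does not close. The same obstruction persists even with the sharper bound $n^{-(1/2-\ee)}$ proved later in Lemma \ref{taupptrn}. In short, the quantity $\bb E_x N(X_{\tau_y})$ is genuinely delicate and your outline does not control it.

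The paper bypasses this difficulty entirely by a one-line observation you missed: since $(M_n)_{n\geq 0}$ is a martingale, $(|M_n|)_{n\geq 0}$ is a submartingale, and $\tau_y\wedge n\leq \hat T_z\wedge n$ as well as $T_z\wedge n\leq \hat T_z\wedge n$ are bounded stopping times, so optional stopping for submartingales gives directly
\[
\max\bigl\{\bb E_x|M_{\tau_y\wedge n}|,\ \bb E_x|M_{T_z\wedge n}|\bigr\}\leq \bb E_x|M_{\hat T_z\wedge n}|.
\]
Thus it suffices to bound $\bb E_x|M_{\hat T_z\wedge n}|$ uniformly in $n$, which is done by writing out the cases $\{\hat T_z\leq n\}$, $\{\tau_y>n\}$, $\{\tau_y\leq n<\hat T_z\}$ and invoking Lemma \ref{intofthemartcond} exactly as you do for $\hat T_z$. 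This eliminates the need for any control on $N(X_{\tau_y})$ and treats all three stopping times at once.
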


\begin{proof}
The stopping times $\tau_y \wedge n$, $T_z \wedge n$ and $\hat{T}_z \wedge n$ are bounded and
 satisfy $\tau_y \wedge n \leq \hat{T}_z \wedge n$ and $T_z \wedge n \leq \hat{T}_z \wedge n.$ 
Since $(\abs{M_n})_{n\geq 0}$ is a submartingale, we have
\begin{equation}
	\label{MtauinfnleqMTinfn}
	\max \left\{ \bb E_x \left( \abs{M_{\tau_y \wedge n}} \right), \bb E_x \left( \abs{M_{T_z \wedge n}} \right) \right\}  \leq \bb E_x \left( \abs{M_{\hat{T}_z \wedge n}} \right).
\end{equation}
Using the optional stopping theorem,
\begin{align*}
	\bb E_x \left( \abs{M_{\hat{T}_z \wedge n}} \right) \leq\;& -\bb E_x \left( z+M_{\hat{T}_z} \,;\, \hat{T}_z \leq n \right) + \bb E_x \left( \abs{z+M_{n}} \,;\, \tau_y > n \right) \\
	& + \bb E_x \left( z+M_{n} \,;\, \tau_y \leq n \,,\, \hat{T}_z > n \right) + \abs{z} \\
	=\;& -\bb E_x \left( z+M_{n} \,;\, \hat{T}_z \leq n \right) - 2\bb E_x \left( z+M_{n} \,;\, z+M_{n} \leq 0 \,,\, \tau_y > n \right) \\
	& + \bb E_x \left( z+M_{n} \,;\, \tau_y > n \right) + \bb E_x \left( z+M_{n} \,;\, \tau_y \leq n \,,\, \hat{T}_z > n \right) + \abs{z}\\
	=\;& -z + 2\bb E_x \left( z+M_{n} \,;\, \hat{T}_z > n \right) \\
	&- 2\bb E_x \left( z+M_{n} \,;\, z+M_{n} \leq 0 \,,\, \tau_y > n \right) + \abs{z}.
\end{align*}
On the event $\{ z+M_{n} \leq 0 \,,\, \tau_y > n \}$, by \eqref{decMSX}, it holds $\abs{z+M_n} \leq \abs{r \left( X_n \right)}$.
Therefore, by Lemma \ref{MTR} and the point \ref{Momdec001} of Hypothesis \ref{Momdec}, we have
\[
- 2\bb E_x \left( z+M_{n} \,;\, z+M_{n} \leq 0 \,,\, \tau_y > n \right) \leq c \left( 1+N(x) \right),
\]
Using Lemma \ref{intofthemartcond},
\begin{equation}
	\label{majMTzinfn}
	\bb E_x \left( \abs{M_{\hat{T}_z}} \,;\, \hat{T}_z \leq n \right) \leq \bb E_x \left( \abs{M_{\hat{T}_z \wedge n}} \right) \leq c \left( 1+ \abs{z} + N(x) \right).
\end{equation}
By the Lebesgue monotone convergence theorem and the fact that $\hat{T}_z < + \infty,$ 
we deduce that $M_{\hat{T}_z}$ is $\bb P_x$-integrable and
\[
\bb E_x \left( \abs{M_{\hat{T}_z}} \right) \leq c \left( 1+ \abs{z} + N(x) \right).
\]
In the same manner, using (\ref{MtauinfnleqMTinfn}), (\ref{majMTzinfn}) and Lemmas \ref{Exitfinit} and \ref{ExitfinitTz}, we conclude that $M_{\tau_y}$ and $M_{T_z}$ are $\bb P_x$-integrable and
\[
\max \left\{ \bb E_x \left( \abs{M_{\tau_y}} \right), \bb E_x \left( \abs{M_{T_z}} \right) \right\} \leq c \left( 1+ \abs{z} + N(x) \right).
\]
The assertion of the lemma follows obviously from the last two inequalities.
\end{proof}

\begin{corollary}
\label{ExofVW}
For any $x \in \bb X$, $y \in \bb R$ and $z \in \bb R$, the following functions are well defined:
\[
W(x,z) :=  -\bb E_x \left( M_{T_z} \right), \quad \hat{W}(x,z) :=  -\bb E_x \left( M_{\hat{T}_z} \right) \quad \text{and} \quad V(x,y) :=  -\bb E_x \left( M_{\tau_y} \right).
\]
\end{corollary}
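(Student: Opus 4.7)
The plan is to observe that Corollary \ref{ExofVW} is an almost immediate consequence of the preceding lemmas. Specifically, to say that $W(x,z)$, $\hat{W}(x,z)$, and $V(x,y)$ are well defined, two things must be verified: (i) the stopping times $T_z$, $\hat{T}_z$, $\tau_y$ are $\bb P_x$-almost surely finite so that $M_{T_z}$, $M_{\hat{T}_z}$, $M_{\tau_y}$ are genuine random variables on $\Omega$; and (ii) these random variables are $\bb P_x$-integrable, so that their expectations exist in $\bb R$.

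First I would collect the almost-sure finiteness statements. Corollary \ref{Exitfinit} (cited in Section \ref{sec-not-res} and proved later in Appendix) gives $\tau_y < +\infty$ $\bb P_x$-a.s.; Lemma \ref{ExitfinitTz} gives the analogous statement for $T_z$; and Lemma \ref{ExitfinitTmanuz} just above provides $\hat{T}_z < +\infty$ $\bb P_x$-a.s. Thus on a set of full $\bb P_x$-measure the values $M_{\tau_y}$, $M_{T_z}$, $M_{\hat{T}_z}$ are well defined as the values of the martingale $(M_n)_{n\geq 0}$ at the corresponding finite indices.

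Next I would invoke Lemma \ref{intdeMtau}, which states precisely that
\[
\max\bigl\{\bb E_x(\abs{M_{\hat{T}_z}}),\, \bb E_x(\abs{M_{T_z}}),\, \bb E_x(\abs{M_{\tau_y}})\bigr\} \leq c\bigl(1+\abs{z}+N(x)\bigr) < +\infty.
\]
This gives the integrability of each of the three random variables, so the expectations $\bb E_x(M_{\hat{T}_z})$, $\bb E_x(M_{T_z})$, $\bb E_x(M_{\tau_y})$ are finite real numbers. Setting $W(x,z) := -\bb E_x(M_{T_z})$, $\hat{W}(x,z) := -\bb E_x(M_{\hat{T}_z})$, and $V(x,y) := -\bb E_x(M_{\tau_y})$ then yields well-defined functions on $\bb X \times \bb R$. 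There is no real obstacle here since all the substantive work has been carried out in Lemmas \ref{ExitfinitTmanuz} and \ref{intdeMtau}; the only issue to flag is the consistency of the definitions (for $V$, one should note that the sign of $M_{\tau_y}$ is typically negative on the overshoot, so $-\bb E_x(M_{\tau_y})$ is the natural non-negative candidate for the harmonic function, as will be confirmed in later sections).
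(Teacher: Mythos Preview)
Your proposal is correct and matches the paper's approach: the corollary is stated without proof because it is immediate from Lemma \ref{intdeMtau} (integrability) together with the a.s.\ finiteness results Lemma \ref{Exitfinit}, Lemma \ref{ExitfinitTz}, and Lemma \ref{ExitfinitTmanuz}, exactly as you argue.
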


\begin{proposition}\ 
\label{IFP}
\begin{enumerate}[ref=\arabic*, leftmargin=*, label=\arabic*.]
	\item \label{IFP001} Let $x\in \bb X,$ $y \in \bb R$ and $z=y+r(x)$. Then
	\[
	V(x,y) = \underset{n\to +\infty}{\lim} \bb E_x \left( z+M_n \,;\, \tau_y > n \right) = \underset{n\to +\infty}{\lim} \bb E_x \left( y+S_n \,;\, \tau_y > n \right)
	\]
	and
	\begin{align*}
		W(x,z) &= \underset{n\to +\infty}{\lim} \bb E_x \left( z+M_n \,;\, T_z > n \right),\\
		\hat{W}(x,z) &= \underset{n\to +\infty}{\lim} \bb E_x \left( z+M_n \,;\, \hat{T}_z > n \right).
	\end{align*}
	\item \label{IFP002} For any $x \in \bb X$, the functions $y\mapsto V(x,y)$, $z\mapsto W(x,z)$ and $z\mapsto \hat{W}(x,z)$ are non-decreasing on $\bb R$.
	\item \label{IFP003} There exists $\ee_0>0$ such that, for any $\ee \in (0,\ee_0)$, $x\in \bb X$, $z\in \bb R$ and any integer $n_f \geq 2$,
	\begin{equation}
		\label{WLZX}
		\hat{W}(x,z) \leq \left( 1 + \frac{c_{\ee}}{n_f^\ee} \right) \left( \max(z,0) + cN(x) \right) + c_{\ee} n_f^{1/2} + \e^{-c_{\ee} n_f^{\ee}} N(x)
	\end{equation}
	and, for any $x\in \bb X$, $y\in \bb R$ and $z=y+r(x)$,
	\begin{equation}
		\label{VWppthW}
		0 \leq \min \left\{ V(x,y), W(x,z) \right\} \leq \max \left\{ V(x,y), W(x,z) \right\} \leq \hat{W}(x,y).
	\end{equation}
	In particular, for any $x\in \bb X$ and $y\in \bb R$,
	\begin{equation}
		\label{encadrV}
		0 \leq V(x,y) \leq c \left( 1 + \max(y,0) + N(x) \right).
	\end{equation}
	\item \label{IFP004} For any $x\in \bb X$ and $y\in \bb R$,
	\[
	V(x,y) = \bb E_x \left( V(X_1,y+S_1) \,;\, \tau_y > n \right)
	\]
	and $\left( V\left( X_n, y+S_n \right) \mathbbm 1_{\left\{ \tau_y > n \right\}} \right)_{n\geq 0}$ is a $\bb P_x$-martingale.
\end{enumerate}
\end{proposition}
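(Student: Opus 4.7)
The plan is to establish all four claims by passing to the limit in the finite-horizon expectations of Sections \ref{Mart Approx}--\ref{CMWI}. For \ref{IFP001}, I apply optional stopping to the bounded stopping time $T_z\wedge n$: since $(M_n)_{n\geq 0}$ is a zero-mean $\bb P_x$-martingale one has $\bb E_x(M_{T_z\wedge n})=0$, and splitting on $\{T_z>n\}$ and its complement gives
\[
\bb E_x(z+M_n\,;\,T_z>n) = -\bb E_x(M_{T_z}\,;\,T_z\leq n).
\]
Lemma \ref{intdeMtau} furnishes $\abs{M_{T_z}}$ as a $\bb P_x$-integrable dominating variable, and the a.s.\ finiteness of $T_z$ (Lemma \ref{ExitfinitTz}) lets Lebesgue's theorem deliver the limit $-\bb E_x(M_{T_z})=W(x,z)$. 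The same argument with $\hat{T}_z$ and $\tau_y$ in place of $T_z$ (using Lemmas \ref{ExitfinitTmanuz} and \ref{Exitfinit} respectively for the a.s.\ finiteness) yields the corresponding limits for $\hat{W}$ and $V$. For the second expression of $V$, I invoke \eqref{decMSX} and observe that
\[
\abs{\bb E_x(r(X_n)\,;\,\tau_y>n)} \leq c\,\bb E_x^{1/2}\left(1+N(X_n)^2\right)\,\bb P_x^{1/2}(\tau_y>n),
\]
which tends to $0$ by \eqref{decexpN} (combined with Jensen's inequality, valid since $\alpha>2$) and the a.s.\ finiteness of $\tau_y$.

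For \ref{IFP002}, I work at fixed $n$. For $y_1\leq y_2$ with $h=y_2-y_1\geq 0$, the pathwise inequality $\tau_{y_1}\leq\tau_{y_2}$ gives
\[
\bb E_x(y_2+S_n\,;\,\tau_{y_2}>n) - \bb E_x(y_1+S_n\,;\,\tau_{y_1}>n) = h\,\bb P_x(\tau_{y_2}>n) + \bb E_x(y_1+S_n\,;\,\tau_{y_2}>n\geq\tau_{y_1});
\]
since $y_1+S_n>-h$ on $\{\tau_{y_2}>n\}$, the right-hand side is bounded below by $h\,\bb P_x(\tau_{y_1}>n)\geq 0$, and letting $n\to+\infty$ proves the monotonicity of $V(x,\cdot)$. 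The same scheme handles $W$ and $\hat{W}$ once one notices that $z\mapsto\hat{T}_z$ is pathwise non-decreasing: if $z_1\leq z_2$ and $k\geq\tau_{y_2}\geq\tau_{y_1}$ with $z_2+M_k\leq 0$, then $z_1+M_k\leq 0$.

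For \ref{IFP003}, the bound \eqref{WLZX} follows by letting $n\to+\infty$ in Lemma \ref{intofthemartcond} via \ref{IFP001}. For $V\leq\hat{W}$, I use the decomposition
\[
\bb E_x(z+M_n\,;\,\hat{T}_z>n) = \bb E_x(z+M_n\,;\,\tau_y>n) + \bb E_x(z+M_n\,;\,\tau_y\leq n,\,\hat{T}_z>n),
\]
combined with the fact that $z+M_n>0$ on $\{\tau_y\leq n,\,\hat{T}_z>n\}$ by definition of $\hat{T}_z$. For $W\leq\hat{W}$ I use the analogous decomposition via $\{\hat{T}_z>n\}=\{T_z>n\}\sqcup\{T_z\leq n,\,\tau_y>n\}$ and bound the second piece from below using \eqref{decMSX} and the vanishing of $\bb E_x(1+N(X_n)\,;\,\tau_y>n)$ shown above. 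Positivity of $V$, $W$, $\hat{W}$ is immediate from the sign of the finite-$n$ integrands on the corresponding exit events, and \eqref{encadrV} comes from taking $n_f=2$ in \eqref{WLZX} and using $\max(z,0)\leq\max(y,0)+c(1+N(x))$ via Lemma \ref{MTR}.

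For \ref{IFP004}, I condition on $\mathscr F_1$. Writing $\Phi_m(x',y'):=\bb E_{x'}(y'+S_m\,;\,\tau_{y'}>m)$, the Markov property yields
\[
\bb E_x(y+S_n\,;\,\tau_y>n) = \bb E_x\!\left[\mathbbm 1_{\{y+S_1>0\}}\,\Phi_{n-1}(X_1,y+S_1)\right].
\]
By \ref{IFP001}, $\Phi_{n-1}(X_1,y+S_1)\to V(X_1,y+S_1)$ pointwise, while Corollary \ref{unifmajdelafoncinv} with $n_f=2$ provides the uniform-in-$n$ dominating bound $c(1+\max(y+S_1,0)+N(X_1))$, which is $\bb P_x$-integrable thanks to Hypothesis \ref{Momdec} and Lemma \ref{MTR}. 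Lebesgue's theorem then gives the $\mathbf Q_+$-harmonicity of $V$ at step $1$. The martingale property follows by applying this identity conditionally given $\mathscr F_n$ to the shifted chain $(X_{n+k},y+S_{n+k})_{k\geq 0}$ through the Markov property. I expect the main obstacle to be securing the uniform-in-$n$ domination used here: it rests entirely on Corollary \ref{unifmajdelafoncinv}, itself relying on the delicate recursion of Lemma \ref{intofthemartcond} and the careful use of the two stopping times $T_z$ and $\hat{T}_z$.
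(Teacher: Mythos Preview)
Your approach is essentially the paper's, and the overall structure is sound. There is one recurring slip: on $\{\hat{T}_z>n\}\cap\{\tau_y>n\}$ the quantity $z+M_n=y+S_n+r(X_n)$ need not be positive, so several of your ``by sign of the integrand'' steps need the $r(X_n)$ correction you already use for the second expression of $V$. Concretely: your ``same scheme'' for the monotonicity of $\hat{W}$ in \ref{IFP002} requires the lower bound $z+M_n>r(X_n)$ on $\{\tau_y>n\}$ and the vanishing of $\bb E_x(|r(X_n)|;\tau_y>n)$ (the paper does this explicitly); your decomposition $\{\hat{T}_z>n\}=\{T_z>n\}\sqcup\{T_z\leq n,\tau_y>n\}$ for $W\leq\hat{W}$ is false as written---the correct second piece is $\{T_z\leq n,\hat{T}_z>n\}$, which is strictly larger, and again the $r(X_n)$ bound handles the part where $\tau_y>n$ (cf.\ \eqref{MetMchap}); and the positivity of $\hat{W}$ does not follow from the integrand sign but rather from $\hat{W}\geq W\geq 0$. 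Finally, your optional stopping identity in \ref{IFP001} omits a $z\,\bb P_x(\mathfrak T>n)$ term, though it vanishes in the limit. None of these affects the conclusions, and the fixes are exactly what the paper does.
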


\begin{proof} \textit{Claim \ref{IFP001}.} 
Let $\mathfrak{T}$ be any of the stopping times $\tau_y, T_z,$ or $\hat{T}_z$.
By the martingale property, 
\[
\bb E_x \left( z+M_n \,;\, \mathfrak{T} > n \right) = z\bb P_x \left( \mathfrak{T} > n \right) - \bb E_x \left( M_{\mathfrak{T}} \,;\, \mathfrak{T} \leq n \right).
\]
Using Lemmas \ref{Exitfinit}, \ref{ExitfinitTz}, \ref{ExitfinitTmanuz}, \ref{intdeMtau} and the Lebesgue dominated convergence theorem,
\[
\bb E_x \left( z+M_n \,;\, \mathfrak{T} > n \right) = -\bb E_x \left( M_{\mathfrak{T}} \right).
\]
Moreover, by \eqref{decMSX},
\[
\bb E_x \left( y+S_n \,;\, \tau_y > n \right) = \bb E_x \left( z+M_n \,;\, \tau_y > n \right) - \bb E_x \left( r\left(X_n\right) \,;\, \tau_y > n \right).
\]
Since, by Lemma \ref{MTR}, the point \ref{Momdec001} of Hypothesis \ref{Momdec} and Lemma \ref{Exitfinit}, we have
\begin{align}
	\abs{\bb E_x \left( r\left(X_n\right) \,;\, \tau_y > n \right)} &\leq c\bb E_x^{1/2} \left( \left( 1+N \left( X_n \right) \right)^2 \right) \bb P_x^{1/2} \left( \tau_y > n \right) \nonumber\\
	&\leq c \left( 1+N(x) \right) \bb P_x^{1/2} \left( \tau_y > n \right) \underset{n\to +\infty}{\longrightarrow} 0,
	\label{intrXn}
\end{align}
the claim \ref{IFP001} follows.

\textit{Proof of the claim \ref{IFP002}.} Let $x\in \bb X$. For any $y' \leq y$, we obviously have $\tau_{y'} \leq \tau_y$. Therefore,
\[
\bb E_x \left( y'+S_n \,;\, \tau_{y'} > n \right) \leq \bb E_x \left( y+S_n \,;\, \tau_{y'} > n \right) \leq \bb E_x \left( y+S_n \,;\, \tau_{y} > n \right).
\]
Taking the limit as $n\to +\infty$ and using the claim \ref{IFP001}, it follows that $V(x,y')\leq V(x,y)$. 
In the same way $W(x,z') \leq W(x,z)$ for $z'\leq z$.
To prove the monotonicity of
$\hat{W}$, we note that, for any $z' \leq z,$ $y'=z'-r(x)$ and $y=z-r(x)$, we have $\hat{T}_{z'} = \min \{ k \geq \tau_{y'},\, z'+M_k \leq 0 \} \leq \min \{ k \geq \tau_y,\, z'+M_k \leq 0 \} \leq \hat{T}_z$. So
\begin{align*}
	\bb E_x \left( z'+M_n \,;\, \hat{T}_{z'} > n \right) \leq\;& \bb E_x \left( z+M_n \,;\, \hat{T}_{z'} > n \,,\, \hat{T}_z > n \right)\\
	\leq\;& \bb E_x \left( y+S_n \,;\, \tau_y > n \right) + \bb E_x \left( \abs{r\left( X_n \right)} \,;\, \tau_y > n \right) \\
	&+ \bb E_x \left( z+M_n \,;\, \tau_y \leq n \,,\, \hat{T}_z > n \right) \\
	\leq\;& \bb E_x \left( z+M_n \,;\, \hat{T}_z>n \right) + 2\bb E_x \left( \abs{r\left( X_n \right)} \,;\, \tau_y > n \right).
\end{align*}
As in \eqref{intrXn}, taking the limit as $n\to +\infty$, by the claim \ref{IFP001}, we have $\hat{W}(x,z')\leq \hat{W}(x,z)$.

\textit{Proof of the claim \ref{IFP003}.} The inequality \eqref{WLZX} is a direct consequence of the claim \ref{IFP001} and Lemma \ref{intofthemartcond}. Moreover, taking the limit as $n\to \infty$ in \eqref{VnPPWn}, we get $V(x,y) \leq \hat{W}(x,z)$.

To bound $W$, we write
\begin{align*}
	\bb E_x \left( z+M_n \,;\, T_z > n \right) &\leq \bb E_x \left( z+M_n \,;\, \tau_y \leq n \,,\, \hat{T}_z > n \,,\, T_z > n \right) \\
	&\qquad \qquad + \bb E_x \left( z+M_n \,;\, z+M_n > 0 \,,\, \tau_y > n \,,\, T_z > n \right).
\end{align*}
Since $z+M_n >0$ on the event $\{ \tau_y \leq n \,,\, \hat{T}_z > n \}$,
\begin{align*}
	\bb E_x \left( z+M_n \,;\, T_z > n \right) &\leq \bb E_x \left( z+M_n \,;\, \tau_y \leq n \,,\, \hat{T}_z > n \right) \\
	&\qquad \qquad+ \bb E_x \left( z+M_n \,;\, z+M_n > 0 \,,\, \tau_y > n \right) \\
	&= \bb E_x \left( z+M_n \,;\, \hat{T}_z > n \right) \\
	&\qquad \qquad- \bb E_x \left( z+M_n \,;\, z+M_n \leq 0 \,,\, \tau_y > n \right).
\end{align*}
Using the approximation \eqref{decMSX},
\begin{equation}
	\label{MetMchap}
	\bb E_x \left( z+M_n \,;\, T_z > n \right) \leq \bb E_x \left( z+M_n \,;\, \hat{T}_z > n \right) + \bb E_x \left( \abs{r\left(X_n\right)} \,;\, \tau_y > n \right).
\end{equation}
As in \eqref{intrXn}, using the claim \ref{IFP001},
\[
W(x,z) \leq \hat{W}(x,z).
\]
Now, since $y+S_n$ is positive on the event $\{ \tau_y > n \}$, by the claim \ref{IFP001}, we see that $V(x,y) \geq 0$ and in the same way $W(x,z) \geq 0$. This proves \eqref{VWppthW}.

Inequality \eqref{encadrV} follows from \eqref{WLZX} and \eqref{VWppthW}.

\textit{Proof of the claim \ref{IFP004}.} By the Markov property,
\begin{align*}
	\bb E_x \left( y+S_{n+1} \,;\, \tau_y > n+1 \right) &= \int_{\bb X \times \bb R} \bb E_{x'} \left( y'+S_n \,;\, \tau_{y'} > n \right) \\
	&\qquad \qquad \times \bb P_x \left( X_1 \in \dd x' \,,\, y+S_1 \in \dd y' \,,\, \tau_y > 1 \right),
\end{align*}
where, by Corollary \ref{unifmajdelafoncinv}, $\bb E_{x'} \left( y'+S_n \,;\, \tau_{y'} > n \right) \leq c \left( 1+\abs{y'} + N\left(x'\right) \right)$ and by the point \ref{Momdec001} of Hypothesis \ref{Momdec},
\[
c\bb E_x \left( 1+\abs{y+S_1} + N\left(X_1\right) \right) \leq c \left( 1+\abs{y}+N(x) \right) < +\infty.
\]
The Lebesgue dominated convergence theorem implies that
\[
V(x,y) = \bb E_x \left( V \left( X_1, y+S_1 \right) \,;\, \tau_y > 1 \right).
\]
\end{proof}

\section{Positivity of the harmonic function}
\label{PosHaFun}

The aim of this section is to prove that the harmonic function $V$ is non-identically zero and to precise its domain of positivity $\mathscr{D}_{+}(V)$. 
For any $x\in \bb X$, $z\in \bb R$ and $n\in \bb N$, denote for brevity,
\[
\hat{W}_n(x,z) = \hat{W} \left( X_n, z+M_n \right) \mathbbm 1_{\left\{\hat{T}_z > n \right\}}.
\]
Although it is easy to verify that $\hat{W}(x,z) \geq z$ (see Lemma \ref{prophW}) which, in turn, ensures that $\hat{W}(x,z)>0$ for any $z>0$, 
there is no straightforward way to bound from below the function $V$. We take advantage of the lower bound $V(x,y) \geq \lim_{n} \bb E_x ( \hat{W}_n(x,z) \,;\, \tau_y > n )$ (Lemma \ref{minpos001}) and of the fact that $( \hat{W}_n(x,z) \mathbbm 1_{\left\{ \tau_y > n \right\}} )_{n\geq 0}$ is a $\bb P_x$-supermartingale (Lemma \ref{prophW}). By a recurrent procedure we obtain a lower bound for $V$ (Lemma \ref{PosdeV}) which subsequently is used to prove the positivity of $V$ (Lemma \ref{posdeVsurDgamma}).

\begin{lemma}\ 
\label{prophW}
\begin{enumerate}[ref=\arabic*, leftmargin=*, label=\arabic*.]
	\item \label{prophW001} For any $x\in \bb X$ and $z\in \bb R$,
	\[
	\hat{W}(x,z) \geq z.
	\]
	\item \label{prophW001bis} For any $x\in \bb X$,
	\[
	\underset{z\to +\infty}{\lim} \frac{\hat{W}(x,z)}{z} = 1.
	\]
	\item \label{prophW002} For any $x\in \bb X$ and $z\in \bb R$,
	\[
	\bb E_x \left( \hat{W}_n(x,z) \right) \geq \hat{W}(x,z).
	\]
	\item \label{prophW003} For any $x\in \bb X$ and $z\in \bb R$, $\left( \hat{W}_n(x,z) \mathbbm 1_{\left\{ \tau_y > n \right\}} \right)_{n\geq 0}$ is a $\bb P_x$-supermartingale.
\end{enumerate}
\end{lemma}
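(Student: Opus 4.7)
The plan is to prove the four points in order, with the bulk of the work devoted to establishing a one-step decomposition of $\hat W$. The point \ref{prophW001} is immediate: Lemma \ref{ExitfinitTmanuz} ensures $\hat T_z<+\infty$ $\bb P_x$-a.s., and by definition of $\hat T_z$ we have $z+M_{\hat T_z}\leq 0$; taking expectations gives $\hat W(x,z)=-\bb E_x(M_{\hat T_z})\geq z$. For the point \ref{prophW001bis}, the upper bound \eqref{WLZX} yields, for each fixed $n_f$ and $x$, $\limsup_{z\to+\infty}\hat W(x,z)/z\leq 1+c_{\ee}/n_f^{\ee}$; sending $n_f\to+\infty$ and combining with the lower bound from the point \ref{prophW001} produces $\lim_{z\to+\infty}\hat W(x,z)/z=1$.

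The heart of the argument is the following one-step identity, valid for every $x\in\bb X$, $z\in\bb R$ with $y=z-r(x)$:
\[
\hat W(x,z)=\bb E_x\left(\hat W(X_1,z+M_1)\,;\,\tau_y>1\right)+\bb E_x\left(W(X_1,z+M_1)\,;\,\tau_y=1,\hat T_z>1\right).
\]
To derive it I would decompose $-\bb E_x(M_{\hat T_z})$ on the three events $\{\tau_y>1\}$, $\{\tau_y=1,\hat T_z=1\}$ and $\{\tau_y=1,\hat T_z>1\}$ and apply the Markov property; the approximation \eqref{decMSX} gives $y+S_1=(z+M_1)-r(X_1)$, so on $\{\tau_y>1\}$ the restart point $(x',z')=(X_1,z+M_1)$ satisfies $y'=y+S_1>0$, whence the remainder of $\hat T_z$ coincides with $\hat T'_{z'}$ for the shifted chain and contributes $\hat W(X_1,z+M_1)$ once the $M_1$ piece is absorbed by $\bb E_x(M_1)=0$, while on $\{\tau_y=1,\hat T_z>1\}$ the walk has already exited but $z+M_1>0$, so the remainder equals $T'_{z'}$ and contributes $W(X_1,z+M_1)$. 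Integrability of all the expectations is guaranteed by Lemma \ref{intdeMtau}. Carrying out the same decomposition at time $n$ rather than at time $1$ yields the $n$-step extension
\[
\hat W(x,z)=\bb E_x\left(\hat W(X_n,z+M_n)\,;\,\tau_y>n\right)+\bb E_x\left(W(X_n,z+M_n)\,;\,\tau_y\leq n,\hat T_z>n\right).
\]

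The point \ref{prophW002} then follows from the $n$-step identity by applying $W\leq\hat W$ (from \eqref{VWppthW}) to the second term, which bounds $\hat W(x,z)$ above by $\bb E_x(\hat W(X_n,z+M_n)\,;\,\hat T_z>n)=\bb E_x(\hat W_n(x,z))$. For the point \ref{prophW003}, on $\{\tau_y\leq n\}$ both sides of the desired supermartingale inequality vanish; on $\{\tau_y>n\}$ the Markov property at time $n$ reduces $\bb E_x(\hat W(X_{n+1},z+M_{n+1})\mathbbm 1_{\tau_y>n+1}\mid\mathscr{F}_n)$ to $\bb E_{x'}(\hat W(X_1,z'+M_1)\,;\,\tau'_{y'}>1)$ with $(x',z')=(X_n,z+M_n)$ and $y'=y+S_n>0$; by the one-step identity this equals $\hat W(x',z')-\bb E_{x'}(W(X_1,z'+M_1)\,;\,\tau'_{y'}=1,\hat T'_{z'}>1)\leq\hat W(X_n,z+M_n)$ since $W\geq 0$, which coincides with $\hat W_n(x,z)$ because $\hat T_z>n$ on $\{\tau_y>n\}$. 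The principal difficulty lies in setting up the one-step identity, with its careful bookkeeping of the three subregions of $\{(\tau_y,\hat T_z)\}$ and the matching $y'=(z+M_1)-r(X_1)$ provided by \eqref{decMSX}; once this identity is in place, the points \ref{prophW002} and \ref{prophW003} reduce to monotone applications of $0\leq W\leq\hat W$.
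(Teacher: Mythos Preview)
Your proof is correct and follows essentially the same route as the paper: the central object is precisely the identity
\[
\hat{W}(x,z)=\bb E_x\bigl(\hat{W}(X_n,z+M_n)\,;\,\tau_y>n\bigr)+\bb E_x\bigl(W(X_n,z+M_n)\,;\,\tau_y\leq n,\hat{T}_z>n\bigr),
\]
which the paper labels \eqref{marthW}, and points \ref{prophW002}--\ref{prophW003} are then deduced from it exactly as you do, via $0\leq W\leq \hat W$. The only difference is in how the identity is obtained. The paper first writes the finite-time decomposition of $\bb E_x(z+M_{n+k}\,;\,\hat T_z>n+k)$ using the Markov property (as in Lemma \ref{MarkovpropforTz}) and then passes to the limit $n\to+\infty$ with the Lebesgue dominated convergence theorem, the domination being supplied by Lemma \ref{intofthemartcond}. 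You instead work directly on $-\bb E_x(M_{\hat T_z})$, split on the $\mathscr F_n$-events, and recombine the $M_n$-pieces via $\bb E_x(M_{\hat T_z\wedge n})=0$ (optional stopping at a bounded time). Both approaches are equivalent; yours is slightly more direct but relies on the integrability of $M_{\hat T_z}$ already provided by Lemma \ref{intdeMtau}, while the paper's limiting argument is self-contained at the level of bounded expectations.
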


\begin{proof}
\textit{Claim \ref{prophW001}.} By the Doob optional theorem and the definition of $\hat{T}_z$,
\[
\bb E_x \left( z+M_n \,;\, \hat{T}_z > n \right) = z - \bb E_x \left( z+M_{\hat{T}_z} \,;\, \hat{T}_z \leq n \right) \geq z.
\]
Taking the limit as $n\to+\infty$ and using the point \ref{IFP001} of Proposition \ref{IFP} proves the claim \ref{prophW001}.

\textit{Proof of the claim \ref{prophW001bis}.} By the claim \ref{prophW001}, $\liminf_{z\to +\infty} \hat{W}(x,z)/z \geq 1$. Moreover, by \eqref{WLZX}, for any $n_f \geq 2$,
\[
\underset{z\to \infty}{\limsup} \frac{\hat{W}(x,z)}{z} \leq \left( 1 + \frac{c_{\ee}}{n_f^\ee} \right).
\]
Taking the limit as $n_f \to +\infty$, the claim follows.

\textit{Proof of the claim \ref{prophW002}.} Let $y=z-r(x)$. Using the Markov property, as in the proof of Lemma \ref{MarkovpropforTz},
\begin{align}
	\bb E_x \left( z+M_{n+k} \,;\, \hat{T}_z > n+k \right) =\;& \int_{\bb X \times \bb R} \bb E_{x'} \left( z'+M_n \,;\, \hat{T}_{z'} > n \right) \nonumber\\
	&\quad \times \bb P_x \left( X_k \in \dd x' \,,\, z+M_k \in \dd z' \,,\, \tau_y > k \right) \nonumber\\
	\label{decint}
	&+\int_{\bb X \times \bb R} \bb E_{x'} \left( z'+M_n \,;\, T_{z'} > n \right) \\
	&\quad \times \bb P_x \left( X_k \in \dd x' \,,\, z+M_k \in \dd z' \,,\, \tau_y \leq k \,,\, \hat{T}_z > k \right). \nonumber
\end{align}
We will find the limits as $n \to +\infty$ of the two terms in the right hand side. By Lemmas \ref{intofthemartcond} and \ref{MTR}, $\bb E_{x'} \left( z'+M_n \,;\, \hat{T}_{z'} > n \right) \leq c \left( 1+\abs{y'} + N\left(x'\right) \right)$, with $y'=z'-r(x')$. Moreover by the point \ref{Momdec001} of Hypothesis \ref{Momdec}, $\bb E_x \left( 1+\abs{y+S_k} + N\left(X_k\right) \right) \leq ck \left( 1+\abs{y}+N(x) \right) < +\infty$. So, by the Lebesgue dominated convergence theorem and the point \ref{IFP001} of Proposition \ref{IFP},
\begin{align}
	\int_{\bb X \times \bb R} \bb E_{x'} \left( z'+M_n \,;\, \hat{T}_{z'} > n \right) &\bb P_x \left( X_k \in \dd x' \,,\, z+M_k \in \dd z' \,,\, \tau_y > k \right) \nonumber\\
	&\hspace{2cm} \underset{n\to+\infty}{\longrightarrow} \bb E_x \left( \hat{W} \left( X_k, z+M_k \right) \,;\, \tau_y > k \right).
	\label{limint001}
\end{align}
Moreover, using \eqref{MetMchap}, Lemmas \ref{intofthemartcond} and \ref{MTR} and the point \ref{Momdec001} of Hypothesis \ref{Momdec},
\[
\bb E_{x'} \left( z'+M_n \,;\, T_{z'} > n \right) \leq c \left( 1+\abs{z'} + N\left(x'\right) \right).
\]
Again, by the Lebesgue dominated convergence theorem and the point \ref{IFP001} of Proposition \ref{IFP}, we have
\begin{align}
	&\int_{\bb X \times \bb R} \bb E_{x'} \left( z'+M_n \,;\, T_{z'} > n \right) \bb P_x \left( X_k \in \dd x' \,,\, z+M_k \in \dd z' \,,\, \tau_y \leq k \,,\, \hat{T}_z > k \right) \nonumber\\
	&\hspace{5cm} \underset{n\to+\infty}{\longrightarrow} \bb E_x \left( W \left( X_k, z+M_k \right) \,;\, \tau_y \leq k \,,\, \hat{T}_z > k \right).
	\label{limint002}
\end{align}
Putting together \eqref{decint}, \eqref{limint001}, \eqref{limint002} and using the point \ref{IFP001} of Proposition \ref{IFP},
\begin{align}
	\hat{W}(x,z) =\;& \bb E_x \left( \hat{W} \left( X_k, z+M_k \right) \,;\, \tau_y > k \right) \nonumber\\
	&+ \bb E_x \left( W \left( X_k, z+M_k \right) \,;\, \tau_y \leq k \,,\, \hat{T}_z > k \right).
	\label{marthW}
\end{align}
Now, taking into account \eqref{VWppthW} and the identity $\{ \tau_y > k \} = \{ \tau_y > k,\; \hat{T}_z > k \}$, we obtain the claim \ref{prophW002}.

\textit{Proof of the claim \ref{prophW003}.} By the point \ref{IFP003} of Proposition \ref{IFP}, $W$ is a non-negative function. 
Therefore, using \eqref{marthW},
\[
\hat{W}(x,z) \geq \bb E_x \left( \hat{W} \left( X_1, z+M_1 \right) \,;\, \tau_y > 1 \right),
\]
which implies that $\left( \hat{W}_n(x,z) \mathbbm 1_{\left\{ \tau_y > n \right\}} \right)_{n\geq 0}$ is a supermartingale.
\end{proof}

\begin{lemma}
\label{minpos001}
For any $x\in \bb X,$ $y \in \bb R$ and $z=y+r(x)$,
\[
V(x,y) = \underset{n\to+\infty}{\lim}\, \bb E_x \left( \hat{W}_n(x,z) \,;\, \tau_y > n \right).
\]
\end{lemma}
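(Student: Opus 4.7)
The plan is to apply the strong Markov property at time $n$ on the event $\{\tau_y>n\}$ in order to rewrite $\hat W(X_n,z+M_n)$ as a conditional expectation of $-(M_{\hat T_z}-M_n)$ given $\mathscr F_n$, and then to let $n\to+\infty$.

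The key algebraic step is the pathwise identity $\hat T_z-n=\hat T'_{z+M_n}$ on $\{\tau_y>n\}$, where the primes refer to the shifted process $(\tilde X_j,\tilde M_j)_{j\ge 0}:=(X_{n+j},M_{n+j}-M_n)_{j\ge 0}$. Indeed, on $\{\tau_y>n\}$ one has $\tau_y-n=\tau'_{y+S_n}$, and the definition $\hat T_z=\inf\{k\ge\tau_y:\,z+M_k\le 0\}$ gives $\hat T_z-n=\inf\{j\ge\tau'_{y+S_n}:\,(z+M_n)+\tilde M_j\le 0\}=\hat T'_{z+M_n}$. Using the strong Markov property (the conditional law of $(\tilde X,\tilde M)$ given $\mathscr F_n$ is that of a fresh chain starting at $X_n$ with martingale starting at $0$) together with the definition $\hat W(x',z')=-\bb E_{x'}(M_{\hat T_{z'}})$ from Corollary~\ref{ExofVW}, I would get
\begin{equation*}
\bb E_x\bigl(M_{\hat T_z}-M_n\bigm|\mathscr F_n\bigr)\,\mathbbm 1_{\{\tau_y>n\}}
= -\hat W(X_n,z+M_n)\,\mathbbm 1_{\{\tau_y>n\}}.
\end{equation*}
Taking unconditional expectation, using the tower property and $\{\tau_y>n\}\in\mathscr F_n$, and noting that $\hat W_n(x,z)\mathbbm 1_{\{\tau_y>n\}}=\hat W(X_n,z+M_n)\mathbbm 1_{\{\tau_y>n\}}$ since $\tau_y\le\hat T_z$, this rewrites as
\begin{equation*}
\bb E_x\bigl(\hat W_n(x,z)\,;\,\tau_y>n\bigr)
= \bb E_x(M_n\,;\,\tau_y>n)-\bb E_x(M_{\hat T_z}\,;\,\tau_y>n).
\end{equation*}

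To conclude I let $n\to+\infty$. Writing $\bb E_x(M_n;\tau_y>n)=\bb E_x(z+M_n;\tau_y>n)-z\,\bb P_x(\tau_y>n)$, the first summand tends to $V(x,y)$ by Proposition~\ref{IFP}\ref{IFP001} and the second vanishes since $\tau_y<+\infty$ $\bb P_x$-a.s. For the remaining term, Lemma~\ref{intdeMtau} provides the $\bb P_x$-integrability of $M_{\hat T_z}$, so the dominant function $|M_{\hat T_z}|$ is available and Lebesgue's dominated convergence theorem (with $\mathbbm 1_{\{\tau_y>n\}}\to 0$ a.s.) yields $\bb E_x(M_{\hat T_z};\tau_y>n)\to 0$. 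Summing the two contributions gives the claimed limit $V(x,y)$.

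The only delicate point is the identification $\hat T_z-n=\hat T'_{z+M_n}$ on $\{\tau_y>n\}$ and the correct application of the strong Markov property with the $\mathscr F_n$-measurable random starting point $z+M_n$; both follow directly from the definitions once one observes that on this event $\tau_y$ has not yet occurred, so that the stopping time $\hat T_z$ only sees the post-$n$ trajectory.
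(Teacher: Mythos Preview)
Your proof is correct and takes a genuinely different, more direct route than the paper.

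The paper argues by a squeeze. For the inequality $V(x,y)\le \lim_n \bb E_x(\hat W_n(x,z);\tau_y>n)$ it simply uses $z+M_n\le \hat W(X_n,z+M_n)$ (the point~\ref{prophW001} of Lemma~\ref{prophW}) and takes limits. For the reverse inequality it works harder: starting from
\[
\bb E_x(z+M_n;\tau_y>n)\ge \bb E_x(z+M_n;\hat T_z>n)-\bb E_x(\hat W_n(x,z))+\bb E_x(\hat W_n(x,z);\tau_y>n),
\]
it then uses the quantitative upper bound \eqref{WLZX} on $\hat W$ to show $\limsup_n \bb E_x(\hat W_n(x,z))\le (1+\delta)\hat W(x,z)$ for every $\delta>0$, obtains $V(x,y)\ge -\delta\hat W(x,z)+\lim_n \bb E_x(\hat W_n(x,z);\tau_y>n)$, and lets $\delta\to 0$.

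By contrast, you establish for each fixed $n$ the exact identity
\[
\bb E_x\bigl(\hat W_n(x,z);\tau_y>n\bigr)=\bb E_x(M_n;\tau_y>n)-\bb E_x(M_{\hat T_z};\tau_y>n)
\]
via the Markov property and the representation $\hat W(x',z')=-\bb E_{x'}(M_{\hat T_{z'}})$ of Corollary~\ref{ExofVW}, and then pass to the limit using only Proposition~\ref{IFP}\ref{IFP001}, the integrability from Lemma~\ref{intdeMtau}, and the finiteness of $\tau_y$. This bypasses the $\delta$-approximation and the dependence on the quantitative estimate \eqref{WLZX} entirely. The paper's approach has the side benefit of delivering $\lim_n \bb E_x(\hat W_n(x,z))=\hat W(x,z)$ (stated as a Remark), though this too would follow from your method by replacing $\{\tau_y>n\}$ with $\{\hat T_z>n\}$.
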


\begin{proof}
For any $n\in \bb N$, $x\in \bb X,$ $y \in \bb R$ and $z=y+r(x)$,
\[
\bb E_x \left( z+M_n \,;\, \tau_y > n \right) = \bb E_x \left( z+M_n \,;\, \hat{T}_z > n \right) - \bb E_x \left( z+M_n \,;\, \tau_y \leq n \,,\, \hat{T}_z > n  \right).	
\]
By the point \ref{prophW001} of Lemma \ref{prophW}, $z+M_n \leq \hat{W}_n(x,z)$ and therefore
\begin{align}
	\bb E_x \left( z+M_n \,;\, \tau_y > n \right) \geq \bb E_x \left( z+M_n \,;\, \hat{T}_z > n \right) &- \bb E_x \left( \hat{W}_n(x,z)  \right) \nonumber\\
	\label{minorzMn001}
	&+ \bb E_x \left( \hat{W}_n(x,z) \,;\, \tau_y > n  \right).
\end{align}
Moreover, by \eqref{WLZX}, for any $\delta >0$,
\begin{align*}
	\bb E_x \left( \hat{W}_n(x,z) \right) \leq\;& \left( 1+\delta \right) \bb E_x \left( z+M_n \,;\, \hat{T}_z > n \right) + c_{\delta} \bb E_x \left( 1+N\left(X_n\right) \,;\, \hat{T}_z > n \right) \\
	& - (1+\delta) \bb E_x \left( z+M_n \,;\, z+M_n < 0 \,,\, \tau_y > n \right).
\end{align*}
On the event $\{ z+M_n < 0 \,,\, \tau_y > n \}$, by \eqref{decMSX}, it holds $r\left( X_n \right) < z+M_n < 0$. 
Therefore, using Lemma \ref{MTR},
\[
\bb E_x \left( \hat{W}_n(x,z) \right) \leq \left( 1+\delta \right) \bb E_x \left( z+M_n \,;\, \hat{T}_z > n \right) + c_{\delta} \bb E_x\left( 1+N\left(X_n\right) \,;\, \hat{T}_z > n \right).
\]
By the Markov property and \eqref{decexpN},
\begin{align*}
	c_{\delta} \bb E_x\left( 1+N\left(X_n\right) \,;\, \hat{T}_z > n \right) &\leq c_{\delta} \bb E_x \left( 1+\e^{-cn/2} N\left( X_{\pent{n/2}} \right) \,;\, \hat{T}_z > \pent{n/2} \right) \\
	&\leq c_{\delta} \bb P_x \left( \hat{T}_z > \pent{n/2} \right) + \e^{-c_{\delta}n} \left( 1+N(x) \right).
\end{align*}
By Lemma \ref{ExitfinitTmanuz} and the point \ref{IFP001} of Lemma \ref{IFP},
\begin{equation}
	\label{limdeWn}
	\underset{n\to+\infty}{\lim} \bb E_x \left( \hat{W}_n(x,z) \right) \leq \left( 1+\delta \right) \hat{W}(x,z).
\end{equation}
Taking the limit as $n\to +\infty$ in \eqref{minorzMn001} and using the previous bound, we obtain that
\[
V(x,y) \geq -\delta \hat{W}(x,z) + \underset{n\to+\infty}{\lim}\, \bb E_x \left( \hat{W}_n(x,z) \,;\, \tau_y > n \right).
\]
Since this inequality holds true for any $\delta >0$ small enough, we obtain the bound
\begin{equation}
	\label{VppquelimWn}
	\underset{n\to+\infty}{\lim}\, \bb E_x \left( \hat{W}_n(x,z) \,;\, \tau_y > n \right) \leq V(x,y).
\end{equation}
Now, by the point \ref{prophW001} of Lemma \ref{prophW},
\[
\bb E_x \left( z+M_n \,;\, \tau_y > n \right) \leq \bb E_x \left( \hat{W}(X_n,z+M_n) \,;\, \tau_y > n \right).
\]
Taking the limit as $n\to +\infty$ and using the point \ref{IFP001} of Proposition \ref{IFP}, we obtain that
\[
V(x,y) \leq \underset{n\to+\infty}{\lim}\, \bb E_x \left( \hat{W}_n(x,z) \,;\, \tau_y > n \right).
\]
Together with \eqref{VppquelimWn}, this concludes the proof.
\end{proof}

\begin{remark}
Taking the limit in the point \ref{prophW002} of Lemma \ref{prophW}, we can deduce that $\lim_{n\to+\infty} \bb E_x \left( \hat{W}_n(x,z) \right) \geq \hat{W}(x,z)$. Coupling this result with \eqref{limdeWn}, it follows that
\[
\underset{n\to+\infty}{\lim} \bb E_x \left( \hat{W}_n(x,z) \right) = \hat{W}(x,z).
\]
\end{remark}

\begin{lemma}
\label{mWtau}
There exists $\ee_0>0$ such that, for any $\ee \in (0,\ee_0)$, $n\in \bb N$, $x\in \bb X,$ $z\in \bb R$ and $y=z-r(x)$, we have
\[
\bb E_x \left( \hat{W}_n(x,z) \,;\, \tau_y > n \right) \geq \hat{W}(x,z) + c \min(z,0) -c_{\ee} \left( n^{1/2-2\ee} + n^{2\ee} N(x) \right).
\]
\end{lemma}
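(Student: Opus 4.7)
The plan is to exploit the exact decomposition derived inside the proof of Lemma~\ref{prophW}, point~\ref{prophW002}: taking $k=n$ in formula~\eqref{marthW} gives
\[
\hat W(x,z) = \bb E_x(\hat W(X_n, z+M_n)\,;\,\tau_y>n) + \bb E_x(W(X_n, z+M_n)\,;\,\tau_y\leq n,\,\hat T_z>n).
\]
Since $\hat T_z\geq \tau_y$, the first term on the right equals $\bb E_x(\hat W_n(x,z)\,;\,\tau_y>n)$. Consequently the lemma reduces to establishing an upper bound of the form
\[
\bb E_x(W(X_n, z+M_n)\,;\,\tau_y\leq n,\,\hat T_z>n) \leq -c\min(z,0) + c_\ee(n^{1/2-2\ee}+n^{2\ee}N(x)).
\]

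On the event $\{\tau_y\leq n,\hat T_z>n\}$, the definition of $\hat T_z$ forces $z+M_n>0$. I combine the inequality $W\leq \hat W$ from~\eqref{VWppthW} with~\eqref{WLZX} applied at $n_f=\pent{n^{1-4\ee}}$ to obtain the pointwise bound
\[
W(X_n, z+M_n)\leq (1+c_\ee/n_f^\ee)\bigl((z+M_n) + cN(X_n)\bigr) + c_\ee n_f^{1/2} + \e^{-c_\ee n_f^\ee}N(X_n).
\]
With this choice of $n_f$, the factor $1+c_\ee/n_f^\ee$ is bounded by a constant, $c_\ee n_f^{1/2}\leq c_\ee n^{1/2-2\ee}$, and~\eqref{decexpN} controls $\bb E_x(N(X_n))$ by $c(1+N(x))$. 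Hence every contribution except the one carrying $z+M_n$ is already absorbed into the target form.

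It remains to control the main term $\bb E_x(z+M_n\,;\,\tau_y\leq n,\,\hat T_z>n)$, which I split as $\bb E_x(z+M_n\,;\,\hat T_z>n)-\bb E_x(z+M_n\,;\,\tau_y>n)$. For the first piece Lemma~\ref{firstupperbound} gives the upper bound $\max(z,0)+c_\ee(n^{1/2-2\ee}+n^{2\ee}N(x))$. For the second I apply the optional stopping theorem to the bounded stopping time $\tau_y\wedge n$, which yields $\bb E_x(z+M_n\,;\,\tau_y>n) = z - \bb E_x(z+M_{\tau_y}\,;\,\tau_y\leq n)$. On $\{\tau_y\leq n\}$, \eqref{decMSX} together with $y+S_{\tau_y}\leq 0$ gives $z+M_{\tau_y}\leq r(X_{\tau_y})$, and Lemma~\ref{MTR} bounds $r(X_{\tau_y})$ by $c(1+N(X_{\tau_y}))$. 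The truncation-and-Markov argument that produced~\eqref{NXT} in the proof of Lemma~\ref{firstupperboundforMnTz}, used verbatim with $\tau_y$ in place of $T_z$, yields $\bb E_x(N(X_{\tau_y})\,;\,\tau_y\leq n)\leq c_\ee(n^{1/2-2\ee}+N(x))$. Combining,
\[
\bb E_x(z+M_n\,;\,\tau_y\leq n,\,\hat T_z>n) \leq \max(z,0)-z + c_\ee(n^{1/2-2\ee}+n^{2\ee}N(x)) = -\min(z,0) + c_\ee(n^{1/2-2\ee}+n^{2\ee}N(x)),
\]
since $\max(z,0)-z = -\min(z,0)\geq 0$. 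Plugging this back produces the claimed lower bound.

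The real point of the strategy is the identity~\eqref{marthW}: the supermartingale property (Lemma~\ref{prophW}, point~\ref{prophW003}) only delivers the wrong-direction inequality, so I rely on the accompanying $W$-term which, once one exploits the strict positivity $z+M_n>0$ on the bad event, reduces everything to the martingale bounds of Section~\ref{CMWI}. The cancellation $\max(z,0)-z=-\min(z,0)$ is precisely what accounts for the $c\min(z,0)$ correction in the statement, and the careful choice $n_f=\pent{n^{1-4\ee}}$ ensures that every remainder fits inside $c_\ee(n^{1/2-2\ee}+n^{2\ee}N(x))$.
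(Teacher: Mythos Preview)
Your proof is correct and follows essentially the same route as the paper. The paper starts from the inequality $\bb E_x(\hat W_n(x,z))\geq \hat W(x,z)$ (point~\ref{prophW002} of Lemma~\ref{prophW}) and subtracts $\bb E_x(\hat W_n(x,z)\,;\,\tau_y\leq n)$, whereas you start one step earlier from the exact identity~\eqref{marthW} and bound the $W$-term via $W\leq\hat W$; since point~\ref{prophW002} was itself derived from~\eqref{marthW} by that same inequality, the two arguments coincide, and from there (Lemma~\ref{firstupperbound}, optional stopping, the~\eqref{NXT}-type bound for $\bb E_x(N(X_{\tau_y});\tau_y\leq n)$, and $\max(z,0)-z=-\min(z,0)$) they are identical. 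Your explicit choice $n_f=\pent{n^{1-4\ee}}$ is harmless but unnecessary: the paper simply uses~\eqref{WLZX} with a fixed $n_f$ to get the coarse bound $\hat W(X_n,z+M_n)\leq c(z+M_n)+c(1+N(X_n))$.
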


\begin{proof}
Using the point \ref{prophW002} of Lemma \ref{prophW}, the upper bound for $\hat{W}(x,y)$ given by \eqref{WLZX} and the point \ref{Momdec001} of Hypothesis \ref{Momdec},
\begin{align*}
	\bb E_x \left( \hat{W}_n(x,z) \,;\, \tau_y > n \right) =\;& \bb E_x \left( \hat{W}_n(x,z) \right) - \bb E_x \left( \hat{W}_n(x,z) \,;\, \tau_y \leq n \right) \\
	\geq\;& \hat{W}(x,z) - c \bb E_x \left( z+M_n \,;\, \tau_y \leq n \,,\, \hat{T}_z > n \right) - c  \left( 1+N\left(x\right) \right).
\end{align*}
By the point \ref{Momdec001} of \ref{Momdec}, Lemma \ref{firstupperbound} and the Doob optional stopping theorem,
\begin{align*}
	\bb E_x \left( \hat{W}_n(x,z) \,;\, \tau_y > n \right) \geq \hat{W}(x,z) &- c \left[ \bb E_x \left( z+M_n \,;\, \hat{T}_z > n \right) - \bb E_x \left( z+M_n \,;\, \tau_y > n \right) \right]\\
	&- c  \left( 1+N\left(x\right) \right) \\
	\geq \hat{W}(x,z) &- c \left[ \max(z,0) - z + \bb E_x \left( z+M_{\tau_y} \,;\, \tau_y \leq n \right) \right] \\
	&-c_{\ee} \left( n^{1/2-2\ee} + n^{2\ee} N(x) \right) - c  \left( 1+N\left(x\right) \right).
\end{align*}
By \eqref{decMSX}, $z+M_{\tau_y} \leq r\left( X_{\tau_y} \right)$. 
Therefore, in the same way as in the proof of \eqref{NXT},
\[
\bb E_x \left( z+M_{\tau_y} \,;\, \tau_y \leq n \right) 
\leq c\bb E_x \left( 1+N \left( X_{\tau_y} \right) \,;\, \tau_y \leq n \right) \leq c_{\ee} n^{1/2-2\ee} + c_{\ee} N(x).
\]
Together with the previous bound this implies that
\[
\bb E_x \left( \hat{W}_n(x,z) \,;\, \tau_y > n \right) \geq \hat{W}(x,z) + c \min(z,0) -c_{\ee} \left( n^{1/2-2\ee} + n^{2\ee} N(x) \right).
\]
\end{proof}

\begin{lemma}
\label{mWtau2}
There exists $\ee_0>0$ such that, for any $\ee \in (0,\ee_0)$, $n\geq 2$, $n_f \in \{2, \dots, n\}$, $x\in \bb X$ and $z\in \bb R$, with $y=z-r(x)$, we have
\[
\bb E_x \left( \hat{W}_n(x,z) \,;\, \tau_y > n \right) \geq \bb E_x \left( \hat{W}_{n_f}(x,z) \,;\, \tau_y > n_f \right) - \frac{c_{\ee}}{n_f^\ee} \left( \max(z,0) + 1 + N(x) \right).
\]
\end{lemma}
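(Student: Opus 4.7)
The plan is to adapt the iteration scheme from the proof of Lemma \ref{intofthemartcond} to the $\bb P_x$-supermartingale $\bigl(\hat W_n(x,z)\,\mathbbm{1}_{\{\tau_y > n\}}\bigr)_{n\geq 0}$ provided by the point \ref{prophW003} of Lemma \ref{prophW}. Since the expectation of this supermartingale is non-increasing in $n$, we automatically have $\bb E_x(\hat W_n; \tau_y > n) \leq \bb E_x(\hat W_{n_f}; \tau_y > n_f)$; the content of the lemma is a quantitative upper bound of order $c_\ee n_f^{-\ee}$ on the decrement between times $n_f$ and $n$.

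The first step is to rewrite the decrement in a workable form. By the strong Markov property at time $n_f$, combined with the identity \eqref{marthW} (from the proof of the point \ref{prophW002} of Lemma \ref{prophW}) applied to the shifted chain with initial data $(x',z') = (X_{n_f}, z+M_{n_f})$ and $y' = y+S_{n_f} = z' - r(x')$, the decrement takes the form
\[
\bb E_x(\hat W_{n_f}; \tau_y > n_f) - \bb E_x(\hat W_n; \tau_y > n) = \bb E_x\!\left[\,\bb E_{x'}\!\bigl(W(X_{n-n_f}, z'+M_{n-n_f})\,;\, \tau_{y'} \leq n-n_f,\, \hat T_{z'} > n-n_f\bigr)\,;\, \tau_y > n_f\right].
\]
On the inner event we have $z' + M_{n-n_f} > 0$, so by the inequality $W \leq \hat W$ from \eqref{VWppthW} together with the upper bound \eqref{WLZX} on $\hat W$ applied at a scale $m$ to be chosen of order $n_f$, the inner expectation is dominated by $(1+c_\ee/m^\ee)\bb E_{x'}(z'+M_{n-n_f}\,;\,\tau_{y'} \leq n-n_f,\, \hat T_{z'} > n-n_f)$ up to terms of order $m^{1/2}\bb P_{x'}(\hat T_{z'} > n-n_f)$ and $e^{-c_\ee m^\ee}(1+N(x'))$.

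The second step is to control the difference $\bb E_{x'}(z'+M_k;\,\hat T_{z'}>k) - \bb E_{x'}(z'+M_k;\,\tau_{y'}>k)$ for $k = n-n_f$. The first term is handled by Lemma \ref{intofthemartcond}, while the second term is treated via Lemma \ref{mWtau} applied at the moderate scale $n_f$ so that the $\max(z',0) + cN(x')$ terms cancel up to a multiplicative error $c_\ee/n_f^\ee$. Integrating against $\bb P_x(\,\cdot\,;\,\tau_y > n_f)$ and using \eqref{decexpN} to reduce $\bb E_x(N(X_{n_f}); \tau_y > n_f)$ to $c + e^{-c n_f}N(x)$, together with the martingale estimate of the point \ref{majmart002} of Lemma \ref{majmart} for the $L^1$-norm of $M_{n_f}$, produces the claimed bound $c_\ee n_f^{-\ee}(\max(z,0)+1+N(x))$ after one further invocation of \eqref{WLZX} to absorb the residual $\hat W(x,z)$-dependence.

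The main obstacle is that Lemma \ref{mWtau} carries an error of order $k^{1/2-2\ee}+k^{2\ee}N(x')$ that grows with $k$, so it cannot be applied directly with $k = n - n_f$, which may be arbitrarily large compared to $n_f$. The resolution is to use it only at a moderate scale $k \asymp n_f$ and to combine it with the supermartingale (non-increasing) property of $j \mapsto \bb E_{x'}(\hat W_j;\tau_{y'}>j)$ to transfer the resulting information to the actual scale $n - n_f$, losing only a factor of the form $1+c_\ee/n_f^\ee$ that is absorbed into the final additive error via \eqref{WLZX}. Consistent bookkeeping of the lower-order terms arising from \eqref{WLZX}, Lemma \ref{intofthemartcond} and \eqref{decexpN} is the most delicate aspect of the argument.
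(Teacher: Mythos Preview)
Your decomposition of the decrement via \eqref{marthW} is correct, and the identification of the obstacle --- that Lemma~\ref{mWtau} carries an error $k^{1/2-2\ee}+k^{2\ee}N(x')$ growing with the residual time $k=n-n_f$ --- is exactly the right diagnosis. However, your proposed resolution does not work. The supermartingale property of $j\mapsto\bb E_{x'}(\hat W_j;\tau_{y'}>j)$ (point~\ref{prophW003} of Lemma~\ref{prophW}) is an \emph{upper} bound at large times in terms of smaller times; it cannot transfer a lower bound established at scale $j\asymp n_f$ to the larger scale $j=n-n_f$. Moreover, Lemma~\ref{mWtau} controls $\bb E_{x'}(\hat W_k;\tau_{y'}>k)$, not $\bb E_{x'}(z'+M_k;\tau_{y'}>k)$; since $z'+M_k\leq \hat W(X_k,z'+M_k)$, the former only majorises the latter, so a lower bound on the former does not yield the lower bound on the latter that your cancellation argument requires. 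In short, the terms of order $\max(z',0)+cN(x')$ do \emph{not} cancel up to $c_\ee/n_f^\ee$: their difference is essentially $\hat W(x',z')-V(x',y')$, which can be of full size.

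The paper's proof takes a different route that sidesteps this difficulty. Rather than comparing scales $n$ and $n_f$ directly, it uses the Markov property at the random time $\nu_n^\ee=\nu_n+\lfloor n^\ee\rfloor$, where $\nu_n$ is the first time $z+M_k$ exceeds $n^{1/2-\ee}$. The point is that at this stopping time the martingale value is guaranteed to be large, so when Lemma~\ref{mWtau} is applied over the residual interval (of length $\sim n$), its error term $n^{1/2-2\ee}$ is dominated by $(z+M_{\nu_n})/n^\ee$, giving a \emph{relative} error of order $n^{-\ee}$. Combined with Lemma~\ref{concentnu} to control $\{\nu_n>\lfloor n^{1-\ee}\rfloor\}$, this yields the one-step recursive inequality
\[
\bb E_x(\hat W_n;\tau_y>n)\geq \bb E_x(\hat W_{\lfloor n^{1-\ee}\rfloor};\tau_y>\lfloor n^{1-\ee}\rfloor)-\frac{c_\ee}{n^\ee}\bigl(\max(z,0)+1+N(x)\bigr),
\]
which is then iterated down to any $n_f\in\{2,\dots,n\}$ via Lemma~\ref{lemanalyse2}. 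The use of $\nu_n$ to make the martingale large before applying Lemma~\ref{mWtau} is the missing idea in your proposal.
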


\begin{proof} Let $\ee \in (0,1).$ Consider the stopping time $\nu_n^\ee = \nu_n + \pent{n^\ee}$. By the Markov property, with $y'=z'-r(x')$,
\begin{align*}
	\bb E_x \left( \hat{W}_n(x,z) \,;\, \tau_y > n \right) &\geq \bb E_x \left( \hat{W}_n(x,z) \,;\, \tau_y > n \,,\, \nu_n^\ee \leq \pent{n^{1-\ee}} \right) \\
	&= \sum_{k=\pent{n^{\ee}}+1}^{\pent{n^{1-\ee}}} \int_{\bb X \times \bb R} \bb E_{x'} \left( \hat{W}_{n-k}(x',z') \,;\, \tau_{y'} > n-k \right) \\
	&\hspace{2cm} \bb P_x \left( X_k \in \dd x' \,,\, z+M_k \in \dd z' \,,\, \tau_y > k \,,\, \nu_n^\ee =k \right).
\end{align*}
Using Lemma \ref{mWtau}, we obtain,
\begin{align*}
	\bb E_x \left( \hat{W}_n(x,z) \,;\, \tau_y > n \right) \geq\;& \bb E_x \left( \hat{W}_{\nu_n^\ee}(x,z) \,;\, \tau_y > \nu_n^\ee \,,\, \nu_n^\ee \leq \pent{n^{1-\ee}} \right)\\
	&+ c \bb E_x \left( \min \left(z+M_{\nu_n^\ee},0\right) \,;\, \tau_y > \nu_n^\ee \,,\, \nu_n^\ee \leq \pent{n^{1-\ee}} \right) \\
	&- c_{\ee} \bb E_x \left( n^{1/2-2\ee} + n^{2\ee} N\left( X_{\nu_n^\ee} \right) \,;\, \tau_y > \nu_n^\ee \,,\, \nu_n^\ee \leq \pent{n^{1-\ee}} \right).
\end{align*}
On the event $\{z+M_{\nu_n^\ee} \leq 0 \,,\, \tau_y > \nu_n^\ee \}$,  by \eqref{decMSX}, 
we have $0 \geq z+M_{\nu_n^\ee} \geq r\left( X_{\nu_n^\ee} \right)$. 
Therefore, by Lemma \ref{MTR},
\begin{align*}
	&\bb E_x \left( \min\left(z+M_{\nu_n^\ee},0\right) \,;\, \tau_y > \nu_n^\ee \,,\, \nu_n^\ee \leq \pent{n^{1-\ee}} \right) \\
	&\hspace{6cm} \geq c\bb E_x \left( 1+N\left( X_{\nu_n^\ee} \right) \,;\, \tau_y > \nu_n^\ee \,,\, \nu_n^\ee \leq \pent{n^{1-\ee}} \right).
\end{align*}
Consequently, using the point \ref{prophW003} of Lemma \ref{prophW} and \eqref{decexpN},
\begin{align}
	\bb E_x \left( \hat{W}_n(x,z) \,;\, \tau_y > n \right) \geq\;& \bb E_x \left( \hat{W}_{\pent{n^{1-\ee}}}(x,z) \,;\, \tau_y > \pent{n^{1-\ee}} \,,\, \nu_n^\ee \leq \pent{n^{1-\ee}} \right) \nonumber\\
	&- c_{\ee} \bb E_x \left( n^{1/2-2\ee} + \e^{-c_{\ee}n^\ee} N\left( X_{\nu_n} \right) \,;\, \tau_y > \nu_n \,,\, \nu_n \leq \pent{n^{1-\ee}} \right) \nonumber\\
	\geq\;& \bb E_x \left( \hat{W}_{\pent{n^{1-\ee}}}(x,z) \,;\, \tau_y > \pent{n^{1-\ee}} \right) - \e^{-c_{\ee} n^\ee} \left( 1+N(x) \right) \nonumber\\
	\label{minordec}
	&- \frac{c_{\ee}}{n^\ee} \underbrace{\bb E_x \left( z+M_{\nu_n} \,;\, \tau_y > \nu_n \,,\, \nu_n \leq \pent{n^{1-\ee}} \right)}_{=:I_1} \\
	&-\underbrace{\bb E_x \left( \hat{W}_{\pent{n^{1-\ee}}}(x,z) \,;\, \tau_y > \pent{n^{1-\ee}} \,,\, \nu_n^\ee > \pent{n^{1-\ee}} \right)}_{=:I_2}. \nonumber
\end{align}

\textit{Bound of $I_1$.} Using the fact that $\{ \tau_y > \nu_n \} \subseteq \{ \hat{T}_z > \nu_n \}$ combined with the positivity of $z+M_{\nu_n}$ and using Lemma \ref{Mnsubmartingale}, we have
\begin{align*}
	I_1 &\leq \bb E_x \left( z+M_{\pent{n^{1-\ee}}} \,;\, \hat{T}_z > \pent{n^{1-\ee}} \,,\, \nu_n \leq \pent{n^{1-\ee}} \right) \\
	&\leq \bb E_x \left( z+M_{\pent{n^{1-\ee}}} \,;\, \hat{T}_z > \pent{n^{1-\ee}} \right) - J_{21}'',
\end{align*}
where $J_{21}''$ is defined in \eqref{decJ21'}. Now, it follows from Lemma \ref{Mnsubmartingale} and the point \ref{IFP001} of Proposition \ref{IFP}, that $( \bb E_x ( z+M_{\pent{n^{1-\ee}}} \,;\, \hat{T}_z > \pent{n^{1-\ee}} ) )_{n\in \bb N}$ is a non-decreasing sequence which converges to $\hat{W}(x,z)$ and so $\bb E_x ( z+M_{\pent{n^{1-\ee}}} \,;\, \hat{T}_z > \pent{n^{1-\ee}} ) \leq \hat{W}(x,z)$. Using \eqref{MajJ21''}, we find that
\begin{equation}
	\label{minorWntau001}
	I_1 \leq \hat{W}(x,z) + \e^{-c_{\ee} n^{\ee}} \left( 1+N(x) \right).
\end{equation}

\textit{Bound of $I_2$.} By \eqref{WLZX},
\begin{align*}
	I_2	&\leq c \bb E_x \left( z+M_{\pent{n^{1-\ee}}} \left( 1- \mathbbm 1_{\left\{ z+M_{\pent{n^{1-\ee}}} < 0 \right\}} \right) \,;\, \hat{T}_z > \pent{n^{1-\ee}} \,,\, \nu_n^\ee > \pent{n^{1-\ee}} \right) \\
	&\hspace{1.5cm}+ c \bb E_x \left( 1+N \left( X_{\pent{n^{1-\ee}}} \right) \,;\, \hat{T}_z > \pent{n^{1-\ee}} \,,\, \nu_n^\ee > \pent{n^{1-\ee}} \right).
\end{align*}
On the event $\{ z+M_{\pent{n^{1-\ee}}} < 0 \,,\, \hat{T}_z > \pent{n^{1-\ee}} \} = \{z+M_{\pent{n^{1-\ee}}} < 0\,,\, \tau_y > \pent{n^{1-\ee}} \}$,  it holds $z+M_{\pent{n^{1-\ee}}} > r \left( X_{\pent{n^{1-\ee}}} \right)$. Therefore, using Lemma \ref{MTR},
\[
I_2 \leq c \bb E_x \left( z+M_{\pent{n^{1-\ee}}} + 1+N \left( X_{\pent{n^{1-\ee}}} \right) \,;\, \hat{T}_z > \pent{n^{1-\ee}} \,,\, \nu_n^\ee > \pent{n^{1-\ee}} \right).
\]
By Lemma \ref{Mnsubmartingale}, $\bb E_x \left( z+M_{\pent{n^{1-\ee}}} \,;\, \hat{T}_z > \pent{n^{1-\ee}} \,,\, \nu_n^\ee > \pent{n^{1-\ee}} \right) \leq J_1$, where $J_1$ is defined in \eqref{UBJ1ET2}. 
Using inequalities \eqref{MajJ1}, \eqref{decexpN} and Lemma \ref{concentnu}, with $m_{\ee} = \pent{n^{1-\ee}}-\pent{n^\ee}$, we obtain
\begin{align}
	I_2 &\leq \e^{-c_\ee n^\ee} \left( 1+N(x) \right) + c \bb E_x \left( 1+ \e^{-cn^{\ee}} N\left( X_{m_{\ee}} \right) \,;\, \hat{T}_z > m_{\ee} \,,\, \nu_n > m_{\ee} \right) \nonumber\\
	&\leq \e^{-c_\ee n^\ee} \left( 1+N(x) \right).
	\label{minorWntau002}
\end{align}
Putting together \eqref{minorWntau002}, \eqref{minorWntau001} and \eqref{minordec} and using \eqref{WLZX}, we have
\begin{align*}
	\bb E_x \left( \hat{W}_n(x,z) \,;\, \tau_y > n \right) \geq\;& \bb E_x \left( \hat{W}_{\pent{n^{1-\ee}}}(x,z) \,;\, \tau_y > \pent{n^{1-\ee}} \right) \\
	&- \frac{c_{\ee}}{n^\ee} \left( \max(z,0) + 1 + N(x) \right).
\end{align*}
By the point \ref{prophW003} of Lemma \ref{prophW}, $( \bb E_x ( \hat{W}_n(x,z) \,;\, \tau_y > n ) )_{n\in \bb N}$ is non-increasing. So using Lemma \ref{lemanalyse2} we conclude that, for any $n\geq 2$ and $n_f \in \{ 2, \dots, n \}$,
\[
\bb E_x \left( \hat{W}_n(x,z) \,;\, \tau_y > n \right) \geq \bb E_x \left( \hat{W}_{n_f}(x,z) \,;\, \tau_y > n_f \right) - \frac{c_{\ee}}{n_f^\ee} \left( \max(z,0) + 1 + N(x) \right).
\]
\end{proof}

\begin{proposition}\ 
\label{PosdeV}
\begin{enumerate}[ref=\arabic*, leftmargin=*, label=\arabic*.]
	\item \label{PosdeV001} For any $\delta \in (0,1)$, $x\in \bb X$ and $y >0$,
	\[
	V(x,y) \geq \left( 1- \delta \right) y - c_{\delta} \left( 1 + N(x) \right).
	\]
	\item \label{PosdeV002} For any $x\in \bb X$,
	\[
	\underset{y\to +\infty}{\lim} \frac{V(x,y)}{y} = 1.
	\]
\end{enumerate}
\end{proposition}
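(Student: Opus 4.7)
The plan is to combine Lemmas \ref{minpos001}, \ref{mWtau2} and \ref{mWtau} with the pointwise estimate $\hat W(x,z) \geq z$ from Lemma \ref{prophW}\ref{prophW001}, and then to calibrate the free integer $n_f$ in terms of $\delta$. The second claim will be a short corollary of the first together with \eqref{VWppthW} and Lemma \ref{prophW}\ref{prophW001bis}.

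For claim \ref{PosdeV001}, I fix $x \in \bb X$, $y > 0$, set $z = y + r(x)$, and take $\ee \in (0,\ee_0)$ small enough for Lemmas \ref{mWtau} and \ref{mWtau2} to apply. Lemma \ref{minpos001} rewrites $V(x,y) = \lim_{n \to +\infty} \bb E_x(\hat W_n(x,z); \tau_y > n)$, so applying Lemma \ref{mWtau2} for $n \geq n_f \geq 2$ and sending $n \to +\infty$ yields
\[
V(x,y) \geq \bb E_x\bigl(\hat W_{n_f}(x,z); \tau_y > n_f\bigr) - \frac{c_\ee}{n_f^\ee}\bigl(\max(z,0) + 1 + N(x)\bigr).
\]
Lemma \ref{mWtau} then bounds the remaining expectation from below by $\hat W(x,z) + c\min(z,0) - c_\ee\bigl(n_f^{1/2-2\ee} + n_f^{2\ee} N(x)\bigr)$, and Lemma \ref{prophW}\ref{prophW001} gives $\hat W(x,z) \geq z$. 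Since $\abs{r(x)} \leq c(1+N(x))$ by Lemma \ref{MTR}, one has $z \geq y - c(1+N(x))$, $\min(z,0) \geq -c(1+N(x))$ and $\max(z,0) \leq y + c(1+N(x))$. Substituting produces
\[
V(x,y) \geq \left(1 - \frac{c_\ee}{n_f^\ee}\right) y - c_\ee\bigl(n_f^{1/2-2\ee} + (1+n_f^{2\ee}) N(x) + 1\bigr).
\]
Given $\delta \in (0,1)$, it then suffices to take $n_f = n_f(\delta)$ as a large integer with $c_\ee/n_f^\ee \leq \delta$; the $n_f$-dependent factors collapse into a single constant $c_\delta$, and we obtain $V(x,y) \geq (1-\delta) y - c_\delta(1+N(x))$.

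Claim \ref{PosdeV002} follows in two steps. From claim \ref{PosdeV001}, $\liminf_{y \to +\infty} V(x,y)/y \geq 1 - \delta$ for every $\delta \in (0,1)$, hence $\liminf \geq 1$. Conversely, \eqref{VWppthW} gives $V(x,y) \leq \hat W(x,z)$; since $r(x)$ does not depend on $y$, $z = y + r(x) \to +\infty$ with $z/y \to 1$, and Lemma \ref{prophW}\ref{prophW001bis} yields $V(x,y)/y \leq \bigl(\hat W(x,z)/z\bigr)\cdot(z/y) \to 1$, hence $\limsup \leq 1$.

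All the genuine work has been absorbed into Lemmas \ref{mWtau} and \ref{mWtau2}, so no new estimate is needed; the only delicate point is the order of calibration. One must freeze $\ee$ first in $(0,\ee_0)$, and only then choose $n_f$ large in terms of $\delta$: this single choice simultaneously converts the $\max(z,0)/n_f^\ee$ error into the desired $\delta y$ term and turns the $n_f^{1/2-2\ee}$ and $n_f^{2\ee} N(x)$ terms into constants $c_\delta$ and $c_\delta(1+N(x))$.
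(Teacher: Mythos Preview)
Your proof is correct and follows the same overall architecture as the paper: combine Lemmas~\ref{minpos001} and \ref{mWtau2} to obtain $V(x,y) \geq \bb E_x(\hat W_{n_f}(x,z);\tau_y>n_f) - c_\ee n_f^{-\ee}(\max(z,0)+1+N(x))$, then bound the remaining expectation at the fixed time $n_f$, and finally calibrate $n_f$ against $\delta$. Claim~\ref{PosdeV002} is handled identically.

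The one genuine difference is how the expectation $\bb E_x(\hat W_{n_f}(x,z);\tau_y>n_f)$ is bounded below. You invoke Lemma~\ref{mWtau} at $n=n_f$ together with $\hat W(x,z)\geq z$, which packages the work neatly into already-proved lemmas. The paper instead uses the pointwise bound $\hat W(X_{n_f},z+M_{n_f})\geq z+M_{n_f}$ on $\{\hat T_z>n_f\}$, then controls $\bb E_x(z+M_{n_f};\tau_y>n_f)$ directly via $z\,\bb P_x(\tau_y>n_f)-c(\sqrt{n_f}+N(x))$ and a Markov-inequality lower bound $\bb P_x(\tau_y>n_f)\geq 1-cn_f^2(1+N(x))/y$. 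The paper's route is slightly more elementary (avoiding Lemma~\ref{mWtau}) and yields the explicit intermediate inequality $V(x,y)\geq(1-c_\ee n_f^{-\ee})y-c_\ee n_f^2(1+N(x))$, which it reuses later in Lemma~\ref{SurE1etE2}; your route gives a cleaner exponent in the error term and is entirely sufficient for the proposition itself.
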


\begin{proof}
\textit{Claim \ref{PosdeV001}.} By Lemmas \ref{mWtau2} and \ref{minpos001}, we immediately have, with $z=y+r(x)$,
\[
V(x,y) \geq \bb E_x \left( \hat{W}_{n_f}(x,z) \,;\, \tau_y > n_f \right) - \frac{c_{\ee}}{n_f^\ee} \left( \max(z,0) + 1 + N(x) \right).
\]
Using the point \ref{prophW001} of Lemma \ref{prophW} and the point \ref{majmart002} of Lemma \ref{majmart},
\begin{align*}
	V(x,y) &\geq \bb E_x \left( z+M_{n_f} \,;\, \tau_y > n_f \right) - \frac{c_{\ee}}{n_f^\ee} \left( \max(z,0) + 1 + N(x) \right) \\
	&\geq z \bb P_x \left( \tau_y > n_f \right) - c \left( \sqrt{n_f} + N(x) \right) - \frac{c_{\ee}}{n_f^\ee} \left( \max(z,0) + 1 + N(x) \right).
\end{align*}
Since, by the Markov inequality,
\[
\bb P_x \left( \tau_y > n_f \right) \geq \bb P_x \left( \underset{1\leq k \leq n_f}{\max} \abs{f\left( X_k \right)} < \frac{y}{n_f} \right) \geq 1-\frac{ cn_f^2 \left( 1+N(x) \right)}{y},
\]
we obtain that, by the definition of $z$,
\begin{equation}
	\label{PosdeV001ter}
	V(x,y) \geq \left( 1-\frac{c_{\ee}}{n_f^\ee} \right) y - c_{\ee} n_f^2 \left( 1 + N(x) \right).
\end{equation}
Let $\delta \in(0,1)$. Taking $n_f$ large enough, we obtain the desired inequality.

\textit{Proof of the claim \ref{PosdeV002}.} By the claim \ref{PosdeV001}, for any $\delta \in (0,1)$ and $x\in \bb X$, we have that $\liminf_{y\to +\infty} V(x,y)/y \geq 1-\delta$. Taking the limit as $\delta \to 0$, we obtain the lower bound. Now by \eqref{VWppthW} and \eqref{WLZX}, for any integer $n_f \geq 2$, $y\in \bb R$ and $z=y+r(x)$,
\[
V(x,y) \leq \hat{W}(x,z) \leq \left( 1 + \frac{c_{\ee}}{n_f^\ee} \right) \left( \max(z,0) + cN(x) \right) + c_{\ee} n_f^{1/2} + \e^{-c_{\ee} n_f^{\ee}} N(x).
\]
Using the definition of $z$, we conclude that
\[
\underset{y\to +\infty}{\limsup} \frac{V(x,y)}{y} \leq \underset{n_f \to +\infty}{\lim} \left( 1 + \frac{c_{\ee}}{n_f^\ee} \right) = 1.
\]
\end{proof}

Now, for any $\gamma > 0$, consider the stopping time:
\[
\zeta_{\gamma} := \inf\left\{ k \geq 1, \, \abs{y+S_k} > \gamma \left( 1+N \left( X_k \right) \right) \right\}.
\]
The control on the tail of $\zeta_{\gamma}$ is given by the following Lemma.
\begin{lemma}
\label{concentmu}
For any $\gamma>0$, $x \in \bb X$, $y \in \bb R$ and $n \geq 1$,
\[
\bb P_x \left( \zeta_\gamma > n \right) \leq \e^{-c_{\gamma} n} \left( 1 + N \left( x \right) \right).
\]
\end{lemma}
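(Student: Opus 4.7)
The plan is to carry out a block argument with blocks of fixed length depending only on $\gamma$, in order to obtain the claimed exponential decay in $n$. First I would fix an integer $L = L_\gamma$, large enough (depending only on $\gamma$) to be chosen below, and set $K := \pent{n/L}$. The main technical step is to establish a uniform single-block estimate: for $L$ large enough there exist $q_\gamma \in (0,1)$ and $\ee >0$ such that, for every $(x',y')\in \bb X \times \bb R$,
\[
\bb P_{x'}\left( \zeta_\gamma^{y'} > L \right) \le q_\gamma + c_\gamma \frac{1+N(x')}{L^{\ee}},
\]
where $\zeta_\gamma^{y'}$ is the stopping time $\zeta_\gamma$ taken with starting value $y'$ in place of $y$.

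To prove this single-block bound, I would use the inclusion $\{\zeta_\gamma^{y'} > L\} \subseteq \{|y'+S_L| \le \gamma(1+N(X_L))\}$ and split the right-hand side on the level set $\{N(X_L) \le M_\gamma\}$. On this event the condition becomes $|y'+S_L| \le \gamma(1+M_\gamma)$, whose probability is controlled by the Berry-Esseen estimate (Corollary \ref{BerEss}) as $\frac{2\gamma(1+M_\gamma)}{\sigma\sqrt{2\pi L}} + c(1+N(x'))/L^{\ee}$. On the complementary event, Markov's inequality combined with \eqref{decexpN} gives $\bb P_{x'}(N(X_L) > M_\gamma) \le (c+\e^{-c L}N(x'))/M_\gamma$, which can be sharpened via \eqref{decexpNl} using Hypothesis \ref{Momdec}.3. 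Choosing $M_\gamma \asymp \sqrt{L}$ and then $L = L_\gamma$ sufficiently large yields the required $q_\gamma < 1$.

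I would then iterate the single-block estimate via the Markov property at the times $jL$, for $j = K-1, K-2, \dots, 1$. Writing $p_j := \bb P_x(\zeta_\gamma > jL)$, the Markov property gives
\[
p_j = \bb E_x\left[ \mathbbm 1_{\{\zeta_\gamma > (j-1)L\}} \bb P_{X_{(j-1)L}}\left( \zeta_\gamma^{y+S_{(j-1)L}} > L \right) \right],
\]
and substituting the single-block bound produces a linear recursion in $(p_j)$. Controlling the error $\bb E_x[\mathbbm 1_{\{\zeta_\gamma > (j-1)L\}}(1+N(X_{(j-1)L}))]/L^{\ee}$ via the moment bounds \eqref{decexpN} and \eqref{decexpNl}, the recursion unravels into the claimed bound $\bb P_x(\zeta_\gamma > n) \le p_K \le \e^{-c_\gamma n}(1+N(x))$.

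The main obstacle will be to control the Berry-Esseen error term $c_\gamma(1+N(x'))/L^{\ee}$ across the iterations so that it does not degrade the geometric decay coming from the $q_\gamma$-factor. This forces a delicate choice of $L=L_\gamma$ (so that $q_\gamma + c_\gamma/L^{\ee} < 1$) together with a careful use of the sharper tail bound \eqref{decexpNl} from Hypothesis \ref{Momdec}.3 to absorb the $N(x')$-dependent contributions into exponentially small remainders, mirroring the block-argument structure already used in Lemma \ref{concentnu} but with blocks of fixed length rather than blocks growing with $n$.
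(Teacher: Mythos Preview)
Your proposal is correct and follows essentially the same approach as the paper: fixed-length blocks, a single-block Berry--Esseen estimate giving a contraction constant $q_\gamma<1$, and iteration via the Markov property. The paper handles the $N$-dependent error term you flag as the main obstacle by tracking the pair $\bigl(p_K,\;E_K\bigr)$ with $E_K=\bb E_x\bigl(N(X_{2Kl});\zeta_\gamma>2Kl\bigr)$ and writing the coupled recursion as a $2\times 2$ matrix inequality whose spectral radius is $<1$ once the block length $l$ is chosen large enough, which is exactly the mechanism your closing paragraph anticipates.
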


\begin{proof}
The reasoning is very close to that of the proof of the Lemma \ref{concentnu}. Let $\gamma > 0$. Consider the integer $l\geq 1$ which will be chosen later. Define $K:= \pent{\frac{n}{2l}}$ and introduce the event $A_{k,y}^{\gamma} := \underset{k' \in \{ 1, \dots, k\}}{\bigcap}\left\{  \abs{y+S_{k'l}} \leq \gamma \left( 1+N \left( X_{k'l} \right) \right)  \right\}$. We have
\[
\bb P_x \left( \zeta_\gamma > n \right) \leq \bb P_x \left( A_{2K,y}^{\gamma} \right).
\]
By the Markov property,
\begin{align}
	\bb P_x \left( A_{2K,y}^{\gamma} \right) = \int_{\bb X \times \bb R} &\int_{\bb X \times \bb R} \bb P_{x''} \left( A_{1,y''}^{\gamma} \right) \bb P_{x'} \left( X_l \in \dd x'' \,,\, y'+S_l \in \dd y''  \,,\, A_{1,y'}^{\gamma} \right) \nonumber\\
	&\times \bb P_x \left( X_{2(K-1)l} \in \dd x' \,,\,  y+S_{2(K-1)l} \in \dd y' \,,\, A_{2(K-1),y}^{\gamma} \right).
	\label{PAg00}
\end{align}
We write
\begin{align*}
	\bb P_{x''} \left( A_{1,y''}^{\gamma} \right) \leq\;& \bb P_{x''} \left( \abs{y''+S_l} \leq 2\gamma \sqrt{l} \right) + \bb P_{x''} \left( N \left( X_l \right) > \sqrt{l} \right)\\
	\leq\;& \bb P_{x''} \left( \frac{-y''}{\sqrt{l}} - 2\gamma \leq \frac{S_l}{\sqrt{l}} \leq \frac{-y''}{\sqrt{l}} + 2\gamma \right) + \bb E_{x''} \left( \frac{N \left( X_l \right)}{\sqrt{l}} \right).
\end{align*}
By Corollary \ref{BerEss} and the point \ref{Momdec001} of Hypothesis \ref{Momdec}, there exists $\ee_0 \in (0,1/4)$ such that, for any $\ee \in (0,\ee_0)$,
\begin{align*}
\bb P_{x''} \left( A_{1,y''}^{\gamma} \right) &\leq \int_{\frac{-y''}{\sqrt{l}} - 2\gamma}^{\frac{-y''}{\sqrt{l}} + 2\gamma} \e^{-\frac{u^2}{2\sigma^2}} \frac{\dd u}{\sqrt{2\pi} \sigma} + \frac{2c_{\ee}}{l^\ee} \left(1+N(x'')\right) + \frac{c}{\sqrt{l}} \left( 1+N \left( x'' \right) \right).
\end{align*}
Set $q_{\gamma} := \int_{- 2\gamma}^{2\gamma} \e^{-\frac{u^2}{2\sigma^2}} \frac{\dd u}{\sqrt{2\pi} \sigma}<1$. From \eqref{PAg00}, we obtain
\begin{align*}
	\bb P_x \left( A_{2K,y}^{\gamma} \right) &\leq \int_{\bb X \times \bb R} \left( q_{\gamma} + \frac{c_{\ee}}{l^{\ee}} + \frac{c_{\ee}}{l^{\ee}} \bb E_{x'} \left( N \left( X_l \right) \right) \right) \nonumber\\
	&\qquad \times \bb P_x \left( X_{2(K-1)l} \in \dd x' \,,\,  y+S_{2(K-1)l} \in \dd y' \,,\, A_{2(K-1),y}^{\gamma} \right) \\
	&\leq \left( q_{\gamma} + \frac{c_{\ee}}{l^{\ee}} \right) \bb P_x \left( A_{2(K-1),y}^{\gamma} \right) + \e^{-c_{\ee}l} \bb E_x \left( N\left( X_{2(K-1)l} \right) \,;\, A_{2(K-1),y}^{\gamma} \right).
\end{align*}
For brevity, set  $p_K= \bb P_x \left( A_{2K,y}^{\gamma} \right)$ and $E_{K} = \bb E_x \left( N\left( X_{2Kl} \right) \,;\, A_{2K,y}^{\gamma} \right)$. Then, the previous inequality can be rewritten as
\begin{equation}
	\label{ineqpK}
	p_K \leq \left( q_{\gamma} + \frac{c_{\ee}}{l^{\ee}} \right) p_{K-1} + \e^{-c_{\ee}l} E_{K-1}.
\end{equation}
Moreover, from \eqref{decexpN}, we have
\begin{equation}
	\label{ineqpK002}
	E_{K} \leq c p_{K-1} + \e^{-c 2l} E_{K-1}.
\end{equation}
Using \eqref{ineqpK} and \eqref{ineqpK002}, we write that
\begin{equation}
	\label{recdouble}
	\begin{pmatrix} p_K \\ E_K \end{pmatrix} \leq A_l \begin{pmatrix} p_{K-1} \\ E_{K-1} \end{pmatrix}
\end{equation}
where
\[
A_l := \begin{pmatrix} q_{\gamma} + \frac{c_{\ee}}{l^{\ee}} & \e^{-c_{\ee}l} \\
c & \e^{-c l} \end{pmatrix} \underset{l\to+\infty}{\longrightarrow} A = \begin{pmatrix} q_{\gamma} & 0 \\
c & 0 \end{pmatrix}.
\]
Since the spectral radius $q_{\gamma}$ of $A$ is less than $1$, we can choose $l=l(\ee,\gamma)$ large enough such that the spectral radius $\rho_{\ee,\gamma}$ of $A_l$ is less than $1$.
Iterating \eqref{recdouble}, we get
\[
p_K \leq c \rho_{\ee,\gamma}^K \max \left( p_1, E_1 \right) \leq c \rho_{\ee,\gamma}^K \left( 1+N(x) \right).
\]
Taking into account that $K \geq c_{\ee,\gamma} n$, we obtain
\[
\bb P_x \left( A_{2K,y}^{\gamma} \right) \leq \e^{-c_{\gamma} n} \left( 1+N(x) \right).
\]
\end{proof}

Now we shall establish some properties of the set $\mathscr{D}_{\gamma}$ introduced in Section \ref{sec-not-res}. It is easy to see that, for any $\gamma > 0$,
\[
\mathscr{D}_{\gamma} = \left\{ (x,y) \in \bb X \times \bb R, \; \exists n_0 \geq 1, \bb P_x \left( \zeta_{\gamma} \leq n_0 \,,\, \tau_y > n_0 \right) > 0 \right\}.
\]
\begin{proposition}
\label{posdeVsurDgamma}\ 
\begin{enumerate}[ref=\arabic*, leftmargin=*, label=\arabic*.]
	\item \label{posdeVsurDgamma001} For any $\gamma_1 \leq \gamma_2$, it holds $\mathscr{D}_{\gamma_1} \supseteq \mathscr{D}_{\gamma_2}$.
	\item \label{posdeVsurDgamma002} For any $\gamma >0$, there exists $c_{\gamma}>0$ such that
	\[
	\mathscr{D}_{\gamma}^c \subseteq \left\{ (x,y) \in \bb X \times \bb R, \; \bb P_x \left( \tau_y > n \right) \leq \e^{-c_{\gamma} n} \left( 1+N(x) \right) \right\}.
	\]
	\item \label{posdeVsurDgamma003} For any $\gamma > 0$, the domain of positivity of the function $V$ is included in $\mathscr{D}_{\gamma}$:
	\[
	\mathscr{D}_+(V) = \left\{ (x,y),\; V(x,y) >0 \right\} \subseteq \mathscr{D}_{\gamma}.
	\]
	\item \label{posdeVsurDgamma004} There exists $\gamma_0 > 0$ such that for any $\gamma \geq \gamma_0$,
	\[
	\mathscr{D}_+(V) = \mathscr{D}_{\gamma}.
	\]
	Moreover,
	\[
	\left\{ (x,y) \in \bb X \times \bb R_+^*,\; y > \frac{\gamma_0}{2} \left( 1+N(x) \right) \right\} \subseteq \mathscr{D}_+(V).
	\]
\end{enumerate}
\end{proposition}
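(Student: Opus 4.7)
The strategy is to dispatch (1)--(3) rapidly using the equivalent description $\mathscr{D}_\gamma = \{(x,y) : \exists n_0 \geq 1,\; \bb P_x(\zeta_\gamma \leq n_0, \tau_y > n_0) > 0\}$ recalled just before the proposition, together with Lemma \ref{concentmu} and Proposition \ref{IFP}, and then to reserve the main work for a careful choice of $\gamma_0$ in (4). Claim (1) is immediate from the original definition of $\mathscr{D}_\gamma$: if $\gamma_1 \leq \gamma_2$, the event $\{y+S_{n_0} > \gamma_2(1+N(X_{n_0}))\}$ is contained in its counterpart for $\gamma_1$. For (2), on $\mathscr{D}_\gamma^c$ the probability $\bb P_x(\zeta_\gamma \leq n, \tau_y > n)$ vanishes for every $n \geq 1$, so $\bb P_x(\tau_y > n) = \bb P_x(\zeta_\gamma > n, \tau_y > n) \leq \bb P_x(\zeta_\gamma > n)$, and Lemma \ref{concentmu} supplies the exponential bound.

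For (3), suppose $(x,y) \in \mathscr{D}_\gamma^c$. On $\{\tau_y > n\}$ we have $y + S_k > 0$ for every $k \leq n$ and, $\bb P_x$-a.s., $\zeta_\gamma > n$, whence $y + S_n \leq \gamma(1+N(X_n))$. Cauchy--Schwarz then yields
\[
\bb E_x \left( y + S_n \,;\, \tau_y > n \right) \leq \gamma \, \bb E_x^{1/2} \left( (1+N(X_n))^2 \right) \bb P_x^{1/2}\left( \tau_y > n \right).
\]
The first factor is bounded by $c(1+N(x))$ thanks to the point \ref{Momdec001} of Hypothesis \ref{Momdec} (via Jensen and $\alpha > 2$), and the second by $\e^{-c_\gamma n/2}(1+N(x))^{1/2}$ by (2). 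The right-hand side therefore tends to $0$, and the point \ref{IFP001} of Proposition \ref{IFP} forces $V(x,y) = 0$, i.e.\ $(x,y) \notin \mathscr{D}_+(V)$.

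For (4), let $c_{1/2}$ denote the constant appearing in the point \ref{PosdeV001} of Proposition \ref{PosdeV} for $\delta = 1/2$, and set $\gamma_0 := 4 c_{1/2} + 1$. If $(x,y) \in \mathscr{D}_\gamma$ with $\gamma \geq \gamma_0$, pick $n_0$ such that $\bb P_x(A) > 0$, where $A := \{y+S_{n_0} > \gamma(1+N(X_{n_0}))\} \cap \{\tau_y > n_0\}$. Iterating the martingale identity of the point \ref{IFP004} of Proposition \ref{IFP} gives
\[
V(x,y) = \bb E_x \left( V(X_{n_0}, y+S_{n_0}) \,;\, \tau_y > n_0 \right) \geq \bb E_x \left( V(X_{n_0}, y+S_{n_0}) \,;\, A \right),
\]
and on $A$ the point \ref{PosdeV001} of Proposition \ref{PosdeV} yields $V(X_{n_0}, y+S_{n_0}) \geq (y+S_{n_0})/2 - c_{1/2}(1+N(X_{n_0})) \geq (\gamma/2 - c_{1/2})(1+N(X_{n_0})) > 0$. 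Hence $V(x,y) > 0$, and combined with (3) we conclude $\mathscr{D}_+(V) = \mathscr{D}_\gamma$ for every $\gamma \geq \gamma_0$. The final inclusion is then direct: if $y > \gamma_0(1+N(x))/2$, then $y > 2 c_{1/2}(1+N(x))$, and the point \ref{PosdeV001} of Proposition \ref{PosdeV} gives $V(x,y) \geq y/2 - c_{1/2}(1+N(x)) > 0$. The main obstacle is the calibration of $\gamma_0$: both the positivity of $V(X_{n_0}, y+S_{n_0})$ on $A$ and the inclusion of the explicit half-space in $\mathscr{D}_+(V)$ must be valid at once, and each condition amounts to $\gamma_0 > 4 c_{1/2}$, which the choice above respects.
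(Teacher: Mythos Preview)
Your proof is correct and follows essentially the same approach as the paper: claims~\ref{posdeVsurDgamma001}--\ref{posdeVsurDgamma002} are identical, and for claims~\ref{posdeVsurDgamma003}--\ref{posdeVsurDgamma004} you use the same key ingredients (Lemma~\ref{concentmu}, Proposition~\ref{IFP}, and the lower bound of Proposition~\ref{PosdeV} with $\delta=1/2$). Your treatment is slightly more streamlined in two places: in claim~\ref{posdeVsurDgamma003} you bound $y+S_n$ pointwise by $\gamma(1+N(X_n))$ on $\{\tau_y>n\}$ (the paper instead splits $z+M_n$ and invokes Lemma~\ref{majmart}), and in claim~\ref{posdeVsurDgamma004} you work directly at the fixed time $n_0$ via the original definition of $\mathscr D_\gamma$, whereas the paper uses the $\zeta_\gamma$-characterization and applies the optional stopping theorem to the martingale $\big(V(X_n,y+S_n)\mathbbm 1_{\{\tau_y>n\}}\big)_{n\geq 0}$ at the stopping time $\zeta_{\gamma_0}$ --- both shortcuts are legitimate and yield the same conclusion.
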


\begin{proof}
\textit{Claim \ref{posdeVsurDgamma001}.} For any $\gamma_1 \leq \gamma_2$, we have $\zeta_{\gamma_1} \leq \zeta_{\gamma_2}$ and the claim \ref{posdeVsurDgamma001} follows.

\textit{Claim \ref{posdeVsurDgamma002}.} Fix $\gamma > 0$. By the definition of $\mathscr{D}_{\gamma}$, for any $(x,y) \in \mathscr{D}_{\gamma}^c$ and $n\geq 1$,
\[
0 = \bb P_x \left( \zeta_{\gamma} \leq n \,,\, \tau_y > n \right) = \bb P_x \left( \tau_y > n \right) - \bb P_x \left( \zeta_{\gamma} > n \,,\, \tau_y > n \right).
\]
From this, using Lemma \ref{concentmu}, we obtain
\[
\bb P_x \left( \tau_y > n \right)= \bb P_x \left( \zeta_{\gamma} > n \,,\, \tau_y > n \right) \leq \bb P_x \left( \zeta_{\gamma} > n \right) \leq \e^{-c_{\gamma} n} \left( 1 + N \left( x \right) \right).
\]

\textit{Claim \ref{posdeVsurDgamma003}.} Fix $\gamma >0$. Using the claim \ref{posdeVsurDgamma002} and Lemma \ref{majmart}, we have, for any $(x,y) \in \mathscr{D}_{\gamma}^c$ and $z=y+r(x),$
\begin{align*}
	\bb E_x \left( z+M_n \,;\, \tau_y > n \right) &\leq \abs{z} \bb P_x \left( \tau_y > n \right) +  \bb E_x^{1/2} \left( \abs{M_n}^2 \right) \bb P_x^{1/2} \left( \tau_y > n \right)\\
	&\leq \abs{z} \left( 1 + N \left( x \right) \right) \e^{-c_{\gamma} n} +  c \sqrt{n} \left( 1+N(x) \right)^{3/2} \e^{-c_{\gamma} n}.
\end{align*}
Taking the limit when $n\to +\infty$, by the point \ref{IFP001} of Proposition \ref{IFP}, we get
\[
V(x,y) = 0,
\]
and we conclude that $\mathscr{D}_{\gamma}^c \subseteq \mathscr{D}_+(V)^c$.

\textit{Claim \ref{posdeVsurDgamma004}.} By the point \ref{PosdeV001} of Proposition \ref{PosdeV}, taking $\delta = 1/2$, there exists $\gamma_0 > 0$ such that, for any $x\in \bb X$ and $y>0,$
\begin{equation}
	\label{PosdeV001bis}
	V(x,y) \geq \frac{y}{2} - \frac{\gamma_0}{4} \left( 1 + N(x) \right).
\end{equation}
Now, fix $(x,y) \in \mathscr{D}_{\gamma_0}$ and let $n_0\geq 1$ be an integer such that $\bb P_x \left( \zeta_{\gamma_0} \leq n_0 \,,\, \tau_y > n_0 \right) > 0$. By the point \ref{IFP004} of Proposition \ref{IFP},
\begin{align*}
	V(x,y) &= \bb E_x \left( V\left( X_{n_0}, y+S_{n_0} \right) \,;\, \tau_y > n_0 \right) \\
	&\geq \bb E_x \left( V\left( X_{n_0}, y+S_{n_0} \right) \,;\, \tau_y > n_0 \,,\, \zeta_{\gamma_0} \leq n_0 \right).
\end{align*}
By the Doob optional stopping theorem, \eqref{PosdeV001bis} and the definition of $\zeta_{\gamma_0}$,
\begin{align*}
	V(x,y) &\geq \bb E_x \left( V\left( X_{\zeta_{\gamma_0}}, y+S_{\zeta_{\gamma_0}} \right) \,;\, \tau_y > \zeta_{\gamma_0} \,,\, \zeta_{\gamma_0} \leq n_0 \right) \\
	&\geq \frac{1}{2} \bb E_x \left( y+S_{\zeta_{\gamma_0}} - \frac{\gamma_0}{2} \left( 1+ N\left( X_{\zeta_{\gamma_0}} \right) \right) \,;\, \tau_y > \zeta_{\gamma_0} \,,\, \zeta_{\gamma_0} \leq n_0 \right)\\
	&\geq \frac{1}{2} \bb E_x \left( \frac{\gamma_0}{2} \left( 1+ N\left( X_{\zeta_{\gamma_0}} \right) \right) \,;\, \tau_y > \zeta_{\gamma_0} \,,\, \zeta_{\gamma_0} \leq n_0 \right)\\
	&\geq \frac{\gamma_0}{4} \bb P_x \left( \tau_y > n_0 \,,\, \zeta_{\gamma_0} \leq n_0 \right).
\end{align*}
Now, since $n_0$ has been chosen such that the last probability is strictly positive, we get that $V(x,y) > 0$. This proves that $\mathscr{D}_{\gamma_0} \subseteq \mathscr{D}_+(V)$. Using the claims \ref{posdeVsurDgamma001} and \ref{posdeVsurDgamma003}, for any $\gamma \geq \gamma_0$, we obtain that $\mathscr{D}_{\gamma} \subseteq \mathscr{D}_{\gamma_0} \subseteq \mathscr{D}_+(V) \subseteq \mathscr{D}_{\gamma}$ and so $\mathscr{D}_{\gamma} = \mathscr{D}_{\gamma_0} = \mathscr{D}_+(V)$. Using \eqref{PosdeV001bis} proves the second assertion of the claim \ref{posdeVsurDgamma004}.
\end{proof}

\textbf{Proof of Theorem \ref{thonV}.} The claim \ref{thonV001} is proved by the point \ref{IFP001} of Proposition \ref{IFP}~; the claim \ref{thonV002} is proved by the point \ref{IFP004} of Proposition \ref{IFP}~; the claim \ref{thonV003} is proved by the points \ref{IFP002} and \ref{IFP003} of Proposition \ref{IFP} and by Proposition \ref{PosdeV}~; the claim \ref{thonV004} is proved by the point \ref{posdeVsurDgamma004} of Proposition \ref{posdeVsurDgamma}.

\section{Asymptotic for the exit time}
\label{AsExTi}

\subsection{Preliminary results} 

\begin{lemma}
\label{SurE1etE2}
There exists $\ee_0 >0$ such that, for any $\ee\in (0,\ee_0)$, $x\in \bb X$, $y\in \bb R$ and $z=y+r(x),$
\begin{align*}
	E_1 &:= \bb E_x \left( z+M_{\nu_n} \,;\, \tau_y > \nu_n \,,\, \nu_n \leq \pent{n^{1-\ee}} \right) \leq c_{\ee} \left( 1+\max(y,0)+N(x) \right), \quad \forall n \geq 1, \\
	E_2 &:= \bb E_x \left( z+M_{\nu_n^{\ee^2}} \,;\, \tau_y > \nu_n^{\ee^2} \,,\, \nu_n^{\ee^2} \leq \pent{n^{1-\ee}} \right) \underset{n\to \infty}{\longrightarrow} V(x,y).
\end{align*}
Moreover, for any $n\geq 1$, $\ee\in (0,\ee_0)$, $x\in \bb X$ and $y\in \bb R,$
\[
\abs{E_2 - V(x,y)} \leq \frac{c_{\ee}}{n^{\ee/8}} \left( 1+\max(y,0)+N(x) \right).
\]
\end{lemma}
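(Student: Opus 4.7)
The plan is to prove both bounds by replacing the stopped martingale by the harmonic function $V$ and exploiting the identity obtained from optional stopping applied to $(V(X_k,y+S_k)\mathbbm 1_{\{\tau_y>k\}})_{k\ge 0}$. For $E_1$, first note that $\{\tau_y>\nu_n\}\subseteq\{\hat T_z>\nu_n\}$, so by Lemma \ref{Mnsubmartingale} and Doob's optional sampling applied to the submartingale $((z+M_k)\mathbbm 1_{\{\hat T_z>k\}})_{k\ge 0}$ at the bounded stopping times $\nu_n\wedge\lfloor n^{1-\ee}\rfloor$ and $\lfloor n^{1-\ee}\rfloor$, one obtains
\[
E_1\;\le\;\bb E_x\!\left(z+M_{\lfloor n^{1-\ee}\rfloor}\,;\,\hat T_z>\lfloor n^{1-\ee}\rfloor\right)
\]
(the leftover term being non-negative after using $z+M_k>r(X_k)$ on $\{\tau_y>k\}$, cf.\ \eqref{decMSX} and Lemma \ref{MTR}). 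Applying Lemma \ref{intofthemartcond} with a fixed $n_f$ (e.g.\ $n_f=2$) and converting $\max(z,0)$ to $\max(y,0)+c(1+N(x))$ via Lemma \ref{MTR} gives the required bound $E_1\le c_{\ee}(1+\max(y,0)+N(x))$.

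For $E_2$, by the point \ref{IFP004} of Proposition \ref{IFP}, the process $(V(X_k,y+S_k)\mathbbm 1_{\{\tau_y>k\}})_{k\ge 0}$ is a $\bb P_x$-martingale, and by the bound \eqref{encadrV} its $L^1$-norm is controlled on $[0,\lfloor n^{1-\ee}\rfloor]$. Doob's optional sampling at $\tau:=\nu_n^{\ee^2}\wedge \lfloor n^{1-\ee}\rfloor$ yields
\[
V(x,y) \;=\; V_A\;+\;V_B,
\]
where $V_A := \bb E_x(V(X_{\nu_n^{\ee^2}},y+S_{\nu_n^{\ee^2}})\,;\,\tau_y>\nu_n^{\ee^2}\,,\,\nu_n^{\ee^2}\le\lfloor n^{1-\ee}\rfloor)$ and $V_B$ is the analogous expectation on the complementary event $\{\nu_n^{\ee^2}>\lfloor n^{1-\ee}\rfloor\}\cap\{\tau_y>\lfloor n^{1-\ee}\rfloor\}$. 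On this complementary event, $\{\nu_n>\lfloor n^{1-\ee}\rfloor-\lfloor n^{\ee^2}\rfloor\}\cap\{\hat T_z>\lfloor n^{1-\ee}\rfloor-\lfloor n^{\ee^2}\rfloor\}$ holds, whose probability is exponentially small by Lemma \ref{concentnu}. Combining this with \eqref{encadrV}, the moment bound of Lemma \ref{majmart} and the point \ref{Momdec001} of Hypothesis \ref{Momdec} via Cauchy--Schwarz yields $|V_B|\le \e^{-c_{\ee}n^{\ee}}(1+\max(y,0)+N(x))$.

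The hard part is the bound on $|V_A-E_2|$. Using \eqref{decMSX} and Lemma \ref{MTR},
\[
|V_A-E_2|\;\le\;\bb E_x\!\left(\bigl|V(X_{\nu_n^{\ee^2}},y+S_{\nu_n^{\ee^2}})-(y+S_{\nu_n^{\ee^2}})\bigr|\,;\,A\right) + c\,\bb E_x(1+N(X_{\nu_n^{\ee^2}})\,;\,A),
\]
with $A$ the event defining $V_A$. The second expectation is easily controlled by $c(1+N(x))$ via the Markov property at $\nu_n$ and the exponential decay \eqref{decexpN} together with the bound $\bb E_x(N(X_{\nu_n});\nu_n\le\lfloor n^{1-\ee}\rfloor)\le c_{\ee}n^{1/2}(1+N(x))$ (obtained as in the proof of Lemma \ref{firstupperboundforMnTz}). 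For the first expectation, split $A=A_{\mathrm{big}}\cup A_{\mathrm{bad}}$ with $A_{\mathrm{big}}:=A\cap\{y+S_{\nu_n^{\ee^2}}>n^{1/2-2\ee}\}$. On $A_{\mathrm{bad}}$, since $z+M_{\nu_n}>n^{1/2-\ee}$ by definition of $\nu_n$, we have $|M_{\nu_n^{\ee^2}}-M_{\nu_n}|\ge c\,n^{1/2-\ee}$, whose probability is bounded by Chebyshev together with Burkholder and \eqref{Majdesxi001} by $c_{\ee}n^{\ee^2}/n^{1-2\ee}$; this contribution, multiplied by the polynomial $L^2$-bound for $y+S_{\nu_n^{\ee^2}}+N(X_{\nu_n^{\ee^2}})$ issued from Lemma \ref{majmart} and \eqref{decexpN}, is at most $c_{\ee}n^{-\ee/8}(1+\max(y,0)+N(x))$ for $\ee$ small enough. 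On $A_{\mathrm{big}}$, we use the quantitative estimate \eqref{PosdeV001ter} from the proof of Proposition \ref{PosdeV}, namely $y'-V(x',y')\le c_{\ee}y'/n_f^{\ee}+c_{\ee}n_f^{2}(1+N(x'))$, and its analogue for the upper bound coming from \eqref{WLZX} and \eqref{VWppthW}; applying this with $y'=y+S_{\nu_n^{\ee^2}}>n^{1/2-2\ee}$ and choosing $n_f$ as a suitable small power of $n$ so as to balance the two error terms gives again a contribution bounded by $c_{\ee}n^{-\ee/8}(1+\max(y,0)+N(x))$. Combining these estimates with the bound on $V_B$ yields the claimed rate of convergence of $E_2$ to $V(x,y)$.

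The main obstacle is the split on $A$ and the rate calibration in Step for $A_{\mathrm{big}}$: the quantitative version of $V(x,y)/y\to 1$ has two competing error terms (linear in $y$ and in $1+N(x)$), and one must leverage the lower bound $y+S_{\nu_n^{\ee^2}}>n^{1/2-2\ee}$ to absorb the $N$-term into a small relative error, while simultaneously controlling the probability of the bad event via a concentration estimate on the martingale increments over the short window $[\nu_n,\nu_n^{\ee^2}]$. The exponent $\ee/8$ emerges from optimising these two competing rates.
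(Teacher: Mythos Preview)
Your approach to $E_1$ is essentially the paper's: use $\{\tau_y>\nu_n\}\subseteq\{\hat T_z>\nu_n\}$, optional sampling for the submartingale, and then Lemma \ref{intofthemartcond}. The leftover term is the paper's $-J_{21}''$ (see \eqref{decJ21'}), and it is not simply ``non-negative''; one needs Lemma \ref{concentnu} in addition to \eqref{decMSX} to make it exponentially small, as in \eqref{MajJ21''}. This is a minor point.

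For $E_2$ your overall strategy coincides with the paper's (optional stopping for the $V$-martingale, quantitative two-sided bounds on $V$, concentration for $\nu_n$), but there are two genuine gaps.

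First, your bound $\bb E_x(1+N(X_{\nu_n^{\ee^2}});A)\le c(1+N(x))$ is \emph{not} sufficient: a non-decaying contribution here would destroy the rate $n^{-\ee/8}$ for $|V_A-E_2|$. The point you are missing is that $\bb P_x(A)$ itself is small: since $z+M_{\nu_n}>n^{1/2-\ee}$ on $A$, one has $\bb P_x(A)\le n^{-(1/2-\ee)}E_1$. Combined with the Markov property at $\nu_n$ and \eqref{decexpN}, this gives $\bb E_x(1+N(X_{\nu_n^{\ee^2}});A)\le c_\ee n^{-(1/2-\ee)}E_1+\e^{-c_\ee n^{\ee^2}}(1+N(x))$, which does decay.

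Second, the split $A=A_{\mathrm{big}}\cup A_{\mathrm{bad}}$ is unnecessary and your treatment of $A_{\mathrm{bad}}$ does not close. The probability bound ``$c_\ee n^{\ee^2}/n^{1-2\ee}$'' for $\{|M_{\nu_n^{\ee^2}}-M_{\nu_n}|>cn^{1/2-\ee}\}$ silently drops a factor $(1+N(X_{\nu_n}))^2$ coming from Lemma \ref{majmart}; once restored, a Cauchy--Schwarz against an $L^2$ norm that grows like $n^{1/2}$ yields a bound that \emph{blows up} in $n$ (and in any case produces a quadratic, not linear, dependence on $N(x)$). The paper avoids this entirely: it applies the two-sided quantitative estimates on $V$ (namely \eqref{PosdeV001ter} and, via \eqref{VWppthW}, \eqref{WLZX}) directly on the whole event $A$, obtaining
\[
\abs{V_A-E_2}\le \frac{c_\ee}{n_f^{\ee}}\,E_2+c_\ee n_f^{2}\,\bb E_x\!\left(1+N(X_{\nu_n^{\ee^2}});A\right),
\]
and then uses the decay of $\bb E_x(1+N(X_{\nu_n^{\ee^2}});A)$ just described to get $c_\ee n_f^{2}n^{-(1/2-\ee)}E_1$. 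Choosing $n_f=n^{1/4-\ee}$ balances the two terms and yields the rate $n^{-\ee/8}$ after bounding $E_2$ (via the lower bound \eqref{MinorantE2}) by $c_\ee(V(x,y)+1+N(x))$ and then \eqref{encadrV}. In short: drop the split, and replace the Cauchy--Schwarz on $A_{\mathrm{bad}}$ by the observation that $\bb P_x(A)\le E_1/n^{1/2-\ee}$.
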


\begin{proof}
Using the fact $\{ \tau_y > \nu_n \} \subseteq \{ \hat{T}_z > \nu_n \}$ and Lemma \ref{Mnsubmartingale},
\[
E_1 \leq \bb E_x \left( z+M_{\pent{n^{1-\ee}}} \,;\, \hat{T}_z > \pent{n^{1-\ee}} \right) - J_{21}'',
\]
where $J_{21}''$ is defined in \eqref{decJ21'} and by \eqref{MajJ21''} the quantity $-J_{21}''$ does not exceed $\e^{-c_{\ee}n^{\ee}} ( 1+N(x) )$. Again, by Lemma \ref{Mnsubmartingale} and the point \ref{IFP001} of Proposition \ref{IFP}, we have that $( \bb E_x ( z+M_{n} \,;\, \hat{T}_z > n ) )_{n\geq 0}$ is a non-decreasing sequence which converges to $\hat{W}(x,z)$. So, using the point \ref{IFP003} of Proposition \ref{IFP} and the fact that $z=y+r(x)$,
\begin{equation}
	\label{MajdeE1}
	E_1 \leq \hat{W}(x,z) + \e^{-c_{\ee} n^{\ee}} \left( 1+N(x) \right) \leq c_{\ee} \left( 1+\max(y,0)+N(x) \right).
\end{equation}

By the point \ref{IFP004} of Proposition \ref{IFP}, we have
\begin{align*}
	V(x,y) =\;& \bb E_x \left( V \left( X_n, y+S_n \right) \,;\, \tau_y > n \,,\, \nu_n^{\ee^2} \leq \pent{n^{1-\ee}} \right) \\
	&+ \bb E_x \left( V \left( X_n, y+S_n \right) \,;\, \tau_y > n \,,\, \nu_n^{\ee^2} > \pent{n^{1-\ee}} \right).
\end{align*}
Using the point \ref{IFP003} of Proposition \ref{IFP}, for any $n_f \geq 2$,
\begin{align*}
	V(x,y) \leq\;& \bb E_x \left( V \left( X_{\nu_n^{\ee^2}}, y+S_{\nu_n^{\ee^2}} \right) \,;\, \tau_y > \nu_n^{\ee^2} \,,\, \nu_n^{\ee^2} \leq \pent{n^{1-\ee}} \right) \\
	&+ c\bb E_x \left( \max\left( z+M_n, 0 \right) + 1 + N\left( X_n \right) \,;\, \tau_y > n \,,\, \nu_n^{\ee^2} > \pent{n^{1-\ee}} \right) \\
	\leq\;& \left( 1+\frac{c_{\ee}}{n_f^{\ee}} \right) E_2 + c_{\ee} \bb E_x \left( \sqrt{n_f}+N \left( X_{\nu_n^{\ee^2}} \right) \,;\, \tau_y > \nu_n^{\ee^2} \,,\, \nu_n^{\ee^2} \leq \pent{n^{1-\ee}} \right) \\
	&\underbrace{-c_{\ee} \bb E_x \left( z+M_{\nu_n^{\ee^2}} \,;\, z+M_{\nu_n^{\ee^2}} < 0 \,,\, \tau_y > \nu_n^{\ee^2} \,,\, \nu_n^{\ee^2} \leq \pent{n^{1-\ee}} \right)}_{=J_{22}'(\ee^2)} \\
	&+ c\bb E_x \left( z+M_n + \abs{r\left( X_n \right)} +1+N\left( X_n \right) \,;\, \tau_y > n \,,\, \nu_n^{\ee^2} > \pent{n^{1-\ee}} \right).
\end{align*}
From the previous bound, using the Markov property, the bound \eqref{decexpN} and the approximation \eqref{decMSX}, we get 
\begin{align*}
	V(x,y) \leq\;& \left( 1+\frac{c_{\ee}}{n_f^{\ee}} \right) E_2 + J_{22}'(\ee^2) + c\underbrace{\bb E_x \left( z+M_n \,;\, \hat{T}_z > n \,,\, \nu_n^{\ee^2} > \pent{n^{1-\ee}} \right)}_{=J_1(\ee^2)} \\
	&+ c_{\ee} \bb E_x \left( \sqrt{n_f}+e^{-cn^{\ee^2}} N \left( X_{\nu_n} \right) \,;\, \tau_y > \nu_n \,,\, \nu_n \leq \pent{n^{1-\ee}} \right) \\
	&+ c\bb E_x \left( 1+e^{-c_{\ee}n} N\left( X_{\pent{n^{1-\ee}}} \right) \,;\, \tau_y > \pent{n^{1-\ee}} \,,\, \nu_n^{\ee^2} > \pent{n^{1-\ee}} \right).
\end{align*}
Proceeding in the same way as for the bound \eqref{MajJ22'},
\begin{align*}
	J_{22}'(\ee^2) &\leq c_{\ee} \bb E_x \left( 1+ \e^{-c n^{\ee^2}} N \left( X_{\nu_n} \right) \,;\, \tau_y > \nu_n \,,\, \nu_n \leq \pent{n^{1-\ee}} \right) \\
	&\leq \frac{c_{\ee}}{n^{1/2-\ee}} E_1 + \e^{-c_{\ee} n^{\ee^2}} \left( 1+N(x) \right).
\end{align*}
Moreover, similarly as for the bound \eqref{MajJ1}, we have
\[
J_1(\ee^2) \leq \e^{-c_{\ee}n^{\ee^2}} \left( 1+N(x) \right).
\]
Taking into account these bounds and using Lemma \ref{concentnu},
\begin{equation}
	\label{MajorantE2}
	V(x,y) \leq \left( 1+\frac{c_{\ee}}{n_f^{\ee}} \right) E_2 + \frac{c_{\ee} \sqrt{n_f}}{n^{1/2-\ee}} E_1 + \e^{-c_{\ee} n^{\ee^2}} \left( 1+N(x) \right).
\end{equation}
Analagously, by \eqref{PosdeV001ter} and \eqref{decMSX}, we have the lower bound
\begin{align}
	V(x,y) \geq\;& \bb E_x \left( V \left( X_{\nu_n^{\ee^2}}, y+S_{\nu_n^{\ee^2}} \right) \,;\, \tau_y > \nu_n^{\ee^2} \,,\, \nu_n^{\ee^2} \leq \pent{n^{1-\ee}} \right) \nonumber\\
	\geq\;& \left( 1-\frac{c_{\ee}}{n_f^{\ee}} \right) E_2 - c_{\ee} n_f^2 \bb E_x \left( 1+N \left( X_{\nu_n^{\ee^2}} \right) \,;\, \tau_y > \nu_n^{\ee^2} \,,\, \nu_n^{\ee^2} \leq \pent{n^{1-\ee}} \right) \nonumber\\
	\label{MinorantE2}
	\geq\;& \left( 1-\frac{c_{\ee}}{n_f^{\ee}} \right) E_2 - \frac{c_{\ee}n_f^2}{n^{1/2-\ee}} E_1 - n_f^2 \e^{-c_{\ee} n^{\ee^2}} \left( 1+N(x) \right).
\end{align}
Taking $n_f=n^{1/4-\ee}$ in \eqref{MinorantE2} and \eqref{MajorantE2}, we conclude that, for any $\ee \in (0,1/8)$,
\[ \abs{V(x,y) - E_2} \leq \frac{c_{\ee}}{n^{\ee/8}} E_2 + \frac{c_{\ee}}{n^{\ee}} \left( E_1+1+N(x) \right). \]
Again, using  \eqref{MinorantE2},
\[
\abs{V(x,y) - E_2} \leq \frac{c_{\ee}}{n^{\ee/8}} V(x,y) + \frac{c_{\ee}}{n^{\ee}} \left( E_1+1+N(x) \right).
\]
Finally, employing \eqref{MajdeE1} and \eqref{encadrV},
\[ \abs{V(x,y) - E_2} \leq \frac{c_{\ee}}{n^{\ee/8}} \left( 1+\max(y,0)+N(x) \right). \] 
\end{proof}

\begin{lemma}
\label{taupptrn}
There exists $\ee_0 > 0$ such that, for any $\ee \in (0,\ee_0)$, $x\in \bb X$, $y\in \bb R$ and $n\geq 1$,
\[
\bb P_x \left( \tau_y > n \right) \leq \frac{c_\ee}{n^{1/2-\ee}} \left( 1+\max(y,0)+N(x) \right).
\]
Moreover, summing this bound, for any $\ee \in (0,\ee_0)$, $x\in \bb X$, $y\in \bb R$ and $n\geq 1$, we have 
\[
\sum_{k=1}^{\pent{n^{1-\ee}}} \bb P_x \left( \tau_y > k \right) \leq c_\ee \left( 1+\max(y,0)+N(x) \right) n^{1/2+\ee}.
\]
\end{lemma}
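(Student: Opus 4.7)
The plan is to decompose $\bb P_x(\tau_y > n)$ according to whether the stopping time $\nu_n = \inf\{k \geq 1,\; z+M_k > n^{1/2-\ee}\}$ (with $z = y+r(x)$) has passed by time $\pent{n^{1-\ee}}$, and to combine the bound on $E_1$ from Lemma \ref{SurE1etE2} with the exponential concentration estimate of Lemma \ref{concentnu}. The starting point is
\[
\bb P_x(\tau_y > n) = \bb P_x(\tau_y > n,\, \nu_n \leq \pent{n^{1-\ee}}) + \bb P_x(\tau_y > n,\, \nu_n > \pent{n^{1-\ee}}).
\]

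For the first piece, I would use the fact that on $\{\nu_n \leq \pent{n^{1-\ee}}\}$ we have $\nu_n \leq n$, hence $\{\tau_y > n\} \subseteq \{\tau_y > \nu_n\}$, and the definition of $\nu_n$ gives $z+M_{\nu_n} > n^{1/2-\ee}$. Markov's inequality then yields
\[
\bb P_x(\tau_y > n,\, \nu_n \leq \pent{n^{1-\ee}}) \leq \frac{E_1}{n^{1/2-\ee}} \leq \frac{c_{\ee}(1+\max(y,0)+N(x))}{n^{1/2-\ee}},
\]
by Lemma \ref{SurE1etE2}. For the second piece, the chain of inclusions $\{\tau_y > n\} \subseteq \{\hat{T}_z > n\} \subseteq \{\hat{T}_z > \pent{n^{1-\ee}}\}$ (using $\tau_y \leq \hat{T}_z$ by definition) reduces the estimate to $\bb P_x(\nu_n > \pent{n^{1-\ee}},\, \hat{T}_z > \pent{n^{1-\ee}})$, which Lemma \ref{concentnu} applied with $\delta = 1$ bounds by $\e^{-c_{\ee} n^{\ee}}(1+N(x))$. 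This tail is absorbed into the polynomial term, proving the first inequality.

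The summed bound then follows from a standard Riemann-sum comparison: with $K = \pent{n^{1-\ee}}$ one has $\sum_{k=1}^{K} k^{-(1/2-\ee)} \leq c\, K^{1/2+\ee}$, and since $(1-\ee)(1/2+\ee) \leq 1/2+\ee$ this gives $K^{1/2+\ee} \leq n^{1/2+\ee}$. I do not anticipate any substantive obstacle, as essentially all the analytic work has already been carried out in the preparatory Lemmas \ref{SurE1etE2} and \ref{concentnu}; the only slightly delicate point is the transition from $\tau_y$ to $\hat{T}_z$ when invoking Lemma \ref{concentnu}, which is immediate but must be explicitly noted.
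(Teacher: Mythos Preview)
Your proposal is correct and matches the paper's proof essentially line for line: the same decomposition on $\{\nu_n \lessgtr \pent{n^{1-\ee}}\}$, the same use of $1 \leq (z+M_{\nu_n})/n^{1/2-\ee}$ together with the $E_1$ bound from Lemma \ref{SurE1etE2} on the first piece, and Lemma \ref{concentnu} (via $\tau_y \leq \hat{T}_z$) on the second. The summation argument is likewise the same Riemann-sum comparison.
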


\begin{proof}
Using Lemma \ref{concentnu} and Lemma \ref{SurE1etE2}, with $z=y+r(x)$,
\begin{align*}
	\bb P_x \left( \tau_y > n \right) &\leq \bb P_x \left( \tau_y > n \,,\, \nu_n \leq \pent{n^{1-\ee}} \right) + \bb P_x \left( \hat{T}_z > n \,,\, \nu_n > \pent{n^{1-\ee}} \right) \\
	&\leq \bb E_x \left( \frac{z+M_{\nu_n}}{n^{1/2-\ee}} \,;\, \tau_y > n \,,\, \nu_n \leq \pent{n^{1-\ee}} \right) + \e^{-c_\ee n^\ee} \left( 1+N(x) \right) \\
	&\leq \frac{c_{\ee}}{n^{1/2-\ee}} \left( 1+\max(y,0)+N(x) \right).
\end{align*}
\end{proof}

\begin{lemma}
\label{MajE3}
There exists $\ee_0 > 0$ such that, for any $\ee \in (0,\ee_0)$, $x\in \bb X$, $y\in \bb R$ and $z=y+r(x)$,
\[
E_3 := \bb E_x \left( z+M_{\nu_n} \,;\, z+M_{\nu_n} > n^{1/2-\ee/2} \,,\, \tau_y > \nu_n \,,\, \nu_n \leq \pent{n^{1-\ee}} \right) \underset{n\to +\infty}{\longrightarrow} 0.
\]
More precisely, for any $n\geq 1$, $\ee \in (0,\ee_0)$, $x\in \bb X$, $y\in \bb R$ and $z=y+r(x)$,
\[
E_3 \leq c_{\ee}\frac{\max(y,0) +  \left( 1+ y\mathbbm 1_{\{y> n^{1/2-2\ee}\}} +N(x) \right)^2}{n^{\ee}}.
\]
\end{lemma}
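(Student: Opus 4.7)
The plan is to split the argument according to the sizes of $y$ and $N(x)$, setting $L := n^{1/2-\ee/2}$ throughout. When either $y > n^{1/2-2\ee}$ or $N(x) > n^{1/2-2\ee}$, the trivial inequality $E_3 \leq E_1$ (obtained by dropping the constraint $z+M_{\nu_n} > L$) combined with Lemma~\ref{SurE1etE2} already yields $E_3 \leq c_\ee(1+\max(y,0)+N(x))$; in this regime one has $1+y\mathbbm 1_{\{y>n^{1/2-2\ee}\}}+N(x) \geq n^{1/2-2\ee} \geq n^{\ee}$ for $\ee$ small, so the squared factor divided by $n^\ee$ in the target bound already absorbs this linear estimate.

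In the remaining main regime $y \leq n^{1/2-2\ee}$ and $N(x) \leq n^{1/2-2\ee}$, so $|z|=|y+r(x)| \leq cn^{1/2-2\ee} \leq L/4$ for $n$ large. By the definition of $\nu_n$ and the convention $M_0=0$, one has $z+M_{\nu_n-1} \leq n^{1/2-\ee}$, which together with $z+M_{\nu_n} > L$ forces $\xi_{\nu_n} > L - n^{1/2-\ee} \geq L/2$ and gives $z+M_{\nu_n} \leq 2\xi_{\nu_n}$. Decomposing on the value of $\nu_n$ and relaxing $\tau_y > k$ to $\tau_y > k-1$ yields
\[
E_3 \leq 2\sum_{k=1}^{\pent{n^{1-\ee}}} \bb E_x\!\left(\xi_k\,;\,\xi_k > L/2\,,\,\tau_y > k-1\right).
\]

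Individual summands are then estimated by conditioning on $\mathscr{F}_{k-1}$. Since $\xi_k = \Theta(X_k) - r(X_{k-1})$, the Markov property gives $\bb E_x(\xi_k\,;\,\xi_k > L/2\,,\,\tau_y > k-1) \leq \bb E_x(h_L(X_{k-1}) \mathbbm 1_{\{\tau_y > k-1\}})$, where $h_L(x'):=\bb E_{x'}(\xi_1\,;\,\xi_1 > L/2)$. Using $|\xi_1| \leq c(1+N(X_1)+N(x'))$ from Lemma~\ref{MTR} together with the $\alpha$-th moment bound of Hypothesis~\ref{Momdec} and Jensen (since $\alpha > 2$) yields $\bb E_{x'}(\xi_1^2) \leq c(1+N(x'))^2$; combined with the elementary inequality $\xi_1 \mathbbm 1_{\{\xi_1 > L/2\}} \leq 2\xi_1^2/L$ this gives $h_L(x') \leq c(1+N(x'))^2/L$, so that
\[
E_3 \leq \frac{c}{L}\sum_{k=1}^{\pent{n^{1-\ee}}} \bb E_x\!\left((1+N(X_{k-1}))^2 \mathbbm 1_{\{\tau_y > k-1\}}\right).
\]

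The main obstacle is to control this final sum sharply enough that, after dividing by $L=n^{1/2-\ee/2}$, a factor of $n^{-\ee}$ appears. The plan is to split the range of $k$: for $k$ up to a carefully chosen threshold, use the unconditional bound $\bb E_x((1+N(X_k))^2) \leq c(1+N(x))^2$ coming from Hypothesis~\ref{Momdec001} and Jensen; for larger $k$, interpolate via H\"older's inequality between the moment of $(1+N)^2$ (at the largest exponent permitted by $\alpha > 2$) and the polynomial survival estimate $\bb P_x(\tau_y > k) \leq c_\ee(1+\max(y,0)+N(x))/k^{1/2-\ee}$ supplied by Lemma~\ref{taupptrn}. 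Balancing the threshold and the H\"older exponent against $L$ and $\ee$ will produce the target bound, the quadratic dependence $(1+N(x))^2$ emerging naturally from the second-moment step, while the additional linear term $\max(y,0)/n^\ee$ accounts for the residual contribution of $|z|$ itself in the boundary case $\nu_n=1$.
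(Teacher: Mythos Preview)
Your reduction to the case $y\leq n^{1/2-2\ee}$, $N(x)\leq n^{1/2-2\ee}$ and the observation that on $\{z+M_{\nu_n}>L\}$ one has $\xi_{\nu_n}>L/2$ and $z+M_{\nu_n}\leq 2\xi_{\nu_n}$ are both correct, and the conditioning step giving $h_L(x')=\bb E_{x'}(\xi_1;\xi_1>L/2)\leq c(1+N(x'))^2/L$ is valid. The gap is in the final paragraph: the second-moment truncation is too crude to close the argument under Hypothesis~\ref{Momdec}, which only guarantees $\alpha>2$. Indeed, after your bound you must control
\[
\frac{1}{L}\sum_{k=0}^{\pent{n^{1-\ee}}}\bb E_x\!\left((1+N(X_k))^2\mathbbm 1_{\{\tau_y>k\}}\right),
\qquad L=n^{1/2-\ee/2}.
\]
H\"older with exponent $\alpha/2$ and Lemma~\ref{taupptrn} give each summand $\leq c_\ee(1+N(x))^2\,k^{-(1/2-\ee)(1-2/\alpha)}$; the sum is then of order $n^{(1-\ee)(1-(1/2-\ee)(1-2/\alpha))}$, and even letting $\alpha\to\infty$ this exponent tends to $(1-\ee)(1/2+\ee)$, so that after dividing by $L$ one obtains $n^{\ee-\ee^2}$, which \emph{grows}. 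Replacing the second moment by a $p$-th moment ($2<p\leq\alpha$) and redoing the balance shows that decay requires $p>2\alpha/(\alpha-1)$, hence $\alpha>3$; for $\alpha\in(2,3]$ no choice of threshold and H\"older exponent rescues the estimate.

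The paper's proof uses precisely the piece of Hypothesis~\ref{Momdec} you did not touch: the functions $N_l$ with $\bs\nu(N_l)\leq c/l^{1+\beta}$. After a time shift by $\pent{n^\ee}$ (so that mixing kicks in), \eqref{decexpNl} gives the much sharper tail bound $\bb P_{x'}(\xi_{\pent{n^\ee}}>c_\ee L)\leq c_\ee n^{-1-\beta/4}+\e^{-c_\ee n^\ee}N(x')$; the exponent $1+\beta/4>1$ beats the $n^{1-\ee}$ terms in the sum and delivers the $n^{-\ee}$ decay. Your approach, relying only on moment bounds for $\xi_1$ at the immediate next step, cannot access this extra $\beta$, and that is exactly where it fails.
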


\begin{proof}
Notice that when $\nu_n \neq 1$ the following inclusion holds: $\{ z+M_{\nu_n} > n^{1/2-\ee/2} \} \subseteq \{ \xi_{\nu_n} > n^{1/2-\ee/2} - n^{1/2-\ee} \geq c_{\ee} n^{1/2-\ee/2} \}$. Therefore,
\begin{align}
	E_3 \leq\;& \underbrace{\bb E_x \left( z+M_{\nu_n} \,;\, \nu_n \leq 2\pent{n^{\ee}} \right)}_{=:E_{30}} \nonumber\\
	\label{decdeE3}
	&+ \underbrace{\sum_{k=2\pent{n^{\ee}}+1}^{\pent{n^{1-\ee}}} \bb E_x \left( z+M_k \,;\, \xi_k > c_{\ee} n^{1/2-\ee/2} \,,\, \tau_y > k \,,\, \nu_n=k \right)}_{=:E_{31}}.
\end{align}

\textit{Bound of $E_{30}$.}
For $y\leq n^{1/2-2\ee}$, by the Markov inequality and Lemma \ref{majmart},
\begin{align*}
	\bb P_x \left( \nu_n \leq 2 \pent{n^{\ee}} \right) &\leq \sum_{k=1}^{2\pent{n^{\ee}}} \bb P_x \left( r(x)+M_k > c_{\ee} n^{1/2-\ee} \right) \leq \frac{c_{\ee} \left( 1+N(x) \right)}{n^{1/2-3\ee}}.
\end{align*}
For $y > n^{1/2-2\ee}$, in the same way, we have $\bb P_x \left( \nu_n \leq 2 \pent{n^{\ee}} \right) \leq \frac{c_{\ee} \left( 1+y+N(x) \right)}{n^{1/2-3\ee}}$. Putting together these bounds, we get, for any $y \in \bb R$,
\begin{equation}
	\label{nupetit}
	\bb P_x \left( \nu_n \leq 2 \pent{n^{\ee}} \right) \leq \frac{c_{\ee} \left( 1+ y\mathbbm 1_{\{y> n^{1/2-2\ee}\}}+N(x) \right)}{n^{1/2-3\ee}}.
\end{equation}
Using Lemma \ref{majmart},
\begin{align}
	E_{30} &\leq z \bb P_x \left( \nu_n \leq 2 \pent{n^{\ee}} \right) + \sum_{k=1}^{2\pent{n^{\ee}}} \bb E_x^{1/2} \left( \abs{M_k}^2 \right) \bb P_x^{1/2} \left( \nu_n \leq 2\pent{n^\ee} \right) \nonumber\\
	\label{E30to0}
	&\leq \frac{c_{\ee} \left( 1+ y\mathbbm 1_{\{y> n^{1/2-2\ee}\}} +N(x) \right)^2}{n^{\ee}}.
\end{align}

\textit{Bound of $E_{31}$.} Changing the index of summation ($j=k-\pent{n^\ee}$) and using the Markov property,
\begin{align}
	E_{31} \leq\;& \sum_{j=\pent{n^{\ee}}+1}^{\pent{n^{1-\ee}}} \int_{\bb X \times \bb R} \max(z',0) \bb P_{x'} \left( \xi_{\pent{n^\ee}} > c_{\ee} n^{1/2-\ee/2} \right)  \nonumber\\
	&\underbrace{\hspace{3cm} \times \bb P_x \left( X_j \in \dd x' \,,\, z+M_j \in \dd z' \,,\, \tau_y > j \right)}_{=:E_{32}} \nonumber\\
	\label{decdeE31}
	&+\sum_{j=\pent{n^{\ee}}+1}^{\pent{n^{1-\ee}}} \int_{\bb X \times \bb R} \bb E_{x'}^{1/2} \left( \abs{M_{\pent{n^\ee}}}^2 \right) \bb P_{x'}^{1/2} \left( \xi_{\pent{n^\ee}} > c_{\ee} n^{1/2-\ee/2} \right) \\
	&\underbrace{\hspace{3cm} \times \bb P_x \left( X_j \in \dd x' \,,\, z+M_j \in \dd z' \,,\, \tau_y > j \right).}_{=:E_{33}} \nonumber
\end{align}

\textit{Bound of $E_{32}$.}
Using \eqref{majdesxi000}, the Markov inequality and \eqref{decexpNl} with $l=\pent{c_{\ee} n^{1/2-\ee/2}}$,
\begin{align*}
	\bb P_{x'} \left( \xi_{\pent{n^\ee}} > c_{\ee} n^{1/2-\ee/2} \right) \leq\;& \bb P_{x'} \left( N \left( X_{\pent{n^\ee}} \right) > c_{\ee} n^{1/2-\ee/2} \right) \\
	&+ \bb P_{x'} \left( N \left( X_{\pent{n^\ee}-1} \right) > c_{\ee} n^{1/2-\ee/2} \right) \\
	\leq\;& \frac{1}{l} \bb E_{x'} \left( N_l \left( X_{\pent{n^\ee}} \right) \right) + \frac{1}{l} \bb E_{x'} \left( N_l \left( X_{\pent{n^\ee}-1} \right) \right) \\
	\leq\;& \frac{c}{l^{2+\beta}} + \frac{1}{l} \e^{-c n^{\ee}} \left( 1+N(x') \right).
\end{align*}
Choosing $\ee>0$ small enough we find that
\begin{equation}
	\label{queuedexi}
	\bb P_{x'} \left( \xi_{\pent{n^\ee}} > c_{\ee} n^{1/2-\ee/2} \right) \leq \frac{c_{\ee}}{n^{1+\beta/4}} + \e^{-c_{\ee} n^{\ee}} N(x').
\end{equation}
By the definition of $E_{32}$ in \eqref{decdeE31},
\begin{align*}
	E_{32} \leq\;& \frac{c_{\ee}}{n^{1+\beta/4}} \sum_{j=\pent{n^{\ee}}+1}^{\pent{n^{1-\ee}}} \left[ \bb E_x \left( z+M_j \,;\, \tau_y > j \right) + \bb E_x \left( \abs{r\left( X_j \right)} \right) \right] \\
	&+ \e^{-c_{\ee} n^{\ee}} \sum_{j=\pent{n^{\ee}}+1}^{\pent{n^{1-\ee}}} \left[ \max(z,0) \bb E_x \left( N \left( X_j \right) \right) +  \bb E_x^{1/2} \left( \abs{M_j}^2 \right) \bb E_x^{1/2} \left( N \left( X_j \right)^2 \right) \right].
\end{align*}
Using \eqref{VnPPWn2}, Lemma \ref{majmart} and the point \ref{Momdec001} of Hypothesis \ref{Momdec}, we find that
\begin{equation}
	\label{E32to0}
	E_{32} \leq c_{\ee}\frac{\max(y,0) +  \left( 1+ y\mathbbm 1_{\{y> n^{1/2-2\ee}\}} +N(x) \right)  \left( 1+N(x) \right)}{n^{\beta/4}}.
\end{equation}

\textit{Bound of $E_{33}$.} Using \eqref{queuedexi} and Lemma \ref{majmart}, we have
\[
E_{33} \leq \sum_{j=\pent{n^{\ee}}+1}^{\pent{n^{1-\ee}}} \bb E_x \left( n^{\ee/2} \left( 1+N \left( X_j \right) \right) \left( \frac{c_{\ee}}{n^{1/2+\beta/8}} + \e^{-c_{\ee} n^{\ee}} N\left( X_j \right)^{1/2} \right) \,;\, \tau_y >j \right).
\]
By the Markov property,
\[
E_{33} \leq \e^{-c_{\ee} n^{\ee}} \left( 1+ N(x) \right)^{3/2} + \frac{c_{\ee}}{n^{1/2+\beta/8-\ee/2}} \sum_{j=1}^{\pent{n^{1-\ee}}} \bb E_x \left( 1+ \e^{-c n^\ee} N \left( X_j \right) \,;\, \tau_y > j \right).
\]
Using Lemma \ref{taupptrn},
\begin{equation}
	\label{E33to0}
	E_{33} \leq c_{\ee} \frac{\max(y,0)+ \left( 1+N(x) \right)^{3/2}}{n^{\beta/8-3\ee/2}}.
\end{equation}
With \eqref{E33to0}, \eqref{E32to0} and \eqref{decdeE31}, for $\ee > 0$ small enough, we find that
\[
E_{31} \leq c_{\ee} \frac{\max(y,0) +  \left( 1+ y\mathbbm 1_{\{y> n^{1/2-2\ee}\}} +N(x) \right)  \left( 1+N(x) \right)}{n^{\ee}}.
\]
This bound, together with \eqref{E30to0} and \eqref{decdeE3}, proves the lemma.
\end{proof}

\begin{lemma}
\label{SurE4}
There exists $\ee_0 > 0$ such that, for any $\ee \in (0,\ee_0)$, $x\in \bb X$, $y\in \bb R$ and $z=y+r(x)$,
\[
E_4 := \bb E_x \left( z+M_{\nu_n^{\ee^2}} \,;\, z+M_{\nu_n^{\ee^2}} > n^{1/2-\ee/4} \,,\, \tau_y > \nu_n^{\ee^2} \,,\, \nu_n^{\ee^2} \leq \pent{n^{1-\ee}} \right) \underset{n\to +\infty}{\longrightarrow} 0.
\]
More precisely, for any $n\geq 1$, $\ee \in (0,\ee_0)$, $x\in \bb X$, $y\in \bb R$ and $z=y+r(x)$,
\[
E_4 \leq c_{\ee}\frac{\max(y,0) +  \left( 1+ y\mathbbm 1_{\{y> n^{1/2-2\ee}\}} +N(x) \right)^2}{n^{\ee/2}}.
\]
\end{lemma}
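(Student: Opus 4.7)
The plan is to mimic the proof of Lemma \ref{MajE3}, treating the extra delay of length $\pent{n^{\ee^2}}$ in the stopping time $\nu_n^{\ee^2}=\nu_n+\pent{n^{\ee^2}}$ by conditioning at the earlier stopping time $\nu_n$. This is natural because on $\{\tau_y>\nu_n^{\ee^2}\}$ we also have $\{\tau_y>\nu_n\}$, so the Markov property at $\nu_n$ reduces the estimate of $E_4$ to an integral against the joint law of $(X_{\nu_n},z+M_{\nu_n})$ restricted to $\{\tau_y>\nu_n,\nu_n\leq\pent{n^{1-\ee}}\}$, which is already the setting of Lemma \ref{MajE3}.

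Concretely, I would split $E_4=E_4^A+E_4^B$ according to whether $z+M_{\nu_n}>n^{1/2-\ee/2}$ (part $A$) or $z+M_{\nu_n}\leq n^{1/2-\ee/2}$ (part $B$). On part $A$, write $z+M_{\nu_n^{\ee^2}}=(z+M_{\nu_n})+(M_{\nu_n^{\ee^2}}-M_{\nu_n})$ and drop the indicator $\mathbbm 1_{\{z+M_{\nu_n^{\ee^2}}>n^{1/2-\ee/4}\}}$: the first piece equals $E_3$ and is controlled by Lemma \ref{MajE3}; for the second piece, the Markov property at $\nu_n$ together with the point \ref{majmart002} of Lemma \ref{majmart} gives
\[
\bb E_x\bigl(\abs{M_{\nu_n^{\ee^2}}-M_{\nu_n}}\,\big|\,\mathscr{F}_{\nu_n}\bigr)\leq c\bigl(n^{\ee^2/2}+N(X_{\nu_n})\bigr),
\]
and then $\bb P_x(A)\leq E_3/n^{1/2-\ee/2}$ combined with a Cauchy–Schwarz estimate produces the desired $n^{-\ee/2}$ factor. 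On part $B$, the event $\{z+M_{\nu_n^{\ee^2}}>n^{1/2-\ee/4}\}$ forces $M_{\nu_n^{\ee^2}}-M_{\nu_n}>\tfrac12 n^{1/2-\ee/4}$ (for $n$ large enough); applying the Markov property at $\nu_n$ with $m=\pent{n^{\ee^2}}$ and the Chebyshev-type bound
\[
\bb E_{x'}\bigl((z'+M_m)\mathbbm 1_{\{M_m>T\}}\bigr)\leq \frac{z'\,\bb E_{x'}(M_m^2)}{T^2}+\frac{\bb E_{x'}(M_m^2)}{T}
\]
with $T=\tfrac12 n^{1/2-\ee/4}$ and $\bb E_{x'}(M_m^2)\leq cm(1+N(x'))^2$ yields, for $\ee$ small,
\[
E_4^B\leq c_{\ee}\,n^{-\ee/2}\,\bb E_x\bigl((1+N(X_{\nu_n}))^2\,;\,\tau_y>\nu_n,\;\nu_n\leq\pent{n^{1-\ee}}\bigr).
\]

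The main obstacle is therefore to bound the remaining quantity by $c_\ee(1+y\mathbbm 1_{\{y>n^{1/2-2\ee}\}}+N(x))^2$, uniformly in $n$. I would do this along the lines of the treatment of $E_{30}$ and $E_{31}$ in the proof of Lemma \ref{MajE3}: split $\{\nu_n\leq 2\pent{n^\ee}\}$ from $\{2\pent{n^\ee}<\nu_n\leq\pent{n^{1-\ee}}\}$, control the former via the concentration bound \eqref{nupetit} together with Hölder between $\bb E_x(N(X_{\nu_n})^\alpha)\leq c(1+N(x))^\alpha$ and $\bb P_x(\nu_n\leq 2\pent{n^\ee})$, and control the latter by exploiting the fact that on $\{\nu_n=k\}$ with $k>2\pent{n^\ee}$ the increment $\xi_k>c_\ee n^{1/2-\ee}$ is forced to be large, so that the tail estimate \eqref{queuedexi} of Hypothesis \ref{Momdec} together with Cauchy–Schwarz and Lemma \ref{taupptrn} (summing $\bb P_x(\tau_y>k)$) gives the required dependence in $x$ and $y$. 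The delicate bookkeeping here—keeping the power of $N$ at two and absorbing the additional $N(x)$ factors coming from the Markov property into exponentially small prefactors—is the one step that is not purely mechanical.
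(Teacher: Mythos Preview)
Your decomposition $E_4=E_4^A+E_4^B$ according to $z+M_{\nu_n}\gtrless n^{1/2-\ee/2}$ and the conditioning at $\nu_n$ match the paper's split $E_{41}+E_{42}$. The gap is in part $B$. Your Chebyshev bound produces a factor $(1+N(x'))^2$, so you must control $\bb E_x\bigl((1+N(X_{\nu_n}))^2;\,\tau_y>\nu_n,\ \nu_n\leq\pent{n^{1-\ee}}\bigr)$, and the argument you propose for this fails: the claim that on $\{\nu_n=k\}$ with $k>2\pent{n^\ee}$ the increment $\xi_k$ is forced to exceed $c_\ee n^{1/2-\ee}$ is false here. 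That inclusion held in the proof of Lemma~\ref{MajE3} only because of the \emph{additional} constraint $z+M_{\nu_n}>n^{1/2-\ee/2}$ built into $E_3$; in part $B$ you have the opposite constraint $z+M_{\nu_n}\leq n^{1/2-\ee/2}$, and the definition of $\nu_n$ only gives $z+M_{k-1}\leq n^{1/2-\ee}<z+M_k$, hence merely $\xi_k>0$. Without a large-increment event you cannot invoke \eqref{queuedexi}, and Hypothesis~\ref{Momdec} provides only first-moment tail decay through the $N_l$ (see \eqref{decexpNl}); there is no analogue controlling $N(X_k)^2\mathbbm 1_{\{N(X_k)>l\}}$. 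Your asserted inequality $\bb E_x(N(X_{\nu_n})^\alpha)\leq c(1+N(x))^\alpha$ is likewise unjustified for the random time $\nu_n$ ranging up to $\pent{n^{1-\ee}}$.

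The paper avoids the square by replacing Chebyshev with a \emph{first}-moment Markov inequality: writing $(z'+M_m)\mathbbm 1_{\{z'+M_m>n^{1/2-\ee/4}\}}\leq z'\mathbbm 1_{\{M_m>c_\ee n^{1/2-\ee/4}\}}+|M_m|$ and bounding $\bb P_{x'}(M_m>T)$ and $\bb E_{x'}|M_m|$ via Lemma~\ref{majmart} yields only a factor $(1+N(X_{\nu_n}))$. This first power is then handled by splitting on $\{N(X_{\nu_n})\lessgtr n^{1/2-2\ee}\}$: on the small-$N$ event one uses $z+M_{\nu_n}>n^{1/2-\ee}$ from the definition of $\nu_n$ to get $1+N(X_{\nu_n})\leq c_\ee n^{-\ee}(z+M_{\nu_n})$ and reduce to $E_1$; on the large-$N$ event the $N_l$ estimate \eqref{decexpNl} combined with Lemma~\ref{taupptrn} gives the bound \eqref{MajE41'}. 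The same small/large-$N$ split, not Cauchy--Schwarz, is what is needed for the $N(X_{\nu_n})$ contribution in your part $A$ as well.
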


\begin{proof}
We will apply Lemma \ref{MajE3}. For this we write
\begin{align}
	E_4 =\;& \bb E_x \left( z+M_{\nu_n^{\ee^2}} \,;\, z+M_{\nu_n^{\ee^2}} > n^{1/2-\ee/4} \,,\, z+M_{\nu_n} > n^{1/2-\ee/2} \,,\, \right. \nonumber\\
	 &\underbrace{\hspace{8cm} \left. \tau_y > \nu_n^{\ee^2} \,,\, \nu_n^{\ee^2} \leq \pent{n^{1-\ee}} \right)}_{=:E_{41}} \nonumber\\
	\label{decdeE4}
	+& \bb E_x \left( z+M_{\nu_n^{\ee^2}} \,;\, z+M_{\nu_n^{\ee^2}} > n^{1/2-\ee/4} \,,\, z+M_{\nu_n} \leq n^{1/2-\ee/2} \,,\, \right. \\
	&\underbrace{\hspace{8cm} \left. \tau_y > \nu_n^{\ee^2} \,,\, \nu_n^{\ee^2} \leq \pent{n^{1-\ee}} \right)}_{=:E_{42}}. \nonumber
\end{align}

\textit{Bound of $E_{41}$.} By the Markov property,
\begin{align*}
	E_{41} &= \sum_{k=1}^{\pent{n^{1-\ee}}-\pent{n^{\ee^2}}} \int_{\bb X \times \bb R} \bb E_{x'} \left( z' + M_{\pent{n^{\ee^2}}} \,;\, z' + M_{\pent{n^{\ee^2}}} > n^{1/2-\ee/4} \,,\, \tau_{y'} > \pent{n^{\ee^2}} \right) \\
	&\qquad \times \bb P_x \left( X_k \in \dd x' \,,\, z+M_k \in \dd z' \,,\, z+M_k > n^{1/2-\ee/2} \,,\, \tau_y > k \,,\, \nu_n = k \right),
\end{align*}
where $y'=z'-r(x')$. Moreover, for any $x' \in \bb X$, $z' \in \bb R$, using \eqref{VnPPWn2}, we have
\begin{align*}
	\bb E_{x'} &\left( z' + M_{\pent{n^{\ee^2}}} \,;\, z' + M_{\pent{n^{\ee^2}}} > n^{1/2-\ee/4} \,,\, \tau_{y'} > \pent{n^{\ee^2}} \right) \\
	&\qquad \leq \bb E_{x'} \left( z' + M_{\pent{n^{\ee^2}}} \,;\, z' + M_{\pent{n^{\ee^2}}} > 0 \,,\, \tau_{y'} > \pent{n^{\ee^2}} \right) \\
	&\qquad \leq \bb E_{x'} \left( z' + M_{\pent{n^{\ee^2}}} \,;\, \tau_{y'} > \pent{n^{\ee^2}} \right) + \bb E_{x'} \left( \abs{r\left( X_{n^{\ee^2}} \right)} \right) \\
	&\qquad \leq c_{\ee} \max(z',0) + c_{\ee} \left( 1+N(x') \right).
\end{align*}
Consequently,
\begin{align}
	E_{41} &\leq c_{\ee} E_3 + c_{\ee} \bb E_x \left( 1 + N \left( X_{\nu_n} \right) \,;\, z+M_{\nu_n} > n^{1/2-\ee/2} \,,\, \tau_y > \nu_n \,,\, \nu_n \leq \pent{n^{1-\ee}} \right) \nonumber\\
	&\leq 2c_{\ee} E_3 + c_{\ee} \bb E_x \left( N \left( X_{\nu_n} \right) \,;\, N \left( X_{\nu_n} \right) > n^{1/2-\ee} \,,\, \tau_y > \nu_n \,,\, \nu_n \leq \pent{n^{1-\ee}} \right)  \nonumber\\
	&\qquad +c_{\ee} \bb E_x \left( n^{1/2-\ee} \,;\, N \left( X_{\nu_n} \right) \leq n^{1/2-\ee} \,,\, z+M_{\nu_n} > n^{1/2-\ee/2} \,,\, \right. \nonumber\\
	&\hspace{8cm} \left.  \tau_y > \nu_n \,,\, \nu_n \leq \pent{n^{1-\ee}} \right) \nonumber\\
	\label{decdeE41}
	 &\leq 3c_{\ee} E_3 + c_{\ee} \underbrace{\bb E_x \left( N \left( X_{\nu_n} \right) \,;\, N \left( X_{\nu_n} \right) > n^{1/2-\ee} \,,\, \tau_y > \nu_n \,,\, \nu_n \leq \pent{n^{1-\ee}} \right)}_{=:E_{41}'}.
\end{align}
Denoting $l=\pent{n^{1/2-\ee}}$ and using the point \ref{Momdec001} of \ref{Momdec} and \eqref{decexpNl}, we have
\begin{align*}
	E_{41}' \leq\;& \bb E_x \left( \frac{N \left( X_{\nu_n} \right)^2}{n^{1/2-\ee}} \,;\, \nu_n \leq \pent{n^{\ee}} \right) + \sum_{k=\pent{n^\ee}+1}^{\pent{n^{1-\ee}}} \bb E_x \left( N_l \left( X_k \right) \,;\, \tau_y > k \,,\, \nu_n=k \right) \\
	\leq\;& \frac{c n^{\ee} \left( 1+N(x) \right)^2}{n^{1/2-\ee}} + \sum_{k=1}^{\pent{n^{1-\ee}}}  \left[\frac{c}{l^{1+\beta}}\bb P_x \left( \tau_y > k \right) + \e^{-cn^{\ee}} \bb E_x \left( 1+N \left( X_k \right) \right) \right].
\end{align*}
Using Lemma \ref{taupptrn} and taking $\ee>0$ small enough,
\begin{equation}
	\label{MajE41'}
	E_{41}' \leq c_{\ee}\frac{\max(y,0) + \left( 1+N(x) \right)^2}{n^{\min(1,\beta)/4}}.
\end{equation}
In conjunction with Lemma \ref{MajE3}, from \eqref{decdeE41} we obtain that, for some $\ee >0$,
\begin{equation}
	\label{E41to0}
	E_{41} \leq c_{\ee}\frac{\max(y,0) +  \left( 1+ y\mathbbm 1_{\{y> n^{1/2-2\ee}\}} +N(x) \right)^2}{n^{\ee}}.
\end{equation}

\textit{Bound of $E_{42}$.} For any $z' \in (0, n^{1/2-\ee/2}]$, we have 
\[
\left(z'+M_{\pent{n^{\ee^2}}} \right) \bb P_{x'} ( z'+M_{\pent{n^{\ee^2}}} > n^{1/2-\ee/4} ) \leq z' \bb P_{x'} ( M_{\pent{n^{\ee^2}}} > c_{\ee} n^{1/2-\ee/4} ) + \abs{M_{\pent{n^{\ee^2}}}}.
\]
Therefore, by the Markov property,
\begin{align}
	E_{42} \leq\;& \int_{\bb X \times \bb R} z' \bb P_{x'} \left( M_{\pent{n^{\ee^2}}} > c_{\ee} n^{1/2-\ee/4} \right) \bb P_x \left( X_{\nu_n} \in \dd x' \,,\, z+M_{\nu_n} \in \dd z' \,,\, \right. \nonumber\\
	&\underbrace{\hspace{4cm} \left.  z+M_{\nu_n} \leq n^{1/2-\ee/2} \,,\, \tau_y > \nu_n \,,\, \nu_n \leq \pent{n^{1-\ee}} \right)}_{=:E_{43}} \nonumber\\
	\label{decdeE42}
	&+ \int_{\bb X \times \bb R} \bb E_{x'} \left( \abs{M_{\pent{n^{\ee^2}}}} \right) \bb P_x \left( X_{\nu_n} \in \dd x' \,,\, z+M_{\nu_n} \in \dd z' \,,\, \right. \\
	&\underbrace{\hspace{4cm} \left. z+M_{\nu_n} \leq n^{1/2-\ee/2} \,,\, \tau_y > \nu_n \,,\, \nu_n \leq \pent{n^{1-\ee}} \right)}_{=:E_{44}}. \nonumber
\end{align}

\textit{Bound of $E_{43}$.} Using Lemma \ref{majmart},
\[
\bb P_{x'} \left( M_{\pent{n^{\ee^2}}} > c_{\ee} n^{1/2-\ee/4} \right) \leq \frac{c_{\ee} n^{\ee^2} \left( 1+N(x') \right)}{n^{1/2-\ee/4}}.
\]
Therefore, we have
\begin{align*}
	E_{43} \leq\;& \bb E_x \left(  \frac{c_{\ee}}{n^{3\ee/4-\ee^2}}\left( z+M_{\nu_n} \right) \mathbbm 1_{\left\{ N \left( X_{\nu_n} \right) \leq n^{1/2-\ee} \right\}} + \frac{c_{\ee}}{n^{\ee/4-\ee^2}}  N \left( X_{\nu_n} \right) \mathbbm 1_{\left\{ N \left( X_{\nu_n} \right) > n^{1/2-\ee} \right\}} \,;\,   \right. \\
	 &\hspace{6cm} \left. z+M_{\nu_n} \leq n^{1/2-\ee/2} \,,\, \tau_y > \nu_n \,,\, \nu_n \leq \pent{n^{1-\ee}} \right) \\
	 \leq\;& \frac{c_{\ee}}{n^{3\ee/4-\ee^2}} E_1 + \frac{c_{\ee}}{n^{\ee/4-\ee^2}} E_{41}'.
\end{align*}
By Lemma \ref{SurE1etE2} and \eqref{MajE41'}, we obtain for some small $\ee>0$,
\begin{equation}
	\label{E43to0}
	E_{43} \leq c_{\ee}\frac{ \max(y,0) + \left( 1+N(x) \right)^2}{n^{\ee/2}}.
\end{equation}

\textit{Bound of $E_{44}$.} Again by Lemma \ref{majmart}, $\bb E_{x'} \left( \abs{M_{\pent{n^{\ee^2}}}} \right) \leq n^{\ee^2} \left( 1+N(x') \right)$. Consequently,
\begin{align*}
	E_{44} \leq\;& \frac{c_{\ee}}{n^{\ee-\ee^2}} \bb E_x \left( z+M_{\nu_n} \,;\, N \left( X_{\nu_n} \right) \leq n^{1/2-2\ee} \,,\, \tau_y > \nu_n \,,\, \nu_n \leq \pent{n^{1-\ee}} \right) \\
	 &+ c_{\ee}n^{\ee^2} \bb E_x \left( N \left( X_{\nu_n} \right) \,;\, N \left( X_{\nu_n} \right) > n^{1/2-2\ee} \,,\, \tau_y > \nu_n \,,\, \nu_n \leq \pent{n^{1-\ee}} \right).
\end{align*}
Proceeding exactly as in the proof of the bound of $E_{41}'$ but with $l=\pent{n^{1/2-2\ee}}$, we obtain, by Lemma \ref{SurE1etE2},
\[
E_{44} \leq c_{\ee}\frac{ \max(y,0) + \left( 1+N(x) \right)^2}{n^{\ee/2}}.
\]
Putting together this bound with \eqref{E43to0} and \eqref{decdeE42}, we find that 
\[
E_{42} \leq c_{\ee}\frac{ \max(y,0) + \left( 1+N(x) \right)^2}{n^{\ee/2}}.
\]
So, using \eqref{decdeE4} and \eqref{E41to0}, we obtain the second assertion. The first one is an easy consequence of the second one.
\end{proof}

The following results are similar to that provided by Lemmas \ref{SurE1etE2} and \ref{SurE4} (see $E_2$ and $E_4$ respectively).

\begin{lemma}
\label{SurF2etF4}
There exists $\ee_0 >0$ such that, for any $\ee\in (0,\ee_0)$, $x\in \bb X$ and $y\in \bb R$,
\begin{align*}
	F_2 &:= \bb E_x \left( y+S_{\nu_n^{\ee^2}} \,;\, \tau_y > \nu_n^{\ee^2} \,,\, \nu_n^{\ee^2} \leq \pent{n^{1-\ee}} \right) \underset{n\to \infty}{\longrightarrow} V(x,y), \\
	F_4 &:= \bb E_x \left( y+S_{\nu_n^{\ee^2}} \,;\, y+S_{\nu_n^{\ee^2}} > n^{1/2-\ee/8} \,,\, \tau_y > \nu_n^{\ee^2} \,,\, \nu_n^{\ee^2} \leq \pent{n^{1-\ee}} \right) \underset{n\to +\infty}{\longrightarrow} 0.
\end{align*}
More precisely, for any $n\geq 1$, $\ee \in (0,\ee_0)$, $x\in \bb X$ and $y\in \bb R$,
\[
\abs{F_2 - V(x,y)} \leq \frac{c_{\ee}}{n^{\ee/8}} \left( 1+\max(y,0)+N(x) \right)
\]
and
\[
F_4 \leq c_{\ee}\frac{\max(y,0) +  \left( 1+ y\mathbbm 1_{\{y> n^{1/2-2\ee}\}} +N(x) \right)^2}{n^{\ee/2}}.
\]
\end{lemma}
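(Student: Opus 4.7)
The plan is to reduce both claims to their martingale analogues (Lemmas \ref{SurE1etE2} and \ref{SurE4}) via the approximation \eqref{decMSX}, which can be rewritten as
\[
y+S_{\nu_n^{\ee^2}} = z+M_{\nu_n^{\ee^2}} - r(X_{\nu_n^{\ee^2}}), \qquad z := y+r(x).
\]
Setting
\[
R := \bb E_x \left( r(X_{\nu_n^{\ee^2}}) \,;\, \tau_y > \nu_n^{\ee^2} \,,\, \nu_n^{\ee^2} \leq \pent{n^{1-\ee}} \right),
\]
one has $F_2 = E_2 - R$, and a similar decomposition will reduce $F_4$ to $E_4$ modulo error terms involving $R$ and the tails of $N(X_{\nu_n^{\ee^2}})$. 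The main technical step is therefore to bound $|R|$ (together with the tail contribution) at the same polynomial rate that appears in Lemmas \ref{SurE1etE2} and \ref{SurE4}.

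For the bound on $|R|$, I would use the decomposition $\nu_n^{\ee^2} = \nu_n + \pent{n^{\ee^2}}$ and apply the Markov property at time $\nu_n$, exactly as in the proofs of Lemmas \ref{MarkovpropforTz} and \ref{SurE1etE2}; this reduces the problem to uniformly estimating $\bb E_{x'}(|r(X_{\pent{n^{\ee^2}}})|\,;\,\tau_{y'}>\pent{n^{\ee^2}})$ in $(x',y')$. Combining Lemma \ref{MTR} (to pass from $r$ to $N$), Hypothesis \ref{Momdec}.\ref{Momdec001} (to control moments of $N(X_k)$), and Lemma \ref{taupptrn} (for $\bb P_{x'}(\tau_{y'}>\pent{n^{\ee^2}})$) through H\"older's inequality with an exponent $p$ close to $\alpha$, then integrating back using the bound on $E_1$ from Lemma \ref{SurE1etE2}, the decay \eqref{decexpN}, and the probabilistic estimate \eqref{nupetit}, should give $|R| \leq c_\ee n^{-\ee/8}(1+\max(y,0)+N(x))$. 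Combined with Lemma \ref{SurE1etE2} this proves the claim on $F_2$.

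For $F_4$ I would further split the expectation according to the magnitude of $|r(X_{\nu_n^{\ee^2}})|$. On the event $\{|r(X_{\nu_n^{\ee^2}})| \leq n^{1/2-\ee/4}\}$, the identity above forces $z+M_{\nu_n^{\ee^2}} \geq n^{1/2-\ee/8} - n^{1/2-\ee/4} \geq n^{1/2-\ee/4}$ for $n$ large, so $y+S_{\nu_n^{\ee^2}} \leq z+M_{\nu_n^{\ee^2}} + |r(X_{\nu_n^{\ee^2}})|$ and this contribution is controlled by $E_4 + |R|$ via Lemma \ref{SurE4}. On the complementary event, Lemma \ref{MTR} yields $N(X_{\nu_n^{\ee^2}}) > c n^{1/2-\ee/4}$, and Markov's inequality together with \eqref{decexpNl} (applied with $l \sim n^{1/2-\ee/4}$) produces a negligible contribution, in the spirit of the bound on $E_{41}'$ in the proof of Lemma \ref{SurE4}.

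The delicate point is to make the polynomial rates match: the buffer $\pent{n^{\ee^2}}$ is rather short (because $\ee^2$ is very small), so Lemma \ref{taupptrn} alone gives only weak decay of the survival probability, and one must combine it with the high-moment information on $N$ encoded in Hypothesis \ref{Momdec} through a carefully chosen H\"older exponent. This bookkeeping, together with the need to treat separately the regime $\{y > n^{1/2-2\ee}\}$ in which the prefactor $1 + y\mathbbm 1_{\{y>n^{1/2-2\ee}\}}$ becomes relevant (as in Lemmas \ref{MajE3} and \ref{SurE4}), will be the main technical obstacle.
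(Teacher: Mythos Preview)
Your overall strategy---reduce to $E_2$ and $E_4$ via the martingale approximation \eqref{decMSX} and control the residual $r(X_{\nu_n^{\ee^2}})$---is exactly the paper's, and your treatment of $F_4$ (split according to whether the correction is small compared to a power of $n$) is essentially equivalent to the paper's split according to the size of $z+M_{\nu_n^{\ee^2}}$. So the plan is sound.

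However, you have over-engineered the bound on the residual term and, as a consequence, misidentified the ``delicate point''. The paper bounds
\[
F_2' := \bb E_x\!\left(\abs{r(X_{\nu_n^{\ee^2}})}\,;\,\tau_y>\nu_n^{\ee^2},\,\nu_n^{\ee^2}\le\pent{n^{1-\ee}}\right)
\]
with no use of H\"older's inequality or of Lemma \ref{taupptrn} at all. After the Markov property at time $\nu_n$, one simply \emph{drops} the survival constraint $\{\tau_{y'}>\pent{n^{\ee^2}}\}$ and applies \eqref{decexpN} together with Lemma \ref{MTR} to get
\[
\bb E_{x'}\!\left(\abs{r(X_{\pent{n^{\ee^2}}})}\right)\le c\bigl(1+\e^{-cn^{\ee^2}}N(x')\bigr).
\]
The constant term ``$1$'' is then absorbed via the elementary inequality $1\le (z+M_{\nu_n})/n^{1/2-\ee}$ on $\{\nu_n\le\pent{n^{1-\ee}}\}$, which immediately yields $F_2'\le c_\ee n^{-(1/2-\ee)}E_1+\e^{-c_\ee n^{\ee^2}}(1+N(x))$, a rate far stronger than the $n^{-\ee/8}$ needed. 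In other words, the buffer length $\pent{n^{\ee^2}}$ plays no role in producing polynomial decay; all the decay comes from the definition of $\nu_n$. The concern you flag about combining Lemma \ref{taupptrn} with high-moment information through a carefully tuned H\"older exponent is therefore unnecessary.

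For $F_4$, the paper splits according to whether $z+M_{\nu_n^{\ee^2}}$ exceeds $n^{1/2-\ee/4}$ rather than according to the size of $\abs{r(X_{\nu_n^{\ee^2}})}$; this is slightly cleaner because on the complementary event $\{z+M_{\nu_n^{\ee^2}}\le n^{1/2-\ee/4}\}\cap\{y+S_{\nu_n^{\ee^2}}>n^{1/2-\ee/8}\}$ one has directly $y+S_{\nu_n^{\ee^2}}\le (1+c_\ee n^{-\ee/8})\abs{r(X_{\nu_n^{\ee^2}})}$, reducing everything to $E_4+c_\ee F_2'$ without any further tail analysis of $N$. Your ``large $|r|$'' branch still needs to control the $z+M$ contribution (which can be large simultaneously with $|r|$); this would require an additional sub-split that you do not mention but that is easy to carry out.
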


\begin{proof}
By \eqref{decMSX},
\[
\abs{F_2 - E_2} \leq \underbrace{\bb E_x \left( \abs{r\left( X_{\nu_n^{\ee^2}} \right)} \,;\, \tau_y > \nu_n^{\ee^2} \,,\, \nu_n^{\ee^2} \leq \pent{n^{1-\ee}} \right)}_{=:F_2'}.
\]
Using the Markov property, the definition of $\nu_n$ and Lemma \ref{SurE1etE2},
\begin{align}
	F_2' &\leq c \bb E_x \left( 1 + \e^{-cn^{\ee^2}} N\left( X_{\nu_n} \right) \,;\, \tau_y > \nu_n \,,\, \nu_n \leq \pent{n^{1-\ee}} \right) \nonumber\\
	&\leq \frac{c}{n^{1/2-\ee}} E_1 + \e^{-cn^{\ee^2}} \left( 1+N(x) \right) \nonumber\\
	\label{MajF2'}
	&\leq \frac{c_{\ee}}{n^{1/2-\ee}} \left( 1+\max(y,0)+N(x) \right).
\end{align}
Therefore, by Lemma \ref{SurE1etE2}, 
\[
\abs{F_2 - V(x,y)} \leq \abs{E_2 - V(x,y)} + F_2' \leq \frac{c_{\ee}}{n^{\ee/8}} \left( 1+\max(y,0)+N(x) \right).
\]

To bound $F_4$, set $z=y+r(x)$. By \eqref{decMSX}, on the event 
$\left\{ z+M_{\nu_n^{\ee^2}} \leq n^{1/2-\ee/4} \right\}  \cap \left\{ y+S_{\nu_n^{\ee^2}} > n^{1/2-\ee/8} \right\}$ 
we have $\abs{r\left( X_{\nu_n^{\ee^2}} \right)} > c_{\ee} n^{1/2-\ee/8}.$ 
Therefore, $$y+S_{\nu_n^{\ee^2}} \leq n^{1/2-\ee/4} - r\left( X_{\nu_n^{\ee^2}} \right) \leq \left( \frac{c_{\ee}}{n^{\ee/8}}+1 \right) \abs{r\left( X_{\nu_n^{\ee^2}} \right)},$$ which implies that
\[
F_4 \leq \bb E_x \left( y+S_{\nu_n^{\ee^2}} \,;\, z+M_{\nu_n^{\ee^2}} > n^{1/2-\ee/4} \,,\, \tau_y > \nu_n^{\ee^2} \,,\, \nu_n^{\ee^2} \leq \pent{n^{1-\ee}} \right) + c_{\ee} F_2'.
\]
By \eqref{decMSX}, Lemma \ref{SurE4} and \eqref{MajF2'}, we conclude that
\[
F_4 \leq E_4 + F_2' + c_{\ee} F_2' \leq c_{\ee}\frac{\max(y,0) +  \left( 1+ y\mathbbm 1_{\{y> n^{1/2-2\ee}\}} +N(x) \right)^2}{n^{\ee/2}}.
\]
\end{proof}

\subsection{Proof of Theorem \ref{thontau}}

Assume that $(x,y) \in \bb X \times \bb R$. Let $\left( B_t \right)_{t \geq 0}$ be the Brownian motion defined by Proposition \ref{majdeA_k}. Consider the event 
\begin{equation}
	\label{defdeAk}
	A_k = \{ \sup_{0\leq t \leq 1} \abs{S_{\pent{tk}} - \sigma B_{tk}} \leq k^{1/2-2\ee} \}
\end{equation}
and denote by $\overline{A}_k$ its complement. Using these notations, we write
\begin{align}
	\bb P_x \left( \tau_y >n \right) =\;& \bb P_x \left( \tau_y >n \,,\, \nu_n^{\ee^2} > \pent{n^{1-\ee}} \right) \nonumber\\
	+&\sum_{k=\pent{n^{\ee^2}}+1}^{\pent{n^{1-\ee}}} \int_{\bb X \times \bb R} \bb P_{x'} \left( \tau_{y'} > n-k \,,\, \overline{A}_{n-k} \right) \bb P_x \left( X_k \in \dd x' \,,\, y+S_k \in \dd y' \,,\, \right. \nonumber\\
	&\underbrace{\hspace{8.5cm} \left.  \tau_y > k \,,\, \nu_n^{\ee^2} = k \right)}_{=:J_1} \nonumber\\
	\label{taudecJ0}
	+&\sum_{k=\pent{n^{\ee^2}}+1}^{\pent{n^{1-\ee}}} \int_{\bb X \times \bb R} \bb P_{x'} \left( \tau_{y'} > n-k \,,\, A_{n-k} \right) \bb P_x \left( X_k \in \dd x' \,,\, y+S_k \in \dd y' \,,\, \right. \\
	&\underbrace{\hspace{8.5cm} \left. \tau_y > k \,,\, \nu_n^{\ee^2} = k \right)}_{=:J_2} \nonumber.
\end{align}

\textit{Bound of $J_1$.} Since $n-k \geq c_{\ee} n$, for any $k\leq \pent{n^{1-\ee}}$, by Proposition \ref{majdeA_k}, we have $\bb P_{x'} \left( \tau_{y'} > n-k \,,\, \overline{A}_{n-k} \right) \leq \frac{c_{\ee} \left( 1+N(x') \right)}{n^{2\ee}}$. So, using the fact that $n^{1/2-\ee} \leq z+M_{\nu_n}$ and Lemma \ref{SurE1etE2},
\begin{align}
	J_1 &\leq \frac{c_{\ee}}{n^{2\ee}} \bb E_x \left( 1+\e^{-c n^{\ee^2}} N\left( X_{\nu_n} \right) \,;\, \tau_y > \nu_n \,,\, \nu_n \leq \pent{n^{1-\ee}} \right) \nonumber\\
	&\leq \frac{c_{\ee}}{n^{1/2+\ee}} E_1 +  \e^{-c_{\ee} n^{\ee^2}} \left( 1+N(x) \right) \nonumber\\
	&\leq \frac{c_{\ee} \left( 1+\max(y,0)+N(x) \right)}{n^{1/2+\ee}}.
	\label{taumajJ1}
\end{align}

\textit{Bound of $J_2$.} We split $J_2$ into two terms:
\begin{align}
	J_2 =\;& \sum_{k=\pent{n^{\ee^2}}+1}^{\pent{n^{1-\ee}}} \int_{\bb X \times \bb R} \bb P_{x'} \left( \tau_{y'} > n-k \,,\, A_{n-k} \right) \nonumber\\
	&\underbrace{\quad \times \bb P_x \left( X_k \in \dd x' \,,\, y+S_k \in \dd y' \,,\, y+S_k > n^{1/2-\ee/8} \,,\, \tau_y > k \,,\, \nu_n^{\ee^2} = k \right)}_{=:J_3} \nonumber\\
	\label{taudecJ2}
	&+\sum_{k=\pent{n^{\ee^2}}+1}^{\pent{n^{1-\ee}}} \int_{\bb X \times \bb R} \bb P_{x'} \left( \tau_{y'} > n-k \,,\, A_{n-k} \right) \\
	&\underbrace{\quad \times \bb P_x \left( X_k \in \dd x' \,,\, y+S_k \in \dd y' \,,\, y+S_k \leq n^{1/2-\ee/8} \,,\, \tau_y > k \,,\, \nu_n^{\ee^2} = k \right)}_{=:J_4}. \nonumber
\end{align}

\textit{Bound of $J_3$.} With $y'_+ = y'+(n-k)^{1/2-2\ee}$, we have
\begin{equation}
	\label{tautotaubm}
	\bb P_{x'} \left( \tau_{y'} > n-k \,,\, A_{n-k} \right) \leq \bb P_{x'} \left( \tau_{y_+'}^{bm} > n-k \right),
\end{equation}
where $\tau_y^{bm}$ is defined in \eqref{defdetaubm}. By the point \ref{exittimeforB001} of Proposition \ref{exittimeforB} and Lemma \ref{SurF2etF4},
\begin{align}
	J_3 &\leq \frac{c_{\ee}}{\sqrt{n}} \bb E_x \left( y+S_{\nu_n^{\ee^2}} +n^{1/2-2\ee}  \,;\, y+S_{\nu_n^{\ee^2}} > n^{1/2-\ee/8} \,,\, \tau_y > \nu_n^{\ee^2} \,,\, \nu_n^{\ee^2} \leq \pent{n^{1-\ee}} \right) \nonumber\\
	&\leq \frac{2c_{\ee}}{\sqrt{n}} F_4 \nonumber\\
	\label{taumajJ3}
	&\leq c_{\ee}\frac{\max(y,0) +  \left( 1+ y\mathbbm 1_{\{y> n^{1/2-2\ee}\}} +N(x) \right)^2}{n^{1/2+\ee/2}}.
\end{align}

\textit{Upper bound of $J_4$.} For $y' \leq n^{1/2-\ee/8}$ and any $k\leq \pent{n^{1-\ee}}$, it holds $y_+' \leq 2n^{1/2-\ee/8} \leq c_{\ee} (n-k)^{1/2-\ee/8}$. Therefore, by \eqref{tautotaubm} and the point \ref{exittimeforB002} of Proposition \ref{exittimeforB} with $\theta_m = c_{\ee} m^{-\ee/8}$ and $m=n-k$, we have
\begin{align*}
	J_4 &\leq \sum_{k=\pent{n^{\ee^2}}+1}^{\pent{n^{1-\ee}}} \int_{\bb X \times \bb R} \frac{2\left( 1+ \theta_{n-k}^2 \right)}{\sqrt{2\pi (n-k)}\sigma} \bb E_x \left( y + S_k +(n-k)^{1/2-2\ee} \,;\, \right. \\
	&\hspace{7cm} \left. y+S_k \leq n^{1/2-\ee/8} \,,\, \tau_y > k \,,\, \nu_n^{\ee^2} = k \right).
\end{align*}
Since $\frac{2\left( 1+ \theta_{n-k}^2 \right)}{\sqrt{2\pi (n-k)}\sigma} \leq \frac{2}{\sqrt{2\pi n}\sigma} \left( 1+\frac{c_{\ee}}{n^{\ee/4}} \right)$ and $n^{1/2-\ee} \leq z+M_{\nu_n}$, we get
\begin{align*}
	J_4 &\leq \frac{2}{\sqrt{2\pi n}\sigma} \left( 1+\frac{c_{\ee}}{n^{\ee/4}} \right) \bb E_x \left( y+S_{\nu_n^{\ee^2}} +n^{1/2-2\ee}  \,;\, y+S_{\nu_n^{\ee^2}} \leq n^{1/2-\ee/8} \,,\, \right.\\
	&\hspace{9cm} \left. \tau_y > \nu_n^{\ee^2} \,,\, \nu_n^{\ee^2} \leq \pent{n^{1-\ee}} \right)\\
	&\leq \frac{2}{\sqrt{2\pi n}\sigma} \left( 1+\frac{c_{\ee}}{n^{\ee/4}} \right) F_2 + \frac{c_{\ee}}{n^{1/2+\ee}} E_1.
\end{align*}
By Lemmas \ref{SurE1etE2}, \ref{SurF2etF4} and \eqref{encadrV},
\begin{equation}
	\label{taumajJ4001}
	J_4 \leq \frac{2V(x,y)}{\sqrt{2\pi n}\sigma} + \frac{c_{\ee} \left( 1+\max(y,0)+N(x) \right) }{n^{1/2+\ee/8}}.
\end{equation}

\textit{Lower bound of $J_4$.} With $y_-'=y'-(n-k)^{1/2-2\ee}$, we have $\bb P_{x'} \left( \tau_{y'} > n-k \,,\, A_{n-k} \right) \geq \bb P_{x'} \left( \tau_{y_-'}^{bm} > n-k \right) - \bb P_{x'} \left( \overline{A}_{n-k} \right)$.
Considering the event $\{ y+S_k > (n-k)^{1/2-2\ee} \}$ and repeating the arguments used to bound $J_1$ (see \eqref{taumajJ1}), we obtain
\begin{align*}
	J_4 &\geq \sum_{k=\pent{n^{\ee^2}}+1}^{\pent{n^{1-\ee}}} \int_{\bb X \times \bb R} \bb P_{x'} \left( \tau_{y_-'}^{bm} > n-k \right) \bb P_x \left( X_k \in \dd x' \,,\, y+S_k \in \dd y' \,,\, \right. \\
	&\hspace{3cm} \left. y+S_k \leq n^{1/2-\ee/8} \,,\, y+S_k > (n-k)^{1/2-2\ee} \,,\, \tau_y > k \,,\, \nu_n^{\ee^2} = k \right) \\
	&\quad-\frac{c_{\ee}\left(1+\max(y,0)+N(x)\right)}{n^{1/2+\ee}}.
\end{align*}
Using the point \ref{exittimeforB002} of Proposition \ref{exittimeforB} and Proposition \ref{majdeA_k},
\begin{align*}
	J_4 \geq\;& \frac{2}{\sqrt{2\pi n}\sigma} \left( 1-\frac{c_{\ee}}{n^{\ee/4}} \right) \bb E_x \left( y+S_{\nu_n^{\ee^2}} - (n-\nu_n^{\ee^2})^{1/2-2\ee}  \,;\, \right.\\
	&\qquad \left. y+S_{\nu_n^{\ee^2}} > (n-\nu_n^{\ee^2})^{1/2-2\ee} \,,\, y+S_{\nu_n^{\ee^2}} \leq n^{1/2-\ee/8} \,,\, \tau_y > \nu_n^{\ee^2} \,,\, \nu_n^{\ee^2} \leq \pent{n^{1-\ee}} \right)\\
	&-\frac{c_{\ee}\left(1+\max(y,0)+N(x)\right)}{n^{1/2+\ee}} \\
	\geq\;& \frac{2}{\sqrt{2\pi n}\sigma} \left( 1-\frac{c_{\ee}}{n^{\ee/4}} \right) F_2 - \frac{c_{\ee}}{\sqrt{n}} F_4 - \frac{c_{\ee}}{n^{1/2+\ee}} E_1 -\frac{c_{\ee}\left(1+\max(y,0)+N(x)\right)}{n^{1/2+\ee}}.
\end{align*}
By Lemmas \ref{SurE1etE2}, \ref{SurF2etF4} and \eqref{encadrV},
\begin{equation}
	\label{taumajJ4002}
	J_4 \geq \frac{2V(x,y)}{\sqrt{2\pi n}\sigma} -c_{\ee}\frac{\max(y,0) +  \left( 1+ y\mathbbm 1_{\{y> n^{1/2-2\ee}\}} +N(x) \right)^2}{n^{1/2+\ee/8}}.
\end{equation}

Putting together \eqref{taumajJ4002}, \eqref{taumajJ4001}, \eqref{taumajJ3} and \eqref{taudecJ2},
\[
\abs{J_2 - \frac{2V(x,y)}{\sqrt{2\pi n}\sigma}} \leq  c_{\ee}\frac{\max(y,0) +  \left( 1+ y\mathbbm 1_{\{y> n^{1/2-2\ee}\}} +N(x) \right)^2}{n^{1/2+\ee/8}}.
\]
Taking into account \eqref{taumajJ1}, \eqref{taudecJ0} and Lemma \ref{concentnu}, we conclude that, for any  $(x,y) \in \bb X \times \bb R$,
\begin{equation}
\abs{\bb P_x \left( \tau_y > n \right) - \frac{2V(x,y)}{\sqrt{2\pi n}\sigma}} \leq  
c_{\ee}\frac{\max(y,0) +  \left( 1+ y\mathbbm 1_{\{y> n^{1/2-2\ee}\}} +N(x) \right)^2}{n^{1/2+\ee/8}}.
\label{taumajJ4003}
\end{equation}

Taking the limit as $n\to +\infty$ in \eqref{taumajJ4003}, we obtain the point \ref{thontau001} of Theorem \ref{thontau}. 
The point \ref{thontau002} of Theorem \ref{thontau} is an immediate consequence of the points \ref{posdeVsurDgamma002} and \ref{posdeVsurDgamma004} of Proposition \ref{posdeVsurDgamma}.

\subsection{Proof of Theorem \ref{thontau2}}

The point \ref{thontau001bis} of Theorem \ref{thontau2} is exactly \eqref{taumajJ4003}. In order to prove the point \ref{thontau003} of Theorem \ref{thontau2}, we will first establish a bound for $\bb P_x \left( \tau_y > n \right)$ when $z=y+r(x)\geq n^{1/2-\ee}$. Set $m_{\ee} = n -\pent{n^{\ee}}$. By the Markov property,
\begin{align}
	\bb P_x \left( \tau_y > n \right) = \int_{\bb X \times \bb R} &\bb P_{x'} \left( \tau_{y'} > m_{\ee} \right) \nonumber\\
	\label{majtauygrand}
	&\qquad \times \bb P_x \left( X_{\pent{n^{\ee}}} \in \dd x' \,,\, y+S_{\pent{n^{\ee}}} \in \dd y' \,,\, \tau_y > \pent{n^{\ee}} \right).
\end{align}
For any $x' \in \bb X$ and $y' >0$, using $A_{m_{\ee}}$ defined by \eqref{defdeAk}, we have
\[
\bb P_{x'} \left( \tau_{y'} > m_{\ee} \right) \leq \bb P_{x'} \left( \tau_{y_+'}^{bm} > m_{\ee} \right) + \bb P_{x'} \left( \overline{A}_{m_{\ee}} \right),
\]
where $\tau_{y_+'}^{bm}$ is defined by \eqref{defdetaubm} and $y_+' = y'+ m_{\ee}^{1/2-2\ee}$. By the point \ref{exittimeforB001} of Proposition \ref{exittimeforB} and Proposition \ref{majdeA_k},
\[
\bb P_{x'} \left( \tau_{y'} > m_{\ee} \right) \leq \frac{c y_+'}{\sqrt{m_{\ee}}} + \frac{c_{\ee}}{m_{\ee}^{2\ee}} \left( 1+N(x') \right) \leq  \frac{c_{\ee} y'}{\sqrt{n}} + \frac{c_{\ee}}{n^{2\ee}} + \frac{c_{\ee}}{n^{2\ee}} N(x').
\]
Introducing this bound in \eqref{majtauygrand}, we get
\[
\bb P_x \left( \tau_y > n \right) \leq \frac{c_{\ee}}{\sqrt{n}} \bb E_x \left( y+S_{\pent{n^{\ee}}} \,,\, \tau_y > \pent{n^{\ee}} \right) + \frac{c_{\ee}}{n^{2\ee}} + \frac{c_{\ee}}{n^{2\ee}} \bb E_x \left( N \left( X_{\pent{n^{\ee}}} \right) \right).
\]
Using Corollary \ref{unifmajdelafoncinv}, the inequality \eqref{decexpN} and the fact that $n^{1/2-\ee} \leq z$, we find
\begin{equation}
	\label{point2th22}
	\bb P_x \left( \tau_y > n \right) \leq \frac{c_{\ee} \left( z+N(x) \right)}{\sqrt{n}}.
\end{equation}

Now, for any $x\in \bb X$, $z \in \bb R$ and $y=z-r(x)$, using the Markov property, \eqref{point2th22} and the fact that $\sqrt{n-\nu_n} \geq c_{\ee} \sqrt{n}$ on the event $\{ \nu_n \leq \pent{n^{1-\ee}} \}$, we have
\begin{align*}
	\bb P_x \left( \tau_y > n \right) \leq\;& \frac{c_{\ee}}{\sqrt{n}} \bb E_x \left( z+M_{\nu_n} +N \left( X_{\nu_n} \right) \,;\, \tau_y >\nu_n \,,\, \nu_n \leq \pent{n^{1-\ee}} \right) \\
	&+ \bb P_x \left( \tau_y > n \,,\, \nu_n > \pent{n^{1-\ee}} \right).
\end{align*}
Using Lemma \ref{concentnu} and the fact that $N \left( X_{\nu_n} \right) \leq z+M_{\nu_n}$ on the event $\{ N \left( X_{\nu_n} \right) \leq n^{1/2-\ee} \}$, with $l=\pent{n^{1/2-\ee}}$, it holds
\begin{align*}
	\bb P_x \left( \tau_y > n \right) \leq\;& \frac{c_{\ee}}{\sqrt{n}} \bb E_x \left( \left( z+M_{\nu_n} \right) \left( 1 + \mathbbm 1_{\left\{ N \left( X_{\nu_n} \right) \leq n^{1/2-\ee} \right\}} \right) \,;\, \tau_y >\nu_n \,,\, \nu_n \leq \pent{n^{1-\ee}} \right) \\
	&+ \frac{c_{\ee}}{\sqrt{n}} \bb E_x \left( N_l \left( X_{\nu_n} \right) \,;\, \tau_y >\nu_n \,,\, \nu_n \leq \pent{n^{1-\ee}} \right) + \e^{-c_{\ee} n^{\ee}} \left( 1+N(x) \right) \\
	\leq\;& \frac{2c_{\ee}}{\sqrt{n}} E_1 + \frac{c_{\ee}}{\sqrt{n}} \sum_{k=1}^{\pent{n^{\ee}}} \bb E_x \left( N_l \left( X_k \right) \right) \\
	&+ \frac{c_{\ee}}{\sqrt{n}} \sum_{k=\pent{n^{\ee}}+1}^{\pent{n^{1-\ee}}} \bb E_x \left( N_l \left( X_k \right) \,;\, \tau_y > k \right) + \e^{-c_{\ee} n^{\ee}} \left( 1+N(x) \right).
\end{align*}
By \eqref{decexpNl} and the Markov property,
\begin{align*}
	\bb P_x \left( \tau_y > n \right) \leq\;& \frac{c_{\ee}}{\sqrt{n}} E_1 + \frac{c_{\ee}}{\sqrt{n}} \left(  \frac{c n^{\ee}}{l^{1+\beta}} + \left( 1+N(x) \right) \right) + \e^{-c_{\ee} n^{\ee}} \left( 1+N(x) \right) \\
	&+ \frac{c_{\ee}}{\sqrt{n}} \sum_{j=1}^{\pent{n^{1-\ee}}-\pent{n^{\ee}}} \left[ \frac{c}{l^{1+\beta}} \bb P_x \left( \tau_y > j \right) + \e^{-cn^{\ee}} \bb E_x \left( \left( 1+N\left( X_j \right) \right) \right) \right] \\
	\leq\;& \frac{c_{\ee}}{\sqrt{n}} E_1 + \frac{c_{\ee} \left( 1+N(x) \right)}{\sqrt{n}} + \frac{c_{\ee}}{\sqrt{n}} \frac{c}{l^{1+\beta}} \sum_{j=1}^{\pent{n^{1-\ee}}} \bb P_x \left( \tau_y > j \right).
\end{align*}
Using Lemmas \ref{SurE1etE2} and \ref{taupptrn}, we deduce the point \ref{thontau003} of Theorem \ref{thontau2}.

\section{Asymptotic for the conditioned Markov walk}
\label{AsCondMarkWalk}

In this section, we prove Theorem \ref{loideRayleigh}. The arguments are similar to those given in Section \ref{AsExTi}. We also keep the same notations. 
Assume that $(x,y) \in \bb X \times \bb R$ and let $t_0> 0$ be a positive real. For any $t\in \left[ 0 , t_0 \right]$, we write
\begin{align}
	\bb P_x &\left( y+S_n \leq t\sqrt{n} \,,\, \tau_y > n \right) \nonumber\\
	=\;& \bb P_x \left( y+S_n \leq t\sqrt{n} \,,\, \tau_y > n \,,\, \nu_n^{\ee^2} > \pent{n^{1-\ee}} \right) \nonumber\\
	&+ \sum_{k=\pent{n^{\ee^2}}+1}^{\pent{n^{1-\ee}}} \int_{\bb X \times \bb R} \bb P_{x'} \left( y'+S_{n-k} \leq t\sqrt{n} \,,\, \tau_{y'} > n-k \,,\, \overline{A}_{n-k} \right) \nonumber\\
	&\underbrace{\hspace{3cm} \times \bb P_x \left( X_k \in \dd x' \,,\, y+S_k \in \dd y' \,,\, \tau_y > k \,,\, \nu_n^{\ee^2} = k \right)}_{=:L_1} \nonumber\\
	\label{decL0}
	&+ \sum_{k=\pent{n^{\ee^2}}+1}^{\pent{n^{1-\ee}}} \int_{\bb X \times \bb R} \bb P_{x'} \left( y'+S_{n-k} \leq t\sqrt{n} \,,\, \tau_{y'} > n-k \,,\, A_{n-k} \right).\\
&\underbrace{\hspace{3cm} \times \bb P_x \left( X_k \in \dd x' \,,\, y+S_k \in \dd y' \,,\, \tau_y > k \,,\, \nu_n^{\ee^2} = k \right)}_{=:L_2}. \nonumber
\end{align}

\textit{Bound of $L_1$.} With $J_1$ defined in \eqref{taudecJ0} and with the bound \eqref{taumajJ1}, we have,
\begin{equation}
	\label{MajL1}
	L_1 \leq J_1 \leq \frac{c_{\ee} \left( 1+\max(y,0)+N(x) \right)}{n^{1/2+\ee}}.
\end{equation}

\textit{Bound of $L_2$.} According to whether $y+S_k \leq n^{1/2-\ee/8}$ or not, we write
\begin{align}
	L_2 =\;& \sum_{k=\pent{n^{\ee^2}} +1}^{\pent{n^{1-\ee}}} \int_{\bb X \times \bb R} \bb P_{x'} \left( y'+S_{n-k} \leq t \sqrt{n} \,,\, \tau_{y'} > n-k \,,\, A_{n-k} \right) \nonumber\\
	&\underbrace{\quad \times \bb P_x \left( X_k \in \dd x' \,,\, y+S_k \in \dd y' \,,\, y+S_k > n^{1/2-\ee/8} \,,\, \tau_y > k \,,\, \nu_n^{\ee^2} = k \right)}_{=:L_3} \nonumber\\
	\label{decL2}
	&+\sum_{k=\pent{n^{\ee^2}}+1}^{\pent{n^{1-\ee}}} \int_{\bb X \times \bb R} \bb P_{x'} \left( y'+S_{n-k} \leq t \sqrt{n} \,,\, \tau_{y'} > n-k \,,\, A_{n-k} \right) \\
	&\underbrace{\quad \times \bb P_x \left( X_k \in \dd x' \,,\, y+S_k \in \dd y' \,,\, y+S_k \leq n^{1/2-\ee/8} \,,\, \tau_y > k \,,\, \nu_n^{\ee^2} = k \right)}_{=:L_4}. \nonumber
\end{align}

\textit{Bound of $L_3$.} With $J_3$ defined in \eqref{taudecJ2} and with the bound \eqref{taumajJ3}, we have
\begin{equation}
	\label{MajL3}
	L_3 \leq J_3 \leq  c_{\ee}\frac{\max(y,0) +  \left( 1+ y\mathbbm 1_{\{y> n^{1/2-2\ee}\}} +N(x) \right)^2}{n^{1/2+\ee/2}}.
\end{equation}

\textit{Bound of $L_4$.} We start with the upper bound. Set $y_+' = y' + (n-k)^{1/2-2\ee}$ and $t_+ = t+\frac{2}{n^{2\ee}}$. Note that on the event $\{ y'+S_{n-k} \leq t\sqrt{n} \,,\, \tau_{y'} > n-k \,,\, A_{n-k} \}$ we have $y_+' + \sigma B_{n-k} \leq t_+\sqrt{n}$ and $\tau_{y_+'}^{bm} > n-k$.
Therefore, by Proposition \ref{intBro},
\begin{align*}
	&\bb P_{x'} \left( y'+S_{n-k} \leq t\sqrt{n} \,,\, \tau_{y'} > n-k \,,\, A_{n-k} \right) \\
	&\hspace{4cm} \leq \frac{2}{\sqrt{2\pi}} \int_0^{\frac{t_+ \sqrt{n}}{\sigma \sqrt{n-k}}} \e^{-s^2/2} \sh \left( s \frac{y'_+}{\sqrt{n-k}\sigma} \right) \dd s.
\end{align*}
We shall use the following bounds:
\begin{align*}
	&\sh(u) \leq u \left( 1+\frac{u^2}{6} \ch(u) \right), \qquad &&\text{for } u\geq 0, \\
	&\frac{y_+'}{\sigma\sqrt{n-k}} \leq \frac{y_+'}{\sigma\sqrt{n}} \left( 1+ \frac{c_{\ee}}{n^{\ee}} \right) \leq \frac{c_{\ee}}{n^{\ee/8}}, \qquad &&\text{for } y' \leq n^{1/2-\ee/8} \text{ and } k\leq \pent{n^{1-\ee}},\\
	&\frac{t_+ \sqrt{n}}{\sigma \sqrt{n-k}} \leq \frac{t}{\sigma} + \frac{c_{\ee,t_0}}{n^{\ee}} \leq c_{\ee,t_0}, \qquad &&\text{for } k\leq \pent{n^{1-\ee}}.
\end{align*}
Consequently,
\begin{align*}
	&\bb P_{x'} \left( y'+S_{n-k} \leq t\sqrt{n} \,,\, \tau_{y'} > n-k \,,\, A_{n-k} \right) \\
	&\hspace{2cm} \leq \frac{2y_+'}{\sqrt{2\pi n}\sigma} \left( 1+ \frac{c_{\ee}}{n^{\ee}} \right) \int_0^{\frac{t_+ \sqrt{n}}{\sigma \sqrt{n-k}}} s \e^{-s^2/2} \left( 1 + \frac{c_{\ee} s^2}{n^{\ee/4}}\ch \left( c_{\ee} s \right) \right) \dd s \\
	&\hspace{2cm} \leq \frac{2y_+'}{\sqrt{2\pi n}\sigma} \left( 1+ \frac{c_{\ee}}{n^{\ee}} \right) \left( 1 + \frac{c_{\ee,t_0}}{n^{\ee/4}} \right) \left( \int_0^{\frac{t}{\sigma}} s \e^{-s^2/2} \dd s + \int_{\frac{t}{\sigma}}^{\frac{t_+ \sqrt{n}}{\sigma \sqrt{n-k}}} s \e^{-s^2/2} \dd s \right) \\
	&\hspace{2cm} \leq \frac{2y_+'}{\sqrt{2\pi n}\sigma} \left( 1+\frac{c_{\ee,t_0}}{n^{\ee/4}} \right) \left( 1-\e^{-\frac{t^2}{2\sigma^2}} + \frac{c_{\ee,t_0}}{n^{\ee}} \right).
\end{align*}
This implies the upper bound (with $F_2$ and $E_1$ from Lemmas \ref{SurF2etF4}  and \ref{SurE1etE2}, respectively)
\begin{align*}
	L_4 &\leq \frac{2}{\sqrt{2\pi n}\sigma} \left( 1+\frac{c_{\ee,t_0}}{n^{\ee/4}} \right) \left( 1-\e^{-\frac{t^2}{2\sigma^2}} + \frac{c_{\ee,t_0}}{n^{\ee}} \right) F_2 + \frac{c_{\ee,t_0}}{n^{1/2+\ee}} E_1 \\
	&\leq \frac{2V(x,y)}{\sqrt{2\pi n}\sigma} \left( 1-\e^{-\frac{t^2}{2\sigma^2}} \right) + \frac{c_{\ee,t_0} \left( 1+\max(y,0)+N(x) \right)}{n^{1/2+\ee/8}}.
\end{align*}
The proof of the following lower bound of $L_4$, being similar, is left to the reader: 
\[
L_4 \geq \frac{2V(x,y)}{\sqrt{2\pi n}\sigma} \left( 1-\e^{-\frac{t^2}{2\sigma^2}} \right) - c_{\ee,t_0}\frac{\max(y,0) +  \left( 1+ y\mathbbm 1_{\{y> n^{1/2-2\ee}\}} +N(x) \right)^2}{n^{1/2+\ee/8}}.
\]
Combining the upper and the lower bounds of $L_4$ and \eqref{MajL3} with \eqref{decL2} 
we obtain an asymptotic developpement of $L_2.$ 
Implementing this developpement and the bound \eqref{MajL1} into \eqref{decL0} and using Lemma \ref{concentnu}, we conclude that
\begin{align*}
&\abs{\bb P_x \left( y+S_n \leq t \sqrt{n} \,,\, \tau_y > n \right) - \frac{2V(x,y)}{\sqrt{2\pi n}\sigma} \left( 1-\e^{-\frac{t^2}{2\sigma^2}} \right)} \\
&\hspace{6cm} \leq c_{\ee,t_0}\frac{\max(y,0) +  \left( 1+ y\mathbbm 1_{\{y> n^{1/2-2\ee}\}} +N(x) \right)^2}{n^{1/2+\ee/8}}.
\end{align*}
Using the asymptotic of $\bb P_x (\tau_y>n)$ provided by Theorem \ref{thontau} finishes the proof of Theorem \ref{loideRayleigh}.

\section{Appendix}
\label{Appendix}

\subsection{Convergence of recursively bounded monotonic sequences}

We recall two lemmas from \cite{GLLP_affine_2016} which give sufficient conditions for a monotonic sequence to be bounded.

\begin{lemma}
\label{lemanalyse}
Let $(u_n)_{n\geq 1}$ be a non-decreasing sequence of reals such that there exist $\ee \in (0,1)$ and $\alpha, \beta, \gamma, \delta \geq 0$ such that, for any $n\geq 2$,
\[
u_n \leq \left( 1+ \frac{\alpha}{n^\ee} \right) u_{\pent{n^{1-\ee}}} + \frac{\beta}{n^\ee} + \gamma \e^{-\delta n^\ee}.
\]
Then, for any $n\geq 2$ and any integer $n_f \in \{2, \dots, n \}$,
\[
u_n \leq \left( 1+ \frac{c_{\alpha,\ee}}{n_f^\ee} \right) u_{n_f} + \beta \frac{c_{\alpha,\ee}}{n_f^\ee} + \gamma \e^{-c_{\alpha,\delta,\ee} n_f^\ee}.
\]
In particular, choosing $n_f$ constant, it follows that $(u_n)_{n\geq 1}$ is bounded. 
\end{lemma}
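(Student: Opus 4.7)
The plan is to iterate the recursive inequality. Set $\varphi(m) := \pent{m^{1-\ee}}$, $n_0 := n$, $n_{k+1} := \varphi(n_k)$; the case $n \leq n_f$ is immediate by monotonicity, so assume $n > n_f$ and let $K \geq 1$ be the smallest integer with $n_K \leq n_f$, so that $n_j > n_f \geq 2$ for every $j = 0,\dots,K-1$. Unrolling the hypothesis $K$ times and using $u_{n_K} \leq u_{n_f}$ yields
\[
u_n \leq P_K\, u_{n_f} + \beta\, P_K \sum_{j=0}^{K-1}\frac{1}{n_j^\ee} + \gamma\, P_K \sum_{j=0}^{K-1} \e^{-\delta n_j^\ee},
\]
where $P_K := \prod_{j=0}^{K-1}(1+\alpha/n_j^\ee)$.

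The entire argument now rests on the ratio estimate $n_{j+1}^\ee \leq n_j^{\ee(1-\ee)} = n_j^{-\ee^2}\, n_j^\ee \leq r\, n_j^\ee$ with $r := 2^{-\ee^2} \in (0,1)$, valid since $n_j \geq 2$. Iterating this backward from $j = K-1$ produces the two key geometric dominations
\[
\sum_{j=0}^{K-1}\frac{1}{n_j^\ee} \leq \frac{1}{(1-r)\, n_{K-1}^\ee} \leq \frac{C_\ee}{n_f^\ee}, \qquad \sum_{j=0}^{K-1} \e^{-\delta n_j^\ee} \leq C'_{\delta,\ee}\, \e^{-\delta n_{K-1}^\ee} \leq C'_{\delta,\ee}\, \e^{-\delta n_f^\ee}.
\]
Feeding the first bound into $\log P_K \leq \alpha \sum 1/n_j^\ee \leq \alpha C_\ee / n_f^\ee$ and using $\e^x \leq 1 + x \e^x$ on the bounded argument gives $P_K \leq 1 + c_{\alpha,\ee}/n_f^\ee$; substituting into the $\beta$-sum then yields the bound $\beta\, c_{\alpha,\ee}/n_f^\ee$.

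For the exponential term one is left with a bound of the form $\gamma\, C^*_{\alpha,\delta,\ee}\, \e^{-\delta n_f^\ee}$, and it remains to rewrite this as $\gamma\, \e^{-c_{\alpha,\delta,\ee} n_f^\ee}$ uniformly in $n_f \geq 2$. For $n_f$ larger than a threshold $N_0 = N_0(\alpha,\delta,\ee)$, the choice $c_{\alpha,\delta,\ee} = \delta/2$ works; the finite range $n_f \in \{2,\dots,N_0\}$ is handled by applying the result already established at $n_f = N_0$ and adjusting constants via monotonicity of $(u_n)$. The main technical obstacle is thus the geometric summability of $(1/n_j^\ee)$ and $(\e^{-\delta n_j^\ee})$, which relies entirely on the inequality $n_{j+1}^\ee \leq r\, n_j^\ee$ produced by the iterated map $m \mapsto \pent{m^{1-\ee}}$.
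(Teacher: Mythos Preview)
The paper does not actually prove this lemma; it merely cites \cite{GLLP_affine_2016}. Your iteration argument is the natural approach and the core of it---unrolling the recursion, controlling the product $P_K$ and the two sums via the geometric decay $n_{j+1}^\ee \leq r\, n_j^\ee$ with $r = 2^{-\ee^2}$---is entirely correct and yields
\[
u_n \leq \Bigl(1+\frac{c_{\alpha,\ee}}{n_f^\ee}\Bigr) u_{n_f} + \beta\,\frac{c_{\alpha,\ee}}{n_f^\ee} + \gamma\, C^*_{\alpha,\delta,\ee}\, \e^{-\delta n_f^\ee},
\]
which is the form actually used throughout the paper.

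The only weak spot is the last cosmetic step, where you try to absorb the multiplicative constant $C^*$ into the exponent to reach $\gamma\, \e^{-c' n_f^\ee}$ with a positive $c'$ independent of $n_f$. Your ``finite range'' fix does not work as written: applying the already-established bound at $n_f = N_0$ and then relating $u_{N_0}$ back to $u_{n_f}$ (necessarily via the same iteration) still produces a term $\gamma\, C^* \e^{-\delta n_f^\ee}$, so nothing is gained. In fact the absorption is impossible in general: your bound on the exponential sum gives $C^* \asymp 1/(1 - \e^{-\delta 2^\ee(\rho-1)}) \sim c/\delta$ as $\delta \to 0^+$, while the requirement $C^* \leq \e^{(\delta - c') 2^\ee}$ forces $C^*$ to stay bounded. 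For $\delta = 0$ the stated conclusion is even plainly false: take $\alpha = \beta = 0$, $\gamma = 1$, and let $u_n$ count the iterations of $m \mapsto \lfloor m^{1-\ee}\rfloor$ needed to bring $n$ below $2$; this sequence is non-decreasing, satisfies the hypothesis, but is unbounded. This is harmless in practice---the form $C^*_{\alpha,\delta,\ee}\, \e^{-\delta n_f^\ee}$ is what every application in the paper actually needs---so the gap lies in the lemma's stated shape rather than in your argument.
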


\begin{lemma}
\label{lemanalyse2}
Let $(u_n)_{n\geq 1}$ be a non-increasing sequence of reals such that there exist $\ee \in (0,1)$ and $\beta \geq 0$ such that, for any $n\geq 2$,
\[
u_n \geq u_{\pent{n^{1-\ee}}} - \frac{\beta}{n^\ee}.
\]
Then, for any $n\geq 2$ and any integer $n_f \in \{2, \dots, n \}$,
\[
u_n \geq u_{n_f} - c_\ee \frac{\beta}{n_f^\ee}.
\]
In particular, choosing $n_f$ constant, it follows that  $(u_n)_{n\geq 1}$ is bounded.
\end{lemma}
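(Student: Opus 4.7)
The plan is to iterate the hypothesis from $n$ downward along the geometrically decreasing sequence $n_0 = n$, $n_{k+1} = \pent{n_k^{1-\ee}}$, and telescope. Since $n_k^{1-\ee} < n_k$ whenever $n_k \geq 2$, this sequence is strictly decreasing as long as it stays $\geq 2$, so there is a well-defined smallest index $K$ with $n_K \leq n_f$ (the case $n = n_f$ being trivial, and for $n > n_f \geq 2$ this $K$ is finite). Since $n_0, \dots, n_{K-1} > n_f \geq 2$, the hypothesis applies at each step, and telescoping yields
\[
u_n \;\geq\; u_{n_K} - \beta \sum_{k=0}^{K-1} \frac{1}{n_k^\ee}.
\]
The non-increasing nature of $u$ combined with $n_K \leq n_f$ gives $u_{n_K} \geq u_{n_f}$, so the goal reduces to bounding the above sum by $c_\ee / n_f^\ee$.

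To control the summands I read the recursion upward from the terminus: from $n_{k+1} = \pent{n_k^{1-\ee}} \leq n_k^{1-\ee}$ we get $n_k \geq n_{k+1}^{1/(1-\ee)}$, and an immediate induction starting from $n_{K-1} > n_f$ produces
\[
n_{K-j} \;>\; n_f^{1/(1-\ee)^{j-1}} \qquad \text{for } 1 \leq j \leq K.
\]
Combining this with the elementary estimate $(1-\ee)^{-(j-1)} \geq 1 + \ee(j-1)$ (obtained by telescoping $(1-\ee)^{-1} - 1 = \ee/(1-\ee) \geq \ee$), we find $n_{K-j}^\ee \geq n_f^\ee \cdot n_f^{\ee^2(j-1)}$, whence
\[
\sum_{j=1}^K n_{K-j}^{-\ee} \;\leq\; \frac{1}{n_f^\ee} \sum_{j=0}^\infty n_f^{-\ee^2 j} \;=\; \frac{1}{n_f^\ee}\cdot\frac{1}{1-n_f^{-\ee^2}} \;\leq\; \frac{c_\ee}{n_f^\ee},
\]
using $n_f \geq 2$ to make $1 - n_f^{-\ee^2} \geq 1 - 2^{-\ee^2}$ a constant depending only on $\ee$. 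This establishes the claimed inequality. The final boundedness assertion then follows by fixing $n_f = 2$: the lower bound $u_n \geq u_2 - c_\ee \beta/2^\ee$ holds for all $n$, while $u_n \leq u_1$ is free from monotonicity.

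The main point requiring attention is the interaction of the floor operation with the iteration: one must confirm that, despite the truncations introduced at every step, the lower bound $n_{K-j} > n_f^{1/(1-\ee)^{j-1}}$ genuinely survives. Fortunately the key base inequality $n_k \geq n_{k+1}^{1/(1-\ee)}$ is insensitive to the floor, since $\pent{x}\leq x$ works in the favorable direction, and no further rounding analysis is needed. Everything else is routine series manipulation with constants absorbed into $c_\ee$.
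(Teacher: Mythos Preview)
Your proof is correct. The paper itself does not prove this lemma but merely cites it from \cite{GLLP_affine_2016}, so there is no in-paper argument to compare against; your iteration along $n_{k+1}=\pent{n_k^{1-\ee}}$ and geometric summation via the bound $n_{K-j}^\ee \geq n_f^\ee\, n_f^{\ee^2(j-1)}$ is exactly the standard approach and is fully self-contained.
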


\subsection{Brownian motion and strong approximation}
\label{Strong Approx}

We consider the standard Brownian motion $\left( B_t \right)_{t\geq 0}$ with values in $\bb R$ living on the probability space $\left( \Omega, \mathscr F, \bb P \right)$. Define the exit time
\begin{equation}
	\label{defdetaubm}
	\tau_y^{bm} = \inf \{ t\geq 0, \, y+\sigma B_t \leq 0 \},
\end{equation}
where $\sigma>0$. 

The following affirmations are due to L\'evy \cite{levy_theorie_1937}.

\begin{proposition}
\label{intBro}
For any $y>0$, $0\leq a \leq b$ and $n \geq 1$,
	\[
	\bb P \left( \tau_y^{bm} > n \,,\, y+\sigma B_n \in [a,b] \right) = \frac{1}{\sqrt{2\pi n} \sigma} \int_a^b \left( \e^{-\frac{(s-y)^2}{2n\sigma^2}} - \e^{-\frac{(s+y)^2}{2n\sigma^2}} \right) \dd s.
\]
\end{proposition}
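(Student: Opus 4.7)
The plan is to reduce to the standard reflection principle for Brownian motion. Write $W_t = \sigma B_t$, which is a Brownian motion with variance $\sigma^2$ per unit time, and let $y > 0$. First I would split
\[
\bb P\left( \tau_y^{bm} > n \,,\, y+W_n \in [a,b] \right) = \bb P\left( y+W_n \in [a,b] \right) - \bb P\left( \tau_y^{bm} \leq n \,,\, y+W_n \in [a,b] \right),
\]
noting that the first term is given immediately by the Gaussian density
\[
\bb P\left( y+W_n \in [a,b] \right) = \frac{1}{\sqrt{2\pi n}\sigma} \int_a^b \e^{-(s-y)^2/(2n\sigma^2)} \dd s.
\]

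Next, the heart of the proof is the reflection principle, which I would apply by defining the reflected path
\[
\widetilde W_t = W_t \mathbbm 1_{\{ t \leq \tau_y^{bm} \}} + \left( 2W_{\tau_y^{bm}} - W_t \right) \mathbbm 1_{\{ t > \tau_y^{bm} \}}.
\]
By the strong Markov property at the stopping time $\tau_y^{bm}$ (finite a.s. on the event considered) together with the symmetry of Brownian motion, $(\widetilde W_t)_{t\geq 0}$ has the same law as $(W_t)_{t\geq 0}$. Since $y+W_{\tau_y^{bm}} = 0$ on $\{\tau_y^{bm} \leq n\}$, we have $W_{\tau_y^{bm}} = -y$, and therefore $y + \widetilde W_n = -y - W_n$ on that event. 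Using the equality of laws,
\[
\bb P\left( \tau_y^{bm} \leq n \,,\, y+W_n \in [a,b] \right) = \bb P\left( \tau_y^{bm} \leq n \,,\, -y - W_n \in [a,b] \right) = \bb P\left( y+W_n \in [-b,-a] \right),
\]
where for the last equality I use that the event $\{y+W_n \in [-b,-a]\}$ already forces $\tau_y^{bm} \leq n$ (since $a \geq 0$ and $y > 0$ imply $-y-W_n \geq 0$ means the path reached $0$).

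Finally, computing the second probability via the Gaussian density and performing the change of variable $s \mapsto -s$ gives
\[
\bb P\left( y+W_n \in [-b,-a] \right) = \frac{1}{\sqrt{2\pi n}\sigma} \int_a^b \e^{-(s+y)^2/(2n\sigma^2)} \dd s,
\]
and subtracting the two Gaussian integrals yields the announced formula. The only delicate step is the rigorous application of the reflection principle at the stopping time $\tau_y^{bm}$; the rest is a direct computation with Gaussian densities.
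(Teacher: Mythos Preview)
Your proof via the reflection principle is correct; this is exactly the classical argument. The paper itself does not supply a proof of this proposition but simply attributes it to L\'evy \cite{levy_theorie_1937}, so there is no paper-proof to compare against---your write-up fills in what the paper leaves as a citation.
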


\begin{proposition}\ 
\label{exittimeforB}
\begin{enumerate}[ref=\arabic*, leftmargin=*, label=\arabic*.]
	\item
	\label{exittimeforB001}
	For any $y>0$,
	\[
	\bb P \left( \tau_y^{bm}>n \right) \leq c\frac{y}{\sqrt{n}}.
	\]
	\item
	\label{exittimeforB002}
	For any sequence of real numbers $(\theta_n)_{n\geq 0}$ such that $\theta_n \underset{n\to +\infty}{\longrightarrow} 0$,
	\[
	\underset{y\in [0; \theta_n \sqrt{n}]}{\sup} \left( \frac{\bb P \left( \tau_y^{bm}>n \right)}{\frac{2y}{\sqrt{2\pi n}\sigma}} - 1 \right) = O(\theta_n^2).
	\]
\end{enumerate}
\end{proposition}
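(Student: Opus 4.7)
\textbf{Proof plan for Proposition \ref{exittimeforB}.} The plan is to derive an exact formula for $\bb P(\tau_y^{bm}>n)$ from Proposition \ref{intBro} and then read off both estimates from elementary one-variable analysis. First, taking $a=0$ and letting $b\to +\infty$ in the formula of Proposition \ref{intBro}, we obtain
\[
\bb P\left(\tau_y^{bm}>n\right) = \frac{1}{\sqrt{2\pi n}\sigma} \int_0^{+\infty} \left( \e^{-\frac{(s-y)^2}{2n\sigma^2}} - \e^{-\frac{(s+y)^2}{2n\sigma^2}}\right)\dd s.
\]
After the change of variable $u=s/(\sigma\sqrt{n})$ and setting $v=y/(\sigma\sqrt{n})$, the two Gaussian integrals can be merged via $\int_0^{+\infty}\e^{-(u-v)^2/2}\dd u - \int_0^{+\infty}\e^{-(u+v)^2/2}\dd u = \int_{-v}^{v}\e^{-w^2/2}\dd w$, which gives the clean identity
\[
\bb P\left(\tau_y^{bm}>n\right) = \frac{2}{\sqrt{2\pi}} \int_0^{v} \e^{-w^2/2}\dd w, \qquad v=\frac{y}{\sigma\sqrt{n}}.
\]

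From this representation both claims are immediate. For the point \ref{exittimeforB001}, one simply bounds the integrand by $1$ and obtains $\bb P(\tau_y^{bm}>n)\leq \frac{2v}{\sqrt{2\pi}}=\frac{2y}{\sqrt{2\pi n}\sigma}\leq c\,y/\sqrt{n}$. For the point \ref{exittimeforB002}, one writes
\[
\frac{\bb P(\tau_y^{bm}>n)}{\frac{2y}{\sqrt{2\pi n}\sigma}} = \frac{1}{v}\int_0^{v}\e^{-w^2/2}\dd w,
\]
and Taylor expands $\e^{-w^2/2}=1-w^2/2+O(w^4)$ uniformly in $w\in[0,v]$ to obtain $\frac{1}{v}\int_0^{v}\e^{-w^2/2}\dd w = 1 - v^2/6 + O(v^4)$. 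Since $y\in[0,\theta_n\sqrt{n}]$ forces $v\in[0,\theta_n/\sigma]$ and $\theta_n\to 0$, this yields the required estimate $O(\theta_n^2)$ uniformly in $y\in[0,\theta_n\sqrt{n}]$.

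There is no real obstacle: the content is entirely the explicit density of the Brownian motion killed at $0$, obtained through the reflection principle already quoted as Proposition \ref{intBro}, followed by two lines of calculus. The only care needed is to handle the limiting case $v\to 0$ in the ratio of the point \ref{exittimeforB002}, which is done by the Taylor expansion above (the ratio extends continuously to $1$ at $v=0$).
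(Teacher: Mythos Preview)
Your proof is correct. The paper does not actually prove this proposition; it simply attributes the result to L\'evy \cite{levy_theorie_1937} and states it without argument. Your derivation from Proposition \ref{intBro} via the identity $\bb P(\tau_y^{bm}>n)=\frac{2}{\sqrt{2\pi}}\int_0^{y/(\sigma\sqrt n)}\e^{-w^2/2}\dd w$ and the Taylor expansion of the integrand is exactly the standard route, and handles both the upper bound of point \ref{exittimeforB001} and the two-sided asymptotic of point \ref{exittimeforB002} cleanly.
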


Moreover, under hypotheses \ref{BASP}-\ref{CECO} it is proved in \cite{ion_grama_rate_2014} that there is a version of the Markov walk $(S_n)_{n\geq 0}$ and of the standard Brownian motion $(B_t)_{t\geq 0}$ living on the same 
probability space which are close enough in the following sense: 

\begin{proposition}
\label{majdeA_k}
Assume that the Markov chain $\left(X_n\right)_{n\geq 0}$ and the function $f$ satisfy Hypotheses \ref{BASP}-\ref{CECO}. There exists $\ee_0 >0$ such that, for any $\ee \in (0,\ee_0]$, $x\in \bb X$ and $n\geq 1$, without loss of generality (on an extension of the initial probability space) one can reconstruct the sequence $(S_n)_{n\geq 0}$ with a continuous time Brownian motion $(B_t)_{t\in \bb R_{+} }$, such that  
\begin{equation}
\bb P_x \left( \underset{0 \leq t \leq 1}{\sup} \abs{S_{\pent{tn}}-\sigma B_{tn}} > n^{1/2-\ee} \right) \leq \frac{c_{\ee}}{n^{\ee}} ( 1+N(x) ),
\label{KMT001}
\end{equation}
where $\sigma$ is defined in the point \ref{MomAs002} of Proposition \ref{MomAs}.
\end{proposition}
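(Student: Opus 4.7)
The plan is to derive this strong approximation from the Gordin martingale decomposition and a block-wise KMT-type embedding, while carefully tracking the dependence on the initial state $x$ through the function $N$. Since the cited paper \cite{ion_grama_rate_2014} already provides such an approximation, the proposal is essentially to verify that the form of the bound stated here follows from that reference, but I will sketch the internal logic for completeness.

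First, I would use the Gordin decomposition already introduced in Section \ref{Mart Approx}: write $S_n = M_n + r(x) - r(X_n)$, where $(M_n)_{n\geq 0}$ is a martingale with bounded-moment increments and $|r(X_n)| \leq c(1+N(X_n))$ by Lemma \ref{MTR}. On the event where $\max_{k\leq n} N(X_k) \leq n^{1/2-\ee}$, the error $\sup_{t\leq 1} |r(X_{\lfloor tn\rfloor})|$ contributes at most $o(n^{1/2-\ee})$, and the probability of the complement is controlled using the point \ref{Momdec001} of Hypothesis \ref{Momdec} together with \eqref{decexpN}, \eqref{decexpNl}, yielding a bound of order $n^{-\ee}(1+N(x))$. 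This reduces the problem to approximating the martingale $(M_n)$ by a Brownian motion.

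Second, for the martingale approximation itself, I would partition $[0,n]$ into blocks of length $\ell = \lfloor n^{\beta} \rfloor$ for a suitable $\beta \in (0,1)$, and on each block replace the conditional distribution of the block increment by its stationary counterpart, using the spectral gap Hypothesis \ref{SPGA} to show that the discrepancy between $\mathbb{E}_x(\cdot \mid \mathscr F_{k\ell})$ and the stationary expectation decays geometrically in $\ell$. By the Skorokhod embedding (or the Koml\'os–Major–Tusn\'ady construction adapted to martingales with bounded $(2+\gamma)$-th moments as supplied by Lemma \ref{majmart}), each block sum can be matched to a Brownian increment with variance $\ell \sigma^2$ up to a polynomial error. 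Concatenating these embeddings yields a Brownian motion $(B_t)_{t\geq 0}$ on an extension of $(\Omega,\mathscr F,\bb P)$ such that the supremum of $|M_{\lfloor tn \rfloor} - \sigma B_{tn}|$ over $t\in[0,1]$ is of order $n^{1/2-\ee}$ except on an event whose probability is bounded by $c_{\ee} n^{-\ee}(1+N(x))$.

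Third, the initial state enters through two mechanisms: the burn-in $r(x)$, which is absorbed into the term $c(1+N(x))$ thanks to Lemma \ref{MTR}, and the conditional moment bounds $\mathbb{E}_x(N(X_k)^\alpha) \leq c(1+N(x))$ from Hypothesis \ref{Momdec}. Both contribute the factor $(1+N(x))$ multiplicatively, which after optimizing the block length $\beta$ against the moment exponent $\alpha > 2$ and the parameter $\beta$ in point \ref{Momdec003} of Hypothesis \ref{Momdec} gives the announced rate $c_\ee n^{-\ee}(1+N(x))$.

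The main obstacle is tracking the dependence on the initial state $x$ through the KMT-style construction with the explicit factor $(1+N(x))$ rather than some higher polynomial in $N(x)$. Standard strong approximations for stationary processes do not immediately give such a clean non-stationary bound; one must combine the exponential decay $\|Q^n\|_{\mathscr B\to\mathscr B} \leq C_Q \kappa^n$ from Hypothesis \ref{SPGA} with the local integrability bounds \eqref{decexpN}–\eqref{decexpNl} to show that the influence of the initial distribution on the variance and on the higher moments of each block contracts geometrically, so that only the \emph{first} block contributes a term involving $N(x)$, and even that term is dampened to $(1+N(x))$ by a further moment argument. This delicate accounting is precisely the content of the reference \cite{ion_grama_rate_2014}, which I would cite directly.
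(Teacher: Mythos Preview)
Your sketch essentially reproduces the construction inside \cite{ion_grama_rate_2014}, which is fine as background but is far more than the paper actually does. The paper's own proof is two lines: it cites \cite{ion_grama_rate_2014} for the bound
\[
\bb P_x \left( \sup_{0\leq t\leq 1} \abs{S_{\pent{tn}}-\sigma B_{tn}} > n^{1/2-\ee} \right) \leq \frac{c_\ee}{n^{\ee}}\,(1+N(x))^{\alpha}
\]
with $\alpha>2$, and then observes that since the left-hand side is a probability $p\in[0,1]$, one may take the $1/\alpha$-th power of both sides and use $p\leq p^{1/\alpha}$ to replace $(1+N(x))^{\alpha}$ by $1+N(x)$ (absorbing the change $n^{-\ee}\mapsto n^{-\ee/\alpha}$ into a smaller $\ee$).

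This means your identification of the ``main obstacle'' is off. You write that reducing the factor from a polynomial in $N(x)$ to the linear $(1+N(x))$ requires ``delicate accounting'' inside the KMT construction, and that this is ``precisely the content of the reference''. In fact the reference delivers only the polynomial bound $(1+N(x))^{\alpha}$; the linear factor is obtained \emph{a posteriori} by the trivial inequality $p\leq p^{1/\alpha}$ for $p\in[0,1]$. So there is no need for the geometric-contraction argument you outline to push the $N(x)$-dependence down to first order. Your Gordin/block/KMT sketch is a reasonable outline of what happens \emph{inside} \cite{ion_grama_rate_2014}, but for the present proposition all that is needed is the citation plus this one-line monotonicity trick.
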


In the original result the right-hand side in \eqref{KMT001}  is $c_\ee n^{-\ee} (1+N(x))^{\alpha}$ with $\alpha>2.$ To obtain the result of Proposition \ref{majdeA_k} it suffices to take the power $1/\alpha$ on the both sides and to use the obvious 
inequality $p<p^{1/\alpha},$ for $p \in [0,1]$.

Using this proposition, we deduce the following result.

\begin{corollary}
\label{BerEss}
There exists $\ee_0 >0$ such that, for any $\ee \in (0,\ee_0)$, $x\in \bb R$ and $n\geq 1$,
\[
\underset{t\in \bb R}{\sup} \, \abs{ \bb P_x \left( \frac{S_n}{\sqrt{n}} \leq t \right) - \int_{-\infty}^{t} \e^{-\frac{u^2}{2\sigma^2}} \frac{\dd u}{\sqrt{2\pi} \sigma} } \leq \frac{c_{\ee}}{n^\ee} \left(1+N(x)\right).
\]
\end{corollary}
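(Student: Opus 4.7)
The plan is to deduce the Berry--Esseen type bound directly from the strong approximation provided by Proposition \ref{majdeA_k}. First I would reduce $\ee_0$ if necessary so that the approximation in Proposition \ref{majdeA_k} is available, and work on the enlarged probability space on which a Brownian motion $(B_t)_{t\geq 0}$ is coupled to $(S_n)_{n\geq 0}$ in such a way that the event
\[
A_n := \Bigl\{ \underset{0\leq t\leq 1}{\sup}\, \abs{S_{\pent{tn}} - \sigma B_{tn}} \leq n^{1/2-\ee} \Bigr\}
\]
satisfies $\bb P_x\left(A_n^c\right) \leq c_\ee n^{-\ee} (1+N(x))$. In particular, on $A_n$ we have $\abs{S_n - \sigma B_n} \leq n^{1/2-\ee}$, i.e. $\abs{S_n/\sqrt{n} - \sigma B_n/\sqrt{n}} \leq n^{-\ee}$.

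Next, for any $t \in \bb R,$ I would compare the events $\{S_n \leq t\sqrt{n}\}$ and $\{\sigma B_n \leq (t\pm n^{-\ee})\sqrt{n}\}$. On the event $A_n$ the inclusions
\[
\{\sigma B_n \leq (t - n^{-\ee})\sqrt{n}\} \cap A_n \; \subseteq \; \{S_n \leq t\sqrt{n}\} \cap A_n \; \subseteq \; \{\sigma B_n \leq (t + n^{-\ee})\sqrt{n}\}
\]
hold. Consequently, writing $\Phi_\sigma(u) = \int_{-\infty}^u \e^{-v^2/(2\sigma^2)} \frac{\dd v}{\sqrt{2\pi}\sigma}$, and using that $\sigma B_n/\sqrt{n}$ has distribution $\mathcal N(0,\sigma^2)$ under $\bb P_x$, I obtain
\[
\Phi_\sigma(t - n^{-\ee}) - \bb P_x(A_n^c) \;\leq\; \bb P_x(S_n \leq t\sqrt{n}) \;\leq\; \Phi_\sigma(t + n^{-\ee}) + \bb P_x(A_n^c).
\]

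To finish, I would observe that the density $\frac{1}{\sqrt{2\pi}\sigma}\e^{-u^2/(2\sigma^2)}$ is bounded by $\frac{1}{\sqrt{2\pi}\sigma}$, so
\[
\abs{\Phi_\sigma(t \pm n^{-\ee}) - \Phi_\sigma(t)} \leq \frac{n^{-\ee}}{\sqrt{2\pi}\sigma} \leq \frac{c}{n^{\ee}}.
\]
Combining this with the previous two-sided inequality and with the bound on $\bb P_x(A_n^c)$ yields
\[
\abs{\bb P_x(S_n \leq t\sqrt{n}) - \Phi_\sigma(t)} \;\leq\; \frac{c}{n^\ee} + \frac{c_\ee}{n^\ee}(1+N(x)) \;\leq\; \frac{c_\ee}{n^\ee}(1+N(x)),
\]
uniformly in $t$, which is precisely the statement. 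No step here is really an obstacle: the whole content is in Proposition \ref{majdeA_k}, and the rest is a soft comparison of distribution functions via a sandwich argument, where the only quantitative ingredient is the uniform bound on the Gaussian density.
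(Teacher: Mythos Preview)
Your proof is correct and follows essentially the same approach as the paper: both deduce the result directly from the strong approximation of Proposition~\ref{majdeA_k} via a sandwich argument, bounding the difference between the distribution of $S_n/\sqrt{n}$ and that of $\sigma B_n/\sqrt{n}$ by the probability of the bad coupling event plus a Gaussian increment of size $n^{-\ee}$. The only cosmetic difference is that the paper works directly with the single-time event $\{|S_n-\sigma B_n|>n^{1/2-\ee}\}$ rather than the full supremum event $A_n$, but this changes nothing in substance.
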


\begin{proof} Using Proposition \ref{majdeA_k},
\begin{align*}
	\bb P_x \left( \frac{S_n}{\sqrt{n}} \leq t \right)
	\leq \; &\bb P_x \left( \abs{S_n-\sigma B_n} > n^{1/2-\ee} \right) + \bb P_x \left( \frac{\sigma B_n}{\sqrt{n}} \leq t + \frac{1}{n^\ee} \right) \\
	\leq \; & \frac{c_{\ee}}{n^\ee} \left(1+N(x)\right) + \int_{-\infty}^{t+\frac{1}{n^\ee}} \e^{-\frac{u^2}{2\sigma^2}} \frac{\dd u}{\sqrt{2\pi} \sigma}.
\end{align*}
Therefore,
\[
\bb P_x \left( \frac{S_n}{\sqrt{n}} \leq t \right) - \int_{-\infty}^{t} \e^{-\frac{u^2}{2\sigma^2}} \frac{\dd u}{\sqrt{2\pi} \sigma} \leq \frac{c_{\ee}}{n^\ee} \left(1+N(x)\right).
\]
In the same way way,
\[
\bb P_x \left( \frac{S_n}{\sqrt{n}} \leq t \right) \geq \int_{-\infty}^{t-\frac{1}{n^\ee}} \e^{-\frac{u^2}{2\sigma^2}} \frac{\dd u}{\sqrt{2\pi} \sigma} - \bb P_x \left( \frac{\abs{S_n-\sigma B_n}}{\sqrt{n}} > \frac{1}{n^\ee} \right)
\]
and the result follows.
\end{proof}

\subsection{Finiteness of the exit times \texorpdfstring{$\tau_y$}{}  and \texorpdfstring{$T_z$}{}}
\label{secExitfinit}

\begin{lemma}
\label{Exitfinit}
For any $x\in \bb X$ and $y \in \bb R$,
	\[
	\tau_y < +\infty \quad \bb P_x\text{-a.s.}
\]
\end{lemma}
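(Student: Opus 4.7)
The plan is to show that $\bb P_x(\tau_y > n) \to 0$ as $n \to +\infty$; since $\{\tau_y = +\infty\} = \bigcap_{n \geq 1}\{\tau_y > n\}$ is a decreasing intersection, this immediately yields the lemma. The argument couples the Markov walk $(S_n)_{n \geq 0}$ with a standard Brownian motion via the strong approximation of Proposition \ref{majdeA_k} and then exploits the recurrence of one-dimensional Brownian motion through the reflection principle.

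Fix a small $\ee > 0$ and, on an extension of the underlying probability space, let $(B_t)_{t \geq 0}$ be the Brownian motion provided by Proposition \ref{majdeA_k}. Introduce the ``good'' event
\[
A_n := \left\{ \sup_{0 \leq t \leq 1} \abs{S_{\pent{tn}} - \sigma B_{tn}} \leq n^{1/2-\ee} \right\},
\]
which satisfies $\bb P_x(A_n^c) \leq c_{\ee}(1+N(x))/n^{\ee}$. After the change of variables $s = tn$, the inequality defining $A_n$ reads $\abs{S_{\pent{s}} - \sigma B_s} \leq n^{1/2-\ee}$ for every $s \in [0, n]$, so on $A_n$ we have
\[
\min_{1 \leq k \leq n} S_k \;=\; \inf_{1 \leq s \leq n} S_{\pent{s}} \;\leq\; \sigma \inf_{0 \leq s \leq n} B_s \;+\; n^{1/2-\ee}.
\]
In particular, on $A_n \cap \bigl\{ \sigma \inf_{0 \leq s \leq n} B_s \leq -y - n^{1/2-\ee}\bigr\}$ there exists $k \in \{1,\dots,n\}$ with $y + S_k \leq 0$, i.e.\ $\tau_y \leq n$.

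Passing to complements yields
\[
\bb P_x(\tau_y > n) \;\leq\; \bb P_x(A_n^c) \;+\; \bb P\!\left( \inf_{0 \leq s \leq n} B_s > -\tfrac{y + n^{1/2-\ee}}{\sigma} \right).
\]
Setting $c_n := (y + n^{1/2-\ee})/\sigma$, which is positive for $n$ large enough since $\sigma > 0$ by Hypothesis \ref{CECO}, the standard reflection principle gives $\bb P(\inf_{0 \leq s \leq n} B_s > -c_n) = 2 \Phi(c_n/\sqrt{n}) - 1$, where $\Phi$ is the standard normal c.d.f. Since $c_n/\sqrt{n} = y/(\sigma\sqrt{n}) + 1/(\sigma n^{\ee}) \to 0$, this quantity tends to $2\Phi(0) - 1 = 0$, while $\bb P_x(A_n^c) \to 0$ as well, and the lemma follows. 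There is no serious obstacle: the proof is essentially a black-box consequence of Proposition \ref{majdeA_k}, whose continuous-time $\sup$-norm formulation is precisely what makes it possible to bound the discrete minimum of $S_k$ by a continuous Brownian infimum, to which the reflection principle applies directly.
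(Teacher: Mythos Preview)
Your proof is correct and follows essentially the same route as the paper: strong approximation via Proposition \ref{majdeA_k}, then a Brownian exit-time estimate (the paper cites Proposition \ref{exittimeforB}, which is the reflection-principle bound you invoke directly). Two minor remarks: (i) the displayed inequality $\min_{1\leq k\leq n} S_k \leq \sigma\inf_{0\leq s\leq n} B_s + n^{1/2-\ee}$ can fail on $A_n$ if the Brownian infimum is attained on $[0,1)$ (where $S_{\pent s}=0$); this is harmless once you reduce to $y>0$ by monotonicity of $\tau_y$, as the paper does, or replace $y$ by $\max(y,0)$ in $c_n$; (ii) the paper's version also records the quantitative bound $\bb P_x(\tau_y>n)\leq c_\ee n^{-\ee}(1+\max(y,0)+N(x))$, labelled \eqref{tauyto0}, which is reused later (e.g.\ in the proof of Lemma \ref{ExitfinitTmanuz}).
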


\begin{proof} Let $x \in \bb X.$ Assume first that $y > 0$. Since $\{ \tau_y > n \}$ is a non-increasing sequence of events,
	\[
	\bb P_x \left( \tau_y = + \infty \right) = \underset{n\to +\infty}{\lim} \bb P_x \left( \tau_y > n \right) 
	= \underset{n\to +\infty}{\lim} \bb P_x \left(  y+S_k > 0, \, \forall k \leq n \right).
\]
Using Proposition \ref{majdeA_k},
\[
\bb P_x \left(  y+S_k > 0, \, \forall k \leq n \right) \leq \frac{c_{\ee}}{n^\ee} \left(1+N(x)\right) + \bb P \left( \tau_{y+n^{1/2-\ee}}^{bm} > n \right).
\]
Thus, by the point \ref{exittimeforB001} of Proposition \ref{exittimeforB},
\begin{equation}
\bb P_x \left( \tau_y > n \right) \leq \frac{c_{\ee}}{n^\ee} \left(1+N(x)\right) + c\frac{y+n^{1/2-\ee}}{\sqrt{n}} \leq \frac{c_{\ee}}{n^\ee} \left(1+y+N(x)\right).
\label{Pourypos}
\end{equation}
When $y \leq 0$, we have, for any $y' > 0$, $\bb P_x \left( \tau_y > n \right) \leq \bb P_x \left( \tau_{y'} > n \right)$. Taking the limit when $y'\to 0,$ we obtain that
\begin{equation}
\bb P_x \left( \tau_y > n \right) \leq \frac{c_{\ee}}{n^\ee} \left(1+N(x)\right).
\label{Pouryneg}
\end{equation}
From \eqref{Pourypos} and \eqref{Pouryneg} it follows that, for any $y \in \bb R$,
\begin{equation}
\label{tauyto0}
	\bb P_x \left( \tau_y > n \right) \leq \frac{c_{\ee}}{n^{\ee}} \left(1+\max(y,0)+N(x)\right).
\end{equation}
Taking the limit as $n\to +\infty$, we conclude that $\tau_y < +\infty$ $\bb P_x$-a.s.
\end{proof}

The same result can be obtained for the exit time  $T_z$ of the martingale $(z+M_n)_{n\geq 0}$.
\begin{lemma}
\label{ExitfinitTz}
For any $x \in \bb X$ and $z \in \bb R$,
	\[
	T_z < + \infty \quad \bb P_x\text{-a.s.}
\]
\end{lemma}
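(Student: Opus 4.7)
The plan is to mirror the argument used for Lemma \ref{Exitfinit}, comparing the killed martingale to the Brownian motion via the strong approximation of Proposition \ref{majdeA_k}. The essential new difficulty is that on the event $\{T_z > n\}$ the quantity $z + M_k = y + S_k + r(X_k)$ (with $y = z - r(x)$, thanks to \eqref{decMSX}) stays positive, which only gives $y + S_k > -r(X_k)$; since $r(X_k)$ is not bounded (only $|r(X_k)| \leq c(1+N(X_k))$ by Lemma \ref{MTR}), one cannot directly reduce to a Brownian exit from a deterministic barrier.

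To circumvent this, I would introduce a truncation level $K > 0$ and split
\[
\bb P_x(T_z > n) \leq \bb P_x\!\left(T_z > n,\; \max_{1 \leq k \leq n}|r(X_k)| \leq K\right) + \bb P_x\!\left(\max_{1 \leq k \leq n}|r(X_k)| > K\right).
\]
On the event in the first term, $y + S_k > -K$ for every $k \leq n$. Using Proposition \ref{majdeA_k}, on the event $\{\sup_{0 \leq t \leq 1}|S_{\pent{tn}} - \sigma B_{tn}| \leq n^{1/2-\ee}\}$ (whose complement has probability at most $c_\ee n^{-\ee}(1+N(x))$) one then obtains $(y+K+n^{1/2-\ee}) + \sigma B_{tn} > 0$ for every $t \in [0,1]$, that is $\tau_{y+K+n^{1/2-\ee}}^{bm} > n$; Proposition \ref{exittimeforB}, point \ref{exittimeforB001}, bounds this by $c(y + K + n^{1/2-\ee})_+/\sqrt{n}$.

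For the second term, I would combine $|r(x)| \leq c(1+N(x))$ from Lemma \ref{MTR} with the inequality $N(X_k)\mathbbm 1_{\{N(X_k)>L\}} \leq N_L(X_k)$, Markov's inequality and the estimate $\bb E_x(N_L(X_k)) \leq cL^{-(1+\beta)} + e^{-ck}(1+N(x))$ from \eqref{decexpNl}, deducing
\[
\bb P_x\!\left(\max_{k\leq n}|r(X_k)| > K\right) \leq c\,n\,K^{-(2+\beta)} + c\,K^{-1}(1+N(x)).
\]
Choosing $K = n^a$ with $a \in \bigl(1/(2+\beta),\,1/2\bigr)$, which is possible since $\beta > 0$, each of the resulting terms tends to $0$ as $n \to \infty$; gathering everything and using $y_+ \leq \max(z,0) + c(1+N(x))$ would yield a quantitative bound of the form
\begin{equation*}
\bb P_x(T_z > n) \leq c_\ee\, n^{-\delta}\bigl(1 + \max(z,0) + N(x)\bigr)
\end{equation*}
for some $\delta > 0$ (this is the estimate \eqref{Tzto0} invoked in the proof of Lemma \ref{ExitfinitTmanuz}), from which $T_z < +\infty$ $\bb P_x$-a.s.\ follows by letting $n \to \infty$.

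The main obstacle will be arranging the truncation so that the Brownian contribution and the tail of $\max_{k \leq n}|r(X_k)|$ are controlled simultaneously with a linear, rather than $\alpha$-th power, dependence on $1+N(x)$. This is precisely where the refined tail condition $|\bs\nu(N_l)| \leq c/l^{1+\beta}$ from point \ref{Momdec003} of Hypothesis \ref{Momdec} is crucial: a direct application of the bare $\alpha$-moment estimate $\bb E_x(N(X_k)^\alpha) \leq c(1+N(x))^\alpha$ would introduce a factor $(1+N(x))^\alpha$ incompatible with the target form of \eqref{Tzto0}.
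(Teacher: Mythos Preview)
Your proposal is correct and follows essentially the same route as the paper: the paper also splits on the event $\{\max_{1\le k\le n}|r(X_k)|>K\}$, with the specific choice $K=n^{1/2-\ee}$ (which is an instance of your $K=n^{a}$ with $a=\tfrac12-\ee$), bounds the first term via the strong approximation and Proposition~\ref{exittimeforB}, and controls the second term using Lemma~\ref{MTR} together with the $N_l$ tail estimate \eqref{decexpNl} from Hypothesis~\ref{Momdec}. The only cosmetic difference is that the paper splits the sum over $k$ at $\pent{n^{\ee}}$ to write the final bound in the clean form $c_\ee n^{-\ee}(1+\max(z,0)+N(x))$, whereas your direct summation of $\sum_k e^{-ck}$ gives the same conclusion with exponent $\delta$ in place of $\ee$.
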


\begin{proof} Let $x \in \bb X,$ $z\in \bb R$ and $y=z-r(x)$.
Assume first that $y=z-r(x)>0.$ 
Following the proof of Lemma \ref{Exitfinit}, 
	\[
	\bb P_x \left( T_z = + \infty \right) = \underset{n\to +\infty}{\lim} \bb P_x \left(  z+M_k > 0 , \, \forall k \leq n \right).
\]
By \eqref{decMSX} the martingale $(z+M_n)_{n\geq 0}$ is relied to the Markov walk $(y+S_n)_{n\geq 0}$, which gives
\begin{align}
	\bb P_x \left( z+M_k > 0,\, \forall k \leq n \right) \leq\;& \bb P_x \left( y+S_k > -n^{1/2-\ee},\,    \forall k \leq n \right) \nonumber\\
	\label{decpourTzfinit}
	&\hspace{2cm} + \bb P_x \left( \underset{1 \leq k \leq n}{\max} \abs{r\left( X_k \right)} > n^{1/2-\ee} \right).
\end{align}
On the one hand, in the same way as in the proof of Lemma \ref{Exitfinit},
\begin{equation}
	\label{decpourTzfinit001}
	\bb P_x \left(  y+S_k > -n^{1/2-\ee},\, \forall k \leq n \right) \leq \frac{c_{\ee}}{n^\ee} \left(1+N(x)\right) + \bb P_x \left( \tau_{y+2n^{1/2-\ee}}^{bm} > n \right).
\end{equation}
On the other hand, using Lemma \ref{MTR}, for $n$ large enough,
\[
	\bb P_x \left( \underset{1 \leq k \leq n}{\max} \abs{r\left( X_k \right)} > n^{1/2-\ee} \right) \leq \sum_{k=1}^{\pent{n^{\ee}}} \bb E_x \left( \frac{c N\left( X_k \right)}{n^{1/2-\ee}} \right) + \sum_{k=\pent{n^{\ee}}+1}^{n} \bb E_x \left( \frac{c N_l \left( X_k \right)}{n^{1/2-\ee}} \right),
\]
where $l = c n^{1/2-\ee}$. 
So, using \eqref{decexpNl} and taking $\ee \leq \min \left( \frac{1}{6}, \frac{\beta}{2(3+\beta)} \right)$, we obtain
\begin{equation}
	\label{decpourTzfinit002}
	\bb P_x \left( \underset{1 \leq k \leq n}{\max} \abs{r\left( X_k \right)} > n^{1/2-\ee} \right) \leq \frac{c_{\ee}}{n^{\ee}} \left( 1+N(x) \right).
\end{equation}
Putting together \eqref{decpourTzfinit}, \eqref{decpourTzfinit001} and  \eqref{decpourTzfinit002} and using the point \ref{exittimeforB001} of Proposition \ref{exittimeforB}, we have, for $z>r(x),$ 
\[
\bb P_x \left( T_z > n \right) \leq \frac{c_{\ee}}{n^\ee} \left(1+N(x)\right) + c\frac{y+2n^{1/2-\ee}}{\sqrt{n}} \leq \frac{c_{\ee}}{n^\ee} \left(1+\max(z,0)+N(x)\right).
\]
Since $z \mapsto T_z$ is non-decreasing, we obtain the same bound for any $z \in \bb R$,
\begin{equation}
	\label{Tzto0}
	\bb P_x \left( T_z > n \right) \leq \frac{c_{\ee}}{n^{\ee}} \left(1+\max(z,0)+N(x)\right).
\end{equation}
Taking the limit as $n\to +\infty$ we conclude that $T_z < +\infty$ $\bb P_x$-a.s.
\end{proof}

\subsection{Proof of Proposition \ref{PP001}}
\label{proof-rec-sto-Mat}

In this section, for the affine random walk in $\bb R^d$ conditioned to stay in a half-space, we verify that 
Hypotheses \ref{BASP}-\ref{CECO} hold true on an appropriate Banach space 
which we proceed to introduce. 
Let $\delta>0$ be the constant from Hypothesis \ref{hypoH}.
Denote by $\mathscr{C}( \bb R^d )$ the space of continuous complex valued functions on $\bb R^d$. Let $\ee$ and $\theta$ be two positive numbers satisfying
\[
1+\ee < \theta < 2 < 2+2\ee < 2+2\delta.
\]
For any function $h\in \mathscr{C}( \bb R^d )$ introduce the norm $\norm{h}_{\theta,\ee} = \abs{h}_{\theta} + \left[h\right]_{\ee}$, where
\[
\abs{h}_{\theta} = \underset{x \in \bb R^d}{\sup} \frac{\abs{h(x)}}{\left( 1+\abs{x} \right)^{\theta}}, \quad \left[h\right]_{\ee} = \underset{x\neq y}{\sup} \frac{\abs{h(x)-h(y)}}{\abs{x-y}^{\ee}\left( 1+\abs{x} \right)\left( 1+\abs{y} \right)}
\]
and consider the Banach space
\[
\mathscr{B} = \mathscr{L}_{\theta,\ee} = \left\{ h \in \mathscr{C} \left( \bb R^d \right),\; \norm{h}_{\theta,\ee} < +\infty \right\}.
\]

\textit{Proof of \ref{BASP}.} Conditions \ref{BASP001}, \ref{BASP002} and \ref{BASP003} of \ref{BASP} 
can be easily verified under the point \ref{H1} of Hypothesis \ref{hypoH} and the fact that $\theta < 2+2\delta$ and $\norm{\bs \delta_x}_{\mathscr{B}'} \leq \left( 1+\abs{x} \right)^{\theta}$, for any $x \in \bb R^d$. 

We verify the point \ref{BASP004} of Hypothesis \ref{BASP}. For any  $(x,y) \in \bb R^d \times \bb R^d$ and $t \in \bb R$, we have $\abs{\e^{itf(x)} - \e^{itf(y)}} \leq \abs{t} \abs{f(x) - f(y)} \leq \abs{t} \abs{u} \abs{x-y}$ and $\abs{\e^{itf(x)} - \e^{itf(y)}} \leq 2$. Therefore, we write
\[
\abs{\e^{itf(x)} - \e^{itf(y)}} \leq 2^{1-\ee} \abs{t}^{\ee} \abs{u}^{\ee} \abs{x-y}^{\ee}.
\]
Supposing that $\abs{x} \leq \abs{y}$, we obtain, for any $h \in \mathscr{L}_{\theta,\ee}$,
\[
\abs{\e^{itf(x)}h(x) - \e^{itf(y)}h(y)} \leq \abs{\e^{itf(x)} - \e^{itf(y)}} \abs{h}_{\theta} \left( 1+\abs{x} \right)^{\theta} +\abs{h(x) - h(y)}.
\]
Since $\theta < 2$, we have $\left[ \e^{itf}h - \e^{itf}h \right]_{\ee} \leq 2^{1-\ee} \abs{t}^{\ee} \abs{u}^{\ee} \abs{h}_{\theta} + \left[h\right]_{\ee}$. 
Consequently, $\norm{\e^{itf}h}_{\theta,\ee} \leq \left( 1+2^{1-\ee}\abs{t}^{\ee}\abs{u}^{\ee} \right) \norm{h}_{\theta,\ee}$ and the point \ref{BASP004} 
is verified.

\textit{Proof of \ref{SPGA} and \ref{PETO}.} We shall verify that the conditions of the theorem of Ionescu-Tulcea and Marinescu are satisfied (see \cite{norman1972markov} and \cite{tulcea_theorie_1950}). We start by establishing two lemmas.

\begin{samepage}
\begin{lemma} Assume Hypothesis \ref{hypoH}.
\label{opPt}
\begin{enumerate}[ref=\arabic*, leftmargin=*, label=\arabic*.]
	\item \label{opPt001} There exists a constant $c>0$ such that, for any $t \in \bb R$, $n\geq 1$, and $h \in \mathscr{L}_{\theta,\ee}$,
	\[
	\abs{\mathbf{P}_t^n h}_{\theta} \leq c \abs{h}_{\theta}.
	\]
	\item \label{opPt002} There exist constants $c_1$, $c_2$ and $\rho<1$ such that, for any $n \geq 1$, $h \in \mathscr{L}_{\theta,\ee}$ and $t\in \bb R$,
	\[
	 \left[ \mathbf{P}_t^n h \right]_{\ee} \leq c_1 \rho^n \left[h\right]_{\ee} + c_2 \abs{t}^{\ee} \abs{h}_{\theta}.
	\]
	\item \label{opPt003} For any $t\in \bb R,$ the operator $\mathbf{P}_t$ is compact from $(\mathscr{B},\norm{\cdot}_{\theta,\ee})$ to $(\mathscr{C}\left( \bb R^d \right),\abs{\cdot}_{\theta})$.
\end{enumerate}
\end{lemma}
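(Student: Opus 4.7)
The plan is to prove all three parts via the explicit factorisation of the affine walk together with moment estimates derived from Hypothesis \ref{hypoH}. Iterating $X_{k+1} = A_{k+1}X_k + B_{k+1}$ gives
\[
X_n(x) = M_n x + C_n, \quad M_n := A_n \cdots A_1, \quad C_n := \sum_{k=1}^{n}A_n \cdots A_{k+1}B_k,
\]
so that $\mathbf{P}_t^n h(x) = \bb E(e^{itS_n(x)} h(X_n(x)))$ with $S_n(x) = \sum_{k=1}^{n}\scal{u}{X_k(x)}$. The crucial pointwise control is $(1+|X_n(x)|) \leq (1+|M_n|)(1+|C_n|)(1+|x|)$, which follows from $1 + ab \leq (1+a)(1+b)$ for $a, b\geq 0$ and decouples the dependence on the initial point $x$. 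From $k(\delta) < 1$ together with Jensen one gets $\bb E(|M_n|^p) \leq c_p \rho_p^n$ with $\rho_p \in (0,1)$ for every $p \in [0, 2+2\delta]$, while $\sup_n \bb E((1+|C_n|)^p) < \infty$ in the same range by Minkowski applied to the series defining $C_n$ and the geometric decay of $\bb E(|A_n \cdots A_{k+1}|^p)^{1/p}$.

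Claim \ref{opPt001} follows directly: $|\mathbf{P}_t^n h(x)| \leq |h|_\theta \bb E((1+|X_n(x)|)^\theta) \leq |h|_\theta (1+|x|)^\theta \bb E((1+|M_n|)^\theta (1+|C_n|)^\theta)$, and the last expectation is uniformly bounded in $n$ by Hölder's inequality, provided $\ee$ (hence $\theta < 2$) was chosen small enough relative to $\delta$ so that $\theta$ lies well inside the moment range. For Claim \ref{opPt002} I decompose
\[
\mathbf{P}_t^n h(x) - \mathbf{P}_t^n h(y) = T_A + T_B,
\]
with $T_A = \bb E(e^{itS_n(x)}(h(X_n(x)) - h(X_n(y))))$ and $T_B = \bb E((e^{itS_n(x)} - e^{itS_n(y)}) h(X_n(y)))$, and assume without loss of generality $|y| \leq |x|$ (the opposite case is obtained by swapping the roles of $x, y$). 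Using $X_n(x) - X_n(y) = M_n(x-y)$ and the definition of $[\,\cdot\,]_\ee$,
\[
|T_A| \leq [h]_\ee |x-y|^\ee (1+|x|)(1+|y|) \, \bb E\bigl(|M_n|^\ee (1+|M_n|)^2 (1+|C_n|)^2\bigr);
\]
each monomial obtained by expanding $(1+|M_n|)^2 (1+|C_n|)^2$ contains a factor $|M_n|^j$ with $j \geq \ee$, and pairing its exponentially decaying moment against the uniformly bounded moments of $(1+|C_n|)^k$ via a suitable Hölder inequality yields geometric decay of the whole expectation, giving $|T_A|/[|x-y|^\ee (1+|x|)(1+|y|)] \leq c_1 \rho^n [h]_\ee$. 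For $T_B$, the inequality $|e^{i\alpha} - e^{i\beta}| \leq 2^{1-\ee} |\alpha - \beta|^\ee$ combined with $|S_n(x) - S_n(y)| \leq |u| |x-y| T_n$ for $T_n := \sum_{k=1}^{n} |M_k|$ gives
\[
|T_B| \leq c |t|^\ee |x-y|^\ee |h|_\theta (1+|y|)^\theta \, \bb E\bigl(T_n^\ee (1+|M_n|)^\theta (1+|C_n|)^\theta\bigr);
\]
since $\bb E(T_n) \leq \sum_k c\rho^k$ is bounded and hence so is $\bb E(T_n^\ee)$ by Jensen, the expectation is bounded uniformly in $n$ by a further application of Hölder. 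Dividing by $|x-y|^\ee (1+|x|)(1+|y|)$ produces the factor $(1+|y|)^{\theta-1}/(1+|x|)$, which is bounded by $1$ precisely because $|y| \leq |x|$ and $\theta \leq 2$.

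For Claim \ref{opPt003}, compactness of $\mathbf{P}_t : \mathscr{L}_{\theta,\ee} \to (\mathscr{C}(\bb R^d), |\cdot|_\theta)$ will follow from Arzelà-Ascoli applied to the family $\{(1+|x|)^{-\theta} \mathbf{P}_t h(x) : \|h\|_{\theta, \ee} \leq 1\}$: Claim \ref{opPt001} supplies pointwise boundedness, the Hölder estimate of Claim \ref{opPt002} (with $n = 1$) gives local equicontinuity, and tightness at infinity follows from the observation that any $h \in \mathscr{L}_{\theta, \ee}$ satisfies $|h(x)| \leq |h(0)| + [h]_\ee |x|^\ee (1+|x|)$, i.e.\ has effective growth of order only $(1+|x|)^{1+\ee}$, strictly lower than $(1+|x|)^\theta$ since $\theta > 1+\ee$; this property passes through $\mathbf{P}_t$ and forces $|\mathbf{P}_t h(x)|/(1+|x|)^\theta \to 0$ uniformly in $h$ as $|x| \to \infty$. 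The main technical hurdle lies in the moment bookkeeping for $T_A$: closing each Hölder estimate so that every term of the form $\bb E(|M_n|^{j+\ee}(1+|C_n|)^2)$ decays geometrically under the sole budget $2+2\delta$ requires shrinking $\ee$ below an explicit threshold depending on $\delta$, and it is at this point that the strict inequalities imposed in the definition of $\mathscr{B}$ are crucially exploited.
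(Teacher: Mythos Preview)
Your overall strategy is the same as the paper's: factor $X_n(x)=M_nx+C_n$, bound $|\cdot|_\theta$ directly, split the increment into a $[h]_\ee$-part ($T_A$) and a phase part ($T_B$), and obtain compactness via Arzel\`a--Ascoli plus the $(1+|x|)^{1+\ee}$ growth bound. Claims~\ref{opPt001} and~\ref{opPt003} go through essentially as you describe.

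There is, however, a genuine gap in your treatment of $T_A$. Your ``crucial pointwise control'' $(1+|X_n(x)|)\leq (1+|M_n|)(1+|C_n|)(1+|x|)$ is multiplicative, so after expanding $(1+|M_n|)^2(1+|C_n|)^2$ you face the cross term $|M_n|^{2+\ee}|C_n|^2$, of total homogeneity $4+\ee$. A H\"older split $\bb E(|M_n|^{(2+\ee)p})^{1/p}\,\bb E(|C_n|^{2q})^{1/q}$ with $1/p+1/q=1$ requires simultaneously $(2+\ee)p\leq 2+2\delta$ and $2q\leq 2+2\delta$; eliminating $p,q$ this forces $2+\ee(1+\delta)\leq 2\delta^2$, which is impossible for any $\delta\leq 1$. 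Your closing remark that ``shrinking $\ee$'' cures the bookkeeping is therefore incorrect: the obstruction comes from the degree-$4$ monomial $|M_n|^2|C_n|^2$, which survives as $\ee\to 0$.

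The fix is to replace your multiplicative bound by the sharper additive one the paper uses,
\[
1+|X_n(x)|\leq 1+|M_n|\,|x|+|C_n|\leq (1+|x|)\,(1+|M_n|+|X_n^0|),
\]
so that $(1+|X_n(x)|)(1+|X_n(y)|)\leq (1+|x|)(1+|y|)\,(1+|M_n|+|X_n^0|)^2$. Then the relevant expectation is $C_1(n)=\bb E\bigl(|M_n|^\ee(1+|M_n|+|X_n^0|)^2\bigr)$, of total degree only $2+\ee$. A single H\"older with exponents $(1+\ee)/\ee$ and $1+\ee$ gives
\[
C_1(n)\leq \bb E\bigl(|M_n|^{1+\ee}\bigr)^{\ee/(1+\ee)}\,\bb E\bigl((1+|M_n|+|X_n^0|)^{2+2\ee}\bigr)^{1/(1+\ee)},
\]
and now both exponents $1+\ee$ and $2+2\ee$ lie below $2+2\delta$ by the standing inequalities, yielding the desired geometric decay with no further restriction on~$\ee$. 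The same additive bound also streamlines your $T_B$ estimate.
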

\end{samepage}

\begin{proof}
\textit{Claim \ref{opPt001}.} For any $x \in \bb R^d$,
\[
\abs{\mathbf{P}_t^n h(x)} = \abs{\bb E_x \left( \e^{itS_n} h \left( X_n \right) \right)} \leq 3^{\theta} \abs{h}_{\theta} \left( 1+\bb E \left( \abs{\Pi_n}^{\theta} \right) \abs{x}^{\theta}+\bb E \left( \abs{X_n^0}^{\theta} \right) \right),
\]
with $\Pi_n = A_nA_{n-1} \dots A_1$ and $X_n^0 = g_n \dots g_1 \cdot 0 = \sum_{k=1}^n A_n \dots A_{k+1}B_k$. By the point \ref{H1} of Hypothesis \ref{hypoH}, there exist $c(\delta)> 0$ and $0<\rho(\delta)<1$ such that, for any $n\geq 1$,
\[
\bb E^{\frac{2+2\delta}{\theta}} \left( \abs{\Pi_n}^{\theta} \right) \leq \bb E \left( \abs{\Pi_n}^{2+2\delta} \right) \leq c(\delta) \rho(\delta)^{n} \underset{n\to +\infty}{\longrightarrow} 0,
\]
from which it follows that
\[
\bb E \left( \abs{X_n^0}^{\theta} \right) \leq \left( \sum_{k=1}^n \bb E^{1/\theta} \left( \abs{\Pi_n}^{\theta} \right) \bb E^{1/\theta} \left( \abs{B_1}^{\theta} \right) \right)^{\theta}  < +\infty.
\]
This proves the claim \ref{opPt001}.

\textit{Proof of the claim \ref{opPt002}.} For any $x\neq y \in \bb R^d$, with $\abs{x} \leq \abs{y}$, we have
\begin{align*}
	&\abs{\mathbf{P}_t^n h(x) - \mathbf{P}_t^n h(y)} \\
	&\quad \leq \bb E \left( 2^{1-\ee} \abs{t}^{\ee} \abs{u}^{\ee} \left( \sum_{k=1}^n \abs{\Pi_k} \right)^{\ee} \abs{x-y}^{\ee} \abs{h}_{\theta} \left( 1+ \abs{\Pi_n} \abs{x} + \abs{X_n^0} \right)^{\theta} \right)\\
	&\quad \qquad+ \bb E \left( \left[h\right]_{\ee} \abs{\Pi_n}^{\ee} \abs{x-y}^{\ee} \left( 1+\abs{\Pi_n}\abs{x}+\abs{X_n^0} \right)\left( 1+\abs{\Pi_n}\abs{y}+\abs{X_n^0} \right) \right).
\end{align*}
Since $\theta < 2$, we obtain that
\[
\left[\mathbf{P}_t^n h\right]_{\ee} \leq 2^{1-\ee} \abs{t}^{\ee} \abs{u}^{\ee} C_2(n) \abs{h}_{\theta} + C_1(n) \left[h\right]_{\ee},
\]
where 
\[
C_1(n) = \bb E \left( \abs{\Pi_n}^{\ee} \left( 1+\abs{\Pi_n}+\abs{X_n^0} \right)^2 \right)
\]
and
\[
C_2(n) = \bb E \left( \left( \sum_{k=1}^n \abs{\Pi_k} \right)^{\ee}  \left( 1+ \abs{\Pi_n}  + \abs{X_n^0} \right)^{\theta} \right).
\]
Since $2+2\ee < 2+2\delta=p$, by the H\"older inequality,
\begin{align*}
C_1(n) &\leq \bb E^{\frac{\ee}{1+\ee}} \left( \abs{\Pi_n}^{1+\ee} \right) \bb E^{\frac{1}{1+\ee}} \left( \left( 1+ \abs{\Pi_n}  + \abs{X_n^0} \right)^{2+2\ee} \right) \\
&\leq c(\delta)^{\frac{\ee}{p}} \rho(\delta)^{\frac{n\ee}{p}} 3^2 \left( 1+c(\delta)^{\frac{2}{p}}+\left( \frac{c(\delta)^{\frac{1}{p}} \bb E^{\frac{1}{p}} \left( \abs{B_1}^{p} \right)}{1-\rho(\delta)^{\frac{1}{p}}} \right)^2 \right),
\end{align*}
which shows that $C_1(n) $ converges exponentially fast to $0.$
In the same way, taking into account that $\theta < 2$ we show that $C_2(n)$ is bounded:
\begin{align*}
C_2(n) &\leq \left( \sum_{k=1}^n \bb E^{\frac{1}{1+\ee}} \left( \abs{\Pi_k}^{1+\ee} \right) \right)^{\ee} \bb E^{\frac{1}{1+\ee}} \left( \left( 1+ \abs{\Pi_n}  + \abs{X_n^0} \right)^{2+2\ee} \right) \\
&\leq \left( \frac{c(\delta)^{\frac{1}{p}}}{1-\rho(\delta)^{\frac{1}{p}}} \right)^{\ee} 3^2 \left( 1+c(\delta)^{\frac{2}{p}}+\left( \frac{c(\delta)^{\frac{1}{p}} \bb E^{\frac{1}{p}} \left( \abs{B_1}^{p} \right)}{1-\rho(\delta)^{\frac{1}{p}}} \right)^2 \right).
\end{align*}

\textit{Proof of the claim \ref{opPt003}.} Let $B$ be a bounded subset of $\mathscr{B}$, $(h_n)_{n\geq 0}$ be a sequence in $B$ and $K$ be a compact of $\bb R^d$. Using the claim \ref{opPt001}, it follows that, for any $x\in K$ and $n\geq 0$,
\[
\abs{\mathbf{P}_th_n(x)} \leq c\abs{h_n}_{\theta} \left( 1+\abs{x} \right)^{\theta} \leq c_{K},
\]
which implies that the set $\mathscr A=\{ \mathbf{P}_th_n,\, n\geq 0\}$ is uniformly bounded in $(\mathscr{C}\left( K  \right),\abs{\cdot}_{\infty})$, where $\abs{\cdot}_{\infty}$ is the supremum norm.
By the claims \ref{opPt001} and \ref{opPt002}, we have that, for any $x,y \in K$ and $n\geq 0$,
\[
\abs{\mathbf{P}_t h_n (x) - \mathbf{P}_t h_n (y)} \leq \left[ \mathbf{P}_t h_n \right]_{\ee} \abs{x-y}^{\ee} \left(1+\abs{x} \right)^{\theta} \left(1+\abs{y} \right)^{\theta} \leq c_{K} \norm{h_n}_{\mathscr{B}} \abs{x-y}^{\ee}
\]
and, thereby, the set $\mathscr A$ is uniformly equicontinuous. By the theorem of Arzel\` a-Ascoli, we conclude that $\mathscr  A$ is relatively compact in $(\mathscr{C}\left( K  \right),\abs{\cdot}_{\infty})$.
Using a diagonal extraction, we deduce that there exist a subsequence $(n_k)_{k\geq 1}$ and a function $\varphi \in \mathscr{C} ( \bb R^d )$ such that, for any compact $K \subset \bb R^d$,
\[
\sup_{x\in K} \abs{P_t h_{n_k}(x) - \varphi(x)} \underset{n\to+\infty}{\longrightarrow} 0.
\]
Moreover, by the claims \ref{opPt001} and \ref{opPt002}, for any $n \geq 1$ and $x\in \bb R^d$,
\[
\abs{P_t h_n (x)} \leq \abs{P_t h_n (0)} + \left[P_t h_n \right]_{\ee} \abs{x}^{\ee} \left( 1+\abs{x} \right) \leq c \abs{h_n}_{\theta} + c \norm{h_n}_{\mathscr{B}} \abs{x}^{\ee} \left( 1+\abs{x} \right).
\]
Since $B$ is bounded, we have $\abs{P_t h_n (x)} \leq c ( 1+\abs{x} )^{1+\ee}$, for any $x\in \bb R^d,$ as well as $\varphi(x) \leq c ( 1+\abs{x} )^{1+\ee}$, for any $x\in \bb R^d$. Consequently, for any $k \geq 1$ and $A > 0$,
\[
\sup_{x\in \bb R^d} \frac{\abs{P_t h_{n_k} (x) - \varphi(x)}}{\left(1+\abs{x}\right)^{\theta}} \leq \sup_{\abs{x} \leq A} \abs{P_t h_{n_k} (x) - \varphi(x)} + 2c \sup_{\abs{x} > A} \frac{\left(1+\abs{x}\right)^{1+\ee}}{\left(1+\abs{x}\right)^{\theta}}.
\]
Taking the limit as $k\to +\infty$ and then the limit as $A\to +\infty$, we can conclude that $\lim_{k\to+\infty} \abs{P_t h_{n_k} - \varphi}_{\theta} = 0$.

\end{proof}

\begin{samepage}
\begin{lemma} Assume Hypothesis \ref{hypoH}. 
\nobreak
\label{uneseulevp}
\begin{enumerate}[ref=\arabic*, leftmargin=*, label=\arabic*.]
	\item \label{uneseulevp001} The operator $\mathbf P$ has a unique invariant probability $\bs \nu$ which coincides with the distribution of the $\bb P$-a.s.\ convergent series $Z:= \sum_{k=1}^{+\infty} A_1 \dots A_{k-1} B_k$. Moreover, the unique eigenvalue of modulus $1$ of the operator $\mathbf{P}$ on $\mathscr{B}$ is $1$ and the associated eigenspace is generated by the function $e$: $x \mapsto 1$.
	\item \label{uneseulevp002} Let $t \in \bb R^*$. If $h \in \mathscr{B}$ and $z\in \bb C$ of modulus $1$ are such that
	\[
	\mathbf P_t h(x) = z h(x), \qquad x \in \supp(\bs \nu),
	\]
	then $h=0$ on $\supp(\bs \nu)$.
	\end{enumerate}
\end{lemma}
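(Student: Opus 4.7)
\textit{Claim \ref{uneseulevp001}.} First, I would show that $Z = \sum_{k=1}^{+\infty} A_1\cdots A_{k-1} B_k$ converges $\bb P$-a.s. (and in $L^{2+2\delta}$): Hypothesis \ref{H1} yields constants $c(\delta)>0$ and $\rho(\delta)\in (0,1)$ with $\bb E^{1/(2+2\delta)}(|A_1\cdots A_{k-1}|^{2+2\delta}) \leq c(\delta) \rho(\delta)^{k}$, so the partial sums form a Cauchy sequence in $L^{2+2\delta}$. The reversal trick is crucial: for $x \in \bb R^d$, the random variable $X_n = A_n\cdots A_1 x + \sum_{k=1}^n A_n\cdots A_{k+1} B_k$ has the same law as $\tilde X_n := A_1\cdots A_n x + \sum_{k=1}^n A_1\cdots A_{k-1} B_k$ by permutation of the i.i.d. factors, and $\tilde X_n$ converges $\bb P$-a.s. to $Z$ (since $|A_1\cdots A_n x| \to 0$ a.s.). Consequently, for any $h\in \mathscr{B}$, by dominated convergence based on the uniform bound $\sup_n \bb E(|\tilde X_n|^{\theta}) < \infty$ (guaranteed by $\theta < 2+2\delta$), one obtains $\mathbf{P}^n h(x) = \bb E(h(\tilde X_n)) \to \bb E(h(Z))$ for every $x$. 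This identifies $\bs \nu$ as the law of $Z$ and proves both invariance and uniqueness: any $\mathbf{P}$-eigenfunction $h$ for the eigenvalue $1$ must equal $\bs \nu(h) e$. For any other eigenvalue $z$ of modulus $1$, the identity $z^n h(x) = \mathbf{P}^n h(x) \to \bs \nu(h)$ forces $h \equiv 0$ unless $z=1$.

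\textit{Claim \ref{uneseulevp002}.} Let $t\in \bb R^*$, $|z|=1$, and $h\in\mathscr B$ satisfy $\mathbf P_t h(x) = z h(x)$ for $x\in \supp(\bs \nu)$. Iterating, $|h(x)| \leq \mathbf P^n|h|(x)$, and integrating against $\bs \nu$ gives $\bs \nu(|h|) \leq \bs \nu(\mathbf P^n|h|) = \bs \nu(|h|)$, hence $|h| = \mathbf P|h|$ $\bs \nu$-a.e.\ on $\supp(\bs \nu)$. A classical maximum-modulus argument, together with the fact that $\bs \nu$ is the law of $Z$ (so the $\mathbf P$-orbit of a $\bs \nu$-generic point visits any neighbourhood with positive probability thanks to Hypothesis \ref{H2}), then yields that $|h|$ is constant, say equal to $M$, on $\supp(\bs \nu)$. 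If $M=0$ we are done; otherwise write $h = M e^{i\varphi}$ on $\supp(\bs \nu)$. The equality case in $|\mathbf P_t h(x)| = M$ forces $e^{i(tf(A_1x+B_1)+\varphi(A_1x+B_1))}$ to be $\bb P$-a.s.\ equal to $z e^{i\varphi(x)}$, yielding the cohomological equation
\[
 t\scal{u}{A_1 x+B_1} + \varphi(A_1 x+B_1) - \varphi(x) \equiv \arg z \pmod{2\pi}, \qquad \bb P\text{-a.s.}
\]
for all $x\in\supp(\bs \nu)$.

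The final step, which is the main obstacle, consists in deriving a contradiction from this cohomological equation using Hypotheses \ref{H2}, \ref{H3} and \ref{H4}. The plan is to apply this identity twice with independent copies $(A_1,B_1)$ and $(A_2,B_2)$ and subtract: for $\bs \nu$-a.e.\ $x$,
\[
t\scal{u}{(A_1-A_2)x + (B_1-B_2)} \equiv \varphi(A_2 x+B_2) - \varphi(A_1 x+B_1) \pmod{2\pi},
\]
which, after a further symmetrisation using $(A_3,B_3), (A_4,B_4)$, eliminates $\varphi$ and produces an identity of the form $t\scal{{}^tA_i^{-1}\cdots}{\,\cdot\,} \equiv 0 \pmod{2\pi}$. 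Hypothesis \ref{H3} will provide the required non-degeneracy of ${}^tA^{-1}$ to show that this is impossible for $t\neq 0$, while Hypotheses \ref{H2} and \ref{H4} guarantee that $\supp(\bs \nu)$ is large enough (not contained in any proper affine subspace and centred in an appropriate sense) for the a.s.\ equations to propagate to true functional identities. I expect that the delicate point will be the careful iteration needed to eliminate $\varphi$ and the bookkeeping required to reduce the problem to exactly the non-degeneracy condition in \ref{H3}; this is the usual ``spectral radius on the unit circle'' step in the Guivarc'h--Hardy theory adapted to affine walks.
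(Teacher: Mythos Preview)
Your treatment of Claim~\ref{uneseulevp001} matches the paper's proof essentially line by line (reversal trick, a.s.\ convergence of $g_1\cdots g_n\cdot x$ to $Z$, uniform integrability from $\theta<2+2\delta$, and the convergence $z^n h(x)\to\bs\nu(h)$ forcing $z=1$).

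For Claim~\ref{uneseulevp002}, your opening is correct and agrees with the paper: one shows $|h|\le \mathbf P^n|h|$ and, since $\mathbf P^n|h|\to\bs\nu(|h|)$ pointwise, $|h|$ is constant on $\supp(\bs\nu)$; the equality case then gives a pointwise a.s.\ identity. However, your elimination scheme---subtracting the relation at the \emph{same} point $x$ with \emph{independent} copies $(A_1,B_1)$ and $(A_2,B_2)$, then further symmetrising with $(A_3,B_3),(A_4,B_4)$---is not the paper's route, and as stated it does not terminate: the $\varphi$-terms live at the random images $A_i x+B_i$, and piling on more independent copies does not make them cancel.

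The paper's idea is the opposite: fix two points $x,y\in\supp(\bs\nu)$ and use the \emph{same} random sequence $g_1,\dots,g_n$. From the a.s.\ constancy of $\xi_n(v)=\exp\bigl(it\langle u,\sum_{k=1}^n g_k\cdots g_1\cdot v\rangle\bigr)\,h(g_n\cdots g_1\cdot v)$ one gets
\[
\frac{h(g_n\cdots g_1\cdot y)}{h(g_n\cdots g_1\cdot x)}=\frac{h(y)}{h(x)}\,\exp\Bigl(it\Bigl\langle \sum_{k=1}^n {}^t\!A_1\cdots {}^t\!A_k\,u,\;x-y\Bigr\rangle\Bigr)\quad\bb P\text{-a.s.}
\]
Now the reversal trick from Claim~\ref{uneseulevp001} is reused: the left-hand side has the same \emph{law} as $h(g_1\cdots g_n\cdot y)/h(g_1\cdots g_n\cdot x)$, which converges a.s.\ to $h(Z)/h(Z)=1$. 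Passing to the limit and letting $Z'=\sum_{k\ge1}{}^t\!A_1\cdots{}^t\!A_k$, one obtains
\[
\frac{h(y)}{h(x)}\,\e^{it\langle Z'u,\,x-y\rangle}=1\quad\bb P\text{-a.s.}
\]
for every $x,y\in\supp(\bs\nu)$. Hypothesis~\ref{H2} guarantees $\supp(\bs\nu)$ is not contained in a proper affine subspace, so one can pick differences $v_j=x_j-y_j$ forming a basis of $\bb R^d$; hence $\langle Z'u,v_j\rangle\in t^{-1}(\theta_j+2\pi\bb Z)$ a.s., and the law $\bs\eta_u$ of $Z'u$ is purely atomic. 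Since $\bs\eta_u$ is invariant for the chain $X'_{n+1}={}^t\!A_{n+1}(X'_n+u)$, the (finite, nonempty) set of atoms of maximal mass is a.s.\ permuted by $w\mapsto {}^t\!A_1^{-1}w-u$; its barycentre $v_0$ therefore satisfies ${}^t\!A_1^{-1}v_0=v_0+u$ a.s., whence $v_0\neq0$ (because $u\neq0$) and ${}^t\!A_1^{-1}v_0={}^t\!A_2^{-1}v_0$ a.s., contradicting Hypothesis~\ref{H3}. Note that Hypothesis~\ref{H4} plays no role in this lemma; it is only used later to check $\mu=0$.
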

\end{samepage}

\begin{proof} We proceed as in Guivarc'h and Le Page \cite{guivarch_spectral_2008} and Buraczewski, Damek and Guivarc'h \cite{buraczewski_convergence_2009}. For any $g=(A,B) \in \GL\left( d, \bb R \right) \times \bb R^d$ and $x\in \bb R^d$, we set $g \cdot x = Ax+B$.

\textit{Proof of claim \ref{uneseulevp001}.} Since $k(\delta) < 1$, the series $\sum_{k} \bb E^{\frac{1}{2+2\delta}} ( \abs{A_1 \dots A_{k-1} B_k}^{2+2\delta} )$ converges and so the sequence $g_1 \dots g_n \cdot x = A_1 \dots A_n x + \sum_{k=1}^n A_1 \dots A_{k-1} B_k$ converges almost surely to $Z=\sum_{k=1}^{+\infty} A_1 \dots A_{k-1} B_k$ as $n\to+\infty$. 
Therefore, for any $\varphi \in \mathscr{B}$, the sequence $\varphi(g_1 \dots g_n \cdot x)$ converges to $\varphi(Z)$ almost surely 
as $n\to+\infty$. Moreover, since  $\abs{\varphi(x)} \leq \abs{\varphi}_{\theta} \left( 1+\abs{x} \right)^{\theta}$ and $\theta < 2 +2\delta$, the sequence $(\varphi(g_1 \dots g_n \cdot x))_{n\geq1}$ is uniformly integrable. So $\mathbf{P}^n\varphi (x)$ converges to $\bb E(\varphi(Z))$ as $n\to+\infty$. 
This proves that the distribution $\bs \nu$ of $Z$ is the only invariant probability of $\mathbf P$. 

Fix $z \in \bb C$ such that $\abs{z}=1$ and let $h \neq 0$ belonging to $\mathscr{B}$ be an eigenfunction of $\mathbf{P}$, so that $\mathbf{P}h = zh$. From the previous argument, it follows that, for any $x \in \bb R^d$,
\[
z^n h(x) = \mathbf{P}^nh(x) \underset{n\to +\infty}{\longrightarrow} \bs \nu(h).
\]
Since there exists $x \in \bb R^d$ such that $h(x) \neq 0$, the sequence $(z^n)_{n\geq 1}$ should be convergent 
which is possible only if $z=1$. 
From this, we deduce that for any $x\in \bb R^d$, $h(x) = \bb E (h(Z))$ which implies that $h$ is constant.

\textit{Proof of the claim \ref{uneseulevp002}.} Our argument is by contradiction. 
Let $t \in \bb R^* $, $h \in \mathscr B$ and $z\in \bb C$ of modulus $1$ be such that $\mathbf P_t h (x) = z h(x),$ for any $x \in \supp (\bs\nu)$
and suppose that there exists $x_0\in \supp(\bs \nu)$ such that $h(x_0)\not=0$.

First we establish that $\abs{h}$ is constant on the support of the distribution $\bs \nu$. 
Since $\bs \nu$ is $\bs \mu$-invariant, for any $(g,x) \in \supp(\bs \mu) \times \supp(\bs \nu)$ we have $g\cdot x \in \supp(\bs \nu).$  
From this fact it follows that $\mathbf{P}_t^n h(x)=z^n h(x)$, for any $n\geq 1$ and $x\in \supp(\bs \nu)$. This implies that $\abs{h}(x) \leq \mathbf{P}^n \abs{h}(x),$ for any $x\in \supp(\bs \nu)$.  Note also that $\abs{h}$ belongs to $\mathscr{B}$.  Therefore, 
as we have seen in the proof of the first claim,
we have, $\lim_{n\to +\infty} \mathbf{P}^n \abs{h}(x) = \bs \nu( \abs{h} ) = \bb E( \abs{h} (Z) ) <+\infty$,  for any $x \in \supp (\bs \nu)$.  
So  $\abs{h}(x) \leq \int_{x' \in \bb R^d} \abs{h}(x') \bs \nu( \dd x')$, 
for any $x\in \supp (\bs \nu)$.
Since $\abs{h}$ is continuous, this implies that $\abs{h}$ is constant on the support of $\bs \nu$. In particular, this means that $h(x)\not=0$ for any $x\in \supp (\bs \nu)$.

Since the support of $\bs \nu$ is stable by all the elements of the support of $\bs \mu$, we deduce that the random variable $\xi_n(x) =\exp( it \scal{u}{\sum_{k=1}^n g_k \dots g_1 \cdot x}) h( g_n \dots g_1 \cdot x)$ takes values on the sphere $\bb S_{\nu(\abs{h})} = \{ a \in \bb C: \abs{a} = \bs \nu(\abs{h}) \}$, for all $x$ in the support of $\bs \nu$.  Moreover, the mean $z^n h(x)$ of $\xi_n(x)$ is also on $\bb S_{\nu(\abs{h})}$, which is possible only if $\xi_n(x)$ is a constant, for any $x\in \supp (\bs \nu)$. Consequently, for any pair $x,y \in \supp (\bs \nu)$, there exists an event $\Omega_{x,y}$ of $\bb P$-probability one such that on $\Omega_{x,y}$ it holds, for any $n\geq 1$,
\[
\exp \left( it \scal{u}{\sum_{k=1}^n g_k \dots g_1 \cdot v} \right) h\left( g_n \dots g_1 \cdot v \right) = z^n h(v),
\]
with $v \in \{x,y\}$, from which we get
\begin{equation}
\label{hpresqueconst002}
\frac{h\left( g_n \dots g_1 \cdot y \right)}{h\left( g_n \dots g_1 \cdot x \right)}
= \frac{h(y)}{h(x)} \exp\left(it \scal{\sum_{k=1}^n {}^t\!A_1 \dots {}^t\!A_k u}{ x-y } \right).
\end{equation}
In addition, for any $n \geq 1$,
\[
\bb E \left( \abs{ \frac{h\left( g_n \dots g_1 \cdot y \right)}{h\left( g_n \dots g_1 \cdot x \right)} -1 } \right) = \bb E \left( \abs{ \frac{h\left( g_1 \dots g_n \cdot y \right)}{h\left( g_1 \dots g_n \cdot x \right)} -1 } \right).
\]
Since, for $v\in\{x,y\}$, the sequence $h(g_1 \dots g_n \cdot v)$ converges a.s.\ to $h(Z)$ and since $h$ is bounded with a constant modulus, we have by \eqref{hpresqueconst002},
\begin{align*}
	0 &= \underset{n\to+\infty}{\lim} \bb E \left( \abs{ \frac{h\left( g_n \dots g_1 \cdot y \right)}{h\left( g_n \dots g_1 \cdot x \right)} -1 } \right) \\
	&= \underset{n\to+\infty}{\lim} \bb E \left( \abs{ \frac{h(y)}{h(x)} \exp\left(it \scal{\sum_{k=1}^n {}^t\!A_1 \dots {}^t\!A_k u}{ x-y } \right) -1 } \right).
\end{align*}
Taking into account that the series $\sum_{k=1}^n {}^t\!A_1 \dots {}^t\!A_k$ converges a.s.\ to a random variable $Z'$, we have for any $x,y \in \supp(\bs \nu)$,
\begin{equation}
\label{hpresqueconst003}
\bb E \left( \abs{ \frac{h(y)}{h(x)} \e^{it \scal{Z' u}{ x-y }} -1 } \right) = 0.
\end{equation}

Since the support of $\bs \nu$ is invariant by all the elements of the support of $\bs \mu$, by the point \ref{H2} of Hypothesis \ref{hypoH}, we deduce that the support of $\bs \nu$ is not contained in an affine subspace of $\bb R^d$, \textit{i.e.}\ for any $1 \leq j \leq d$, there exist $x_j,y_j \in \supp(\bs \nu)$, such that the family $(v_j)_{1\leq j \leq d} = (x_j-y_j)_{1\leq j \leq d}$ generates $\bb R^d$. From \eqref{hpresqueconst003}, we conclude that for any $1\leq j \leq d$,
\[
\frac{h(y_j)}{h(x_j)} \e^{it \scal{Z' u}{ v_j }} = 1, \qquad \bb P\text{-a.s.}
\]

Let $\theta_j$ be such that $\frac{h(x_j)}{h(y_j)} = \e^{i\theta_j}$. Denoting by $\bs \eta_u$ the distribution of $Z'u$, we obtain that $\scal{Z'u}{v_j} \in \frac{\theta_j +2\pi \bb Z}{t}$ $\bb P$-a.s.\ and so the support of $\bs \eta_u$ is discrete. Moreover, the measure $\bs \eta_u$ is invariant for the Markov chain $X_{n+1}' = {}^t\!A_{n+1} ( X_n' + u )$ and so, for any Borel set $B$ of $\bb R^d$,
\begin{equation}
	\label{etaB}
	\bs \eta_u \left( B \right) = \bb E \left( \int_{v \in \bb R^d} \mathbbm 1_{B} \left( {}^t\!A_1 \left( v +u \right) \right) \bs \eta_u(\dd v) \right).
\end{equation}
Since $\bs \eta_u$ is discrete, the set $E_{max} = \{ x\in \bb R^d: \bs \eta_u \left( \{x\} \right) = \max_{y\in \bb R^d} \bs \eta_u \left( \{y\} \right) \}$ is non-empty and finite. Moreover, using \eqref{etaB} with $B=\left\{ x\right\}$ and $x\in E_{max},$ we can see that the image ${}^tA_1^{-1} x-u$ belongs to $E_{max}$ $\bb P$-a.s. Denoting by $v_0$ the barycentre of $E_{max}$, we find that
\[
\bb P \left( {}^tA_1^{-1}  v_0-u  = v_0 \right) = 1.
\]
The fact that $u \neq 0$ implies that $v_0\neq 0$. 
The latter implies that ${}^tA_1^{-1} v_0 = v_0 +u = {}^tA_2^{-1} v_0$ almost surely, 
which contradicts the point \ref{H3} of Hypothesis \ref{hypoH}.
\end{proof}

The conditions (b), (c) and (d) of the theorem of Ionescu-Tulcea and Marinescu as stated in Chapter 3 of Norman \cite{norman1972markov}
follow from points \ref{opPt001}-\ref{opPt003} of Lemma \ref{opPt} repectively.
It remains to show the condition (a).
Let $\left( h_n \right)_{n\geq 0}$ be a sequence in $\mathscr{L}_{\theta,\ee}$ satisfying $\norm{h_n}_{\theta,\ee} \leq K$,  for any $n\geq 0$
and some constant $K$
and suppose that there exists $h\in \mathscr{C}( \bb R^d )$ such that $\lim_{n\to+\infty} \abs{h_n-h}_{\theta} = 0$.
For any $x,y,z \in \bb R^{d} $ and $n\geq 0,$
\begin{eqnarray*}
&& \frac{\abs{h(x)-h(y)}}{\abs{x-y}^{\ee} (1+\abs{x})(1+\abs{y})} + \frac{\abs{h(z)}}{(1+\abs{z})^{\theta}}\\
&&\leq 
 \abs{h_n-h}_{\theta} \left(      \frac{(1+\abs{x})^{\theta} + (1+\abs{y})^{\theta}}{\abs{x-y}^{\ee} (1+\abs{x})(1+\abs{y})} +1    \right)
+
\left[ h_n \right]_{\ee} + \abs{ h_n }_{\theta}.
\label{IT001}
\end{eqnarray*}
Taking the limit as $n \to +\infty,$ shows that
 $h \in \mathscr{L}_{\theta,\ee}$ and $\norm{h}_{\theta,\ee} \leq K$.

The theorem of Ionescu-Tulcea and Marinescu and the unicity of the one-dimen\-sional projector proved in the point \ref{uneseulevp001} of  Lemma \ref{uneseulevp}  imply Hypothesis \ref{SPGA}.
Hypothesis \ref{PETO} is obtained easily from  Lemma \ref{opPt}.

The point \ref{uneseulevp002} of  Lemma \ref{uneseulevp} will be used latter to prove that $\sigma^2 > 0.$

\textit{Proof of \ref{Momdec}.} By the hypothesis $\alpha=\frac{2+2\delta}{1+\ee}>2$. 
Consider the function $N$: $\bb R^d \to \bb R_{+}$ defined by $N(x) = \abs{x}^{1+\ee}$. 
For any $x,y \in \bb R^d$ satisfying $\abs{x} \leq \abs{y}$,
\[
\abs{N(x) - N(y)} \leq (1+\ee) \abs{y}^{\ee} \abs{x-y}.
\]
Using the fact that $\abs{N(x)-N(y)} \leq 2 \abs{y}^{1+\ee}$, we have
\[
\abs{N(x) - N(y)} \leq (1+\ee)^{\ee} 2^{1-\ee} \abs{y}^{\ee^2+(1+\ee)(1-\ee)} \abs{x-y}^{\ee} = c_{\ee} \abs{y} \abs{x-y}^{\ee}.
\]
Together with $\abs{N}_{\theta} < +\infty$, this proves that the function $N$ is in $\mathscr{B}=\mathscr{L}_{\theta,\ee}$.

Obviously $\abs{f(x)}^{1+\ee} = \abs{ \scal{u}{x} }^{1+\ee} \leq \abs{u}^{1+\ee} \left( 1+N(x) \right)$. Moreover, for any $h\in \mathscr{L}_{\theta,\ee}$,
\[
\abs{h(x)} \leq \left[ h \right]_{\ee} \abs{x}^{\ee} \left( 1+\abs{x} \right) + \abs{h(0)} \leq 2 \norm{h}_{\theta,\ee} \left( 1+N(x) \right)
\]
and so $\norm{\bs \delta_x}_{\mathscr B'} \leq 2 \left( 1+N(x) \right)$. Note that for any $p\in [1,\alpha]$,
\[
\bb E^{1/p} \left( N\left( g_n \dots g_1 \cdot x \right)^{p} \right) \leq 2^{1+\ee} \left( \bb E^{1/p} \left( \Pi_n^{p(1+\ee)} \right)N(x) + \bb E^{1/p} \left( \abs{g_n \dots g_1 \cdot 0}^{p(1+\ee)} \right) \right).
\]
Since $p(1+\ee) \leq 2+2\delta$, the previous inequality proves that $\bb E_x^{1/p} \left( N\left( X_n \right)^p \right) \leq c \left( 1+N(x) \right)$.
Thus, we proved the first inequality of the point \ref{Momdec001} of \ref{Momdec}.
  
For any $l \geq 1$, we consider the function $\phi_l$ on $\bb R_+$ defined by:
\[
\phi_l(t) = \left\{
\begin{array}{ll}
	0 & \text{ if } t\leq l^{\frac{1}{1+\ee}} - 1, \\
	t- \left( l^{\frac{1}{1+\ee}} - 1 \right) & \text{ if } t\in \left[ l^{\frac{1}{1+\ee}} - 1, l^{\frac{1}{1+\ee}} \right], \\
	1 & \text{ if } t\geq l^{\frac{1}{1+\ee}}.
\end{array}
\right.
\]
Define $N_l$ on $\bb R^d$ by $N_l(x) = \phi_l(\abs{x})N(x)$. For any $x\in \bb R^d$, we have $N(x) \mathbbm 1_{\left\{ N(x) > l \right\}} \leq N_l(x) \leq N(x)$ which implies that $\abs{N_l}_{\theta} \leq \abs{N}_{\theta} < +\infty$. Moreover, for any $x,y \in \bb R^d$ satisfying $\abs{x} \leq \abs{y}$, we have
\[
\abs{\phi_l(\abs{y}) - \phi_l(\abs{x})} \leq \min\left( \abs{y} - \abs{x}, 1 \right).
\]
So
\[
\abs{N_l(y) - N_l(x)} \leq \left[ N \right]_{\ee} \abs{x-y}^{\ee} \left( 1+\abs{x} \right) \left( 1+\abs{y} \right) + \abs{x}^{1+\ee} \abs{y-x}^{\ee}.
\]
Since $\abs{x} \leq \abs{y}$, we obtain that $\left[ N_l \right]_{\ee} \leq \left[ N \right]_{\ee} + 1 <+\infty$. Therefore, the function $N_l$ belongs to $\mathscr{B} = \mathscr{L}_{\theta,\ee}$, which finishes the proof of the point \ref{Momdec001} of \ref{Momdec}. 

Moreover, $\norm{N_l}_{\theta,\ee} \leq \norm{N}_{\theta,\ee}+1$ and, so the point \ref{Momdec002} of \ref{Momdec} is also established. 

Since $\int_{\bb X} \abs{x}^p \bs \nu(\dd x) < +\infty$, for any $p\leq 2+2\delta$, we find that
\[
\bs \nu \left( N_l \right) \leq \int_{\bb X} \abs{x}^{1+\ee} \mathbbm 1_{\left\{\abs{x} \geq l^{\frac{1}{1+\ee}}-1 \right\}} \bs \nu(\dd x) \leq  \frac{\int_{\bb X} \abs{x}^{2+2\delta} \bs \nu(\dd x)}{\left( l^{\frac{1}{1+\ee}}-1 \right)^{2+2\delta-(1+\ee)}}.
\]
Choosing $\beta = \alpha - 2 > 0$, we obtain the point \ref{Momdec003} of \ref{Momdec}.

\textit{Proof of \ref{CECO}.} Using \eqref{mu-sigma001} and the point \ref{H4} of Hypothesis \ref{hypoH},
\begin{equation}
	\label{muestnul}
	\mu = \int_{\bb R^d} \scal{u}{x} \bs \nu(\dd x) = \scal{u}{\bb E\left( \sum_{k=1}^{+\infty} A_1 \dots A_{k-1} B_k \right)} = 0.
\end{equation}
Now we prove that $\sigma^2 > 0$. For this, suppose the contrary: $\sigma^2 = 0$. One can easily check that the function $f$ belongs to $\mathscr{B}$. Using \ref{SPGA} and the fact that $\nu (f) = \mu = 0$, we deduce that $\sum_{n\geq 0} \norm{\mathbf{P}^n f}_{\theta,\ee}$ = $\sum_{n\geq 0} \norm{Q^n f}_{\theta,\ee} < +\infty$ and therefore the series $\sum_{n\geq 0} \mathbf{P}^n f$ converges in $\left(\mathscr{B},\norm{\cdot}_{\theta,\ee}\right)$. We denote by $\Theta \in \mathscr{B}$ its limit and notice that the function $\Theta$ satisfies the Poisson equation: $\Theta - \mathbf{P}\Theta = f$.

Using the bound \eqref{bound_EfXn}, we have $\abs{\sum_{n=1}^N f(x) \mathbf{P}^n f(x)} \leq c \left( 1+ N(x) \right)^2$. By the Lebesgue dominated convergence theorem, from  \eqref{mu-sigma001}, we obtain
\begin{align*}
	\sigma^2 &= \int_{\bb R^d} f(x) \left( 2\Theta(x) - f(x) \right) \bs \nu(\dd x) \\
	&= \int_{\bb R^d} \left( \Theta^2(x) - \left( \mathbf{P}\Theta \right)^2(x) \right) \bs \nu(\dd x)	\\
	&= \int_{\GL\left( d, \bb R \right) \times \bb R^d \times \bb R^d} \left( \Theta(g_1 \cdot x) - \mathbf{P}\Theta(x) \right)^2 \bs \mu(\dd g_1) \bs \nu(\dd x).
\end{align*}
As $\sigma^2=0$, we have $\Theta(g_1 \cdot x) = \mathbf{P}\Theta(x)$, \textit{i.e.}\ $f(g_1 \cdot x) = \mathbf{P}\Theta(x) - \mathbf{P}\Theta(g_1 \cdot x)$,
$\bs \mu \times \bs \nu$-a.s. Consequently, there exists a Borel subset $B_0$ of $\bb R^d$ such that $\bs \nu(B_0)=1$ and for any $t\in \bb R$ and $x\in B_0$,
\[
\int_{\GL\left( d, \bb R \right) \times \bb R^d} \e^{it\scal{u}{g_1 \cdot x}} \e^{it \mathbf{P}\Theta(g_1 \cdot x)} \bs \mu (\dd g_1) = \e^{it \mathbf{P}\Theta(x)}.
\]
Since the functions in the both sides are continuous, this equality holds for every $x\in \supp(\bs \nu)$. Since $\Theta \in \mathscr{L}_{\theta,\ee}$, the function $x \mapsto \e^{it \mathbf{P}\Theta(x)}$ belongs to $\mathscr{L}_{\theta,\ee} \smallsetminus \{0\}$. This contradicts the point \ref{uneseulevp002} of Lemma \ref{uneseulevp} and we conclude that $\sigma^2>0$ and so \ref{CECO} holds true.

\subsection{Proof of Proposition \ref{PP002}}
\label{proof-cas-compact}

We show that \ref{BASP}-\ref{CECO} hold true for the Markov chain $(X_n)_{n\geq 1}$, the function $f$ and the Banach space $\mathscr{L}(\bb X)$ given in Section \ref{Compact1}.

\textit{Proof of \ref{BASP}.} Obviously the Dirac measure belongs to $\mathscr{L}(\bb X)'$ and $\norm{\bs \delta_x}_{\mathscr{L}(\bb X)'} \leq 1$ for any $x\in \bb X$. For any $h \in \mathscr{L}(\bb X)$ and $t\in \bb R$ the function $\e^{itf}h$ belongs to $\mathscr{L}(\bb X)$ and 
\begin{equation}
	\label{opxhitborne}
	\norm{\e^{itf}h}_{\mathscr{L}} \leq \abs{t} \left[ f \right]_{\bb X} \norm{h}_{\infty} + \norm{h}_{\mathscr{L}} \leq \left( \abs{t} \left[ f \right]_{\bb X} + 1 \right) \norm{h}_{\mathscr{L}}.
\end{equation}

\textit{Proof of \ref{SPGA}.}
Let $(x_1,x_2)$ and $(y_1,y_2)$ be two elements of $\bb X$ and $h \in \mathscr{L}(\bb X)$. Since
\[
\mathbf Ph(x_1,x_2) = \int_{X} h(x_2,x') P(x_2, \dd x'),
\] 
we have $\norm{\mathbf Ph}_{\infty} \leq \norm{h}_{\infty}$. 
Denote by $h_{x_2}$ the function $z \mapsto h(x_2,z)$, which is an element of $\mathscr{L}(X).$ 
Since $\left[ h_{x_2} \right]_{X} \leq \left[ h \right]_{\bb X}$ and $\abs{h_{x_2}}_{\infty} \leq \norm{h}_{\infty}$, we obtain also that
\begin{align*}
	\abs{\mathbf Ph(x_1,x_2) - \mathbf Ph(y_1,y_2)} &= \abs{Ph_{x_2}(x_2) - Ph_{y_2}(y_2)} \\
	&\leq \left[ Ph_{x_2} \right]_{X}d_{X}(x_2,y_2) + \left[ h \right]_{\bb X} d_{X}(x_2,y_2) \\
	&\leq \left( \abs{P}_{\mathscr{L} \to \mathscr{L}}\norm{h}_{\bb X} + \left[ h \right]_{\bb X} \right) d_{X}(x_2,y_2),
\end{align*}
where $\abs{P}_{\mathscr{L} \to \mathscr{L}}$ is the norm of the operator $P$: $\mathscr{L}(X) \to \mathscr{L}(X)$. 
Therefore $\mathbf P$ is a bounded operator on $\mathscr{L}(\bb X)$ and
$\norm{\mathbf P}_{\mathscr{L} \to \mathscr{L}} \leq \left( 1+\abs{P}_{\mathscr{L} \to \mathscr{L}} \right).$
Now, for any $h \in \mathscr{L}(\bb X)$, we define the function $F_h$ by
\[
F_h (x_2) := \int_{X} h(x_2,x') P(x_2,\dd x') = \mathbf Ph(x_1,x_2).
\]
Notice that $F_h$ belongs to $\mathscr{L}(X)$ and $\abs{F_h}_{\mathscr{L}} \leq \norm{\mathbf Ph}_{\mathscr{L}}$. So by Proposition \ref{TroupourPpasgras}, for any $n\geq 2$, $(x_1,x_2) \in \bb X$ and $h \in \mathscr{L}(\bb X)$,
\[
\mathbf P^n h(x_1,x_2) = P^{n-1}F_h (x_2) = \nu(F_h) + R^{n-1}F_h (x_2) = \bs \nu(h) e(x_1,x_2) + Q^{n}h(x_1,x_2),
\]
where the probability $\bs \nu$ is defined on $\bb X$ by
\[
\bs \nu(h) = \nu( F_h )  = \int_{X\times X} h(x',x'') P(x',\dd x'') \nu(\dd x'), 
\]
the function $e$ is the unit function on $\bb X$, $e(x_1,x_2) = 1$, $\forall (x_1,x_2) \in \bb X$ and $Q$ is the linear operator on $\mathscr{L}(\bb X)$ defined by $Qh=R(F_h)=\mathbf P h - \bs \nu(h)$. By Proposition \ref{TroupourPpasgras}, the operator $Q$ is bounded and for any $n\geq 1$, $\norm{Q^n}_{\mathscr{L} \to \mathscr{L}} \leq \abs{R^{n-1}}_{\mathscr{L} \to \mathscr{L}} \norm{\mathbf P}_{\mathscr{L} \to \mathscr{L}} \leq C_{Q} \kappa^n$. Since $\nu$ is invariant by $P$, one can easily verify that $\Pi Q = Q \Pi = 0$, where $\Pi$ is the one-dimensional projector defined on $\mathscr{L}(\bb X)$ by $\Pi h = \nu(h) e$.

\textit{Proof of \ref{PETO}.} For any $t\in \bb R$, $h\in \mathscr{L}(\bb X)$ and $(x_1,x_2) \in \bb X$,
\[
\mathbf P_th(x_1,x_2) = \int_X \e^{itf(x_2,x')} h(x_2,x') P(x_2,\dd x') = \sum_{n=0}^{+\infty} \frac{i^n t^n}{n!} L_n(h)(x_1,x_2),
\]
where $L_n(h) = \mathbf P (f^n h)$. Since $\left( \mathscr{L}(\bb X), \norm{\cdot}_{\mathscr{L}} \right)$ is a Banach algebra, it follows that $L_n$ is a bounded operator on $\mathscr{L}(\bb X)$ and $\norm{L_n}_{\mathscr{L} \to \mathscr{L}} \leq \norm{\mathbf P}_{\mathscr{L} \to \mathscr{L}} \norm{f}_{\mathscr{L}}^n$. Consequently, the application $t \mapsto \mathbf P_t$ is analytic on $\bb R$ and so, by the analytic perturbation theory of linear operators (see \cite{kato_perturbation_1976}), there exists $\ee_0 > 0$ such that, for any $\abs{t} \leq \ee_0$,
\[
\mathbf P_t^n = \lambda_t^n \Pi_t + Q_t^n,
\] 
where $\lambda_t$ is an eigenvalue of $\mathbf P_t$, $\Pi_t$ is the projector on the one-dimensional eigenspace of $\lambda_t$ and $Q_t$ is an operator of spectral radius $r(Q_t) < \abs{\lambda_t}$ such that $\Pi_t Q_t = Q_t \Pi_t = 0$. The functions $t \mapsto \lambda_t$, $t\mapsto \Pi_t$ and $t \mapsto Q_t$ are analytic on $[-\ee_0,\ee_0]$. Furthermore, for any $h\in \mathscr{L}(\bb X)$ and $(x_1,x_2) \in \bb X$,
\[
\abs{\mathbf P_t h}(x_1,x_2) = \abs{\int_X \e^{itf(x_2,x')} h(x_2,x') P(x_2,\dd x')} \leq \norm{h}_{\infty}
\]
and necessarily $\abs{\lambda_t} \leq 1$, for any $\abs{t} \leq \ee_0$. Consequently
\[
\sup_{\abs{t} \leq \ee_0, n \geq 1} \norm{\mathbf P_t^n}_{\mathscr{L} \to \mathscr{L}} \leq c.
\]

\textit{Proof of \ref{Momdec} and \ref{CECO}.} Since for any $x\in \bb X$, $\abs{f(x)} \leq \abs{f}_{\infty}$ and $\norm{\bs \delta_x}_{\mathscr{L}(\bb X)'} \leq 1$, we can choose $N=0$ and $N_l = 0$ for any $l\geq 1$ and Hypothesis \ref{Momdec} is obviously satisfied.

Finally, Hypothesis \ref{hypoHcompactII} ensures that \ref{CECO} holds true.

\bibliographystyle{plain}
\bibliography{biblio5}

\end{document}